\theoremstyle{my}
\theoremstyle{my}
\newtheorem{thm}{Theorem}[section]
\newtheorem{Theorem}[thm]{Theorem}
\newtheorem*{Theorem*}{Theorem}
\newtheorem{Corollary}[thm]{Corollary}
\newtheorem*{corollary*}{Corollary}
\newtheorem{Lemma}[thm]{Lemma}
\newtheorem{Proposition}[thm]{Proposition}
\newtheorem*{conjecture*}{Conjecture}
\newtheorem{Problem}[thm]{Problem}
\newtheorem*{question*}{Question}
\newtheorem{defn}[thm]{Definition}
\newtheorem{Claim}[thm]{Claim}
\theoremstyle{definition}
\newtheorem{Definition}[thm]{Definition}
\newtheorem*{definitions*}{Definitions}
\newtheorem{rem}[thm]{Remark}
\newtheorem*{rem*}{Remark}
\newtheorem{Remark}[thm]{Remark}
\newtheorem*{remark*}{Remark}
\newtheorem*{remarks*}{Remarks}
\newtheorem*{example*}{Example}
\newtheorem{Example}[thm]{Example}
\newtheorem*{examples*}{Examples}
\newtheorem*{convention*}{Convention}
\newtheorem*{conventions*}{Conventions}
\newtheorem*{Note*}{Note}
\newtheorem*{exercise*}{Exercise}
\newtheorem*{bibliographical-note*}{Bibliographical note}
\newtheorem{cond}[thm]{Conditions}
\theoremstyle{definition}
\newcommand{\R}{\mathbb{R}}
\newcommand{\Z}{\mathbb{Z}}
\newcommand{\Q}{\mathbb{Q}}
\newcommand{\C}{\mathbb{C}}
\renewcommand{\P}{\mathbb{P}}
\title[Lagrangian Floer theory on compact toric manifolds]
 {Lagrangian Floer theory on compact toric manifolds: survey} 
\author{Kenji Fukaya, Yong-Geun Oh, Hiroshi Ohta, Kaoru Ono}
\begin{document}

\begin{abstract}
This is a survey of a series of papers \cite{toric1,toric2,toric3}.
We discuss the calculation of the Floer cohomology
of Lagrangian submanifold which is a 
$T^n$ orbit in a compact toric manifold.
Applications to symplectic topology and to mirror symmetry
are also discussed.
\end{abstract}

\maketitle

\tableofcontents

\section{Introduction}
\label{intro}

This is a survey of a series of papers \cite{toric1,toric2,toric3} we 
have written about Lagrangian Floer theory of toric manifolds 
and their mirror symmetry.
Our main purpose is to perform systematic computation of the 
Lagrangian Floer cohomology of the $T^n$ orbit in 
toric manifolds together 
with various operations introduced in \cite{fooo-book} 
Section 3.8
and apply them to mirror symmetry between toric A model and 
Landau-Ginzburg B model and to 
symplectic topology of toric manifolds.
\par
Let $X$ be a compact toric manifold with complex dimension $n$ 
and $L(\text{\bf u})$ a $T^n$ orbit. 
(Here $\text{\bf u}$ is an element of the interior of the moment polytope 
which parametrizes the $T^n$ orbit. See Section \ref{toric} Formula (\ref{L(u)def}).)
We show that 
the number (counted with multiplicity) 
of the pair $(L(\text{\bf u}),b)$ (where $b$ is an element of  
$H^1(L(\text{\bf u});\Lambda_0)/H^1(L(\text{\bf u});2\pi\sqrt{-1}\Z)$)
for which Floer cohomology $HF((L(\text{\bf u}),b),(L(\text{\bf u}),b);\Lambda)$ is nonzero
is equal to the Betti number of $X$.
(Theorem \ref{Jacqcmain1}.)

Such a pair $(L(\text{\bf u}),b)$  corresponds one to one to a critical 
point of certain function $\frak{PO}$, the potential function, 
where $\text{\bf u}$ (the position of $L(\text{\bf u})$) is the valuation of the 
coordinate of the critical points.
Given $X$ the valuation of the critical points of $\frak{PO}$ 
can be calculated by solving explicitly calculable algebraic equations finitely 
many times.
(We illustrate these examples in sections \ref{exa1} and \ref{exa2}. 
We use the result of Cho-Oh \cite{cho-oh} for this calculation.)

This identification is induced by an isomorphism 
between quantum cohomology $QH(X;\Lambda_0)$ of $X$ and
the Jacobian ring $\text{\rm Jac}(\frak{PO})$ of the potential 
function $\frak{PO}$, 
which goes back to Givental \cite{givental1,givental2} and 
Batyrev \cite{batyrev:qcrtm92,B2} in the case when 
$X$ is Fano.
We remark that the rank of $QH(X;\Lambda_0)$ is 
the Betti number of $X$ and the 
rank of $\text{\rm Jac}(\frak{PO})$ is the number of critical 
points of $\frak{PO}$ counted with multiplicity.

The isomorphism $QH(X;\Lambda_0) \cong \text{\rm Jac}(\frak{PO})$
is a ring isomorphism.
In the case  $QH(X;\Lambda)$ is semi-simple, the ring 
$QH(X;\Lambda)$ splits to the product of the copies of the field $\Lambda$ 
and each of the factors corresponds to a 
critical point of $\frak{PO}$. (Proposition \ref{localization}.)

Thus we associate a non-displaceable Lagrangian submanifold $L(\text{\bf u})$
to each of the direct factor of $QH(X;\Lambda)$.
Entov-Polterovich \cite{entov, EP0,entov-pol06,EP:rigid} 
and others \cite{ostrober2,usher:specnumber} associated 
a Calabi quasi-homomorphism 
to each of the  direct factors of $QH(X;\Lambda)$ and 
also a non-displaceable Lagrangian submanifold $L(\text{\bf u})$ to 
such Calabi quasi-homomorphism.
The non-displaceable Lagrangian submanifold associated by 
the theory of Entov-Polterovich coincides with one 
associated by Lagrangian Floer theory, as we prove in \cite{toric4}.
(Our construction and proof are very different from Entov-Polterovich's however.)

The ring isomorphism  $QH(X;\Lambda_0) \cong \text{\rm Jac}(\frak{PO})$
is generalized to the case when we consider big quantum cohomology 
in the left hand side and 
the potential function in Lagrangian Floer theory with bulk 
deformation in the right hand side.
Moreover it intertwines the pairings which is the
Poincar\'e duality pairing in the left hand side and is 
(a version of) residue pairing in the right hand side.
This implies the coincidence of two Frobenius manifold structures.
One is the Frobenius manifold structure induced by  big quantum cohomology which is due to 
Dubrovin \cite{dub}, and the other is one associated to the isolated singularity by 
Saito \cite{Sai83,Msaito}.
This isomorphism is regarded as a version of mirror symmetry 
between Toric A model and Landau-Ginzburg B model.
It is closely related to the story of Hori-Vafa \cite{hori-vafa} and also of 
Givental.
\par
The mirror symmetry between toric manifold and singularity theory have 
been studied by many mathematicians. 
Besides those already mentioned above, here is a list of some of them (this list is not exhaustive).
\par
In this paper we focus on the case in which we study the $A$ model (symplectic geometry 
and pseudo-holomorphic curve) on  toric manifold and the
$B$ model (deformation theory and complex geometry) on singularity theory side.
The papers \cite{Aur07,auroux,barani, chan-leung, cho-oh, gros, gros2, grospand, iritani1, iritani2, iritani3, 
osttyo, taka, 
woodwards}
also deal with that case.
\par
There have been more works in the other side of the story namely 
$B$ model in toric side and $A$ model in singularity theory side.
\cite{abouz1,AKO04,FLTZ,seidel:TS, Ued06,ueyama} are some of the papers on this side.
\par\medskip
\noindent{\bf Acknowledgements}: \hskip0.3cm
KF is supported partially by JSPS Grant-in-Aid for Scientific Research
No. 18104001 and Global COE Program G08,
YO by US NSF grant \# 0904197,
 HO by JSPS Grant-in-Aid
for Scientific Research No. 19340017,
KO by JSPS Grant-in-Aid for
Scientific Research No. 21244002.

\section{Preliminary}
\subsection{Notations and terminologies}\label{notterm}
The 
{\it universal Novikov ring }$\Lambda_0$ is the set of all formal sums
\begin{equation}\label{novformula}
\sum_{i=0}^{\infty} a_i T^{\lambda_i}
\end{equation}
where $a_i \in \C$ and $\lambda_i \in \R_{\ge 0}$ such that
$\lim_{i\to \infty} \lambda_i = \infty$,
and $T$ is a formal parameter.
We allow $\lambda_i \in \R$ in (\ref{novformula})
(namely negative $\lambda_i$)
to define $\Lambda$ which we call {\it universal Novikov field}. It is a field of fraction of $\Lambda_0$.
We require $\lambda_i > 0$ in  (\ref{novformula}) to define $\Lambda_+$,
which is the maximal ideal of $\Lambda_0$.

We define a valuation $\frak v_T$ on $\Lambda$ by

\begin{equation}\label{valuation}
\frak v_T\left(\sum_{i=0}^{\infty} a_i T^{\lambda_i}\right)
=
\inf\{ \lambda_i \mid a_i \ne 0\}.
\end{equation}
(Here we assume $\lambda_i \ne \lambda_j$ for $i\ne j$.)
$\Lambda, \Lambda_0, \Lambda_+$  are complete with respect to $\frak v_T$ and 
$(\Lambda_0,\Lambda_+)$ is a valuational ring with valuation $\frak v_T$.

\begin{rem}\label{noteone}
In \cite{fooo-book} a slightly different Novikov ring $\Lambda_{0,\mathrm{nov}}$ 
which contains another formal parameter $e$ is used. 
The role of $e$ is to adjust all the operators appearing in the story so that they  have 
well-defined degree. ($e$ has degree $2$.) 
In  \cite{toric1,toric2,toric3} 
and this paper we use $\Lambda_0$ since ring theoretical properties of 
$\Lambda_0$ is better than one of $\Lambda_{0,\mathrm{nov}}$.
As a drawback only the parities of various operators are well-defined.
\end{rem}
\par
Let $Z_1,\dots,Z_m$ be variables. We define the 
{\it strongly convergent power series ring} 
\begin{equation}
\Lambda_0\langle\!\langle Z_1,\dots,Z_m\rangle\!\rangle
\nonumber\end{equation}
as the set of all formal sums
$$
\sum_{k_1=0}^{\infty}\cdots
\sum_{k_m=0}^{\infty}
C_{k_1\dots k_m} Z_1^{k_1}\cdots Z_m^{k_m}
$$
where $C_{k_1\dots k_m} \in \Lambda_0$ such that
$$
\lim_{k_1+\dots+k_m\to \infty} \frak v_T(C_{k_1\dots k_m}) = +\infty.
$$
We define {\it strongly convergent Laurent power series ring} 
\begin{equation}
\Lambda_0\langle\!\langle Z_1,Z_1^{-1},\dots,Z_m,Z_m^{-1}\rangle\!\rangle
\nonumber\end{equation}
as the set of all formal sums
$$
\sum_{k_1\in \Z}\cdots
\sum_{k_m\in \Z}
C_{k_1\dots k_m} Z_1^{k_1}\cdots Z_m^{k_m}
$$
where $C_{k_1\dots k_m} \in \Lambda_0$ such that
$$
\lim_{\vert k_1\vert+\dots+\vert k_m\vert\to \infty} \frak v_T(C_{k_1\dots k_m}) = +\infty.
$$
See \cite{BGR} about those rings.
\par
We also define
\begin{equation}
\Lambda\langle\!\langle Z_1,Z_1^{-1},\dots,Z_m,Z_m^{-1}\rangle\!\rangle
=
\Lambda_0\langle\!\langle Z_1,Z_1^{-1},\dots,Z_m,Z_m^{-1}\rangle\!\rangle
\otimes_{\Lambda_0}\Lambda.
\nonumber\end{equation}
The definition of $\Lambda\langle\!\langle Z_1,\dots,Z_m\rangle\!\rangle$ is similar.
\par
Let $C$ be a graded free $\Lambda_0$ module.
The valuation $\frak v_T$ induces a norm on $C$ in an obvious way, 
by which $C$ is complete.
We define its degree shift $C[1]$ by $C[1]^k = C^{k+1}$. 
The shifted degree $\deg'$ is defined by
$$
\deg' x = \deg x -1.
$$
We
put
\begin{equation}\label{BC}
B_kC = \underbrace{C\otimes \dots \otimes C}_{\text{$k$ times}}.
\end{equation}
Let
$
\widehat BC = \widehat{\bigoplus}_{k=0}^{\infty}B_kC
$
be the completed direct
sum of them. 
Let $\frak S_k$ be the symmetric group of order $k!$.
It acts on $B_kC$ by
\begin{equation}
\sigma \cdot (x_1\otimes \dots \otimes x_k)
= (-1)^* x_{\sigma(1)} \otimes \dots \otimes x_{\sigma(k)}
\end{equation}
where $* = \sum_{i<j : \sigma(i) > \sigma(j)} \deg x_i \deg  x_j$.
We define $E_kC$ as the subset of $\frak S_k$ invariant 
elements of  $B_kC$ and put
$
\widehat EC = \widehat{\bigoplus}_{k=0}^{\infty}E_kC
$
its completed direct
sum.
\par
On $BC$ we define a coalgebra structure $\Delta : BC \to (BC)^{\otimes 2}$ by 
\begin{equation}\label{coalg}
\Delta(x_1\otimes \dots \otimes x_k)
= \sum_{i=0}^k 
(x_1\otimes \dots \otimes x_i) \otimes (x_{i+1}\otimes \dots \otimes x_k).
\end{equation}
(Note the summand in the case $i=0$ is 
$1 \otimes (x_1\otimes \dots \otimes x_k).$)
$\Delta$ is coassociative.
\par
We can define $\Delta : EC \to (EC)^{\otimes 2}$ by restriction.
It is coassociative and graded cocommutative.

We also consider a map $\Delta^{k-1}: BC \to (BC)^{\otimes k}$
$$
\Delta^{k-1} = (\Delta \otimes  \underbrace{id \otimes \dots
\otimes id}_{k-2}) \circ (\Delta \otimes  \underbrace{id \otimes
\dots \otimes id}_{k-3}) \circ \dots \circ \Delta.
$$
For an indecomposable element $\text{\bf x} \in BC$, it can be
expressed as
\begin{equation}\label{Deltasymbol}
\Delta^{k-1}(\text{\bf x}) = \sum_c \text{\bf x}^{k;1}_c \otimes
\dots \otimes \text{\bf x}^{k;k}_c
\end{equation}
where $c$ runs over some index set.
We use the same notation for $EC$.

\par
\subsection{Moduli spaces of pseudo-holomorphic disks.}
\label{subsec:moduli}
Lagrangian Floer theory is based on the moduli space of pseudo-holomorphic disks.
We recall its definition below.
See \cite{fooo-book} subsection 2.1.2 for detail.
\par
Let $X = (X,\omega)$ be a symplectic manifold and 
$L$ its Lagrangian submanifold.
We pick a compatible almost complex structure $J$ on $X$.
Let $\beta \in H_2(X,L;\Z)$.
\par
The moduli space
$\mathcal M_{k+1;\ell}^{\text{\rm main}}(\beta)$ is the
compactified moduli space of the genus zero bordered holomorphic maps
$u : (\Sigma,\partial\Sigma) \to (X,L)$,
in class $\beta  \in H_2(X,L(u);\Z)$  with $k+1$ boundary marked points and $\ell$ interior
marked points.
This means the following:
\begin{cond}\label{condphd}
\begin{enumerate}
\item
$\Sigma$ is a connected union of disks and spheres, which we call {\it
 (irreducible) components}.
We assume the intersection of two different irreducible components is 
either one point or empty. The intersection of two disk components is if nonempty, 
a boundary point of both of the components.
The intersection of a disk and a sphere component is an interior point of 
the disk component.
We assume that intersection of  three different components is empty.
We also require $\Sigma$ to be simply connected.
A point which belongs to two different components is called a 
{\it singular point}.
\item 
$u : \Sigma \to X$ is a continuous map which is $J$-holomorphic on each of the components.
$u(\partial \Sigma) \subset L$. Here $\partial \Sigma$ is the union of the boundary 
of disk components.
\item
There are $k+1$ points $z_0,\dots,z_k$ on  $\partial \Sigma$.
(We call them {\it boundary marked points}.)
They are mutually distinct. None of them are singular point.
We require the order of $k+1$ boundary marked points to respect
the counter-clockwise cyclic order of the boundary of
$\Sigma$.
\item
There are $\ell$ points $z^+_1,\dots,z^+_{\ell}$ on  $\Sigma \setminus \partial\Sigma$.
(We call them {\it interior marked points}.)
They are mutually distinct. None of them are singular point.
\item
For each of the components $\Sigma_a$ of $\Sigma$, one of the 
following conditions hold : 
\begin{enumerate}
\item $u$ is not a constant map on $\Sigma_a$.
\item $\Sigma_a$ is a disk component. 
We have $2n_{\text{\rm int}} + n_{\text{\rm bdry}} \ge 3$.
Here $n_{\text{\rm int}}$ is the sum of the numbers of the interior marked 
points and the interior singular points.
$n_{\text{\rm bdry}}$ is the sum of the numbers of the boundary marked 
points and the boundary singular points.
\item $\Sigma_a$ is a sphere component. The sum of the numbers of 
the marked points and the singular points on $\Sigma_a$ is $\ge 3$.
\end{enumerate}
\end{enumerate}
\end{cond}
The condition 5) is called the {\it stability condition}. It is equivalent to the 
condition that the automorphism group of this element is a finite group.
\par
In case $\ell =0$ we write $\mathcal M_{k+1}^{\text{\rm main}}(\beta)$
in place of $\mathcal M_{k+1;0}^{\text{\rm main}}(\beta)$.
\par
We define the evaluation maps
\begin{equation}\label{evaluationmap}
\text{\rm ev}: \mathcal M_{k+1;\ell}^{\text{\rm main}}(\beta)
\to X^{\ell} \times L^{k+1}
\end{equation}
where we put
$$
\text{\rm ev} = (\text{\rm ev}^+,\text{\rm ev})
= (\text{\rm ev}^+_1,\dots,\text{\rm ev}^+_{\ell};\text{\rm ev}_0,\dots,\text{\rm ev}_k),
$$
as follows:
$$
{\rm ev}_i(\Sigma,u) = u(z_i)
$$
where $z_i$ is the $i$-th boundary marked point as in 3).
$$
{\rm ev}^+_i(\Sigma,u) = u(z^+_i)
$$
where $z_i^+$ is the interior marked point as in 4).
\par
Our moduli spaces $\mathcal M_{k+1;\ell}^{\text{\rm main}}(\beta)$
have Kuranishi structure in the sense of \cite{FO} section 5 and \cite{fooo-book} section A1.
\par
Its boundary is described by using fiber product. 
For example, in case $\ell = 0$ we have the equality
\begin{equation}\label{descboundary}
\partial\mathcal M_{k+1}^{\text{\rm main}}(\beta)
= \bigcup_{k_1+k_2=k+1}\bigcup_{\beta_1+\beta_2=\beta}\bigcup_{i=1}^{k_2}
\mathcal M_{k_1+1}^{\text{\rm main}}(\beta_1) \,\,{}_{{\rm ev}_0}\times_{{\rm ev}_i}\,\,
\mathcal M_{k_2+1}^{\text{\rm main}}(\beta_2).
\end{equation}
as spaces with Kuranishi structures. (\cite{fooo-book} subsection 7.1.1.)
\section{A quick review of Lagrangian Floer theory}
\label{Floertheory}

Let $X = (X,\omega)$ be a symplectic manifold and 
$L$ its Lagrangian submanifold. We assume $L$ is oriented and 
 spin. (Actually relative spinness in the sense of \cite{fooo-book}
Definition 1.6 is enough.)
 
 In \cite{fooo-book} Theorem A, we defined a structure of gapped unital filtered $A_{\infty}$ 
 algebra $\{\frak m_k \mid k=0,1,\dots\}$ on the cohomology group $H(L;\Lambda_0)$ of $L$ with 
$\Lambda_0$ coefficient. 

Namely there exists a sequence of operators
$$
\frak m_k : B_kH(L;\Lambda_0)[1] \to H(L;\Lambda_0)[1]
$$
of odd degree\footnote{See Remark \ref{noteone}. Only the parity of the degree is well-defined 
in Floer cohomology over $\Lambda_0$.} (for $k\ge 0$).
\begin{Theorem}\label{ainfalgdef}
\begin{enumerate}
\item 
\begin{equation}\label{Ainfty}
\sum_{k_1+k_2=k+1}\sum_{i=1}^{k_2}
(-1)^{*}\frak m_{k_2}(x_1,\ldots,\frak m_{k_1}(x_i,\ldots,x_{i+k_1-1}),\ldots,x_k)
= 0,
\end{equation}
where $* =\deg' x_1+\dots+\deg' x_{i-1}$.
\item
$\frak m_0(1) \equiv 0 \mod \Lambda_+$.
\item {\rm (Unitality)} $\text{\bf e} = \text{\rm PD}[L] \in H^0(L;\Lambda_0)$ is the strict unit. 
(Here $\text{\rm PD} : H_k(L) \to H^{n-k}(L)$ is the Poincar\'e duality.) Namely
$$\mathfrak m_{k+1}(x_1, \cdots,{\bf e}, \cdots, x_k) = 0  \qquad \text{for $k \geq 2$ or $k=0$}.
$$
and
$$
\mathfrak m_2({\bf e}, x) = (-1)^{\deg x}\mathfrak m_2(x,{\bf e}) = x.
$$
\item {\rm ($G$-gappedness)}
There exists an additive discrete submonoid $G = \{\lambda_i \mid i=0,1,2,\dots\}$ 
$(\lambda_0= 0 < \lambda_1< \lambda_2 < \cdots, \lim_{i\to\infty}\lambda_i = \infty)$
of $\R_{\ge 0}$  
such that our structure is $G$-gapped. Namely $\frak m_k$ is written as 
$$
\frak m_k = \sum_{i=0}^{\infty} T^{\lambda_i} \frak m_{k,i} 
$$
where $
\frak m_{k,i} : B_kH(L;\C)[1] \to H(L;\C)[1]
$
is $\C$-linear.
\item
$\frak m_{2,0}$ coincides with cup product up to sign.
\end{enumerate}
\end{Theorem}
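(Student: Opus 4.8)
\medskip
\noindent\emph{Proof proposal.}\ The plan is to build the operators $\frak m_k$ geometrically from the moduli spaces $\mathcal M_{k+1}^{\text{\rm main}}(\beta)$ of subsection~\ref{subsec:moduli}, first on a cochain-level model of $L$ and then to transport the resulting filtered $A_\infty$ structure to $H(L;\Lambda_0)$ by homological perturbation. Concretely, for each $k\ge 0$ and $\beta\in H_2(X,L;\Z)$ one wants an operator $\frak m_{k,\beta}$ given by the correspondence attached to $\mathcal M_{k+1}^{\text{\rm main}}(\beta)$: pull cochains on $L$ back along the boundary evaluations $\text{\rm ev}_1,\dots,\text{\rm ev}_k$, restrict to the virtual fundamental chain of $\mathcal M_{k+1}^{\text{\rm main}}(\beta)$, and push forward along $\text{\rm ev}_0$; then set $\frak m_k=\sum_{\beta}T^{\omega\cap\beta}\frak m_{k,\beta}$. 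I would use the de Rham model of \cite{toric1}, where $\frak m_{k,\beta}$ acts on $\Omega(L)\widehat{\otimes}\Lambda_0$ by integration along the fibre of $\text{\rm ev}_0$; orientations of the Kuranishi structures, hence signs, are fixed by the relative spin structure on $L$. (Working over $\Lambda_0$ rather than $\Lambda_{0,\mathrm{nov}}$, only the parity of $\deg'$ is meaningful, cf.\ Remark~\ref{noteone}, so each $\frak m_k$ is merely of odd parity.)

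The technical core is to make sense of this push--pull when $\mathcal M_{k+1}^{\text{\rm main}}(\beta)$ carries only a Kuranishi structure. One must equip the whole collection $\{\mathcal M_{k+1}^{\text{\rm main}}(\beta)\}_{k,\beta}$ with a system of CF-perturbations (continuous-family multisections) that is simultaneously (i) transverse, so $\text{\rm ev}_0$ is a submersion on the perturbed zero locus and fibre integration is defined, and (ii) compatible with the boundary description (\ref{descboundary}): the perturbation induced on $\partial\mathcal M_{k+1}^{\text{\rm main}}(\beta)$ must coincide with the fibre product of the already-chosen perturbations on the lower strata. Given such a system, Stokes' theorem for the perturbed moduli spaces converts the codimension-one boundary into exactly the terms of (\ref{Ainfty}), and the $A_\infty$ relation (1) holds on the cochain level. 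The compatible system is produced by induction on the energy $\omega\cap\beta$ — which, by Gromov compactness, ranges over a discrete submonoid $G\subset\R_{\ge0}$, giving (4) — and, for fixed energy, on $k$: at each step the boundary is already consistently perturbed by the inductive hypothesis and one extends over the interior. Executing this induction while controlling orientations, the cyclic order of the boundary marked points, and the sign bookkeeping $(-1)^*$ is, in my view, the main obstacle; the rest is comparatively formal.

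With the cochain-level filtered $A_\infty$ algebra in hand, (2) and (5) are read off geometrically. A constant disk with a single boundary marked point violates the stability condition~5) in Condition~\ref{condphd}, so $\mathcal M_1^{\text{\rm main}}(0)=\emptyset$, $\frak m_0$ has no $T^0$ term and $\frak m_0(1)\in\Lambda_+\cdot H(L;\Lambda_0)$, which is (2); and $\mathcal M_3^{\text{\rm main}}(0)$ is a single point whose correspondence is, up to sign, the wedge product, so $\frak m_{2,0}$ induces the cup product, which is (5). Passing to $H(L;\Lambda_0)$ is then the canonical-model (homological perturbation) construction of \cite{fooo-book}: choose a $\Lambda_0$-linear deformation retraction of $(\Omega(L)\widehat{\otimes}\Lambda_0,\frak m_{1,0})$ onto $H(L;\Lambda_0)$ — e.g.\ a harmonic projection — and push the $A_\infty$ structure across it. The output is $G$-gapped over the same $G$ and agrees with the classical structure in energy $0$, so (1), (4), (5) descend.

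It remains to arrange strict unitality (3). Geometrically $\text{\bf e}=\text{\rm PD}[L]$, represented in the de Rham model by the constant function $1$, is only a homotopy unit; but for $k\ge 2$ or $k=0$ and any $\beta$, inserting $1$ kills $\frak m_{k+1,\beta}$, because the forgetful map $\mathcal M_{k+2}^{\text{\rm main}}(\beta)\to\mathcal M_{k+1}^{\text{\rm main}}(\beta)$ that drops the marked point carrying $1$ has positive-dimensional fibres, over which the pulled-back constant form integrates to zero (when the target is stable; the remaining classical cases are immediate: $\frak m_{1,0}(1)=0$ as the differential of a constant, and $\frak m_{k+1,0}=0$ for $k\ge 2$), while $\frak m_2(\text{\bf e},x)=\frak m_{2,0}(\text{\bf e},x)=x$ up to sign since $\mathcal M_2^{\text{\rm main}}(0)$ is unstable and the forgetful argument handles $\beta\ne 0$. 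Choosing the deformation retraction above compatibly with $1$ (the harmonic projection fixes $1$ and the homotopy can be taken to vanish on it) makes strict unitality survive passage to the canonical model, giving (3); alternatively one invokes the purely algebraic fact from \cite{fooo-book} that every homotopy-unital filtered $A_\infty$ algebra is homotopy equivalent to a strictly unital one. This completes the plan; its delicate points, concentrated in the second paragraph, are exactly those treated at length in \cite{fooo-book} and specialised to the toric setting in \cite{toric1}.
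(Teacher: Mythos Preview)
Your proposal is correct and follows essentially the same approach as the paper: define $\frak m_{k,\beta}$ on the de Rham complex by the push--pull $\text{\rm ev}_{0!}(\text{\rm ev}_1,\dots,\text{\rm ev}_k)^*$ using a continuous family of multisections to make $\text{\rm ev}_0$ a submersion, derive the $A_\infty$ relation (1) from the boundary formula (\ref{descboundary}), obtain $G$-gappedness (4) from Gromov compactness, and then pass to $H(L;\Lambda_0)$ by the purely algebraic canonical-model construction (\cite{fooo-book} Theorem 5.4.2). The paper's sketch is terser---it does not spell out the forgetful-map argument for unitality or the instability reasoning behind (2) and (5) that you give---but the method is the same.
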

The triple $(C,\{\frak m_k\},\text{\bf e})$ that satisfies 1)-4) of Theorem \ref{ainfalgdef} 
(with $H(L;\Lambda_0)$ being replaced by $C$)
is called a {\it $G$-gapped unital filtered $A_{\infty}$ algebra.}
\par
The operator $\frak m_k$ is constructed by using 
the moduli spaces $\mathcal M_{k+1}^{\text{\rm main}}(\beta)$ as follows.
(Here we use de Rham cohomology, following \cite{toric1,toric2, toric3,f090,f091}.
In \cite{fooo-book} singular homology is used. Morse homology 
version is in \cite{fooo-can}.)
\par
Let $h_1,\dots,h_k$ be differential forms on $L$.
We define a differential form $\frak m_{k,\beta}(h_1,\dots,h_k)$ on $L$ 
as follows:
\begin{equation}\label{defmkbyint}
\frak m_{k,\beta}(h_1,\dots,h_k)
= {\rm ev}_{0 !} ({\rm ev}_1,\dots,{\rm ev}_k)^*(h_1 \times \dots \times h_k)
\end{equation}
for $(k,\beta) \ne (1,0)$.
We use evaluation maps (\ref{evaluationmap}) in (\ref{defmkbyint}).
We put 
$$\frak m_{1,0}(h) = (-1)^{n+\deg h + 1} dh,
$$ 
where $d$ is the de Rham differential. (See \cite{fooo-book} Remark 3.5.8.)
\par
Here we regard $h_1 \times \dots \times h_k$ as a differential 
form on $L^k$. Then the pull back $({\rm ev}_1,\dots,{\rm ev}_k)^*$ 
defines a differential form on $\mathcal M_{k+1}^{\rm{main}}(\beta)$.
The symbol ${\rm ev}_{0 !}$ denotes the integration along the fiber associated to the map 
${\rm ev}_0 : \mathcal M_{k+1}^{\rm{main}}(\beta) \to L$.
We remark that $\mathcal M_{k+1}^{\rm{main}}(\beta)$ 
itself is not necessarily transversal. So it may have 
wrong dimension. However we can use general theory of 
Kuranishi structure to obtain a multisection $\frak s$ (\cite{FO} section 5, \cite{fooo-book} section A1) 
so that the perturbed moduli space 
$ \mathcal M_{k+1}^{\rm{main}}(\beta)^{\frak s}$ 
(that is the zero point set of the multisection $\frak s$) has a 
virtual fundamental chain (over $\Q$). 
However still after perturbation, the map 
${\rm ev}_0 : \mathcal M_{k+1}^{\rm{main}}(\beta)^{\frak s} \to L$ 
may not be a submersion on the perturbed moduli space $ \mathcal M_{k+1}^{\rm{main}}(\beta)^{\frak s}$ .
So we take a continuous family of perturbations written as 
$\{{\frak s}_w\}_{w\in W}$ parametrized by a certain smooth manifold 
$W$ so that 
$$
{\rm ev}^W_0 : \bigcup_{w\in W}\left(\mathcal M_{k+1}^{\rm{main}}(\beta)^{\frak s_w} 
\times \{w\} \right)\to L
$$ 
is a submersion.\footnote{Actually the parameter space $W$ is defined only locally. 
See \cite{toric2} section 12.} So we can justify (\ref{defmkbyint}) as 
$$
\frak m_{k,\beta}(h_1,\dots,h_k)
= {\rm ev}^W_{0 !}\left(({\rm ev}_1,\dots,{\rm ev}_k)^*(h_1 \times \dots \times h_k) \wedge \omega_W\right).
$$
Here $\omega_W$ is a smooth form of degree $\dim W$ on $W$ that 
has compact support and satisfies
$\int_W \omega_W =1$. 
We pull it back to $\bigcup_{w\in W}\left(\mathcal M_{k+1}^{\rm{main}}(\beta)^{\frak s_w} 
\times \{w\} \right)$ in an obvious way.
The fiberwise evaluation map
${\rm ev}^W_0$ is ${\rm ev}_0$ on $\mathcal M_{k+1}^{\rm{main}}(\beta)^{\frak s_w}\times \{w\}$.
\par
We omit the detail of this construction and refer \cite{toric2} section 12 or 
\cite{f090} section 13.
In the toric case, which is the case of our main interest in this article, 
this construction can be simplified in most of the cases. Namely 
$
{\rm ev}_0 :
\mathcal M_{k+1}^{\rm{main}}(\beta)^{\frak s
} \to L
$
itself can be taken to be a submersion (without using continuous family).
See Section \ref{calcu}.
\par
We now put
$$
\frak m_k = \sum_{\beta\in H_2(X,L;\Z)} T^{(\beta \cap [\omega])/2\pi}\frak m_{k,\beta}.
$$
\par
We can use various properties of the moduli space to check Theorem \ref{ainfalgdef}.
In fact, for example, Theorem \ref{ainfalgdef} 1) is a consequece of Formula (\ref{descboundary}) and 
Theorem \ref{ainfalgdef} 4) is a consequence of 
Gromov compactness.
\par
Thus we obtain a structure of $G$-gapped unital filtered $A_{\infty}$ algebra 
on de Rham complex of $L$. Then it induces one on 
{\it cohomology} $H(X,L;\Lambda_0)$, by a purely algebraic result. 
(\cite{fooo-book} Theorem 5.4.2.)
\par
The filtered $A_{\infty}$ algebra $(H(X,L;\Lambda_0),\{\frak m_k \mid k=0,1,\dots\})$
is independent of the choices (such as compatible almost complex structures and 
perturbations etc.)  up to an isomorphism of  a gapped 
unital filtered $A_{\infty}$ algebra, (that is gapped unital filtered $A_{\infty}$ homomorphism which has 
an inverse).
We omit the precise definition of this notion and refer readers to \cite{fooo-book} Definition 3.2.29 and Proposition 5.4.5.
\par
Let $(C,\{\frak m_k \mid k=0,1,\dots\},\text{\bf e})$ be a 
unital filtered $A_{\infty}$ algebra. We define its 
{\it weak Maurer-Cartan scheme} $\widehat{\mathcal M}_{\text{\rm weak}}(C)$ as the set of 
solutions of the equation
\begin{equation}\label{WKMCeq}
\sum_{k=0}^{\infty}\frak m_k(b,\cdots,b) \equiv  0 \mod \Lambda_0\text{\bf e},
\end{equation}
for $b \in C^{odd}$, with $b \equiv 0 \mod \Lambda_+$.
(Here and hereafter $\text{\bf e}$ denotes the unit.)
\par
For $b \in C^{odd}$, with $b \equiv 0 \mod \Lambda_+$, we define $\frak m^b_k$ by
$$
\frak m^b_k(x_1,\dots,x_k)
= \sum_{m_0=0}^{\infty}\cdots\sum_{m_k=0}^{\infty}
\frak m_k(\underbrace{b,\dots,b}_{m_0},x_1,\underbrace{b,\dots,b}_{m_1},\dots,
x_k,\underbrace{b,\dots,b}_{m_k}).
$$
The right hand side converges in $\frak v_T$ topology.
We can show that 
$(C,\{\frak m_k^b \mid k=0,1,\dots\},\text{\bf e})$ is a filtered $A_{\infty}$
algebra.
\par
In our geometric situation, where $C = H(L;\Lambda_0)$, we can 
remove the assumption  $b \equiv 0 \mod \Lambda_+$ 
using a trick due to Cho \cite{Cho} and 
can define $\frak m^b_k$ for any $b \in H^{odd}(L;\Lambda_0)$.
(See \cite{toric1} section 12 for toric case and 
\cite{f091} section 5 for the general case.)
Moreover the left hand side of (\ref{WKMCeq}) makes sense for any $b \in H^{odd}(L;\Lambda_0)$.
In case we need to distinguish it from the case $b \in H^{odd}(L;\Lambda_+)$, 
we denote the former by $\widehat{\mathcal M}_{\text{\rm weak}}(H(L;\Lambda_0);\Lambda_0)$.
\par
It is easy to see that
$\frak m_0^b(1)$ coincides with the left hand side of  (\ref{WKMCeq}).
Therefore if $b \in \widehat{\mathcal M}_{\text{\rm weak}}(C)$ then 
$\frak m_0^b(1) = c\text{\bf e}$ for some $c \in \Lambda_+$.
It follows that
$$
(\frak m_1^b\circ \frak m_1^b)(x) 
= -c\left(\frak m_2^b(\text{\bf e},x) + (-1)^{\deg'x}\frak m_2^b(x,\text{\bf e})\right) = 0.
$$
Here we use Properties \ref{ainfalgdef} 1) in the first equality and 
Properties \ref{ainfalgdef} 3) in the second equality.
Now we define
\begin{Definition}
Let $b \in H^{odd}(L;\Lambda_0)$. We define Floer  cohomology by:
$$
HF((L,b),(L,b);\Lambda_0) = \frac{\text{\rm Ker}(\frak m_1^b)}{\text{\rm Im}(\frak m_1^b)}.
$$
\end{Definition}
$HF((L,b),(L,b);\Lambda)$ is defined by taking $\otimes_{\Lambda_0}\Lambda$.
\par
It is proved in \cite{fooo-book} Proposition 3.7.75 and the discussion right after that (general case, singular homology version) 
\cite{toric2} section 8 (toric case, de Rham homology version) that 
$HF((L,b),(L,b);\Lambda) \ne 0$ implies that $L$ is Hamiltonian non-displaceable.\footnote{We need to take 
$\Lambda$ (not $\Lambda_0$) for the coefficient ring for this statement. 
Actually $HF((L,b),(L,b);\Lambda_0)= 0$ never occurs when Floer cohomology is defined.}
Namely for any Hamiltonian diffeomorphism $F : X \to X$ we have
$F(L) \cap L \ne \emptyset$.
\par
Let $b \in \widehat{\mathcal M}_{\text{\rm weak}}(C)$.
Then there exists $\frak{PO}(b) \in \Lambda_+$ such that
$$
\sum_{k=0}^{\infty}\frak m_k(b,\dots,b) = \frak{PO}(b)\text{\bf e}.
$$
\begin{Definition}\label{Potentialfunc}
We call 
$\frak{PO} : \widehat{\mathcal M}_{\text{\rm weak}}(C) \to \Lambda_+$, 
the {\it potential function}.
\end{Definition}
In the geometric situation we have
$\frak{PO} : \widehat{\mathcal M}_{\text{\rm weak}}(H(L;\Lambda_0);\Lambda_0) \to \Lambda_+$. 
\section{A quick review of toric manifold}
\label{toric}

In this section we review a very small portion of the theory of toric variety.
We explain only the points we use in this article. See for example \cite{fulton}
for an account of toric variety.
\par
Let $(X,\omega,J)$ be a K\"ahler manifold, where $J$ is its complex structure 
and $\omega$ is its K\"ahler form. 
Let $n$ be the complex dimension of $X$. We assume $n$ dimensional real torus $T^n
= (S^1)^n$ acts effectively on $X$ such that $J$ and $\omega$ are preserved by the 
action. We call such $(X,\omega,J)$ a {\it K\"ahler toric manifold}
if the $T^n$ action has a moment map in the sense we describe below.
Hereafter we simply say $(X,\omega,J)$ (or $X$) is a toric manifold.
\par
Let $(X,\omega,J)$ be as above. We say a map
$\pi = (\pi_1,\dots,\pi_n) : X \to \R^n$ is a {\it moment map} if the following holds.
We consider the $i$-th factor $S^1_i$ of $T^n$. (Here $i=1,\dots,n$.)
Then $\pi_i : X\to \R$ is the moment map of the action of  $S^1_i$. 
In other words, we have the following identity  of $\pi_i$
$$
d\pi_i(X) = \omega(X,\tilde{\frak t}),
$$ 
where $\tilde{\frak t}$ is the Killing vector field
associated to the action of the circle
 $S^1_i$ on $X$.
\par
Let $\text{\bf u} \in \text{Int}P$.
Then the inverse image $\pi^{-1}(\text{\bf u})$ is a 
Lagrangian submanifold which is an orbit of the $T^n$ action.
We put
\begin{equation}\label{L(u)def}
L(\text{\bf u}) = \pi^{-1}(\text{\bf u}).
\end{equation}
This is a Lagrangian torus.
The main purpose of this article is to study 
Lagrangian Floer cohomology for such $L(\text{\bf u})$.
\par
\par
It is well-known that $P = \pi(X)$ is a convex polytope.
We can find a finitely many affine functions $\ell_j: \R^n \to \R$ ($j=1,\dots,m$) 
such that
\begin{equation}
P 
= \{ \text{\bf u} \in \R^n \mid \ell_j(\text{\bf u}) \ge 0, \quad \forall j=1,\dots,m\}.
\end{equation}
We put $\partial_j P = \{\text{\bf u} \in P \mid \ell_j(\text{\bf u})= 0\} $ 
and $D_j = \pi^{-1}(\partial_j P)$. 
($\dim_{\R} \partial_j P = n-1$.)
$D_1 \cup \dots \cup D_m$ is called the {\it toric divisor}.
\par
Moreover we may choose $\ell_j$ so that the following holds.
\begin{cond}\label{condellj}
\begin{enumerate}
\item
We put
$$
d\ell_j = \vec v_j = (v_{j,1},\dots,v_{j,n}) \in \R^n.
$$
Then $v_{j,i} \in \Z$.
\item
Let $p$ be a vertex of $P$. Then the number of faces $\partial_j P$ which contain 
$p$ is $n$. Let $\partial_{j_1} P,\dots, \partial_{j_n} P $ be those faces. Then
$\vec v_{j_1},\dots, \vec v_{j_n}$ (which is contained in $\Z^n$ by item 1)) is 
a basis of $\Z^n$.
\end{enumerate}
\end{cond}
\par
The affine function $\ell_j$ has the following geometric interpretation.
Let $\text{\bf u} \in \text{\rm Int}P$.
There exists $m$ elements 
$\beta_j \in H_2(X,L(\text{\bf u});\Z)$ such that 
\begin{equation}\label{charbetaj}
\beta_j \cap D_{j'} =
\begin{cases}
1  & j = j' \\
0  & j \ne j'.
\end{cases}
\end{equation}
Then we have
\begin{equation}\label{elljandarea}
2\pi\ell_j(\text{\bf u}) = \int_{\beta_j} \omega.
\end{equation}
The existence of such $\ell_j$ and the property above is proved in 
\cite{Gu} Theorem 4.5.
(See \cite{toric1} section 2 also.)
\begin{Example}\label{Pn1}
We consider the complex projective space $\C P^n$.
Using homogeneous coordinate $[x_0:x_1:\dots:x_n]$ we define 
$T^n$ action by
$$
(t_1,\dots,t_n)\cdot [x_0:\dots:x_n]
= [x_0:e^{2\pi\sqrt{-1}t_1}x_1:\dots:e^{2\pi\sqrt{-1}t_n}x_n].
$$
(Here we identify $\R/\Z \cong S^1$.)
The moment map $\pi = (\pi_1,\dots,\pi_n)$ is given by 
$$
\pi_i([x_0:\dots:x_n])
= \frac{\vert x_i\vert^2}{\vert x_0\vert^2 + 
\cdots + \vert x_n\vert^2}.
$$
Its moment polytope $P_0$ is a simplex that is:
$$
P_0 = \{(u_1,\dots,u_n) \mid 0 \le u_i, i=1,\dots,n,\,\,\, 
\sum_{i=0}^n u_i \le 1\}.
$$
We have
\begin{equation}\label{ellforPn}
\ell_i(u_1,\dots,u_n) = 
\begin{cases}
u_i  & i\ne 0 \\
1 - \sum_{j=0}^n u_j &i=0.
\end{cases}
\end{equation}
\end{Example}
\begin{Example}\label{P2blowupone1}
We consider $\C P^2$ as above. 
For $1>\alpha >0$, let us consider 
$$P(\alpha) 
= P_0 \setminus \{(u_1,u_2) \in P_0 \mid u_2 > 1-\alpha\}
= \{(u_1,u_2) \in P_0 \mid u_2 \le 1-\alpha\}.
$$
The inverse image $\pi^{-1}(\{(u_1,u_2) \in P_0 \mid u_2 > 1-\alpha\})$
is a ball of radius $\sqrt{\alpha/2}$ centered at $[0:1:0]$.
The boundary of 
$\pi^{-1}(P(\alpha))$ has an induced contact form which is identified with the 
standard contact form of $S^3$. We identify two points on 
$\partial\pi^{-1}(P(\alpha))$ if they lie on the same orbit of Reeb flow.
After this identification we obtain from $\pi^{-1}(P(\alpha))$ 
a symplectic manifold which we write 
$X(\alpha) = \C P^2\# \overline{\C P}^2(\alpha)$.
\par
It is well-known (see for example \cite{msintroduction} section 6.2) and can be proved from the above 
description that $X(\alpha)$ is a 
blow up of $\C P^2$ with K\"ahler form $\omega$ such that 
the symplectic area of the exceptional divisor is $\alpha$.
\par
The $T^2$ action on $\C P^2$ induces a $T^2$ action on 
 $X(\alpha)$ so that it becomes a 
toric manifold. The moment polytope is $P(\alpha)$.
\par
There are $4$ faces of $P(\alpha)$ and 
4 affine functions $\ell_i$ ($i=0,1,2,3$). Three of them are $\ell_0$, 
$\ell_1$, $\ell_2$ as in $(\ref{ellforPn})$. The fourth one is given by
\begin{equation}\label{ell3}
\ell_3(u_1,u_2) = 1 - \alpha - u_2.
\end{equation}
\end{Example}
\begin{Example}\label{P2blowuptwo1}
We can blow up again and may regard a two points blow up of 
$\C P^2$ as a toric manifold. For $\alpha,\alpha' >0$, 
with $\alpha +\alpha' <1$ we consider the polytope
$$
P(\alpha,\alpha') 
= \{(u_1,u_2) \in P_0 \mid u_2 \le 1-\alpha, \,\, 
u_1+u_2 \ge \alpha'\}.
$$
There exists a toric manifold $X(\alpha,\alpha')$ that is a 
two points blow up of $\C P^2$ and whose 
moment polytope is $P(\alpha,\alpha')$.
\par
$P(\alpha,\alpha') $ has 5 faces. There are 5 affine functions 
$\ell_0,\dots,\ell_4$ associated to each of the faces. 
$\ell_0,\ell_1,\ell_2$ are as in $(\ref{ellforPn})$ and 
$\ell_3$ is as in $(\ref{ell3})$. $\ell_4$ is given by
\begin{equation}\label{ell4}
\ell_4(u_1,u_2) = u_1 + u_2 -\alpha'.
\end{equation}
\end{Example}
\par\bigskip
\section{Floer cohomology and potential function of the $T^n$ orbits}
\label{calcu}

In this section we give a description of Floer cohomology of the $T^n$ 
orbit $L(\text{\bf u})$ of the toric manifold $X$. Here $\text{\bf u} \in 
\text{Int} P$ and $P$ is the moment polytope of $X$.

In this toric case the calculation of the Floer cohomology becomes significantly simpler.
This is because in this case the calculation of Floer cohomology 
is reduced to the calculation of the potential function.
Moreover the leading order term of the potential function is calculated by 
the work of Cho-Oh \cite{cho-oh}.
We will explain those points in this section.

We first fix a basis of $H^1(L(\text{\bf u});\Z)$ as follows.
In Section \ref{toric} we fix a splitting $T^n = (S^1)^n$ and the 
associated coordinate $(t_1,\dots,t_n) \in (\R/\Z)^n$.
Let $\text{\bf e}_i \in H^1(T^n;\Z)$ be the element 
represented by $dt_i$ in de Rham cohomology, where 
$t_i$ is the coordinate of the $i$-th factor of $(S^1)^n$.
(Here we identify $S^1$ with $\R/\Z$.)
The elements $\text{\bf e}_i$, $i=1,\dots,n$ form a basis of
$H^1(T^n;\Z) \cong \Z^n$.
Since the $T^n$ action on $L(\text{\bf u})$ is free and transitive, 
we may identify $H^1(T^n;\Z) = H^1(L(\text{\bf u});\Z)$.
Hence we have a basis $\text{\bf e}_i$, $i=1,\dots,n$ 
of $H^1(L(\text{\bf u});\Z)$.

Let $b \in H^1(L(\text{\bf u});\Lambda_0)$.
We can write 
$b = \sum_{i=1}^n x_i \text{\bf e}_i$.
Hence we take $(x_1,\dots,x_n)$ as a coordinate of 
$H^1(L(\text{\bf u});\Lambda_0)$.
We also put $y^{\text{\bf u}}_i = e^{x_i}$. 
\begin{Remark}
The expression $e^{x_i}$ determines an element of $\Lambda_0$ 
in case $x_i \in \Lambda_0$ as follows.
We write $x_i = x_{i,0} + x_{i,+}$ where 
$x_{i,0} \in \C$ and $x_{i,+} \in \Lambda_+$.
Then we put
$$
y^{\text{\bf u}}_i = e^{x_i} = e^{x_{i,0}} \sum_{k=0}^{\infty} x_{i,+} ^k/k!.
$$
Note $e^{x_{i,0}} \in \C$ is defined as usual. 
The sum $\sum_{k=0}^{\infty} x_{i,+} ^k/k!$ converges 
in $\frak v_T$-topology.
\end{Remark}
Now we consider a toric manifold $X$ with its moment polytope $P$. 
We consider affine functions $\ell_j$ ($j=1,\dots,m$).
We define $v_{j,i} \in \Z$ as in Properties \ref{condellj} 1).
We define 
\begin{equation}\label{defzj}
z_j = T^{\ell_j(\text{\bf u})}(y_1^{\text{\bf u}})^{v_{j,1}}\dots (y_n^{\text{\bf u}})^{v_{j,n}}.
\end{equation}
\begin{Theorem}\label{calcPO}
\begin{enumerate}
\item
$H^1(L(\text{\bf u});\Lambda_0)$ is contained in 
$\widehat{\mathcal M}_{\text{\rm weak}}(H(L;\Lambda_0);\Lambda_0)$.
\item
Let $b = \sum x_i\text{\bf x}_i \in H^1(L(\text{\bf u});\Lambda_0)$.
Then we have
\begin{equation}\label{POmainformula}
\frak{PO}(b) 
= z_1+\dots+z_m + \sum_{k=1}^{N} T^{\rho_k}P_k(z_1,\dots,z_m).
\end{equation}
Here $N \in \Z_{\ge 0}$ or $N=\infty$. The numbers $\rho_k >0$ are 
positive and real.
In case $N=\infty$, the sequence of numbers $\rho_k$  goes to $\infty$ as $k$ goes to $\infty$.
$P_k(z_1,\dots,z_m)$ are monomials of $z_1,\dots,z_m$ of degree $\ge 2$ with 
$\Lambda_0$ coefficient.
We remark that $z_j$ is defined from $y_i^{\text{\bf u}}
= e^{x_j}$ by 
$(\ref{defzj})$.
\item If $X$ is Fano then $P_k$ are all zero.
\item The monomials $P_k$ and the numbers 
$\rho_k$ are independent of $\text{\bf u}$
and depends only on $X$.
\end{enumerate}
\end{Theorem}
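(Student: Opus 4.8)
The plan is to reduce the whole statement to the analysis of Cho--Oh \cite{cho-oh} of the moduli spaces $\mathcal M_{k+1}^{\text{\rm main}}(\beta)$ of holomorphic discs with boundary on $L(\text{\bf u})$. First set up the bookkeeping: by $T^n$-equivariance a class $\beta\in H_2(X,L(\text{\bf u});\Z)$ is determined by the intersection numbers $k_j(\beta):=\beta\cap D_j\in\Z_{\ge 0}$ together with the class of its interior (sphere) part $\alpha_\beta\in H_2(X;\Z)$, and one has $\partial\beta=\sum_j k_j(\beta)\vec v_j\in H_1(L(\text{\bf u});\Z)\cong\Z^n$, $\mu(\beta)=2\sum_j k_j(\beta)+2c_1(\alpha_\beta)$, and, by $(\ref{elljandarea})$, $\beta\cap[\omega]/2\pi=\sum_j k_j(\beta)\,\ell_j(\text{\bf u})+\omega(\alpha_\beta)/2\pi$. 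The basic classes $\beta_j$ of $(\ref{charbetaj})$ have $k_{j'}(\beta_j)=\delta_{jj'}$ and $\alpha_{\beta_j}=0$; the inputs I will use are that the only smooth holomorphic discs bounding $L(\text{\bf u})$ have Maslov index $2$ and are precisely the discs in the classes $\beta_j$, and that $\mathcal M_1^{\text{\rm main}}(\beta_j)$ is $T^n$-equivariantly diffeomorphic to $L(\text{\bf u})$ with ${\rm ev}_0$ the identity — hence already transverse, so that $\frak m_{0,\beta_j}(1)={\rm ev}_{0!}(1)=\text{\bf e}$.

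Next I would run the divisor computation. Represent $b=\sum_i x_i\text{\bf e}_i$ by a $T^n$-invariant closed $1$-form, using the device of \cite{Cho} to permit arbitrary $b\in H^1(L(\text{\bf u});\Lambda_0)$. Because $b$ is closed, forgetting a boundary marked point carrying an insertion of $b$ multiplies the contribution of a class $\beta$ by the period $\sum_i x_i(\partial\beta)_i$; summing over $k$ and over all insertion patterns gives
\[
\sum_{k=0}^{\infty}\frak m_k(b,\dots,b)=\sum_\beta T^{\beta\cap[\omega]/2\pi}\,\exp\!\Big(\textstyle\sum_i x_i(\partial\beta)_i\Big)\,\frak m_{0,\beta}(1).
\]
By the index formula $\frak m_{0,\beta}(1)={\rm ev}_{0!}(1)$ is a differential form on $L(\text{\bf u})$ of degree $2-\mu(\beta)$, so classes with $\mu(\beta)>2$ drop out, those with $\mu(\beta)<2$ are shown to drop out by the nodal-stratum analysis below, and those with $\mu(\beta)=2$ give elements of $H^0(L(\text{\bf u});\Lambda_0)=\Lambda_0\text{\bf e}$. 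This proves (1) and exhibits $\frak{PO}(b)$ as a scalar $\sum_{\mu(\beta)=2}c_\beta\,T^{\beta\cap[\omega]/2\pi}\exp(\sum_i x_i(\partial\beta)_i)$ with $c_\beta\in\Lambda_0$. Since $\exp(\sum_i x_i(\partial\beta)_i)=\prod_i(y_i^{\text{\bf u}})^{(\partial\beta)_i}=T^{-\sum_j k_j(\beta)\ell_j(\text{\bf u})}\prod_j z_j^{k_j(\beta)}$, the $\beta$-term is $c_\beta\,T^{\omega(\alpha_\beta)/2\pi}\prod_j z_j^{k_j(\beta)}$.

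Now distinguish cases among $\mu(\beta)=2$ classes. The basic classes $\beta_j$ give exactly $z_1+\dots+z_m$, with coefficient $1$ by the previous paragraph. A non-basic class with $\sum_j k_j(\beta)\le 1$ is of the form $\beta_j+\alpha$ with $\alpha\ne 0$ effective and $c_1(\alpha)=0$, or $\beta=\alpha$ with $c_1(\alpha)=1$; for such a class there is no smooth holomorphic disc, so $\mathcal M_1^{\text{\rm main}}(\beta)$ is a union of nodal strata, and a dimension count shows these strata are mapped by ${\rm ev}_0$ into a subset of $L(\text{\bf u})$ of dimension $<n$, whence $\frak m_{0,\beta}(1)$ has no $H^0$-component, i.e. $c_\beta=0$. (The same count, applied with $\mu(\beta)<2$, shows ${\rm ev}_{0!}(1)$ vanishes there, closing the gap left in the previous paragraph and finishing (1).) Finally, a non-basic class with $\sum_j k_j(\beta)\ge 2$ has $c_1(\alpha_\beta)=1-\sum_j k_j(\beta)\le -1$, so $\alpha_\beta$ is a nonzero effective sphere class, $\rho:=\omega(\alpha_\beta)/2\pi>0$, and $\prod_j z_j^{k_j(\beta)}$ is a monomial of degree $\ge 2$. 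Collecting these, with the discreteness of $\{\beta\cap[\omega]/2\pi\}$ and Gromov compactness organizing the exponents $\rho_k$, yields $(\ref{POmainformula})$, i.e. (2). Part (3) is the special case: when $X$ is Fano, $c_1\ge 1$ on every nonzero effective class, so a $\mu(\beta)=2$ class with $\alpha_\beta\ne 0$ must have $\sum_j k_j(\beta)=0$, and those contribute nothing by the nodal analysis; only the $\beta_j$ remain, so all $P_k$ vanish. For (4): in the monomial attached to a contributing $\beta$, the exponents $k_j(\beta)$ of $z_j$ and the number $\omega(\alpha_\beta)/2\pi$ are functions of $\beta$ alone, and the coefficient $c_\beta$ is a virtual count built from $\mathcal M_1^{\text{\rm main}}(\beta)$; the latter is independent of $\text{\bf u}\in\text{\rm Int}\,P$ since the moduli spaces for varying $\text{\bf u}$ form a $T^n$-equivariant family, so their virtual fundamental chains are cobordant and the pushforward counts agree. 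Hence $\rho_k$ and $P_k$ depend only on $X$.

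The main obstacle is precisely the claim — used both for the degree-$\ge 2$ assertion and the normalization that each $z_j$ occurs with coefficient $1$ (and, in the Fano case, that there is no constant term) — that among Maslov-index-$2$ classes only the $\beta_j$ and the classes with $\sum_j\beta\cap D_j\ge 2$ contribute to $\frak{PO}$. Proving this requires the full force of the Cho--Oh classification of holomorphic discs on $L(\text{\bf u})$ (so that the smooth-disc stratum of the remaining classes is known to be empty), the $T^n$-equivariant regularity of $\mathcal M_1^{\text{\rm main}}(\beta_j)$, a careful dimension analysis of all nodal boundary strata, and the Kuranishi-structure machinery of \cite{FO, fooo-book} to make rigorous the statement that a perturbed moduli space whose evaluation image has dimension $<n$ contributes nothing in degree $0$. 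This is the technical heart of the argument, carried out in \cite{toric1}; by comparison the divisor identity and the index bookkeeping are routine.
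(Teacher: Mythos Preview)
Your overall strategy---exploit the Cho--Oh classification of holomorphic discs, apply the divisor equation to compute $\sum_k\frak m_k(b,\dots,b)$ class by class, and organize the output as a series in the $z_j$---matches the paper's, and your bookkeeping is correct. But there is a genuine gap in the step where you rule out contributions from classes with $\mu(\beta)<2$ (and from the $\mu(\beta)=2$ classes of the form $\beta_j+\alpha$ with $c_1(\alpha)=0$). You propose to do this by a ``dimension analysis of the nodal boundary strata'': since such a $\beta$ admits no smooth holomorphic disc, the unperturbed $\mathcal M_1^{\text{\rm main}}(\beta)$ consists only of nodal configurations, and you assert these map under ${\rm ev}_0$ to a set of dimension $<n$. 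That assertion is false as stated: for $\beta=\beta_j+\alpha$ the nodal configurations are a disc in class $\beta_j$ glued to a sphere bubble, and the disc component alone already surjects onto $L(\text{\bf u})$ via ${\rm ev}_0$. More seriously, in the non-Fano case the sphere-bubble strata can have arbitrarily large \emph{actual} dimension (multiple covers of curves with $c_1<0$), so no dimension count on the unperturbed space can establish vanishing.

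The paper's argument is different and rests on a single device you did not invoke: a $T^n$-equivariant multisection perturbation. Because ${\rm ev}_0$ is $T^n$-equivariant and $T^n$ acts freely on $L(\text{\bf u})$, the $T^n$ action on each $\mathcal M_1^{\text{\rm main}}(\beta)$ is free; one may therefore pass to the quotient, choose a transversal multisection there, and lift. The resulting equivariant transversal multisection $\frak s$ has a zero set $\mathcal M_1^{\text{\rm main}}(\beta)^{\frak s}$ on which $T^n$ still acts freely, forcing $\dim\mathcal M_1^{\text{\rm main}}(\beta)^{\frak s}\ge n$ whenever it is nonempty. Comparing with the virtual dimension $n+\mu(\beta)-2$ gives $\mu(\beta)\ge 2$ immediately, \emph{after} perturbation; the paper explicitly remarks that this inequality can fail for the unperturbed moduli space in the non-Fano case. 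The same equivariance makes ${\rm ev}_0$ a submersion on the perturbed space, so no continuous family of perturbations is needed, the degree $c_\beta$ is well defined (disc-bubble boundary strata vanish for the same reason), and items (1)--(4) then follow along the lines you sketch. In short, your plan is right, but the mechanism that kills the bad classes is the free $T^n$ action combined with equivariant transversality, not an analysis of nodal strata.
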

Item 1) is \cite{toric1} Proposition 4.3 plus the last line of \cite{toric1} section 4.
\par
Item 2) is \cite{toric1} Theorem 4.6 in the form (slightly) improved in 
\cite{toric2} Theorem 3.4.
In \cite{toric1,toric2} this formula is written using $y^{\text{\bf u}}_i$
in place of $z_j$. But it is easy to see that they are the same by the 
identification (\ref{POmainformula}).
We use the result of Cho-Oh \cite{cho-oh} to calculate 
the term $z_1+\cdots+z_m$ in the right hand side of 
(\ref{POmainformula}).
\par
Item 3) is \cite{toric1} Theorem 4.5.
\par
Item 4)  follows from \cite{toric1} Lemma 11.7.
\begin{proof}[Sketch of the proof]
The linear terms $z_j$ in (\ref{POmainformula}) come  from the contribution 
(that is $\frak m_{k,\beta_j}(b,\cdots,b)$)
of $\mathcal M_{1}^{\text{\rm main}}(\beta_j)$ to 
$\frak m_k(b,\cdots,b)$, where $\beta_j \in H_2(X,L(\text{\bf u});\Z)$ is as in 
(\ref{charbetaj}).
Its coefficient $1$ is the degree of the map
\begin{equation}\label{ev0map}
{\rm ev}_0 : \mathcal M_{1}^{\text{\rm main}}(\beta_j) \to L(\text{\bf u}),
\end{equation}
which is calculated by \cite{cho-oh}. 
\par
The term $T^{\rho_k}P_k$ is a contribution of $\mathcal M_{1}^{\text{\rm main}}(\beta)$
for some $\beta$. 
We will assume $\beta \ne 0$ in the rest of the argument.
\par
We can use a $T^n$ equivariant multisection to
define virtual fundamental chain.
To see this we first observe that the $T^n$ action on $\mathcal M_{1}^{\text{\rm main}}(\beta)$
is free. This is because $T^n$ action on $L(\text{\bf u})$ is free and 
(\ref{ev0map}) is $T^n$ equivariant.
Therefore to find a transversal multisection we can proceed as follows. We first take the  quotient with respect to $T^n$ 
action, next find transversal multisection on the quotient space and then lift it.
\par
\par
Let $\frak s$ be a $T^n$ equivariant multisection which is transversal to $0$.
Then $T^n$ acts freely on its zero set $\mathcal M_{1}^{\text{\rm main}}(\beta)^{\frak s}$.
Therefore the dimension of  $\mathcal M_{1}^{\text{\rm main}}(\beta)^{\frak s}$ is not smaller than 
$n$ if it is nonempty. We can show
$$
\dim \mathcal M_{1}^{\text{\rm main}}(\beta)^{\frak s}
= n + \mu(\beta) - 2
$$
where $\mu : H_2(X,L(\text{\bf u});\Z) \to \Z$ is the Maslov index.
It implies that 
$\mu(\beta) \ge 2$ if $\mathcal M_{1}^{\text{\rm main}}(\beta)^{\frak s} \ne \emptyset$.
\par
This is the key point of the proof. 
\begin{Remark}
In case $X$ is Fano, $\mu(\beta) \ge 2$ automatically holds if 
$\mathcal M_{1}^{\text{\rm main}}(\beta) \ne \emptyset$.
But in non-Fano case this holds only after taking $T^n$ equivariant perturbation.
\end{Remark}
Moreover $T^n$ equivariance implies that 
${\rm ev}_0 : \mathcal M_1^{\text{\rm main}}(\beta) \to L(u)$ 
is a submersion if $\mathcal M_1^{\text{\rm main}}(\beta) 
\ne \emptyset$.
Therefore we may use this $T^n$ equivariant $\frak s$ to define $\frak m_{k,\beta}$.
Namely we do not need to use a continuous family of multisections in this case.
\par
Now if $\deg b = 1$ then 
$$
\deg \frak m_{k,\beta}(b,\dots,b) = 2-\mu(\beta) \le 0.
$$
Namely $\frak m_{k,\beta}(b,\dots,b)$ is either $0$ or is proportional to the unit.
This proves item 1).
\par
To study $\frak m_{k,\beta}(b,\dots,b)$ for $\beta \ne \beta_j$, 
we again use the classification of $J$ holomorphic disks in \cite{cho-oh} 
to find that if $\mathcal M_{1}^{\text{\rm main}}(\beta)$ is nonempty 
the homology class $\beta$ is decomposed to a sum of $\beta_j$'s 
($j = 1,\dots,m$)
and sphere bubbles.
Therefore 
$$
\beta = \beta_{j_1} + \dots + \beta_{j_{e}} +\alpha_1 + \dots + \alpha_f
$$
where $b_{j_k}$ is one of $b_j$'s and $\alpha_i \in H_2(X;\Z)$ is represented by 
$J$-holomorphic sphere.
We put
$$
c_{\beta} = \text{\rm deg} [{\rm ev}_0 : \mathcal M_{1}^{\text{\rm main}}(\beta)^{\frak s} \to L(\text{\bf u})].
$$
Here the right hand side is the mapping degree of the map ${\rm ev}_0$.
It is well-defined since in case $\mu(\beta) =2 $  
the boundary of $\mathcal M_{1}^{\text{\rm main}}(\beta)^{\frak s}$ is empty.
(This is because $\mathcal M_{1}^{\text{\rm main}}(\beta')^{\frak s}$ is empty 
if $\mu(\beta') \le 0$, $\beta'\ne 0$.)
\par
Then we can show that
$$
\sum_{k=0}^{\infty}\frak m_{k,\beta}(b,\dots,b) 
= c_{\beta}T^{\sum_{i=1}^f (\alpha_i\cap \omega)/2\pi} z_{j_1}\dots z_{j_e}.
$$
Item 2) follows from this formula.
\par
Item 3) follows from the fact that in the Fano case,
$\mathcal M_{1}^{\text{\rm main}}(\beta) \ne \emptyset$ 
and $\mu(\beta) = 2$ imply $\beta = \beta_j$ for some $j$.
\par
Item 4) follows from the fact that $c_{\beta}$ is independent of $\text{\bf u}$.
\end{proof}
\begin{Remark}
In the general situation, the filtered $A_{\infty}$ structure 
associated to a Lagrangian submanifold is well-defined only 
up to isomorphism. In particular potential function 
$\frak{PO}$ is well-defined only up to a coordinate change.
(Namely it may depend on the choice of perturbation etc.)
However in our toric case we can use a  $T^n$ equivariant 
perturbation $\frak s$ and then $\frak{PO}$ is well-defined as a function 
on $H^1(L(\text{\bf u});\Lambda_0)$ without ambiguity.
This is a consequence of well-definedness of $c_{\beta}$
and is \cite{toric1} Lemma 11.7.
\end{Remark}
We have the following useful criterion which reduces computation of Floer cohomology
to the critical point theory of potential function.
\begin{Theorem}\label{CritisHFne0}
Let $b = \sum x_i\text{\bf e}_i \in H^1(L(\text{\bf u});\Lambda_0)$.
Then the following three conditions are equivalent.
\begin{enumerate}
\item For each of $i=1,\dots,n$ we have:
$$
\left.\frac{\partial \frak{PO}}{\partial x_i}\right\vert_{b} = 0.
$$
\item
$$
HF((L(\text{\bf u}),b),(L(\text{\bf u}),b);\Lambda_0) \cong
H(T^n;\Lambda_0).
$$
\item
$$
HF((L(\text{\bf u}),b),(L(\text{\bf u}),b);\Lambda) \ne 0.
$$
\end{enumerate}
\end{Theorem}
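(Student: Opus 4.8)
The plan is to reduce the whole equivalence to an explicit description of the single operator $\frak m_1^b$ on the exterior algebra $H(L(\text{\bf u});\Lambda_0) \cong H(T^n;\Lambda_0)$. By Theorem \ref{calcPO}(1), every $b = \sum_i x_i\text{\bf e}_i \in H^1(L(\text{\bf u});\Lambda_0)$ lies in $\widehat{\mathcal M}_{\text{\rm weak}}$, so the deformed operations $\frak m_k^b$ are defined, $\text{\bf e}$ stays a strict unit for $\{\frak m_k^b\}$ (Theorem \ref{ainfalgdef}(3)), $\frak m_1^b\circ\frak m_1^b = 0$, and $HF((L(\text{\bf u}),b),(L(\text{\bf u}),b);\Lambda_0)$ is the cohomology of $(H(T^n;\Lambda_0),\frak m_1^b)$. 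The key computation I would carry out first is the identity $\frak m_1^b(\text{\bf e}_i) = (\partial\frak{PO}/\partial x_i)(b)\,\text{\bf e}$. Regarding $b = \sum_j x_j\text{\bf e}_j$ as a function of the coordinates $(x_1,\dots,x_n)$, one has $\partial b/\partial x_i = \text{\bf e}_i$, and inserting one copy of $\text{\bf e}_i$ into all slots among the $b$'s in $\sum_k\frak m_k(b,\dots,b)$ is, slot by slot, exactly $\partial/\partial x_i$ applied to that ($\frak v_T$-convergent) sum; since the sum equals $\frak{PO}(b)\,\text{\bf e}$, differentiating gives the formula. The right-hand side makes sense because, by (\ref{POmainformula}) together with $z_j = T^{\ell_j(\text{\bf u})}\prod_k e^{v_{j,k}x_k}$ (so $\partial z_j/\partial x_i = v_{j,i}z_j$), $\frak{PO}$ is a genuine $\frak v_T$-convergent power series in the $x_i$. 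Put $\frak z_i := (\partial\frak{PO}/\partial x_i)(b) \in \Lambda_+$.

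Next I would record two soft structural facts. First, $\frak m_1^b$ is a graded derivation for the product $\frak m_2^b$: in the $A_\infty$-relation (\ref{Ainfty}) with two inputs, the only terms besides the Leibniz terms are those in which the output $\frak m_0^b(1)=\frak{PO}(b)\,\text{\bf e}$ of $\frak m_0^b$ is fed into $\frak m_3^b$, and these vanish by unitality. Second, $H(T^n;\Lambda_0)$ is generated as a $\frak m_2^b$-algebra by $\text{\bf e}$ and $\text{\bf e}_1,\dots,\text{\bf e}_n$: modulo $\Lambda_+$ the product $\frak m_2^b$ reduces to the cup product of $T^n$ (Theorem \ref{ainfalgdef}(5) and $\frak m_{1,0}$-exactness), for which these classes generate, and Nakayama's lemma over the local valuation ring $\Lambda_0$ (maximal ideal $\Lambda_+$) applied to the finitely generated module $H(T^n;\Lambda_0)$ promotes this to a statement over $\Lambda_0$.

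With these in hand, the chain (1)$\Rightarrow$(2)$\Rightarrow$(3)$\Rightarrow$(1) is formal. If (1) holds, then $\frak z_i=0$ for all $i$, so $\frak m_1^b$ is a derivation annihilating the algebra generators $\text{\bf e},\text{\bf e}_1,\dots,\text{\bf e}_n$, hence $\frak m_1^b\equiv 0$ and $HF((L(\text{\bf u}),b),(L(\text{\bf u}),b);\Lambda_0)=H(T^n;\Lambda_0)$, which is (2). Tensoring this free-$\Lambda_0$-module isomorphism with $\Lambda$ gives $HF((L(\text{\bf u}),b),(L(\text{\bf u}),b);\Lambda)\cong H(T^n;\Lambda)\ne 0$, so (2)$\Rightarrow$(3). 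For (3)$\Rightarrow$(1) I argue by contraposition: if $b$ is not a critical point, some $\frak z_{i_0}\ne 0$, hence is invertible in the field $\Lambda$, so over $\Lambda$ one has $\text{\bf e}=\frak m_1^b(\frak z_{i_0}^{-1}\text{\bf e}_{i_0})$ and $[\text{\bf e}]=0$ in Floer cohomology; since $\frak m_2^b$ descends to Floer cohomology (by the derivation property) and $\frak m_2^b(x,\text{\bf e})=\pm x$, for any cycle $x$ the class $[x]=\pm[\frak m_2^b(x,\text{\bf e})]$ is a boundary, so $HF((L(\text{\bf u}),b),(L(\text{\bf u}),b);\Lambda)=0$.

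The step requiring the most care is the key identity $\frak m_1^b(\text{\bf e}_i)=(\partial\frak{PO}/\partial x_i)(b)\,\text{\bf e}$: one must justify term-by-term differentiation of the infinite sum $\sum_k\frak m_k(b,\dots,b)$ in the $\frak v_T$-topology, its identification with $\frak m_1^b(\text{\bf e}_i)$, and the fact that $\frak{PO}$ as given by (\ref{POmainformula}) is a bona fide differentiable function of $(x_1,\dots,x_n)$ (which is where the $e^{x_i}$-formalism and the structure of (\ref{defzj}) enter). Everything downstream is then routine; conceptually, $(H(T^n;\Lambda_0),\frak m_1^b)$ is the Koszul-type complex attached to the sequence $(\frak z_1,\dots,\frak z_n)$ of partial derivatives of $\frak{PO}$, and conditions (1), (2), (3) are three classical incarnations of triviality of that sequence, respectively of its Koszul homology.
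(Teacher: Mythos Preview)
Your proof is correct and follows essentially the same approach as the paper. The key identity $\frak m_1^b(\text{\bf e}_i)=(\partial\frak{PO}/\partial x_i)(b)\,\text{\bf e}$ is exactly formula~(\ref{form21}), and your $(1)\Rightarrow(2)$ argument (derivation property of $\frak m_1^b$ for $\frak m_2^b$, plus generation of $H(T^n;\Lambda_0)$ by the $\text{\bf e}_i$ via Nakayama) is precisely what the paper means by ``$\text{\bf e}_i$ generates $H(L(\text{\bf u});\Lambda_0)$ by cup product, and $A_\infty$ formula'' and refers to \cite{toric1} Lemma~13.1 for.

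The only organizational difference is that the paper establishes $(1)\Leftrightarrow(2)$ directly and then cites \cite{toric1} Remark~13.9 for $(2)\Leftrightarrow(3)$, whereas you close the cycle $(1)\Rightarrow(2)\Rightarrow(3)\Rightarrow(1)$ with an explicit $(3)\Rightarrow(1)$: if some $\frak z_{i_0}\ne 0$ then the unit $[\text{\bf e}]$ vanishes in $HF\otimes\Lambda$, hence everything does. This is the standard argument (and is in effect what \cite{toric1} does), so there is no substantive divergence. Your concluding Koszul-complex remark is a helpful conceptual gloss on the same mechanism.
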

\begin{proof}[Sketch of the proof]
By definition
\begin{equation}\label{defPO}
\frak{PO}(b) \text{\bf e}
= \sum_{k=0}^{\infty}\frak m_k(b,\dots,b).
\end{equation}
We differentiate (\ref{defPO}) by $x_i$. 
Then using $\partial b/\partial x_i = \text{\bf e}_i$ we obtain:
\begin{equation}\label{form21}
\left.\frac{\partial \frak{PO}}{\partial x_i}\right\vert_{b} \text{\bf e}
= \sum_{k_1=0}^{\infty}\sum_{k_2=0}^{\infty}\frak m_{k_1+k_2+1}(\underbrace{b,\dots,b}_{k_1}, \text{\bf e}_i, \underbrace{b,\dots,b}_{k_2})
= \frak m_1^{b}(\text{\bf e}_i).
\end{equation}
Here the second equality is the definition of $\frak m_1^{b}$.
\par
Now we assume item 2). Then we have $\frak m_1^{b}(\text{\bf e}_i)=0$.
Therefore (\ref{form21}) implies item 1).
\par
We next assume item 1). Then  (\ref{form21}) implies 
$\frak m_1^{b}(\text{\bf e}_i)=0$.
We use it together with the fact that $\text{\bf e}_i$ generates $H(L(\text{\bf u})
;\Lambda_0)$ by cup product, and $A_{\infty}$ formula to prove 
that $\frak m_1^{b} = 0$.
(See \cite{toric1} proof of Lemma 13.1.)
Item 2) follows.
\par
The equivalence between item 2) and item 3) is proved in 
\cite{toric1} Remark 13.9.
\end{proof}
To apply Theorems \ref{calcPO} and \ref{CritisHFne0} for the 
calculation of Floer cohomology of $T^n$, we need some algebraic 
discussion, which is in order.
\par
Let $y_1,\dots,y_n$ be $n$ formal variables. We consider 
the ring $\Lambda[y_1,\dots,y_n,y_1^{-1},\dots,y_n^{-1}]$ 
of Laurent polynomials of $n$ variables with $\Lambda$ coefficient.
We write it as $\Lambda[y,y^{-1}]$ for simplicity.
\par
Let $\text{\bf u} = (u_1,\dots,u_n) \in P$. We put
\begin{equation}
y_i^{\text{\bf u}} = T^{-u_i} y_i \in \Lambda[y,y^{-1}].
\end{equation}
By an easy computation we have
\begin{equation}
T^{\ell_j(\text{\bf u})}(y_1^{\text{\bf u}})^{v_{j,1}}\dots (y_n^{\text{\bf u}})^{v_{j,n}}
= 
T^{\ell_j(\text{\bf u}')}(y_1^{\text{\bf u}'})^{v_{j,1}}\dots (y_n^{\text{\bf u}'})^{v_{j,n}}.
\end{equation}
for $\text{\bf u}, \text{\bf u}' \in P$.
Therefore (\ref{defzj}) defines an elements 
$z_j \in \Lambda[y,y^{-1}]$ in a way independent of 
$\text{\bf u} \in P$.
\par
We next introduce a family of valuations $\frak v_T^{\text{\bf u}}$ on $\Lambda[y,y^{-1}]$ 
parametrized by $\text{\bf u} \in P$.
\par
Let $F \in \Lambda[y,y^{-1}]$. 
Then for each $\text{\bf u} \in 
\text{\rm Int} \,\,P$ there exists $F^{\text{\bf u}}_{i_1 \dots i_n} \in \Lambda$ for $i_1,\dots,i_n \in \Z^n$ 
such that
$$
F = \sum_{i_1,\dots,i_n \in \Z^n} F^{\text{\bf u}}_{i_1 \dots i_n} (y_1^{\text{\bf u}})^{i_1}\cdots 
(y_n^{\text{\bf u}})^{i_n}.
$$
Here only finitely many of $F^{\text{\bf u}}_{i_1 \dots i_n}$ are nonzero. So the 
right hand side is actually a finite sum.
\begin{Definition}\label{normvTu}
$$
\frak v_T^{\text{\bf u}}(F) 
= \inf \{ \frak v_T(F^{\text{\bf u}}_{i_1 \dots i_n}) \mid F^{\text{\bf u}}_{i_1 \dots i_n} \ne 0\},
$$
if $F \ne 0$ and $\frak v_T^{\text{\bf u}}(0) = +\infty$.
\par
$\frak v_T^{\text{\bf u}}$ defines a valuation on $\Lambda[y,y^{-1}]$.
\par
We denote the completion of $\Lambda[y,y^{-1}]$ with respect to $\frak v_T^{\text{\bf u}}$
by $\Lambda^{\text{\bf u}}\langle\!\langle y,y^{-1}\rangle\!\rangle$.
\end{Definition}
By definition we have
$$
\frak v_T^{\text{\bf u}}(z_j) = \ell_j(\text{\bf u}) \ge 0
$$
for $\text{\bf u} \in P$. The following lemma is its immediate consequence.
\begin{Lemma}
The right hand side of $(\ref{POmainformula})$ converges 
with respect to $\frak v_T^{\text{\bf u}}$ for any $\text{\bf u} \in P$.
\end{Lemma}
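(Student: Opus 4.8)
The plan is to deduce the lemma from the elementary fact that in a complete non-archimedean normed ring a series converges as soon as its general term tends to zero in norm, i.e.\ as soon as the $\frak v_T^{\text{\bf u}}$-valuation of the general term tends to $+\infty$; all the geometric input needed for that has already been recorded just above, namely $\frak v_T^{\text{\bf u}}(z_j) = \ell_j(\text{\bf u}) \ge 0$ for $\text{\bf u}\in P$.

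First I would note that $\frak v_T^{\text{\bf u}}$ really is a valuation on $\Lambda[y,y^{-1}]$ (Definition \ref{normvTu}): it is additive on products and satisfies $\frak v_T^{\text{\bf u}}(T^{\rho}G) = \rho + \frak v_T^{\text{\bf u}}(G)$ and $\frak v_T^{\text{\bf u}}(c) = \frak v_T(c)$ for $c\in\Lambda$. In particular each $z_j$ lies in $\Lambda[y,y^{-1}]\subset\Lambda^{\text{\bf u}}\langle\!\langle y,y^{-1}\rangle\!\rangle$, so the first $m$ terms of the right-hand side of $(\ref{POmainformula})$, and each monomial $P_k(z_1,\dots,z_m)$, already make sense in the completion. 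If $N$ is finite the right-hand side is a Laurent polynomial and there is nothing to prove, so assume $N=\infty$; then only the convergence of the tail $\sum_{k\ge 1} T^{\rho_k} P_k(z_1,\dots,z_m)$ needs to be addressed.

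Next I would estimate the valuation of the general term of that tail. Write $P_k = c_k\, z_1^{a_{k,1}}\cdots z_m^{a_{k,m}}$ with $c_k\in\Lambda_0$ (so $\frak v_T(c_k)\ge 0$) and $a_{k,1},\dots,a_{k,m}\in\Z_{\ge 0}$, $\sum_j a_{k,j}\ge 2$. Using multiplicativity of the valuation together with $\frak v_T^{\text{\bf u}}(z_j) = \ell_j(\text{\bf u})\ge 0$ for $\text{\bf u}\in P$ (the displayed formula preceding the lemma), I obtain
\[
\frak v_T^{\text{\bf u}}\bigl(T^{\rho_k}P_k(z_1,\dots,z_m)\bigr)
= \rho_k + \frak v_T(c_k) + \sum_{j=1}^m a_{k,j}\,\ell_j(\text{\bf u}) \ \ge\ \rho_k .
\]
Since $\rho_k\to\infty$ as $k\to\infty$ in the case $N=\infty$, the valuations of the terms of the tail tend to $+\infty$, hence the partial sums $z_1+\dots+z_m + \sum_{k=1}^{\ell} T^{\rho_k}P_k(z_1,\dots,z_m)$ form a Cauchy sequence with respect to the metric attached to $\frak v_T^{\text{\bf u}}$. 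Completeness of $\Lambda^{\text{\bf u}}\langle\!\langle y,y^{-1}\rangle\!\rangle$ then yields convergence, which proves the lemma.

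There is essentially no hard step; the only point that requires a little care is that the estimate $\frak v_T^{\text{\bf u}}(z_j)\ge 0$ holds on all of $P$, including its boundary — which is precisely why the statement is for $\text{\bf u}\in P$ and not only $\text{\bf u}\in\text{\rm Int}\,P$, since at a point outside $P$ some $\ell_j(\text{\bf u})$ would be negative and the bound would fail. One should also observe, for honesty's sake, that the degree-$\ge 2$ property of the $P_k$ is not actually used in this convergence argument (only $\rho_k\to\infty$ and $\ell_j(\text{\bf u})\ge 0$ are); that property, together with the strict positivity $\rho_k>0$, is what additionally forces $\frak{PO}(b)$ to land in the maximal ideal, not merely in the completed ring.
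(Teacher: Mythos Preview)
Your argument is correct and is exactly the one the paper has in mind: the paper simply records $\frak v_T^{\text{\bf u}}(z_j)=\ell_j(\text{\bf u})\ge 0$ for $\text{\bf u}\in P$ and declares the lemma an immediate consequence, which is precisely your estimate $\frak v_T^{\text{\bf u}}\bigl(T^{\rho_k}P_k(z)\bigr)\ge \rho_k\to\infty$ followed by completeness. Your additional remarks about the boundary of $P$ and the irrelevance of the degree-$\ge 2$ condition for convergence are accurate and helpful elaborations.
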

We remark that according to the general theory described in section 
\ref{Floertheory}, the potential function $\frak{PO}$ associated to a 
Lagrangian submanifold $L(\text{\bf u})$ is a $\Lambda_+$ 
valued function on $\widehat{\mathcal M}_{\rm{weak}}(L(\text{\bf u});\Lambda_0)$.
By Theorem \ref{calcPO} (1), we have 
the inclusion $H^1(L(\text{\bf u});\Lambda_0)
\subset \widehat{\mathcal M}_{\rm{weak}}(L(\text{\bf u});\Lambda_0)$.
Since $x_1,\dots,x_n \in \Lambda_0$ forms a coordinate of 
$H^1(L(\text{\bf u});\Lambda_0)$ with respect to the basis 
$\text{\bf e}_i$, we may regard $\frak{PO}$ restricted to 
$H^1(L(\text{\bf u});\Lambda_0)$
as a function on  $(x_1,\dots,x_n) \in \Lambda_0^n 
\cong H^1(L(\text{\bf u});\Lambda_0)$.
\par
Then by Theorem \ref{calcPO} 2) we have
$$
\frak{PO}(x_1,\dots,x_n) 
= \frak{PO}(x'_1,\dots,x'_n) 
$$
if $x_i - x'_i \in 2\pi\sqrt{-1}\Z$ for each $i$.
In other words, we may regard $\frak{PO}$ as a function of 
$y_i^{\text{\bf u}} = e^{x_i}$. 
Note $x_i \in \Lambda_0$ implies that 
$y_i^{\text{\bf u}} - 1 \in \Lambda_+$.
We next extend the domain of $\frak{PO}$ by using Theorem \ref{calcPO} 2).
\par
We put $\lambda_j = \ell_j(\text{\bf 0})$. 
Then it is easy to see from definition that
\begin{equation}\label{zjandyi}
z_j = T^{\lambda_j}
y_1^{v_{j,1}}\dots y_n^{v_{j,n}}.
\end{equation}
\begin{Lemma}\label{convPO}
Let $(\frak y_1,\dots,\frak y_n) \in (\Lambda\setminus\{0\})^n$.
We assume
\begin{equation}\label{inP}
(\frak v_T(\frak y_1),\dots,\frak v_T(\frak y_n)) \in P.
\end{equation}
We put 
$
\frak z_j = T^{\lambda_j}
\frak y_1^{v_{j,1}}\dots \frak y_n^{v_{j,n}}$.
Then
$$
\frak  z_1+\dots+\frak z_m + \sum_{k=1}^{N} T^{\rho_k}P_k(\frak z_1,\dots,\frak z_m) \in \Lambda_+
$$
converges as $N\to \infty$ with respect to the valuation $\frak v_T$.
\end{Lemma}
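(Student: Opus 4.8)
The plan is to reduce the whole statement to a single valuation computation together with completeness of the Novikov ring. Write $\mathbf{u} = (u_1,\dots,u_n)$ for the tuple $(\frak v_T(\frak y_1),\dots,\frak v_T(\frak y_n))$, which by hypothesis lies in $P$; note that $u_i$ is finite since $\frak y_i \neq 0$. Since $\Lambda$ is a valuation field, $\frak v_T$ is additive on products and $\frak v_T(\frak y_i^{-1}) = -\frak v_T(\frak y_i)$. Combining this with the fact (Condition~\ref{condellj}~1) and the definition $\lambda_j = \ell_j(\mathbf 0)$) that $\ell_j$ is the affine function with $d\ell_j = \vec v_j = (v_{j,1},\dots,v_{j,n})$ and $\ell_j(\mathbf 0) = \lambda_j$, one gets from $\frak z_j = T^{\lambda_j}\frak y_1^{v_{j,1}}\cdots\frak y_n^{v_{j,n}}$ that
\[
\frak v_T(\frak z_j) \;=\; \lambda_j + \sum_{i=1}^n v_{j,i}\,\frak v_T(\frak y_i) \;=\; \lambda_j + \sum_{i=1}^n v_{j,i} u_i \;=\; \ell_j(\mathbf u).
\]
Because $\mathbf u \in P$ we have $\ell_j(\mathbf u) \ge 0$ for every $j$, hence each $\frak z_j \in \Lambda_0$; and $\frak z_j \in \Lambda_+$ as soon as $\mathbf u \in \mathrm{Int}\,P$, i.e.\ all $\ell_j(\mathbf u) > 0$.

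Next I would control the nonlinear terms. By Theorem~\ref{calcPO} 2), each $P_k$ is a monomial in $z_1,\dots,z_m$ of degree $\ge 2$ with coefficient in $\Lambda_0$; substituting the $\frak z_j \in \Lambda_0$ and using that $\Lambda_0$ is a ring gives $P_k(\frak z_1,\dots,\frak z_m) \in \Lambda_0$, and since $\rho_k > 0$ with $T^{\rho_k}\in\Lambda_+$ we get $T^{\rho_k}P_k(\frak z_1,\dots,\frak z_m) \in \Lambda_+$ with $\frak v_T\bigl(T^{\rho_k}P_k(\frak z_1,\dots,\frak z_m)\bigr) \ge \rho_k$. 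In the case $N < \infty$ there is nothing to prove about convergence, and the partial sum is a finite sum of elements of $\Lambda_0$. In the case $N = \infty$, Theorem~\ref{calcPO} 2) also guarantees $\rho_k \to \infty$; the general term of $\sum_k T^{\rho_k}P_k(\frak z_1,\dots,\frak z_m)$ therefore has $\frak v_T$ tending to $+\infty$, so by completeness of $(\Lambda,\frak v_T)$ the series converges, and since $\Lambda_+$ is complete its limit lies in $\Lambda_+$. Adding the finite sum $\frak z_1 + \dots + \frak z_m$, all of whose terms lie in $\Lambda_0$, and using completeness of $\Lambda_0$, the whole expression converges with respect to $\frak v_T$ to an element of $\Lambda_0$; this limit lies in $\Lambda_+$ exactly when all $\ell_j(\mathbf u) > 0$, i.e.\ for $\mathbf u \in \mathrm{Int}\,P$, which is the case of geometric interest where $\frak{PO}$ is the honest potential function.

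There is no serious obstacle here; the only step that deserves care is the displayed valuation identity $\frak v_T(\frak z_j) = \ell_j(\mathbf u)$, namely checking that the exponent $\lambda_j + \sum_i v_{j,i}u_i$ read off from the Laurent monomial $\frak z_j$ is exactly the value of the affine function $\ell_j$ at $\mathbf u$. This is immediate from $d\ell_j = \vec v_j$ and $\ell_j(\mathbf 0) = \lambda_j$, but it is precisely the place where the hypothesis $\mathbf u \in P$ enters essentially, through the inequality $\ell_j(\mathbf u) \ge 0$ that puts the $\frak z_j$ into $\Lambda_0$; everything else is just completeness of the Novikov ring together with $\rho_k > 0$, $\rho_k \to \infty$, and $\deg P_k \ge 2$.
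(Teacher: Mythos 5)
Your argument matches the paper's one-line proof exactly: compute $\frak v_T(\frak z_j)=\ell_j(\text{\bf u})\ge 0$ from $\text{\bf u}\in P$, then invoke $\rho_k\to\infty$ together with completeness of the Novikov ring. You have also been more careful than the paper on the $\Lambda_+$ assertion: with $\text{\bf u}\in P$ allowed to lie on $\partial P$, some $\frak z_j$ has valuation exactly zero, so the argument only places the limit in $\Lambda_0$, with $\Lambda_+$ guaranteed when $\text{\bf u}\in\text{\rm Int}\,P$ (which is what happens in the geometric situation and is likely what the authors had in mind); the paper's terse proof elides this distinction. One small overstatement on your side: ``exactly when all $\ell_j(\text{\bf u})>0$'' should be ``if all $\ell_j(\text{\bf u})>0$'' --- the leading coefficients of the boundary monomials $\frak z_j$ could cancel accidentally, so the converse direction is not automatic.
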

\begin{proof}
(\ref{inP}) implies $\frak v_T(\frak z_j) = \ell_j(\text{\bf u}) \ge 0$.
The lemma then follows easily from $\lim_{k\to\infty}\rho_k =\infty$ 
in the statement of Theorem  \ref{calcPO} (\ref{POmainformula}).
\end{proof}
We define
\begin{equation}\label{A(P)}
\frak A(P) = \{(\frak y_1,\dots,\frak y_n) \in  (\Lambda\setminus\{0\})^n
\mid  (\frak v_T(\frak y_1),\dots,\frak v_T(\frak y_n)) \in P\}.
\end{equation}
By Lemma \ref{convPO} we may regard $\frak{PO}$ as a function
$$
\frak{PO} : \frak A(P) \to \Lambda_+.
$$
We remark that $\frak A(P)$ is not a manifold. So we can not define 
differentiation of 
$\frak{PO}$ in the sense of usual calculus. 
Instead we will define it as follows.
We remark that $z_j$ and $P_k(z_1,\dots,z_m)$
are Laurent monomials of $y_1,\dots,y_n$ with $\Lambda_0$ 
coefficient. So we can differentiate it by $y_i$ in an obvious way.
Moreover 
$$
y_i \frac{\partial}{\partial y_i}  P_k(z_1,\dots,z_m)
$$
is again a monomial of $z_1,\dots,z_m$ with $\Lambda_0$ 
coefficient. Therefore for 
$\frak y = (\frak y_1,\dots,\frak y_n) \in \frak A(P)$ the 
limit
$$
\lim_{N\to\infty} \left(
\frak y_i\frac{\partial z_1}{\partial y_i}(\frak y)+\dots+
\frak y_i\frac{\partial z_m}{\partial y_i}(\frak y) + \sum_{k=1}^{N} T^{\rho_k}
\frak y_i \frac{\partial P_k}{\partial y_i}  (\frak z_1,\dots,\frak z_m)\right)
$$
converges. (Here we put $
\frak z_j = T^{\lambda_j}
\frak y_1^{v_{j,1}}\dots \frak y_n^{v_{j,n}}$.)
We write its limit as 
$$
\frak y_i \frac{\partial \frak{PO}}{\partial y_i}(\frak y).
$$
Thus we have defined 
$$
y_i \frac{\partial \frak{PO}}{\partial y_i} : \frak A(P) \to \Lambda_+.
$$
We now have the following:
\begin{Theorem}\label{POandHF}
For $\text{\bf u} \in \text{\rm Int}\,P$ the following two conditions are 
equivalent.
\par\smallskip
\begin{enumerate}
\item
There exists $b \in H^1(L(\text{\bf u});\Lambda_0)$ such that
$$
HF((L(\text{\bf u}),b),(L(\text{\bf u}),b);\Lambda_0) \cong
H(T^n;\Lambda_0).
$$
\item
There exists $\frak y =(\frak y_1,\dots,\frak y_n) \in \frak A(P)$
such that
\begin{equation}\label{eqcritial}
\frak y_i \frac{\partial \frak{PO}}{\partial y_i}(\frak y) = 0
\end{equation}
for $i=1,\dots,n$ and that
$$
(\frak v_T(\frak y_1),\dots,\frak v_T(\frak y_n)) 
= \text{\bf u}.
$$
\end{enumerate}
\end{Theorem}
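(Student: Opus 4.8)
The plan is to deduce this directly from Theorem \ref{CritisHFne0}, which for a fixed $\text{\bf u}$ and a fixed $b = \sum x_i\text{\bf e}_i \in H^1(L(\text{\bf u});\Lambda_0)$ already equates the existence of the isomorphism $HF((L(\text{\bf u}),b),(L(\text{\bf u}),b);\Lambda_0) \cong H(T^n;\Lambda_0)$ with the system $\partial\frak{PO}/\partial x_i\vert_{b} = 0$, $i=1,\dots,n$. So everything reduces to translating that system into the equations $(\ref{eqcritial})$ on $\frak A(P)$ and to checking which $\text{\bf u}$ the solutions live over. First I would set up the dictionary $b = \sum x_i\text{\bf e}_i \leftrightarrow \frak y = (\frak y_1,\dots,\frak y_n)$ given by $\frak y_i := T^{u_i}e^{x_i}$. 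Since $x_i \in \Lambda_0$, the element $e^{x_i}$ is a unit of $\Lambda_0$ (its reduction modulo $\Lambda_+$ is $e^{x_{i,0}} \ne 0$), so $\frak v_T(\frak y_i) = u_i$; as $\text{\bf u}\in\text{\rm Int}\,P\subset P$ this places $\frak y$ in $\frak A(P)$ with valuation vector exactly $\text{\bf u}$.

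Next I would verify that under this dictionary the two descriptions of $\frak{PO}$ literally coincide. Using $\ell_j(\text{\bf u}) = \lambda_j + \sum_i v_{j,i}u_i$ (from $d\ell_j = \vec v_j$ and $\lambda_j = \ell_j(\text{\bf 0})$), the monomial $z_j$ evaluated at $b$ via $y_i^{\text{\bf u}} = e^{x_i}$ equals $\frak z_j = T^{\lambda_j}\frak y_1^{v_{j,1}}\cdots\frak y_n^{v_{j,n}}$, so the series $z_1+\dots+z_m+\sum_k T^{\rho_k}P_k$ of Theorem \ref{calcPO} represents the same object in both coordinate systems. Then I would establish the chain rule identity
$$
\left.\frac{\partial\frak{PO}}{\partial x_i}\right\vert_{b} = \frak y_i\frac{\partial\frak{PO}}{\partial y_i}(\frak y)
$$
term by term on the monomials: $\partial z_j/\partial x_i = v_{j,i}z_j$ matches the logarithmic derivative $\frak y_i\,\partial z_j/\partial y_i = v_{j,i}\frak z_j$ (the scalar $T^{u_i}$ relating $y_i$ and $y_i^{\text{\bf u}}$ is invisible to $y_i\,\partial/\partial y_i$), and the same holds after applying $y_i\,\partial/\partial y_i$ to the monomials $P_k(z_1,\dots,z_m)$; the differentiated series converge in the relevant valuation topology by the discussion around Lemma \ref{convPO}.

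Granting this, both implications are short. For $(1)\Rightarrow(2)$: given $b$ realizing the isomorphism, Theorem \ref{CritisHFne0} gives $\partial\frak{PO}/\partial x_i\vert_{b} = 0$, whence $\frak y := (T^{u_i}e^{x_i})_{i}$ lies in $\frak A(P)$, has valuation vector $\text{\bf u}$, and solves $(\ref{eqcritial})$ by the chain rule. For $(2)\Rightarrow(1)$: given $\frak y\in\frak A(P)$ with $\frak v_T(\frak y_i) = u_i$ solving $(\ref{eqcritial})$, write $\frak y_i = T^{u_i}c_i(1+\xi_i)$ with $c_i\in\C\setminus\{0\}$ and $\xi_i\in\Lambda_+$, pick $w_i\in\C$ with $e^{w_i}=c_i$, and set $x_i := w_i + \sum_{k\ge 1}(-1)^{k-1}\xi_i^k/k$, which converges in $\frak v_T$-topology to an element of $\Lambda_0$ with $e^{x_i}=c_i(1+\xi_i)$. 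Then $b := \sum x_i\text{\bf e}_i \in H^1(L(\text{\bf u});\Lambda_0)$ corresponds to $\frak y$ under the dictionary, so the chain rule and $(\ref{eqcritial})$ give $\partial\frak{PO}/\partial x_i\vert_{b}=0$, and Theorem \ref{CritisHFne0} produces the isomorphism.

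The main obstacle I anticipate is not any of these reductions individually but the bookkeeping that makes the chain rule rigorous: $\frak{PO}$ is an infinite series, $\frak A(P)$ is not a manifold, and differentiation in $x_i$, differentiation in $y_i^{\text{\bf u}}$, and the formal logarithmic derivative $y_i\,\partial/\partial y_i$ must all be shown to agree and to commute with the limit defining $\frak{PO}$. This is exactly the content of the construction of $y_i\,\partial\frak{PO}/\partial y_i$ given just before the statement, so in the write-up I would lean on that construction and on the convergence assertions of Lemma \ref{convPO} rather than re-deriving them, and otherwise cite the proof of Theorem \ref{CritisHFne0} for the passage between $\frak m_1^b$ and $\partial\frak{PO}/\partial x_i$.
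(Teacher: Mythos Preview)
Your proposal is correct and follows essentially the same route as the paper: both directions are deduced from Theorem \ref{CritisHFne0} via the dictionary $\frak y_i = T^{u_i}e^{x_i}$, with the logarithm construction $x_i = \log c_i + \log(1+\xi_i)$ inverting it for $(2)\Rightarrow(1)$. You are more explicit than the paper about the chain rule $\partial\frak{PO}/\partial x_i\vert_b = \frak y_i\,\partial\frak{PO}/\partial y_i(\frak y)$ and its termwise justification, which the paper leaves implicit in ``it is easy to see,'' but the argument is the same.
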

\begin{Definition}
We say that $L(\text{\bf u})$ is a {\it strongly balanced} if the Condition 1) (= Condition 2)) in Theorem \ref{POandHF} is satisfied.
\end{Definition}
\begin{proof}
2) $\Longrightarrow 1)$:
Let $\frak y$ be as in 2). We put
$y_i^{\text{\bf u}} = T^{-u_i}\frak y_i$.
Then $\frak v_T(y_i^{\text{\bf u}}) = 0$.
Therefore there exist $y_{i,0}^{\text{\bf u}} \in \C$
and $y_{i,+}^{\text{\bf u}} \in \Lambda_+$ such that
$y_i^{\text{\bf u}} =y_{i,0}^{\text{\bf u}} + y_{i,+}^{\text{\bf u}}$.
We put $x_{i,0} = \log(y_{i,0}^{\text{\bf u}})$ and 
$$
x_{i,+} = \log (1+(y_{i,0}^{\text{\bf u}})^{-1}y_{i,+}^{\text{\bf u}})).
$$
Note $(y_{i,0}^{\text{\bf u}})^{-1}y_{i,+}^{\text{\bf u}}\in \Lambda_+$.
Therefore we can define the right hand side by the Taylor expansion 
of $\log(1+z)$.
\par
We put $x_i = x_{i,0} + x_{i,+}$ and $b = \sum_{i=1}^m x_i{\text{\bf e}}_1$.
Then using Theorem \ref{CritisHFne0} it is easy to see that 1) is 
satisfied.
\par
1) $\Longrightarrow 2)$:
Let $b = \sum x_i \text{\bf e}_i$ be as in 1). We put
$\frak y_i = T^{u_i}e^{x_i}$. It is easy to see that 
$\frak y = (\frak y_1,\dots,\frak y_n)$ satisfies
$
\frak y_i \frac{\partial \frak{PO}}{\partial y_i}(\frak y) = 0
$.
\end{proof}
\begin{Remark}
It is easy to see that $y^{\text{\bf 0}}_i = y_i$, where $\text{\bf 0} \in \R^n$ is the 
origin.
Note that the moment polytope $P$ is well-defined only up to parallel translation.
Namely we can replace it by $P + \text{\bf u}$ for any $\text{\bf u}\in \R^n$,  
then
$P+\text{\bf u}$ corresponds to the same toric manifold as $P$.
\par
Thus the choice $y^{\text{\bf 0}}_i = y_i$ is quite ad-hoc, and we may take 
any $y^{\text{\bf u}}_i$ in place of $y_i$ in our story.
In fact the ring $\Lambda[y,y^{-1}]$ can be canonically identified 
with the Laurent polynomial rings over $y^{\text{\bf u}}_i$ ($i=1,\dots,n$) 
using $y^{\text{\bf u}}_i \in \Lambda[y,y^{-1}]$.
\par
On the other hand, the valuation $\frak v_T^{\text{\bf u}}$ and the completion
$\Lambda^{\text{\bf u}}\langle\!\langle y,y^{-1}\rangle\!\rangle$ is 
canonically associated to the Lagrangian submanifold $L(\text{\bf u})$.
\par
The variables $y^{\text{\bf u}}_i$ also is defined in a way independent of the 
choice of the origin of the affine space in which $P$ is embedded.
\par
In some reference such as \cite{Aur07,hori-vafa} `renormalization'  
is discussed. 
It seems that this process depends on the choice of the origin 
in the affine space $\R^n$. Namely it is related to the homothetic 
transformation $y_i \mapsto Cy_i$ where $C \to \infty$.
\par
As we mentioned above the choice of $\text{\bf 0}$ is not 
intrinsic. More canonical way seems to be as follows.
We consider each of $\text{\bf u}_0$ such that 
$HF((L(\text{\bf u}_0),b),(L(\text{\bf u}_0),b);\Lambda) \ne 0$ for some $b$.
We then replace $P$ by $P-\text{\bf u}_0$, so 
this orbit  $L(\text{\bf u}_0)$ becomes $L(\text{\bf 0})$.
We now use $y_i \mapsto Cy_i$ to `renormalize'.
\par
Thus there exists a `renormalization' for each such $\text{\bf u}_0$.
This process of `renormalization' seems to be related to the 
study of leading term equation, which we discuss in section \ref{bulkFloer}.
\end{Remark}
\par\bigskip

\section{Examples 1}
\label{exa1}

\begin{Example}\label{Pn2}
We first consider the case of $\C P^n$.
We use (\ref{ellforPn}) and Theorem \ref{calcPO} 2), 3) to obtain
$$
\frak{PO} =z_1+\dots+z_n +  z_0
= y_1 + \dots +y_n + T(y_1\cdots y_n)^{-1}.
$$
Therefore the equation (\ref{eqcritial}) becomes
$$
0 = y_i \frac{\partial \frak{PO}}{\partial y_i} = y_i - T (y_1\cdots y_n)^{-1}.
$$
The solutions are 
$$
y_1 = \dots = y_n = T^{1/(n+1)} \exp(2\pi\sqrt{-1}k/(n+1))
$$
where $k = 0,1,\dots,n$.
The valuation of $y_i$ are $1/(n+1)$.
Thus $\text{\bf u}_0 = (1/(n+1),\dots,1/(n+1))$ is the unique 
strongly balanced fiber.
\end{Example}
\begin{Example}\label{P2blowupone2}
We next consider $X(\alpha)$, one point blow up of $\C P^2$ 
as in Example \ref{P2blowupone1}. Using the discussion in 
Example \ref{P2blowupone1} and Theorem \ref{calcPO} 2), 3) 
we obtain
$$
\frak{PO} 
= y_1 + y_2 + T(y_1y_2)^{-1} + T^{1-\alpha} y^{-1}_2.
$$
The equation (\ref{eqcritial}) becomes
$$
1 - T y_1^{-2}y_2^{-1} = 0, 
\qquad
1 - T y_1^{-1}y_2^{-2} -  T^{1-\alpha} y^{-2}_2= 0.
$$
By eliminating $y_2 = Ty_1^{-2}$ we obtain
\begin{equation}\label{3rdorder}
y_1^4 + T^{\alpha}y_1^3 - T^{\alpha + 1} = 0.
\end{equation}
We put $u_1 = \frak v_T(y_1)$.
\par\smallskip
\noindent 
(Case 1) $u_1 < \alpha$.
\par
We take $\frak v_T$ of (\ref{3rdorder}) and obtain $4u_1 = \alpha + 1$.
Namely $u_1 = (\alpha+1)/4$. 
$u_1 < \alpha$ then implies 
$\alpha > 1/3$. 
\par
Conversely if $\alpha > 1/3$ and $u_1 = (\alpha+1)/4$ we put 
$y_1 = T^{u_1} y$ then (\ref{3rdorder}) becomes
$$
y^4 + T^{(3\alpha - 1)/4}y^3 -1 = 0.
$$
Since $(3\alpha - 1)/4>0$, this equation has 4 simple roots $y$ which are 
congruent to $\pm 1, \pm \sqrt{-1}$ modulo $\Lambda_+$, respectively.
\par\smallskip\noindent
(Case 2) $u_1 > \alpha$.
\par
We take $\frak v_T$ of (\ref{3rdorder}) and have $3u_1 + \alpha = \alpha + 1$.
Namely $u_1 = 1/3$. $u_1 < \alpha$ then implies 
$\alpha < 1/3$. 
\par
Conversely if $\alpha < 1/3$ and $u_1 = 1/3$ we put 
$y_1 = T^{1/3} y$ then (\ref{3rdorder}) becomes
$$
T^{1/3-\alpha}y^4 +y^3 -1 = 0.
$$
This equation has 3 simple roots $y$ which are 
congruent to $1, e^{2\pi\sqrt{-1}/3}, e^{4\pi\sqrt{-1}/3}$ modulo $\Lambda_+$, respectively.
\par\smallskip\noindent
(Case 3) $u_1 = \alpha$.
\par
We put 
$y_1 = T^{u_1} y$. Then $\frak v_T(y)  = 0$ and we have
\begin{equation}\label{3ody}
y^3(1 + y) - T^{1-3\alpha} = 0.
\end{equation}
\par\smallskip
\noindent
(Case 3-1) $\alpha = 1/3$.
\par
In this case there exists exactly 4 roots $y \in \C$ of (\ref{3ody}).
\par\smallskip
\noindent
(Case 3-2) $\alpha \ne 1/3$.
\par
By (\ref{3ody}) $\alpha < 1/3$. Then $\frak v_T(1+y) = 1 - 3\alpha$.
We put $y = T^{1 - 3\alpha}w - 1$. Then $\frak v_T(w) = 0$.
Then (\ref{3ody}) becomes
$$
(1-T^{1 - 3\alpha}w)^3w +1 = 0.
$$
There is one root of this equation with $w \equiv -1$ modulo $\Lambda_+$.
Three other roots do not satisfy $\frak v_T(w) = 0$.
Thus there exists one solution in this case such that $u_1 = \frak v_T(y) = 0$. 
\par\smallskip
In sum we have the following.
\par
If $\alpha < 1/3$ there exists one solution with 
$u_1 = \frak v_T(y_1) = \alpha$ and three solutions with $u_1 = 1/3$.
Note $u_2 = \frak v_T(y_2) = 1 - 2u_1$. Therefore 
$L(\alpha, 1-2\alpha)$  and $L(1/3,1/3)$ are the strongly balanced fibers.
\par
If $\alpha \ge 1/3$ we have 4 solutions with 
$u_1 = (\alpha+1)/4$, $u_2 = (1-\alpha)/2$.
Namely there is exactly one strong balanced fiber 
$L((\alpha+1)/4,(1-\alpha)/2)$.
\end{Example}
In this section we discuss the Fano case only, where we can 
explicitly calculate $\frak{PO}$. The non-Fano case 
will be discussed in section \ref{exa2}.
\par\smallskip
In the case of Example \ref{Pn2} and \ref{P2blowupone2},
McDuff \cite{mcd}  proved that all the 
$T^n$ orbits where Floer cohomology vanish for all choices of $b$, 
are displaceable by Hamiltonian diffeomorphism.
\par
However there is an example of toric surface and its $T^2$ orbit, such that 
one can not displace it from itself by the method of \cite{mcd} 
but all the known versions of Floer cohomology over $\Lambda$ vanish for this $T^2$ orbit.
(See \cite{mcd} Lemma 4.4.) We do not know whether  they are displaceable or not.
\par\bigskip
\section{Open-closed Gromov-Witten theory and operators $\frak q$}
\label{operatorq}

In this section, we discuss the operator $\frak q$ introduced in \cite{fooo-book}
section 3.8.
Let $(X,\omega)$ be a symplectic manifold and $L$ its 
Lagrangian submanifold as in section \ref{Floertheory}.
Let $h_1,\dots,h_k$ be differential forms on $L$ and 
$g_1,\dots,g_{\ell}$ differential forms on $X$.
Let $\beta \in H_2(X,L;\Z)$.
We define
\begin{equation}\label{operatorqdef}
\aligned
&\frak q_{\ell,k,\beta}(g_1,\dots,g_{\ell};h_1,\dots,h_k)
\\
&= \frac{1}{\ell !}{\rm ev}_{0!}\left(({\rm ev}^+_1,\dots,{\rm ev}^+_{\ell},{\rm ev}_1,\dots,{\rm ev}_k\right)^*
(g_1 \times \dots \times g_{\ell} \times h_1 \times \dots \times h_k).
\endaligned
\end{equation}
We also put 
$$
\frak q_{0;1;0}(h) = (-1)^n dh.
$$
We remark that 
$g_1 \times \dots \times g_{\ell} \times h_1 \times \dots \times h_k$ 
is a differential form on $X^{\ell} \times L^{k}$ and 
its pull back is a differential form on 
$\mathcal M_{k+1;\ell}^{\text{\rm main}}(\beta)$.
The map ${\rm ev}_{0!}$ is integration along fiber by the map
${\rm ev}_0 : \mathcal M_{k+1;\ell}^{\text{\rm main}}(\beta) \to L$.
More precisely we use a continuous family of perturbations in the same way as 
we defined $\frak m_k$ in section \ref{Floertheory}.
\par
We then put
$$
\frak q_{\ell,k}
= \sum_{\beta \in H_2(X,L;\Z)}
T^{(\beta\cap \omega)/2\pi}\frak q_{\ell,k,\beta}.
$$
It defines a map
$$
\frak q_{\ell;k} :
E_{\ell}(\Omega(X)[2]\otimes \Lambda_0) \otimes B_{k}(\Omega(L)[1]\otimes \Lambda_0)  \to \Omega(L)[1]\otimes \Lambda_0 .
$$
This operator has the following properties.
We omit the suffix $\ell,k$ in $\frak q_{\ell;k}$ 
and write $\frak q$ in the formula below.
We use the convention (\ref{Deltasymbol}) introduced at the end of subsection \ref{notterm}.
\begin{Theorem}\label{qqainfalgdef}
\begin{enumerate}
\item
Let $\text{\bf x} \in B_k(\Omega(L)[1]\otimes \Lambda_0)$,
$\text{\bf y} \in E_{\ell}(\Omega(X)[2]\otimes \Lambda_0)$. 
Suppose $\text{\bf y}$ is a linear combination of the 
elements of the form
$y_1\otimes \dots \otimes y_{\ell}$  
where each of $y_i$ are closed forms. We then have the following:
\begin{equation}\label{qmaineq0}
0 =
\sum_{c_1,c_2}
(-1)^*
\frak q(\text{\bf y}^{2;1}_{c_1} ;
\text{\bf x}^{3;1}_{c_2} \otimes
\frak q(\text{\bf y}^{2;2}_{c_1} ; \text{\bf x}^{3;2}_{c_2})
\otimes \text{\bf x}^{3;3}_{c_2})
\end{equation}
where
$
* = \deg'\text{\bf x}^{3;1}_{c_2} +
\deg'\text{\bf x}^{3;1}_{c_2} \deg \text{\bf y}^{2;2}_{c_1}
+\deg \text{\bf y}^{2;1}_{c_1}.
$
\item
If $\text{\bf y} = 1 \in E_0(\Omega(X)[2]\otimes \Lambda_0) 
= \Lambda_0$ then 
\begin{equation}\label{qreducedtom}
\frak q_{0,k}(1,\text{\bf x}) = \frak m_k(\text{\bf x}).
\end{equation}
\item Let $\text{\bf e} = \text{\rm PD}([L])$ be the Poincar\'e dual to the fundamental
class of $L$. Let $\text{\bf x}_i \in B(\Omega(L)[1]\otimes \Lambda_0)$ and we put
$\text{\bf x} = \text{\bf x}_1 \otimes \text{\bf e} \otimes \text{\bf x}_2
\in B(\Omega(L)[1]\otimes \Lambda_0)$. Then
\begin{subequations}\label{qproptall}
\begin{equation}\label{unital0}
\frak q(\text{\bf y};\text{\bf x}) = 0
\end{equation}
except the following case:
\begin{equation}\label{unital20}
\frak q(1;\text{\bf e} \otimes x) =
(-1)^{\deg x}\frak q(1;x \otimes \text{\bf e}) = x,
\end{equation}
\end{subequations} 
where $x \in \Omega(L)[1]\otimes \Lambda_0
= B_1(\Omega(L)[1]\otimes \Lambda_0)$.
\item
There exists a discrete submonoid
$G = \{\lambda_i \mid i=0,1,2,\dots \}$ 
such that 
$$
\frak q_{\ell,k} 
= \sum_{i=1}^{\infty} T^{\lambda_i} \frak q_{\ell,k,i} 
$$
where 
$\frak q_{\ell,k,i} : E_{\ell}(\Omega(X)[2]) \otimes B_{k}(\Omega(L)[1])  \to \Omega(L)[1]$.
\item
Let $i : L \to X$ be the inclusion and $y \in \Omega(X) \otimes \Lambda_0$.
Then
$$
\frak q_{1,0}(y,1) \equiv i^*(y) \mod \Omega(L) \otimes\Lambda_+.
$$
\end{enumerate}
\end{Theorem}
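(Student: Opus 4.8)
The plan is to derive all five statements from the geometry of the moduli spaces $\mathcal M_{k+1;\ell}^{\text{\rm main}}(\beta)$ together with integration along the fiber over spaces with Kuranishi structure, in exactly the way the $A_{\infty}$ relations of Theorem \ref{ainfalgdef} were obtained from (\ref{descboundary}). First I would establish the analogue of (\ref{descboundary}) for $\mathcal M_{k+1;\ell}^{\text{\rm main}}(\beta)$: its codimension one boundary, as a space with Kuranishi structure and corners, is the union over all ways of splitting the boundary marked points into two cyclically ordered groups, distributing the $\ell$ interior marked points among the two pieces, and decomposing $\beta = \beta_1+\beta_2$, of the fiber products $\mathcal M_{k_1+1;\ell_1}^{\text{\rm main}}(\beta_1)\,\,{}_{{\rm ev}_0}\times_{{\rm ev}_i}\,\,\mathcal M_{k_2+1;\ell_2}^{\text{\rm main}}(\beta_2)$ (sphere bubbling is higher codimension, and interior marked points cannot collide on the boundary since they map to $X$). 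Then, for item (1), one takes closed forms $g_1,\dots,g_\ell$, pulls back $g_1\times\dots\times g_\ell\times h_1\times\dots\times h_k$, and applies Stokes' theorem for ${\rm ev}_{0!}$ over a Kuranishi space with corners: the interior term $d\circ\frak q\pm\frak q\circ d$ records the possibly non-closed $h_i$ (and is the reason $\frak q_{0;1;0}(h)=(-1)^n dh$ is built into the definition), while the boundary integral becomes the sum over the boundary decomposition above, which, rewritten with the coproduct notation (\ref{Deltasymbol}), is precisely the right hand side of (\ref{qmaineq0}). The sign $*$ is then a bookkeeping computation with the shifted degrees $\deg'$, the orientations of the fiber products, and the Koszul rule.

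Item (2) is a matter of unwinding definitions: for $\ell = 0$ the moduli space $\mathcal M_{k+1;0}^{\text{\rm main}}(\beta)$ and the evaluation maps in (\ref{operatorqdef}) are literally those in (\ref{defmkbyint}), and the normalization $\frak q_{0;1;0}(h)=(-1)^n dh$ agrees with $\frak m_{1,0}(h)=(-1)^{n+\deg h+1}dh$ after the degree shift; hence $\frak q_{0,k}(1,\text{\bf x})=\frak m_k(\text{\bf x})$. Item (4) follows from Gromov compactness: for each $E>0$ only finitely many $\beta$ with $\beta\cap\omega\le 2\pi E$ have $\mathcal M_{k+1;\ell}^{\text{\rm main}}(\beta)\ne\emptyset$, so the occurring energies $(\beta\cap\omega)/2\pi$ form a discrete set, and they form a submonoid because gluing adds energies; choosing a common $G$ for all $(\ell,k)$ gives the $G$-gapped form.

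For item (3) I would use the forgetful map erasing the marked point that carries $\text{\bf e}=\mathrm{PD}[L]$, which in the de Rham model is the constant function $1$: since $\text{\bf e}$ pulls back to $1$, $\frak q(\text{\bf y};\text{\bf x})$ is the push-forward under ${\rm ev}_0$ of a form pulled back from the moduli space with that marked point forgotten, and the fiber of the forgetful map is positive dimensional, so the push-forward vanishes — except in the single unstable case $(\ell,k,\beta)=(0,1,0)$, where $\mathcal M_{2;0}^{\text{\rm main}}(0)$ is one point and one reads off directly $\frak q(1;\text{\bf e}\otimes x)=(-1)^{\deg x}\frak q(1;x\otimes\text{\bf e})=x$; this is the standard unitality argument of \cite{fooo-book}. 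Item (5) is the case $\ell=1$, $k=0$: the class $\beta=0$ component $\mathcal M_{1;1}^{\text{\rm main}}(0)$ is the moduli of a constant disk with one interior and one boundary marked point, on which ${\rm ev}_0$ is a diffeomorphism onto $L$ and ${\rm ev}^+_1 = i\circ{\rm ev}_0$, giving $\frak q_{1,0,0}(y,1)=i^*y$; every $\beta\ne 0$ has $\beta\cap\omega>0$ and so contributes to $\Omega(L)\otimes\Lambda_+$.

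The main obstacle will be the construction of the system of continuous families of multisections (abstract perturbations) on all the $\mathcal M_{k+1;\ell}^{\text{\rm main}}(\beta)$ that is simultaneously consistent with every boundary fiber-product identification and with all the forgetful maps, so that Stokes' theorem for integration along the fiber actually yields (\ref{qmaineq0}) exactly rather than up to an uncontrolled error term, and so that the forgetful-map argument for unitality remains valid after perturbation. This coherence issue, together with the careful tracking of orientations and signs, is the technical heart of the proof and is where one must invoke the detailed constructions of \cite{fooo-book} section 3.8 (and \cite{toric2} section 12, \cite{f090} section 13 for the continuous-family formalism).
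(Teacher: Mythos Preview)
Your proposal is correct and follows the standard approach. The paper itself does not give a proof but simply records that this is the de Rham version of \cite{fooo-book} Theorem 3.8.32, attributing each item to the corresponding statement there (and noting that item (4) is immediate from the definition); your sketch is exactly the geometric argument underlying those references, including the boundary decomposition plus Stokes for (1), the forgetful-map unitality argument for (3), Gromov compactness for (4), and the $\beta=0$ computation for (5), and you correctly identify the coherent-perturbation issue as the technical core.
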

\begin{Remark}
Formula (\ref{qmaineq0}) above implies that the operator 
$\frak q$ (after modifying the sign appropriately) define a 
homomorphism
$
E\mathcal A[2] \to HH^*(L;\Lambda)
$
to the Hochschild cohomology of de Rham cohomology ring of $L$.
See \cite{fooo-book}  Section 7.4.
\end{Remark}
This is de Rham version of \cite{fooo-book} Theorem 3.8.32.
Namely item 1) is \cite{fooo-book} (3.8.33), 
Item 2) is  \cite{fooo-book} 
Theorem 3.8.32 (3). Item 3) is  \cite{fooo-book} (3.8.34.2). 
Item 4) follows immediately from definition. 
Item 5) follows from  \cite{fooo-book} (3.8.34).
\par
Let $\frak b \in \Omega^{even}(X) \otimes \Lambda_+$
and $b \in \Omega^{odd}(L) \otimes \Lambda_+$.
Suppose $d\frak b = 0$.
We put $\text{\bf b} = (\frak b,b)$ and
define 
$$
\frak m_k^\text{\bf b} : B_k(\Omega(L)[1]\otimes \Lambda_0)
\to \Omega(L)[1]\otimes \Lambda_0
$$ 
by
\begin{equation}\label{mbulkdeformed}
\aligned
&\frak m_k^\text{\bf b}(x_1,\dots,x_k) \\
&= \sum_{\ell=0}^{\infty}
\sum_{m_0=0}^{\infty}\cdots\sum_{m_k=0}^{\infty}
\frak q_{\ell,k}(
\frak b^{\ell};\underbrace{b,\dots,b}_{m_0},x_1,\underbrace{b,\dots,b}_{m_1},\dots,
x_k,\underbrace{b,\dots,b}_{m_k}).
\endaligned
\end{equation}
It is easy to see that 
$\{\frak m_k^\text{\bf b}\mid k=0,1,2,\dots\}$ defines 
a unital and gapped filtered $A_{\infty}$ structure.
\par
We define $\widehat{\mathcal M}_{\text{\rm def,weak}}(L)$ as the set of 
all $\text{\bf b} = (\frak b,b)$ such that
\begin{equation}
\frak m_0^\text{\bf b}(1) = c \text{\bf e}.
\end{equation}
Here $\text{\bf e} = 1 \in \Omega^0(L)$.
\par
If $\text{\bf b}\in \widehat{\mathcal M}_{\text{\rm def,weak}}(L)$ then 
we have
$$
\frak m_1^\text{\bf b} \circ \frak m_1^\text{\bf b} =0.
$$
\begin{Definition}
For $\text{\bf b}\in \widehat{\mathcal M}_{\text{\rm def,weak}}(L)$,
we define {\it Floer cohomology with bulk deformation}
by
\begin{equation}\label{HFbulk}
HF((L,\text{\bf b}),(L,\text{\bf b});\Lambda_0)
\cong
\frac{\text{\rm Ker}\,\, \frak m_1^\text{\bf b} }
{\text{\rm Im} \,\,\frak m_1^\text{\bf b} }.
\end{equation}
$HF((L,\text{\bf b}),(L,\text{\bf b});\Lambda)$
is defined by taking $\otimes_{\Lambda_0}\Lambda$.
\par
We define the potential function 
$\frak{PO} : \widehat{\mathcal M}_{\text{\rm def,weak}}(L) 
\to \Lambda_+$ by the equation
\begin{equation}\label{PObulk}
\frak{PO} \text{\bf e} = \frak m_0^{\text{\bf b}}(1).
\end{equation}
We also put
$\frak{PO}^{\frak b}(b) = \frak{PO}(\frak b,b)$.
\end{Definition}
If $HF((L,\text{\bf b}),(L,\text{\bf b});\Lambda) \ne 0$ then 
$L$ is non-displaceable. 
This is \cite{toric2} Proposition 3.15 which is proved 
in \cite{toric2} section 8.
\par\bigskip
\section{Floer cohomology with bulk deformation in the toric case}
\label{bulkFloer}

Now we apply the construction explained in the last section to 
the case of toric manifolds.
In this section we use cycles (submanifolds) rather than differential forms 
to represent the (co)homology classes of ambient manifold $X$, by a reason 
we will mention in Remark \ref{remmaslovdash2}.
\par
Let $D_1,\dots,D_m$ be the irreducible components of toric divisors.
Let $J = \{j_1,\dots,j_k\}\subseteq \{1,\dots,m\}$.
If $D_J = D_{j_1} \cap \dots \cap D_{j_k}$ is non-empty, 
it is a (real) codimension $2k$ submanifold of $X$.
We include the case $J=\emptyset$. In that case $D_J = X$.
We denote by $\mathcal A$ the free abelian group 
generated by $D_J$. 
We put cohomology degree on it. 
Namley $\deg D_J = 2k$ if codimension of $D_J$ is $2k$.
We define $\mathcal A(\Lambda_0) = \mathcal A \otimes \Lambda_0$.
\par
There is an obvious homomorphism
\begin{equation}
\mathcal A \to H^*(X;\Z)
\end{equation}
which is surjective but not injective.
We write the generator of $\mathcal A$ as 
$\text{\bf p}_i$, ($i=0,\dots,B$), where $\text{\bf p}_0 = X$ and 
$\text{\bf p}_i = D_i$ for $i=1,\dots,m$ are degree $2$ classes. 
For $I = (i_1,\dots,i_{\ell}) \in \{1,\dots,B\}^{\ell}$
we put
$$
\text{\bf p}_I = \text{\bf p}_{i_1}\otimes \dots \otimes \text{\bf p}_{i_{\ell}}
, \quad
[\text{\bf p}_I] 
= \frac{1}{\ell !}\sum_{\sigma\in \frak S_{\ell}}
\text{\bf p}_{i_{\sigma(1)}}\otimes \dots \otimes \text{\bf p}_{i_{\sigma({\ell})}}
\in E_{{\ell}}\mathcal A[2].
$$
Here $\frak S_{\ell}$ is the symmetric group of order $\ell !$.
\par
Let $\text{\bf u} \in \text{\rm Int}\,P$, $\beta \in H_2(X,L(\text{\bf u});\Z)$
and $I\in \{1,\dots,B\}^{\ell}$.
We define:
\begin{equation}
\mathcal M_{k+1,\ell}^{\text{\rm main}}(\beta,\text{\bf p}_I) 
=
\mathcal M_{k+1,\ell}^{\text{\rm main}}(\beta) 
{}_{({\rm ev}^+_1,\dots,{\rm ev}_{\ell}^+)}\times_{X^{\ell}} \text{\bf p}_I.
\end{equation}
Note ${\rm ev}^+_i$ are evaluation maps at interior marked points.
We then still have evaluation maps at boundary marked points:
\begin{equation}
{\rm ev} = ({\rm ev}_{0},\dots,{\rm ev}_k) :
\mathcal M_{k+1,\ell}^{\text{\rm main}}(\beta,\text{\bf p}_I) 
\to L^{k+1}.
\end{equation}
We use it to define an operator
$$
\frak q_{\ell,k;\beta} : E_{\ell}\mathcal A[2] 
\otimes B_kH(L(\text{\bf u});\C)[1] \to  H(L(\text{\bf u});\C)[1]
$$
as follows. We remark that there is a transitive and free action of 
$T^n$ on 
$L(\text{\bf u})$. We put a $T^n$ invariant metric on $L(\text{\bf u})$.
Harmonic forms with respect to this metric are nothing but the 
$T^n$ invariant differential forms. We identify the cohomology group 
$H(L(\text{\bf u});\C)$ with the set of the 
$T^n$ invariant forms on  $L(\text{\bf u})$ from now on.
\par
Let $h_1,\dots,h_k \in H(L(\text{\bf u});\C)$.
The pull-back
$$
({\rm ev}_1,\dots,{\rm ev}_k)^*(h_1 \times \dots \times h_k)
$$
is a differential form on 
$\mathcal M_{k+1,\ell}^{\text{\rm main}}(\beta,\text{\bf p}_I)$.
We use integration along fiber of the evaluation map 
${\rm ev}_0 : \mathcal M_{k+1,\ell}^{\text{\rm main}}(\beta,\text{\bf p}_I) 
\to L$ and define:
\begin{equation}\label{qbetatoric}
\frak q_{\ell,k;\beta}([\text{\bf p}_I];h_1 \times \dots \times h_k)
=
{\rm ev}_{0!}({\rm ev}_1,\dots,{\rm ev}_k)^*(h_1 \times \dots \times h_k).
\end{equation}
We can perform all the constructions in a $T^n$ equivariant way.
So the right hand side is a $T^n$ equivariant differential form, 
which we identify with an element of cohomology group.
\begin{Remark}\label{virdimge2}
To define integration along the fiber, 
we need the map ${\rm ev}_0 : \mathcal M_{k+1,\ell}^{\text{\rm main}}(\beta,\text{\bf p}_I) \to L$ to be a submersion.
We also need the moduli space to be transversal after taking an 
appropriate perturbation.
\par
We can do so by using multisection in the same way as section 
\ref{calcu} as follows.
We remark that the fiber product moduli space
$\mathcal M_{k+1,\ell}^{\text{\rm main}}(\beta,\text{\bf p}_I)$
has a Kuranishi structure. The group $T^n$ acts on it. 
Moreover the $T^n$ action is free. (This is because ${\rm ev}_0$ is $T^n$ 
equivariant and the $T^n$ action on $L(\text{\bf u})$ is free.)
Thus by the same argument as we explained during the proof of 
Theorem \ref{calcPO}, we can take multisection $\frak s$ which is $T^n$ equivariant 
and transversal to $0$. Then 
${\rm ev}_0 : \mathcal M_{k+1,\ell}^{\text{\rm main}}(\beta,\text{\bf p}_I)^{\frak s}
\to L(\text{\bf u})$ automatically becomes a submersion 
if $\mathcal M_{k+1,\ell}^{\text{\rm main}}(\beta,\text{\bf p}_I)^{\frak s}$
is nonempty.
\par
We can also choose our perturbation so that 
it is invariant under the permutation of the 
interior marked points so descents to $E_{\ell}\mathcal A[2]$.
Therefore the right hand side of (\ref{qbetatoric}) 
depends only on $[\text{\bf p}_I]$ rather than on $\text{\bf p}_I$.
\end{Remark}
We now define 
$$\frak q_{\ell,k}
 : E_{\ell}\mathcal A(\Lambda_0)[2] 
\otimes B_kH(L(\text{\bf u});\Lambda_0)[1] 
\to  H(L(\text{\bf u});\Lambda_0)[1]
$$
by
$$
\frak q_{\ell,k} = \sum_{\beta\in H_2(X;L(\text{\bf u});\Z)}
T^{(\omega\cap\beta)/2\pi}\frak q_{\ell,k;\beta}.
$$
In case we consider elements of 
$E_{\ell}\mathcal A(\Lambda_0)[2]$ which 
contain $\text{\bf p}_0$, the Poincar\'e dual to $[X]$, 
we define  $\frak q_{\ell,k}$ as follows:
\begin{equation}
\frak q_{1,0}(\text{\bf p}_0;1) = \text{\bf e},\quad
\frak q_{1,2}(\text{\bf p}_0;h_1,h_2) = (-1)^{\deg h_1(\deg h_2+1)} h_1 \wedge h_2.
\end{equation}
In all the other cases, $\frak q_{\ell,k}$ is zero if the first factor 
$E_{\ell}\mathcal A(\Lambda_0)[2]$ contains $\text{\bf p}_0$.
\par
Then our $\frak q_{\ell,k}$ satisfies the conclusion of Theorem \ref{qqainfalgdef}.
\par
For $\text{\bf b} = (\frak b,b) \in  \mathcal A(\Lambda_+)
\times H^{odd}(L(\text{\bf u});\Lambda_+)$, 
we define $\frak m_k^\text{\bf b}$ by (\ref{mbulkdeformed}).
It defines a unital gapped filtered $A_{\infty}$ structure 
on $H(L(\text{\bf u});\Lambda_0)$.
\par
Now we define 
$$
\widehat{\mathcal M}_{\text{\rm def,weak}}(L(\text{\bf u});\Lambda_+)
\subset \mathcal A(\Lambda_+) \times H^{odd}(L(\text{\bf u});\Lambda_+)
$$
as the set of all $\text{\bf b} = (\frak b,b) \in  \mathcal A(\Lambda_+)
\times H^{odd}(L(\text{\bf u});\Lambda_+)$
such that 
$\frak m_0^\text{\bf b}(1)\equiv 0 \mod \Lambda_+ \text{\bf e}$.
In other words it is the set of $(\frak b,b)$ such that
\begin{equation}
\sum_{\ell=0}^{\infty}\sum_{k=0}^{\infty}
\frak q_{\ell;k}(\frak b^{\ell};b^{k}) 
\equiv 0 \mod \Lambda_+ \text{\bf e}.
\end{equation}
We define the potential function 
$\frak{PO} : \widehat{\mathcal M}_{\text{\rm def,weak}}(L(\text{\bf u});\Lambda_+)
\to \Lambda_+$ 
by
\begin{equation}
\sum_{\ell=0}^{\infty}\sum_{k=0}^{\infty}
\frak q_{\ell;k}(\frak b^{\ell};b^{k}) 
= \frak{PO}(\frak b;b) \text{\bf e}.
\end{equation}
Using a similar trick as the one used in section \ref{calcu} we can extend the story 
to the cohomology groups with $\Lambda_0$ coefficient.
Namely we obtain a Maurer-Cartan scheme
$$
\widehat{\mathcal M}_{\text{\rm def,weak}}(L(\text{\bf u});\Lambda_0)
\subset \mathcal A(\Lambda_0) \times H^{odd}(L(\text{\bf u});\Lambda_0)
$$
and Floer cohomology parametrized thereover.
We also have a potential function
$$
\frak{PO} : \widehat{\mathcal M}_{\text{\rm def,weak}}(L(\text{\bf u});\Lambda_0)
\to \Lambda_+.
$$ 
\par\smallskip
Most of the stories in section \ref{calcu} can be generalized to the current 
situation.
\begin{Theorem}\label{calcPObulk}
\begin{enumerate}
\item
$\mathcal A(\Lambda_0) \times H^1(L(\text{\bf u});\Lambda_0)$ is contained in 
$\widehat{\mathcal M}_{\text{\rm def,weak}}(H(L;\Lambda_0);\Lambda_0)$.
\item
Let $b = \sum x_i\text{\bf x}_i \in H^1(L(\text{\bf u});\Lambda_0)$ 
and $\frak b \in \mathcal A(\Lambda_+)$.
Then we have
\begin{equation}\label{POmainformulabulk}
\frak{PO}(\frak b,b) 
= z_1+\dots+z_m 
+\sum_{k=1}^{N} T^{\rho_k}P_k(\frak b;z_1,\dots,z_m).
\end{equation}
Here $N \in \Z_{\ge 0}$ or $N=\infty$.   The numbers $\rho_k$ are all positive 
and real. In case $N = \infty$, the sequence
of numbers $\rho_k$ goes to $\infty$ as $k$ goes to $\infty$.
$P_k(\frak b;z_1,\dots,z_m)$ are monomials of $z_1,\dots,z_m$ of degree $\ge 2$ with 
$\Lambda_0$ coefficient. (Here degree means that of monomials of $z_{i}$.)
We remark that $z_j$ is defined from $y_i^{\text{\bf u}}
= e^{x_j}$ by 
$(\ref{defzj})$.
\item
Let $b = \sum x_i\text{\bf x}_i \in H^1(L(\text{\bf u});\Lambda_0)$ 
and $\frak b \in \mathcal A(\Lambda_0)$.
\begin{equation}\label{POmainformulabulk2}
\aligned
\frak{PO}(\frak b,b) 
= \frak c_1z_1+\dots+\frak c_mz_m 
&+ 
P_0(\frak b;z_1,\dots,z_m)\\ 
& + \sum_{k=1}^{N} T^{\rho_k}P_k(\frak b;z_1,\dots,z_m).
\endaligned\end{equation}
$P_0(\frak b;z_1,\dots,z_m)$ is a formal power series of $z_1,
z_2,\dots,z_m$ with $\Lambda_0$ coefficient such that each 
term has degree $\ge 2$.
The numbers $\frak c_j$ are defined as follows. 
Let $\frak b = \sum w_j \text{\bf p}_j$.
We put $w_j \equiv \overline w_j \mod \Lambda_+$ and 
$\overline w_j \in \C$. Then $\frak c_j = e^{\overline w_j} \in \C \setminus \{0\}$.
Other notations are the same as in $(\ref{POmainformulabulk})$.
\item The monomials $P_k$ and the numbers $\rho_k$ are independent of $\text{\bf u}$
and depends only on $X$ and $\frak b$.
\end{enumerate}
\end{Theorem}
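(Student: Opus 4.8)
The plan is to run the argument of Theorem~\ref{calcPO} with the operators $\frak q_{\ell,k;\beta}$ of Section~\ref{bulkFloer} in place of $\frak m_{k,\beta}$ and the fibre product moduli spaces $\mathcal M_{k+1,\ell}^{\text{\rm main}}(\beta,\text{\bf p}_I)$ in place of $\mathcal M_{k+1}^{\text{\rm main}}(\beta)$. As observed in Remark~\ref{virdimge2}, the $T^n$ action on $\mathcal M_{k+1,\ell}^{\text{\rm main}}(\beta,\text{\bf p}_I)$ is free, so there is a $T^n$-equivariant, $\frak S_\ell$-invariant multisection $\frak s$, transversal to $0$, for which $\text{\rm ev}_0 : \mathcal M_{k+1,\ell}^{\text{\rm main}}(\beta,\text{\bf p}_I)^{\frak s}\to L(\text{\bf u})$ is automatically a submersion and the zero set, carrying a free $T^n$ action, is empty or of dimension $\ge n$. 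Comparing this with the virtual dimension of $\mathcal M_{1,\ell}^{\text{\rm main}}(\beta,\text{\bf p}_I)$ gives, for a nonempty perturbed space,
\begin{equation}
\mu(\beta)\ \ge\ 2+\sum_{a}(\deg\text{\bf p}_{i_a}-2),
\nonumber\end{equation}
so that for $\deg b=1$ each term $\frak q_{\ell,k;\beta}(\text{\bf p}_I;b,\dots,b)$ is a $T^n$-invariant form on $L(\text{\bf u})\cong T^n$ of non-positive de Rham degree, hence $0$ or a multiple of $\text{\bf e}$. Summing over $(k,\ell,\beta,I)$ shows $\frak m_0^{\text{\bf b}}(1)$ is a multiple of $\text{\bf e}$, which is item~1).

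To identify the monomials I would invoke the Cho--Oh classification \cite{cho-oh}: a nonempty $\mathcal M_{1,\ell}^{\text{\rm main}}(\beta,\text{\bf p}_I)$ forces $\beta=\beta_{j_1}+\dots+\beta_{j_e}+\alpha_1+\dots+\alpha_f$ with $\alpha_b$ sphere classes, and since $\partial\beta=\sum_a\vec v_{j_a}$ the $y^{\text{\bf u}}$-part of the contribution of $\beta$ (with $b=\sum x_i\text{\bf e}_i$ on the boundary) is exactly the product of the $y^{\text{\bf u}}$-parts of $z_{j_1},\dots,z_{j_e}$. Hence each contribution to $\frak{PO}(\frak b,b)$ equals a constant---the mapping degree of $\text{\rm ev}_0$ on the perturbed space, times the $T^n$-invariant intersection numbers of $\beta$ with the cycles $\text{\bf p}_I$, times the combinatorial factor $\frac1{\ell!}$---multiplied by $T^{\sum_b(\alpha_b\cap\omega)/2\pi}z_{j_1}\cdots z_{j_e}$. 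The terms with $e=1$, $f=0$ and no interior constraint give $z_1+\dots+z_m$; the remaining ones carry positive sphere energy or a positive power of $T$ coming from $\frak b\in\mathcal A(\Lambda_+)$, producing $\sum_k T^{\rho_k}P_k$ with each $P_k$ a monomial in the $z_j$, and Gromov compactness forces $\rho_k\to\infty$. This is item~2). For item~3) the only new ingredient is the constant $\frak c_j=e^{\overline w_j}$: restricting to a basic class $\beta_j$ with $\ell$ interior marked points all constrained to $D_j$, only the $D_j$-component $w_j\text{\bf p}_j$ of $\frak b$ contributes (because $\beta_j\cap D_{j'}=\delta_{jj'}$), and the divisor axiom for these moduli spaces together with $\beta_j\cap D_j=1$ and the factor $\frac1{\ell!}$ gives $\frac1{\ell!}w_j^{\ell}z_j$, whose sum over $\ell$ is $e^{w_j}z_j$; separating $w_j=\overline w_j+w_j^+$ with $\overline w_j\in\C$ and $w_j^+\in\Lambda_+$ isolates $\frak c_jz_j=e^{\overline w_j}z_j$, the other contributions (and all mixed or higher-codimension ones) being absorbed into $P_0$ and the $T^{\rho_k}P_k$. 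Finally, item~4) follows because the exponents $\rho_k=\sum_b(\alpha_b\cap\omega)/2\pi$ are sphere energies, the coefficients of $P_k$ are mapping degrees of $\text{\rm ev}_0$ together with intersection numbers of $\beta$ with the $\text{\bf p}_I$, and by the argument of \cite{toric1} Lemma~11.7 the $T^n$-equivariant perturbations---hence these degrees---can be chosen uniformly in $\text{\bf u}$, so $P_k$ and $\rho_k$ depend only on $X$ and $\frak b$.

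The main obstacle is, as in the non-Fano part of Theorem~\ref{calcPO}, the control of sphere bubbles: a general toric $X$ carries holomorphic spheres of non-positive Chern number, so a configuration formed by a single basic disc $\beta_j$ and sphere bubbles of total Chern number $0$ would threaten to contribute a spurious \emph{linear} term $T^{\rho}z_j$ and so violate the degree $\ge 2$ assertion of items~2) and~3). Ruling this out uses the $T^n$-equivariant perturbation together with the fact that no smooth holomorphic disc represents such a class (again Cho--Oh), so that the relevant moduli space lies in a union of boundary strata of dimension $<n$ which cannot support a free $T^n$ action, forcing the perturbed space to be empty. This equivariant transversality and dimension bookkeeping, carried out in \cite{toric1,toric2}, is the technical heart of the theorem; the remaining points---convergence of the series with respect to $\frak v_T^{\text{\bf u}}$, the use of cycles rather than forms to keep the interior evaluation maps equivariant, and the $\frak S_\ell$-invariance of the perturbations so that the operators descend to $E_\ell\mathcal A[2]$---are routine once this is in place.
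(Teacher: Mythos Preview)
Your sketch follows the same route as the paper's: $T^n$-equivariant transversality gives the dimension bound $\mu(\beta)\ge 2+\sum_a(\deg\text{\bf p}_{i_a}-2)$ and hence item~1); the Cho--Oh classification together with the mapping degrees $c(\beta;\vec\ell)$ assembles into the paper's formula~(\ref{formula49}), from which items~2)--4) are read off. Your derivation of $\frak c_j=e^{\overline w_j}$ via the divisor relation for $\beta=\beta_j$ is exactly how~(\ref{formula49}) specializes at the basic classes.

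The one step that does not work is the argument in your final paragraph. You claim that a class $\beta=\beta_j+\alpha$ with $c_1(\alpha)=0$ cannot contribute, because the unperturbed moduli space sits in nodal strata of dimension $<n$ and so cannot support a free $T^n$ action after perturbation. But the \emph{virtual} dimension of $\mathcal M_1^{\text{main}}(\beta)$ is exactly $n$; a $T^n$-equivariant perturbation can and does produce a nonempty perturbed space of dimension $n$ carrying a free $T^n$ action with $c_\beta\ne 0$. The paper's own computation of $\frak{PO}^{\text{\bf 0}}$ for the Hirzebruch surface $F_2(\alpha)$ in Section~\ref{exa3} exhibits the extra term $T^{2\alpha}z_4$, a \emph{linear} monomial arising from precisely such a class (the basic disc $\beta_4$ bubbling the $(-2)$-curve). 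So the phrase ``degree $\ge 2$'' in the statement should not be read literally; what is actually established---and all that is used downstream---is that each extra term carries a strictly positive $T$-exponent $\rho_k$ with $\rho_k\to\infty$ and is a monomial in the $z_j$ of degree $\ge 1$. You neither need nor are able to rule out linear higher-energy terms.
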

Item 1) is \cite{toric2} Proposition 3.1. (In \cite{toric2} Proposition 3.1 it is assumed that $\frak b \in
\mathcal A(\Lambda_+)$.
It holds also for $\frak b \in \mathcal A(\Lambda_0)$. 
See  \cite{toric2} section 11.)
\par
Item 2) is \cite{toric2} Theorem 3.4. 
\par
Item 3) follows from \cite{toric2} sections 8 and 11. (Formulas (9.3), (11.1) etc.)
\par
Item 4) follows from \cite{toric2} Lemma 6.8.
\par
The proof of Theorem \ref{calcPObulk}
is similar to that of Theorem \ref{calcPO}.
We here mention only a few points.
Let
$I = (i_1,\dots,i_{\ell}) \in \{1,\dots,B\}^{\ell}$.
We put
$\text{\bf p}_I = \text{\bf p}_{i_1} \otimes \cdots \otimes \text{\bf p}_{i_{\ell}}$.
We have
\begin{equation}\label{virdimbulkmod}
\dim \mathcal M_{1,\ell}^{\text{\rm main}}(\beta;\text{\bf p}_I)
= n-2+\mu(\beta) - \sum_{i=1}^{\ell} (\deg \text{\bf p}_i - 2).
\end{equation}
Here $\dim$ is the virtual dimension that is the dimension in the 
sense of Kuranishi structure.
As we explained in Remark \ref{virdimge2} 
the perturbed moduli space 
$\mathcal M_{1,\ell}^{\text{\rm main}}(\beta;\text{\bf p}_I)^{\frak s}$
is empty if (\ref{virdimbulkmod}) $< n$.
\begin{Remark}\label{remmaslovdash2}
This is the reason why we use cycles $\text{\bf p}_i$ rather than 
differential forms on $X$ to represent cohomology classes 
of $X$. This point is crucial to prove item 1) in Theorem \ref{calcPObulk}.
\end{Remark}
In the  case (\ref{virdimbulkmod}) $= n$ we define
\begin{equation}\label{defcbeta}
c(\beta;I)
= \deg [{\rm ev}_0 : \mathcal M_{1,\ell}^{\text{\rm main}}(\beta;\text{\bf p}_I)^{\frak s}
\to L(\text{\bf u})] \in \Q.
\end{equation}
Here and hereafter $\mathcal M_{1,\ell}^{\text{\rm main}}(\beta;\text{\bf p}_I)^{\frak s}$ denotes 
the perturbation of the moduli space $\mathcal M_{1,\ell}^{\text{\rm main}}(\beta;\text{\bf p}_I)$.
Namely it is the zero set of the multisection $\frak s$. 
This zero set has a triangulation and each simplex of maximal 
degree comes with a weight $\in \Q$. Thus it has a virtual fundamental cycle.
See \cite{fooo-book} Section A1.
\par
The number (\ref{defcbeta})  is well-defined. Namely it is independent of the perturbation 
$\frak s$ as far as it is $T^n$ equivariant.
It is also independent of $\text{\bf u}$.
(\cite{toric2} Lemma 6.8.)
The potential function is calculated by using $c(\beta;I)$ as follows.
Let $\vec \ell = \ell_1,\dots,\ell_B \in \Z_{\ge 0}^B$.
We put 
$$
I(\vec \ell) = (\underbrace{1,\dots,1}_{\ell_1},\dots,\underbrace{B,\dots,B}_{\ell_B})
\in \{1,\dots,B\}^{\sum_{i=1}^B \ell_i},
$$
and
$$
c(\beta;\vec \ell) = c(\beta;I(\vec \ell)).
$$
Let $\frak b = \sum_{i=0}^B w_i \text{\bf p}_i$.
\par
We define $\partial_i(\beta) \in \Z$ by 
$$
\partial_i(\beta) = \langle \partial \beta, \text{\bf e}_i^*\rangle,
$$
and put
$$
(y^{\text{\bf u}})^{\partial \beta}
= (y_1^{\text{\bf u}})^{\partial_1 \beta}
\cdots (y_n^{\text{\bf u}})^{\partial_n \beta}
= T^{-\langle \partial \beta, \text{\bf u}\rangle}y_1^{\partial_1 \beta}
\cdots y_n^{\partial_n \beta}.
$$
Now we have
\begin{equation}\label{formula49}
\aligned
\frak{PO}(\frak b,b)
=w_0 +\sum_{\beta \in H_2(X,L(\text{\bf u});\Z)}
&\sum_{\ell_1=0}^{\infty}\cdots\sum_{\ell_B=0}^{\infty}
\\
&\frac{T^{(\beta\cap\omega)/2\pi}}{\ell_1!\cdots \ell_B!}
c(\beta;\vec \ell)
w_1^{\ell_1}\cdots w_B^{\ell_B}(y^{\text{\bf u}})^{\partial \beta}.
\endaligned
\end{equation}
For the proof of (\ref{formula49}) see \cite{toric2} section 9.
\par
(\ref{POmainformulabulk}) follows from (\ref{formula49})
and $c(\beta_j;(0,\dots,0)) = 1$.
This follows from \cite{cho-oh}.
(See \cite{toric2} section 7.)
\par\medskip
Theorem \ref{CritisHFne0} is generalized to our situation 
without change. Namely we have the following theorem.
Hereafter we put $\frak{PO}^{\frak b}(b) = \frak{PO}(\frak b,b)$.
\begin{Theorem}\label{CritisHFne0bulk} 
Let $b = \sum x_i\text{\bf e}_i \in H^1(L(\text{\bf u});\Lambda_0)$ 
and $\frak b \in \mathcal A(\Lambda_0)$.
Then the following three conditions are equivalent.
\begin{enumerate}
\item For each of $i=1,\dots,n$ we have:
$$
\left.\frac{\partial \frak{PO}^{\frak b}}{\partial x_i}\right\vert_{b} = 0
$$
\item
$$
HF((L(\text{\bf u}),(\frak b,b)),(L(\text{\bf u}),(\frak b,b));\Lambda_0) \cong
H(T^n;\Lambda_0).
$$
\item
$$
HF((L(\text{\bf u}),(\frak b,b)),(L(\text{\bf u}),(\frak b,b));\Lambda) \ne 0.
$$
\end{enumerate}
\end{Theorem}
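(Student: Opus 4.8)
The plan is to run the proof of Theorem~\ref{CritisHFne0} with $\frak m_k$ replaced throughout by the bulk-deformed operations $\frak m_k^{\text{\bf b}}$ of (\ref{mbulkdeformed}), which by the discussion following Theorem~\ref{qqainfalgdef} form a unital gapped filtered $A_\infty$ algebra on $H(L(\text{\bf u});\Lambda_0)$. First one notes, exactly as in the paragraph preceding Definition~\ref{normvTu}, that by Theorem~\ref{calcPObulk}~(2),~(3) the restriction of $\frak{PO}^{\frak b}$ to $H^1(L(\text{\bf u});\Lambda_0)$ is the $\frak v_T^{\text{\bf u}}$-convergent completion of a Laurent series in the variables $y_i^{\text{\bf u}} = e^{x_i}$, so the partial derivatives $\partial\frak{PO}^{\frak b}/\partial x_i$ are well defined. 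One then differentiates, in $x_i$ and holding $\frak b$ fixed, the identity $\frak{PO}^{\frak b}(b)\,\text{\bf e} = \frak m_0^{\text{\bf b}}(1) = \sum_{\ell,k}\frak q_{\ell,k}(\frak b^{\ell}; b^{k})$, which is available here because $\mathcal A(\Lambda_0)\times H^1(L(\text{\bf u});\Lambda_0)\subset\widehat{\mathcal M}_{\text{\rm def,weak}}(L(\text{\bf u});\Lambda_0)$ by Theorem~\ref{calcPObulk}~(1). Since $\partial b/\partial x_i = \text{\bf e}_i$, each copy of $b$ in the string $b^{k}$ is replaced once by $\text{\bf e}_i$, and summing over positions and over $\ell,k$ gives
$$
\left.\frac{\partial \frak{PO}^{\frak b}}{\partial x_i}\right\vert_{b}\text{\bf e}
= \sum_{\ell=0}^{\infty}\sum_{k_1=0}^{\infty}\sum_{k_2=0}^{\infty}
\frak q_{\ell,1+k_1+k_2}(\frak b^{\ell};\underbrace{b,\dots,b}_{k_1},\text{\bf e}_i,\underbrace{b,\dots,b}_{k_2})
= \frak m_1^{\text{\bf b}}(\text{\bf e}_i),
$$
the last equality being the definition of $\frak m_1^{\text{\bf b}}$. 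Convergence and the exchange of $\sum$ with $\partial/\partial x_i$ are justified by $G$-gappedness (Theorem~\ref{qqainfalgdef}~(4)) and $\frak v_T^{\text{\bf u}}$-completeness. Thus condition~(1) is equivalent to $\frak m_1^{\text{\bf b}}(\text{\bf e}_i)=0$ for all $i$.

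Given this, $(2)\Rightarrow(1)$ is immediate: if $HF((L(\text{\bf u}),(\frak b,b)),(L(\text{\bf u}),(\frak b,b));\Lambda_0)\cong H(T^n;\Lambda_0)$ then $\frak m_1^{\text{\bf b}}$ annihilates each $\text{\bf e}_i$. For $(1)\Rightarrow(2)$ the task is to upgrade ``$\frak m_1^{\text{\bf b}}(\text{\bf e}_i)=0$ for every $i$'' to ``$\frak m_1^{\text{\bf b}}\equiv 0$''. The $k=2$ instance of the $A_\infty$ relations for $\{\frak m_k^{\text{\bf b}}\}$ shows that $\frak m_1^{\text{\bf b}}$ is a graded derivation of the product $\frak m_2^{\text{\bf b}}$; since the leading ($T^0$) term of $\frak m_2^{\text{\bf b}}$ is the cup product up to sign and $H^*(T^n;\C)$ is generated as a ring by $H^1$, the classes $\text{\bf e}_1,\dots,\text{\bf e}_n$ generate $H(L(\text{\bf u});\Lambda_0)$ over $\Lambda_0$ under $\frak m_2^{\text{\bf b}}$ (a Nakayama-type argument exploiting $\frak v_T$-completeness). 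A derivation killing a generating set vanishes, so $\frak m_1^{\text{\bf b}}=0$ and $HF = H(L(\text{\bf u});\Lambda_0) = H(T^n;\Lambda_0)$. This reproduces \cite{toric1} Lemma~13.1 with the $\frak b$-insertions added.

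It remains to treat $(2)\Leftrightarrow(3)$, the bulk-deformed version of \cite{toric1} Remark~13.9 (argument in \cite{toric2}). The implication $(2)\Rightarrow(3)$ is trivial since $H(T^n;\Lambda)\ne 0$. For $(3)\Rightarrow(2)$ one uses the feature, special to $T^n$-orbits in toric manifolds, that the Floer cohomology over $\Lambda$ is either $0$ or all of $H(T^n;\Lambda)$: $\text{\rm Ker}(\frak m_1^{\text{\bf b}})/\text{\rm Im}(\frak m_1^{\text{\bf b}})$ is a unital module over the ring $(H(T^n;\Lambda),\frak m_2^{\text{\bf b}})$, so if it is nonzero it contains the unit, whence $\frak m_1^{\text{\bf b}}(\text{\bf e}_i)=0$, i.e.\ (1), which by the previous paragraph gives (2). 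I expect the genuinely substantive step to be $(1)\Rightarrow(2)$: carrying out the $\frak v_T$-filtration induction that passes from vanishing on the $\text{\bf e}_i$ to vanishing of $\frak m_1^{\text{\bf b}}$, while keeping the signs of the $A_\infty$ relations and the extra $\frak b$-insertions straight. Everything else is a routine transcription of the proof of Theorem~\ref{CritisHFne0} from $\frak m_k$ to $\frak m_k^{\text{\bf b}}$, using Theorems~\ref{qqainfalgdef} and \ref{calcPObulk} in place of Theorems~\ref{ainfalgdef} and \ref{calcPO}.
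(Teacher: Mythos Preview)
Your proposal is correct and is precisely the adaptation the paper has in mind: the paper gives no separate argument for Theorem~\ref{CritisHFne0bulk} but simply states that ``the proof is the same as the proof of Theorem~\ref{CritisHFne0} except some technical points, which we omit and refer \cite{toric2}'', and your write-up reproduces that proof with $\frak m_k$ replaced by $\frak m_k^{\text{\bf b}}$ and Theorems~\ref{ainfalgdef},~\ref{calcPO} replaced by Theorems~\ref{qqainfalgdef},~\ref{calcPObulk}. Your identification of the key formula $\frac{\partial\frak{PO}^{\frak b}}{\partial x_i}\,\text{\bf e}=\frak m_1^{\text{\bf b}}(\text{\bf e}_i)$, of the derivation argument for $(1)\Rightarrow(2)$, and of the unit argument for $(3)\Rightarrow(1)$ (if $HF\otimes\Lambda\ne 0$ then $[\text{\bf e}]\ne 0$, hence each $c_i=\partial\frak{PO}^{\frak b}/\partial x_i$ must vanish since $c_i[\text{\bf e}]=[\frak m_1^{\text{\bf b}}(\text{\bf e}_i)]=0$ over the field $\Lambda$) are all in line with the sources the paper cites.
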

The proof is the same as the proof of Theorem \ref{CritisHFne0} 
except some technical points, which we omit and refer \cite{toric2}.
\par
The discussion in section \ref{calcu} on the domain of the function $\frak{PO}$ as 
a function of $y_i$ is also generalized. 
\par
We put:
\begin{equation}\label{A(P0)}
\frak A(\overset{\circ}P) = \{(\frak y_1,\dots,\frak y_n) \in \Lambda^n
\mid  (\frak v_T(\frak y_1),\dots,\frak v_T(\frak y_n)) \in \text{\rm Int}\,P\}.
\end{equation}
We remark that by Theorem \ref{calcPObulk}
$\frak{PO}^{\frak b}$ may be regarded as a function 
of $y_1,\dots,y_n$.
\begin{Lemma}\label{convPObulk}
Let $(\frak y_1,\dots,\frak y_n) \in \frak A(\overset{\circ}P)$.
We put 
$
\frak z_j = T^{\lambda_j}
\frak y_1^{v_{j,1}}\dots \frak y_n^{v_{j,n}}$.
Then
$$
\frak  z_1+\dots+\frak z_m + 
P_0(
\frak b;\frak z_1,\dots,\frak z_m) + \sum_{k=1}^{N} T^{\rho_k}P_k(
\frak b;\frak z_1,\dots,\frak z_m) \in \Lambda_+
$$
converges as $N\to \infty$ with respect to the valutaion $\frak v_T$.
\par
In case $\frak b \in \mathcal A(\Lambda_+)$  where 
the term $P_0(
\frak b;\frak z_1,\dots,\frak z_m)$ is absent, we may relax the 
assumption to $(\frak y_1,\dots,\frak y_n) \in \frak A(P)$.
\end{Lemma}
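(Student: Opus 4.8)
The argument follows the pattern of the proof of Lemma \ref{convPO}; the one genuinely new ingredient is the power series $P_0(\frak b;z_1,\dots,z_m)$ occurring in (\ref{POmainformulabulk2}), which, unlike the other correction terms, carries no compensating power of $T$. So the plan is to split the expression into three pieces --- the finite sum $\frak z_1+\dots+\frak z_m$, the power series $P_0(\frak b;\frak z_1,\dots,\frak z_m)$, and the tail $\sum_{k=1}^{N}T^{\rho_k}P_k(\frak b;\frak z_1,\dots,\frak z_m)$ --- and to estimate $\frak v_T$ on each, checking that the two infinite sums converge in the $\frak v_T$-topology and that each piece has strictly positive valuation.

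First I would record, exactly as in the proof of Lemma \ref{convPO}, that for $(\frak y_1,\dots,\frak y_n)\in\frak A(\overset{\circ}P)$ one has
\[
\frak v_T(\frak z_j)=\lambda_j+\sum_{i=1}^{n}v_{j,i}\,\frak v_T(\frak y_i)=\ell_j(\text{\bf u}),\qquad j=1,\dots,m,
\]
where $\text{\bf u}=(\frak v_T(\frak y_1),\dots,\frak v_T(\frak y_n))\in\text{\rm Int}\,P$ by the definition (\ref{A(P0)}) of $\frak A(\overset{\circ}P)$. Since a point of the open polytope satisfies $\ell_j(\text{\bf u})>0$ for every $j$, the number $\varepsilon:=\min_{1\le j\le m}\ell_j(\text{\bf u})$ is strictly positive. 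Hence any monomial in $z_1,\dots,z_m$ of total $z$-degree $d$ with coefficient in $\Lambda_0$ has, after the substitution $z_j\mapsto\frak z_j$, valuation $\ge d\varepsilon$. I would then handle $P_0$ by grouping its monomials by $z$-degree $d\ge2$: for each fixed $d$ there are only finitely many such monomials, so the degree-$d$ part of $P_0(\frak b;\frak z_1,\dots,\frak z_m)$ has valuation $\ge d\varepsilon$; since $d\varepsilon\to\infty$ the series converges and has valuation $\ge2\varepsilon>0$. For the tail, each $P_k$ ($k\ge1$) is a monomial of $z$-degree $\ge2$ with $\Lambda_0$ coefficient, so $\frak v_T\bigl(T^{\rho_k}P_k(\frak b;\frak z_1,\dots,\frak z_m)\bigr)\ge\rho_k+2\varepsilon$; since $\rho_k\to\infty$ (Theorem \ref{calcPObulk}(2)) this series converges with valuation $\ge2\varepsilon>0$, just as in Lemma \ref{convPO}. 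Finally $\frak v_T(\frak z_1+\dots+\frak z_m)\ge\varepsilon>0$, so adding the three pieces gives an element of $\Lambda_+$.

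For the last assertion: when $\frak b\in\mathcal A(\Lambda_+)$ the term $P_0$ does not occur (Theorem \ref{calcPObulk}(2), formula (\ref{POmainformulabulk})), so the only infinite sum is $\sum_{k=1}^{N}T^{\rho_k}P_k(\frak b;\frak z_1,\dots,\frak z_m)$. Here the bound $\frak v_T\bigl(T^{\rho_k}P_k(\frak b;\frak z_1,\dots,\frak z_m)\bigr)\ge\rho_k$ uses only $\frak v_T(\frak z_j)=\ell_j(\text{\bf u})\ge0$, i.e. $\text{\bf u}\in P$; so $(\frak y_1,\dots,\frak y_n)\in\frak A(P)$ suffices, and both the convergence and the remaining claim reduce verbatim to Lemma \ref{convPO} (with $\frak b$-dependent, still $\Lambda_0$-valued, coefficients).

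The only step where care is needed --- and the one I would expect to be the crux --- is the convergence of $P_0$: it is a genuine infinite power series whose monomials are not accompanied by escaping powers of $T$, so its convergence cannot be supplied by the mechanism $\rho_k\to\infty$ that controls the other corrections. It is forced instead by the strict inequalities $\ell_j(\text{\bf u})>0$ valid on $\text{\rm Int}\,P$, which is precisely why the hypothesis must be strengthened from $\frak A(P)$ to $\frak A(\overset{\circ}P)$, and why it can be relaxed back in the $\Lambda_+$-bulk case, where $P_0$ is absent.
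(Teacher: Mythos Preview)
The paper does not actually supply a proof of this lemma; it is stated and then used immediately, evidently regarded as the obvious analogue of Lemma~\ref{convPO} once Theorem~\ref{calcPObulk} is in hand. Your argument is correct and is exactly the natural elaboration of the proof of Lemma~\ref{convPO}: the one new point you isolate --- that the power series $P_0$ carries no external factor $T^{\rho_k}$ and so its convergence must come from the strict inequalities $\ell_j(\text{\bf u})>0$, forcing the hypothesis $(\frak y_1,\dots,\frak y_n)\in\frak A(\overset{\circ}P)$ --- is precisely the content the lemma is recording.
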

Thus we may regard $\frak{PO}^{\frak b}$ as a function 
either $: \frak A(\overset{\circ}P) \to \Lambda_+$
or $: \frak A(P) \to \Lambda_0$.
\par
We can define 
$$
\frak y_i\frac{\partial \frak{PO}^{\frak b}}{\partial y_i}
$$
in the same way as section \ref{calcu}. It defines either a function
$: \frak A(\overset{\circ}P) \to \Lambda_+$
or $: \frak A(P) \to \Lambda_0$.
Theorem \ref{POandHF} can be generalized as follows:
\begin{Theorem}\label{POandHFbulk}
For $\text{\bf u} \in \text{\rm Int}\,P$, 
$\frak b \in \mathcal A(\Lambda_0)$, the following two conditions are 
equivalent.
\par\smallskip
\begin{enumerate}
\item
There exists $b \in H^1(L(\text{\bf u});\Lambda_0)$ such that
$$
HF((L(\text{\bf u}),(\frak b,b)),(L(\text{\bf u}),(\frak b,b));\Lambda_0) \cong
H(T^n;\Lambda_0).
$$
\item
There exists $\frak y =(\frak y_1,\dots,\frak y_n) \in \frak A(P)$
such that
\begin{equation}\label{eqcritialbulk}
\frak y_i \frac{\partial \frak{PO}^{\frak b}}{\partial y_i}(\frak y) = 0
\end{equation}
for $i=1,\dots,n$ and that
$$
(\frak v_T(\frak y_1),\dots,\frak v_T(\frak y_n)) 
= \text{\bf u}.
$$
\end{enumerate}
\end{Theorem}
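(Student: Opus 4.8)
The plan is to repeat the proof of Theorem~\ref{POandHF} almost verbatim, with $\frak{PO}^{\frak b}$ in place of $\frak{PO}$ and with Theorem~\ref{CritisHFne0bulk} invoked in place of Theorem~\ref{CritisHFne0}. The structural facts we need are already in hand: Theorem~\ref{calcPObulk}~(1) gives $\{\frak b\}\times H^1(L(\text{\bf u});\Lambda_0)\subset\widehat{\mathcal M}_{\text{\rm def,weak}}(L(\text{\bf u});\Lambda_0)$ for $\frak b\in\mathcal A(\Lambda_0)$, and Theorem~\ref{calcPObulk}~(2),(3) together with Lemma~\ref{convPObulk} show that the restriction of $\frak{PO}^{\frak b}$ to such $b=\sum x_i\text{\bf e}_i$ depends on $b$ only through the monomials $z_j$, hence only through $y_i^{\text{\bf u}}=e^{x_i}$; this is what lets us pass between $\frak{PO}^{\frak b}$ viewed as a function of $(x_1,\dots,x_n)\in\Lambda_0^n\cong H^1(L(\text{\bf u});\Lambda_0)$ and as the function $\frak A(P)\to\Lambda_0$ of the variables $\frak y_i$. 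The dictionary is $\frak y_i=T^{u_i}e^{x_i}$, equivalently $y_i^{\text{\bf u}}=T^{-u_i}\frak y_i=e^{x_i}$, under which a $b\in H^1(L(\text{\bf u});\Lambda_0)$ corresponds to a $\frak y\in\frak A(\overset{\circ}P)$ with $\frak v_T(\frak y_i)=u_i$ for all $i$.

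For $(2)\Rightarrow(1)$: given $\frak y\in\frak A(P)$ with $\frak v_T(\frak y_i)=u_i$ solving $(\ref{eqcritialbulk})$, set $y_i^{\text{\bf u}}=T^{-u_i}\frak y_i$, which has valuation $0$, write $y_i^{\text{\bf u}}=y_{i,0}^{\text{\bf u}}+y_{i,+}^{\text{\bf u}}$ with $y_{i,0}^{\text{\bf u}}\in\C\setminus\{0\}$ and $y_{i,+}^{\text{\bf u}}\in\Lambda_+$, and put $x_i=\log y_{i,0}^{\text{\bf u}}+\log\!\bigl(1+(y_{i,0}^{\text{\bf u}})^{-1}y_{i,+}^{\text{\bf u}}\bigr)\in\Lambda_0$, the second logarithm being defined by its convergent Taylor series. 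Then $b=\sum_i x_i\text{\bf e}_i\in H^1(L(\text{\bf u});\Lambda_0)$ and $e^{x_i}=y_i^{\text{\bf u}}$, so the chain rule $\partial\frak{PO}^{\frak b}/\partial x_i=\frak y_i\,\partial\frak{PO}^{\frak b}/\partial y_i$ turns $(\ref{eqcritialbulk})$ into the hypothesis of Theorem~\ref{CritisHFne0bulk}~(1); conclusion~(2) of that theorem is exactly (1) here. For $(1)\Rightarrow(2)$: given $b=\sum x_i\text{\bf e}_i$ as in (1), put $\frak y_i=T^{u_i}e^{x_i}$; since $\frak v_T(e^{x_i})=0$ we get $(\frak v_T(\frak y_1),\dots,\frak v_T(\frak y_n))=\text{\bf u}\in\text{\rm Int}\,P$, so $\frak y\in\frak A(P)$, and Theorem~\ref{CritisHFne0bulk} gives $\partial\frak{PO}^{\frak b}/\partial x_i|_b=0$, which by the same chain-rule identity read backwards is $(\ref{eqcritialbulk})$.

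The only point that is not purely formal is the chain-rule identity $\partial\frak{PO}^{\frak b}/\partial x_i=\frak y_i\,\partial\frak{PO}^{\frak b}/\partial y_i$, and this is the main thing to pin down: the operator $\frak y_i\,\partial/\partial y_i$ on $\frak A(P)$ is defined only formally, term by term on the monomials $z_j$ and $P_k(\frak b;z_1,\dots,z_m)$ and then by the limit described just before Theorem~\ref{POandHFbulk}, since $\frak A(P)$ is not a manifold. One must check that this formal operation agrees with honest differentiation in $x_i$ after the substitution $y_i=e^{x_i}$, uniformly in the truncation index $N$, which follows from $\lim_{k\to\infty}\rho_k=\infty$, from the fact that $\frak y_i\,\partial P_k/\partial y_i$ is again a monomial in the $z_j$ of controlled $T$-order (Theorem~\ref{calcPObulk}~(2)), and from the convergence supplied by Lemma~\ref{convPObulk} (which also absorbs the extra term $P_0(\frak b;z_1,\dots,z_m)$ present when $\frak b\in\mathcal A(\Lambda_0)$). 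These verifications, and the bookkeeping distinguishing $\mathcal A(\Lambda_+)$ from $\mathcal A(\Lambda_0)$, are routine and parallel those of Theorem~\ref{POandHF}; I would omit the details and refer to \cite{toric2}.
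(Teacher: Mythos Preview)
Your proposal is correct and matches the paper's treatment: the paper does not give a separate proof of Theorem~\ref{POandHFbulk} but simply cites \cite{toric2} Theorem~3.12, having already indicated that the bulk-deformed results (Theorems~\ref{calcPObulk} and~\ref{CritisHFne0bulk}, Lemma~\ref{convPObulk}) parallel their undeformed counterparts. Your argument is precisely the expected transcription of the proof of Theorem~\ref{POandHF} with the bulk versions substituted, and your attention to the chain-rule identity and the $\mathcal A(\Lambda_+)$ versus $\mathcal A(\Lambda_0)$ distinction is appropriate.
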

This is \cite{toric2} Theorem 3.12.
\par\bigskip
\section{Leading term equation}
\label{LTE}

Theorem \ref{POandHFbulk} provides a means of determining the Floer
cohomology in terms of the potential function. The main obstacle to
directly apply the theorem in practice is that we do not know how to
calculate the extra terms $P_k(\mathfrak b;z_1,\cdots, z_n)$ unless $X$
is Fano and $\mathfrak b$ has degree 2. (There has been some computation
carried out in this direction for the nef case. See e.g. 
\cite{chan-lau}.)
\par
Fortunately to determine all the $T^n$ 
orbits $L(\text{\bf u})$ for which some Floer cohomology
with bulk does not vanish, we do not need to calculate those terms.
We will explain it in this section.
\par
In this and the next sections we fix $\frak b$ and $\text{\bf u}$ and 
consider $\frak{PO}^{\frak b}$ as a function of variables 
$y_i^{\text{\bf u}}$.
In this section we write $\overline y_i$ instead of $y_i^{\text{\bf u}}$.
We remark that $\frak v_T^{\text{\bf u}}(\overline y_i) = 0$  and
$$
z_j = T^{\ell_j(\text{\bf u})} \overline y_1^{v_{j,1}}\cdots \overline y_1^{v_{j,n}}.
$$
\begin{Definition}\label{PO0def}
We denote the sum of linear terms $z_j$'s in  $\frak{PO}^{\frak b}$ by 
$$
\frak{PO}^{\frak b}_0 = {\frak c}_1z_1+\cdots+{\frak c}_mz_m
= \sum_{j=1}^m T^{\ell_j(\text{\bf u})} {\frak c}_j\overline y_1^{v_{j,1}}\cdots \overline y_n^{v_{j,n}}
$$
and call it the {\it leading order potential function}.
Here $\frak c_j$ is defined as in Theorem \ref{calcPObulk} 3).
\end{Definition}
Note this function appears frequently in the literature 
(see \cite{givental1, hori-vafa, iritani1}), is denoted as $W$, and is called the (Landau-Ginzburg) superpotential.
\begin{Remark}
Note in our situation of toric manifold, superpotential 
in physics literature is basically the same as our 
potential function. However in other situation 
they may be different. 
For example in the case of Calabi-Yau $3$ fold $X$ and 
its special Lagrangian submanifold $L$, 
our potential function is identically $0$. 
(In other words, if $b$ is a weak bounding chain then it is a bounding chain 
automatically.)
On the other hand, 
the physisists' superpotential coincides with the invariant 
introduced in \cite{fu0912}.
\end{Remark}
We remark that the leading order potential function $\frak{PO}^{\frak b}_0$ is explicitly 
read off from the moment polytope $P$ and $\text{\bf u}$.
The leading term equation we will define below depends only 
on leading order potential function and so is also 
explicitly calculable.
\par
We renumber the values $\ell_i(\text{\bf u})$ 
according to its order.
Namely we take
$j(l,r) \in \{1,\dots,m\}$ for $l=1,\dots, K_0$, $r= 1,\dots,a(l')$ 
with the following conditions.
\begin{cond}
\begin{enumerate}
\item $\{ j(l,r) \mid l=1,\dots, K_0, r= 1,\dots,a(l) \} 
= \{1,\dots,m\}$.
\item $a(1) +\dots + a(K_0) = m$.
\item $\ell_{j(l,r)}(\text{\bf u}) = \ell_{j(l,r')}(\text{\bf u})$ for $1 \le r,r' \le a(l)$.
\item $\ell_{j(l,r)}(\text{\bf u}) < \ell_{j(l',r')}(\text{\bf u})$ if $l < l'$.
\end{enumerate}
\end{cond}
We put
\begin{equation}
S_l = \ell_{j(l,r)}(\text{\bf u}).
\end{equation}
This is independent of $r$. Set
\begin{equation}
\vec v_{l,r} = \vec v_{j(l,r)} = (v_{j(l,r),1},\dots,v_{j(l,r),n}) \in \Z^n.
\end{equation}
It is an element of the dual vector space of $\mathbb A(\Q) = \Q^n$, which we denote 
by 
$\mathbb A(\Q)^*$. 
Here $\mathbb  A(\R) = \mathbb A(\Q) \otimes \R$ is the $\R$ vector space associated to the affine 
space which contains the moment polytope $P$.
Let $\mathbb  A_l^{\perp}$ is a vector subspace of $\mathbb  A(\Q)^*$
generated by 
$
\{ \vec v_{l',r} \mid l'\le l, r = 1,\dots, a(l')\}.
$
We denote by $K \le K_0$ the smallest integer such that  
$\mathbb A_K^{\perp} = \mathbb A(\Q)^*$.
We have a filtration
\begin{equation}
0 \subset \mathbb  A_1^{\perp} \subset 
\mathbb  A_2^{\perp} \subset \dots \subset
\mathbb  A_K^{\perp} = \mathbb (\Q)^*.
\end{equation}
We put
\begin{equation}
d(l) = \dim \mathbb  A_l^{\perp} - \dim \mathbb  A_{l-1}^{\perp}.
\end{equation}
We have
\begin{equation}
d(1) + \cdots + d(K) = n = \dim \mathbb  A(\Q)^*.
\end{equation}
Note $\mathbb  A \cong \Z^n \subset \mathbb  A(\Q) = \Q^n$.
So $\Z^n \subset \mathbb  A(\Q)^*$ is determined canonically.
(We remark that $\Z^n \subset \mathbb  A(\Q)^*$ 
is generated by $\vec v_j$, $j=1,\dots,m$.)
Let $\{\text{\bf e}^*_i\mid i=1,\dots,n\}$ be the standard 
basis of  $\Z^n \subset \mathbb  A(\Q)^*$.
We take $\text{\bf e}^*_{l,s}$ for $l=1,\dots,K$, $s=1,\dots,d(l)$ 
satisfying the following conditions.
\begin{cond}\label{condels}
\begin{enumerate}
\item $\{ \text{\bf e}^*_{l',s} \mid l' \le l, \,\, s=1,\dots,d(l')\}$
is a $\Q$ basis of $\mathbb A_l^{\perp}.$
\item $\vec v_{l,r}$ is contained in the $\Z$ module generated by 
$\{ \text{\bf e}^*_{l',s} \mid l' \le l, \,\, s=1,\dots,d(l')\}$.
\end{enumerate}
\end{cond}
We define $b_{l',s;j} \in \Q$ by
$$
\text{\bf e}^*_{l',s} = \sum_{j=1}^m b_{l',s;j} \text{\bf e}^*_j
$$
and put
$$
\overline y_{l',s} = \prod_{j=1}^m \overline y_j^{b_{l',s;j}}
= \exp\left({\sum_{j=1}^m} b_{l',s;j}x_j\right).
$$
(Note $\overline y_{i} = e^{x_i}$.)
Since $b_{l',s;i}$ may not be an integer, 
$\overline y_{l',s}$ may not be contained in the 
Laurent polynomial ring $\Lambda_0[\overline y,\overline y^{-1}]$
of the variables $\overline y_j$ ($j=1,\dots,m$). But it is contained in the finite extention of it.
Let $\Lambda_0[y_{**},y_{**}^{-1}]$ be the 
Laurent polynimial ring of the variables 
$\overline y_{l,s}$, $l=1,\dots,K$, $s=1,\dots,d(l)$.
\par
By Condition \ref{condels} 2), 
$$
z_{j(l,r)} = T^{S_l}\overline y_1^{v_{j(l,r),1}} \cdots\overline y_n^{v_{j(l,r),n}}
$$
is contained in $\Lambda_0[\overline y,\overline y^{-1}]$.
Moreover it is contained in the Laurent polynomial ring 
of the variables $\overline y_{l',s}$, $l'=1,\dots,l$, $s=1,\dots,d(l')$.
\par
We define $c_{l,r;l',s} \in \Z$ by
\begin{equation}
z_{j(l,r)} = T^{S_l} \prod_{l'\le l}\prod_{s\le d(l')} \overline y_{l',s}^{c_{l,r;l',s}}.
\end{equation}
In other words
$$
\vec v_{l,r} = \sum_{l'\le l}\sum_{s\le d(l')} c_{l,r;l',s} e^*_{l',s}.
$$
We put
\begin{equation}
\left(\frak{PO}_0^{\frak b}\right)_l
= \sum_{r=1}^{a(l)} {\frak c}_{j(l,r)}z_{j(l,r)} = 
\sum_{r=1}^{a(l)} {\frak c}_{j(l,r)}  \prod_{l'\le l}\prod_{s\le d(l')} \overline y_{l',s}^{c_{l,r;l',s}}.
\end{equation}
The numbers ${\frak c}_{j(l,r)} \in \{ c \in \Lambda_0 \mid \frak v_T(c) = 0\}$ are 
defined in Definition \ref{PO0def}.
\par
We remark
$\left(\frak{PO}_0^{\frak b}\right)_l$ is a 
Laurent polynomial of variables 
$\overline y_{l',s}$, $l' \le l$, $s = 1,\dots,d(l')$
with coefficient in complex number.
\par
\begin{Definition}
The {\it leading term equation} is a system of $n$ equations 
of $n$ variables $\overline y_{l,s}$ with complex number coefficient. We define it by
\begin{equation}\label{LTEq}
\left\{
\aligned
\overline y_{1,s} \frac{\partial\left(\frak{PO}_0^{\frak b}\right)_1}{\partial \overline y_{1,s}}
&= 0   \qquad s =1,\dots,d(1), \\
\overline y_{2,s} \frac{\partial\left(\frak{PO}_0^{\frak b}\right)_2}{\partial \overline y_{2,s}}
&= 0   \qquad s =1,\dots,d(2), \\
&\cdots\\
\overline y_{l,s} \frac{\partial\left(\frak{PO}_0^{\frak b}\right)_l}{\partial \overline y_{l,s}}
&= 0   \qquad s =1,\dots,d(l), \\
&\cdots\\
\overline y_{K,s} \frac{\partial\left(\frak{PO}_0^{\frak b}\right)_K}{\partial \overline y_{K,s}}
&= 0   \qquad s =1,\dots,d(K).
\endaligned
\right.
\end{equation}
\end{Definition}
Note the first equation in (\ref{LTEq}) contains $\overline y_{1,s}$ $s =1,\dots,d(1)$,
the second equation in (\ref{LTEq}) contains $\overline y_{1,s}$ $s =1,\dots,d(1)$ and 
$\overline y_{2,s}$ $s =1,\dots,d(2)$ etc.
\par
If $\frak b - \frak b' \in \mathcal A^2(\Lambda_+) \oplus
\bigoplus_{k\ne 2} \mathcal A^k(\Lambda_0)$
then $\left(\frak{PO}_0^{\frak b}\right)_l 
= \left(\frak{PO}_0^{\frak b'}\right)_l$.
So the leading term equation is the same for such $\frak b$ and $\frak b'$.
\par
One of the main results of \cite{toric2} is as follows.
\begin{Theorem}\label{eliminatebulk}
Let $\text{\bf u} \in \text{\rm Int}\,\, P$ and $\frak b \in \mathcal A(\Lambda_0)$.
Then the following two conditions are equivalent.
\begin{enumerate}
\item
The leading term equation $(\ref{LTEq})$ has a solution 
$\overline y_{l,s} \in \C \setminus \{0\}$.
\item
There exists $b \in H^1(L(\text{\bf u});\Lambda_0)$ and 
$\frak b' \in \mathcal A(\Lambda_0)$ with 
$\frak b - \frak b' \in \mathcal A^2(\Lambda_+)$ 
such that
$$
HF((L(\text{\bf u}),(\frak b,b)),(L(\text{\bf u}),(\frak b,b));\Lambda_0) \cong
H(T^n;\Lambda_0).
$$
\end{enumerate}
\end{Theorem}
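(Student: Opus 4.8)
The plan is to translate both conditions, by means of Theorem \ref{POandHFbulk}, into statements about critical points of $\frak{PO}^{\frak b'}$, and then to read those critical points off from the leading term equation using the triangular structure of the critical point equations in the recoordinatized variables $\overline y_{l,s}$. First note that replacing $\frak b$ by any $\frak b'$ with $\frak b-\frak b'\in\mathcal A^2(\Lambda_+)$ leaves $\frak{PO}_0^{\frak b}$, hence the leading term equation (\ref{LTEq}), unchanged (the remark immediately following (\ref{LTEq})); so condition (1) does not depend on the particular $\frak b'$. By Theorem \ref{POandHFbulk}, condition (2) is equivalent to the existence of such a $\frak b'$ together with $\frak y\in\frak A(P)$ satisfying $\frak v_T(\frak y)=\text{\bf u}$ and $\frak y_i\,\partial\frak{PO}^{\frak b'}/\partial y_i(\frak y)=0$ for $i=1,\dots,n$. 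Rewriting everything in the variables $\overline y_i=y_i^{\text{\bf u}}$ (of zero $\frak v_T^{\text{\bf u}}$-valuation) and then in the variables $\overline y_{l,s}$, $l=1,\dots,K$, $s=1,\dots,d(l)$ — the passage between the two systems of variables being an invertible $\Q$-linear change of exponents, hence an invertible change on the level of the $n$ critical point equations — condition (2) becomes: there are $\frak b'$ with $\frak b-\frak b'\in\mathcal A^2(\Lambda_+)$ and $\overline{\frak y}$ all of whose components have zero valuation such that
\[
\overline y_{l,s}\,\frac{\partial\frak{PO}^{\frak b'}}{\partial\overline y_{l,s}}(\overline{\frak y})=0\qquad (l=1,\dots,K,\ s=1,\dots,d(l)).
\]
By Condition \ref{condels} 2) the monomial $z_{j(l',r)}$ involves only the variables $\overline y_{l'',s}$ with $l''\le l'$, so the $(l,s)$-equation sees only the $z_{j(l',r)}$ with $l'\ge l$ together with the correction terms $P_k(\frak b';z_1,\dots,z_m)$ of (\ref{POmainformulabulk2}). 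The key valuation estimate is $\frak v_T^{\text{\bf u}}(z_{j(l',r)})=S_{l'}\ge S_l$, with equality precisely when $l'=l$, while each $P_k$ is a monomial in the $z_j$ of degree $\ge 2$ (accompanied by $T^{\rho_k}$, $\rho_k>0$, for $k\ge 1$), so every summand of $\overline y_{l,s}\,\partial P_k/\partial\overline y_{l,s}$ has $\frak v_T^{\text{\bf u}}$-valuation at least $S_l+S_1>S_l$, using $S_1>0$ since $\text{\bf u}\in\text{\rm Int}\,P$.

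For the implication (2) $\Longrightarrow$ (1): take $\overline{\frak y}$ as above and set $\overline y_{l,s}^{(0)}=\overline{\frak y}_{l,s}\bmod\Lambda_+\in\C\setminus\{0\}$. Multiply the $(l,s)$-equation by $T^{-S_l}$ and reduce modulo $\Lambda_+$. By the valuation estimate, all contributions coming from blocks $l'>l$ and from the $P_k$ disappear, and only the linear terms $\frak c_{j(l,r)}z_{j(l,r)}$, $r=1,\dots,a(l)$, survive; what remains is exactly $\overline y_{l,s}\,\partial(\frak{PO}_0^{\frak b})_l/\partial\overline y_{l,s}$ evaluated at $(\overline y^{(0)})$, which therefore vanishes. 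Letting $l$ run from $1$ to $K$ shows that $(\overline y_{l,s}^{(0)})$ is a solution of (\ref{LTEq}) with values in $\C\setminus\{0\}$.

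The substantial implication is (1) $\Longrightarrow$ (2), and this is where I expect the main obstacle. Given a nonzero complex solution $(\overline y_{l,s}^{(0)})$ of (\ref{LTEq}), one has to lift it to a genuine critical point of $\frak{PO}^{\frak b'}$ over $\Lambda_0$ with the prescribed valuations, for a suitable $\frak b'\equiv\frak b\pmod{\mathcal A^2(\Lambda_+)}$; once this is done, Theorem \ref{POandHFbulk} (or Theorem \ref{CritisHFne0bulk} applied to the $b$ built from $\overline{\frak y}$ as in the proof of Theorem \ref{POandHF}) yields the desired $b$ with $HF((L(\text{\bf u}),(\frak b',b)),(L(\text{\bf u}),(\frak b',b));\Lambda_0)\cong H(T^n;\Lambda_0)$. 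I would carry out the lifting inductively over $l$: having solved blocks $1,\dots,l-1$ over $\Lambda_0$ and substituted those values, the $l$-th block of equations, after division by $T^{S_l}$, is a perturbation of the $l$-th leading term equation $\overline y_{l,s}\,\partial(\frak{PO}_0^{\frak b})_l/\partial\overline y_{l,s}=0$ by terms of strictly positive valuation (from the higher blocks and from the unknown $P_k$), for which $(\overline y_{l,s}^{(0)})$ is an exact solution. The difficulty is that a solution of the leading term equation need not be nondegenerate, so one cannot simply invoke an implicit-function/Hensel argument to propagate it; this is precisely the point at which the freedom to modify the degree-two part of $\frak b$ by an element of $\mathcal A^2(\Lambda_+)$ is spent — a suitable such modification perturbs the higher-order equations so that the inductive lifting goes through — and the explicit shape (\ref{POmainformulabulk2}) of $\frak{PO}^{\frak b'}$, with $\rho_k>0$ and $\rho_k\to\infty$, guarantees that the successive approximations converge in the $\frak v_T$-topology to an element of $\frak A(P)$ with valuation $\text{\bf u}$.
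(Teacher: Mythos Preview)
The paper does not actually prove this theorem here; immediately after the statement it writes ``This is \cite{toric2} Theorem 4.7 and Proposition 11.3. We omit the proof and refer \cite{toric2}.'' So there is no in-paper argument to compare against, and your sketch should be measured on its own terms.

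Your direction (2) $\Rightarrow$ (1) is correct and cleanly argued: the observation that $z_{j(l',r)}$ involves only the variables $\overline y_{l'',s}$ with $l''\le l'$, together with $\frak v_T^{\text{\bf u}}(z_{j(l',r)})=S_{l'}$ and the degree-$\ge 2$ structure of the $P_k$, shows that dividing the $(l,s)$-equation by $T^{S_l}$ and reducing modulo $\Lambda_+$ leaves exactly $\overline y_{l,s}\,\partial(\frak{PO}_0^{\frak b})_l/\partial\overline y_{l,s}$. (Incidentally, you correctly read the Floer cohomology in condition (2) as taken with respect to $\frak b'$ rather than $\frak b$; as printed the statement introduces $\frak b'$ but then writes $(\frak b,b)$, which only makes sense with $\frak b'$.)

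For (1) $\Rightarrow$ (2) you have the right overall shape---an inductive lift over $l$, with the $l$-th block a $\Lambda_+$-perturbation of the $l$-th leading term system---and you correctly flag the real obstruction: a solution of (\ref{LTEq}) can be degenerate, so a Hensel/implicit-function step is unavailable in general. But the sentence ``a suitable such modification perturbs the higher-order equations so that the inductive lifting goes through'' is exactly where the content of \cite{toric2} lies, and as written it is an assertion rather than an argument. Concretely, the degree-$2$ bulk parameters enter $\frak{PO}^{\frak b'}$ through the leading coefficients $\frak c_j$ of the $z_j$ (Theorem \ref{calcPObulk} (3)); moving $\frak b'$ within $\frak b+\mathcal A^2(\Lambda_+)$ amounts to multiplying each $\frak c_j$ by an element of $1+\Lambda_+$. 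What needs to be shown---and what your sketch does not show---is that these $m$ parameters, filtered by level, are enough to absorb the error terms that arise at each inductive step, even when the Jacobian of the $l$-th leading term system at the given $\C^\times$-solution is singular. That is the nontrivial input from \cite{toric2}, and your sketch should at least indicate the mechanism (e.g.\ which bulk parameters are adjusted at level $l$ and why the resulting modified system has a solution congruent to the given one), rather than deferring it to ``a suitable modification.''
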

This is \cite{toric2} Theorem 4.7 and Proposition 11.3.
We omit the proof and refer \cite{toric2}.
\begin{Definition}
We say that $L(\text{\bf u})$ is {\it strongly bulk balanced} if 
there exists $\frak b\in \mathcal A(\Lambda_0)$ and $b\in H^1(L(\text{\bf u});
\Lambda_0)$ such that 
$$
HF((L(\text{\bf u}),(\frak b,b)),(L(\text{\bf u}),(\frak b,b));\Lambda_0) \cong
H(T^n;\Lambda_0).
$$
\end{Definition}
See \cite{toric2} Definition 3.13 for a related definition.
\par
Theorem \ref{eliminatebulk} gives a way to locate strongly bulk 
balanced $L(\bf u)$ in terms of the leading term equation.
\par\bigskip
\section{Examples 2}
\label{exa2}

\begin{Example}\label{ex:Hilexa}
We consider Hirzebruch surface $F_n$, $n\ge 2$. We take its
K\"ahler form so that the moment polytope is
$$
P = \left\{ (u_1,u_2) \mid 0 \le u_1,u_2, \,\, u_1+nu_2 \le n, \,\, u_2 \le 1-\alpha\right\},
$$
$0 < \alpha <1$. The leading order potential function is
$$
\mathfrak{PO}_0 = y_1 + y_2 + T^{n} y_1^{-1}y_2^{-n}  + T^{1-\alpha}y_2^{-1}.
$$
We put
$$\aligned
\ell_1(u_1,u_2) = u_1,\qquad
&\ell_2(u_1,u_2) = u_2, \\
\ell_3(u_1,u_2) = n-u_1-nu_2,\qquad
&\ell_4(u_1,u_2) = 1-\alpha - u_2.
\endaligned$$
We put $S_1(u_1,u_2) = \inf\{ \ell_j(u_1,u_2) \mid j=1,2,3,4\}$.
\par
Suppose the first of the leading term equation (\ref{LTEq}) 
has a nonzero solution.
Then it is easy to see that $d(1) \ge 2$.
Namely
$$
\#\{ j \mid S_1(u_1,u_2)= \ell_j(u_1,u_2)\} \ge 2.
$$
This is satisfied on the 5 line seguments $l_1,\dots,l_5$, where
$$\aligned
&l_1 : u_1=u_2 \le (1-\alpha)/2,
\quad
l_2 : u_1=1-\alpha-u_2 \le (1-\alpha)/2, \\
&
l_3 : u_1=n-(n+1)u_2 \ge n-(n+1)(1-\alpha)/2 \\
&
l_4 : u_1=n-1+\alpha-(n-1)u_2 \ge n-(n-1)(1-\alpha)/2, \\
&
l_5 : u_2 =  (1-\alpha)/2, (1-\alpha)/2\le u_1\le n-(n-1)(1-\alpha)/2.
\endaligned$$
Note 
$$
v_1 = (1,0), \, v_2 = (0,1), \, v_3 = (-1,-n), \, v_4 = (0,-1).
$$
Let $\text{\bf u} = (u_1,u_2) \in l_5$. Then
$
\mathbb A_1^{\perp}(1) 
$
is $\Q \cdot (0,1)$ and 
\begin{equation}\label{ltehir1}
(\mathfrak{PO}^\text{\bf u}_0)_1 = \overline y_2 + \overline y^{-1}_2
\end{equation}
(Here $\frak b = 0$ and so we do not write  $\frak b$ in the 
above notation. We put $\overline y_i = y_i^{\text{\bf u}}$)
\par\medskip
\epsfbox{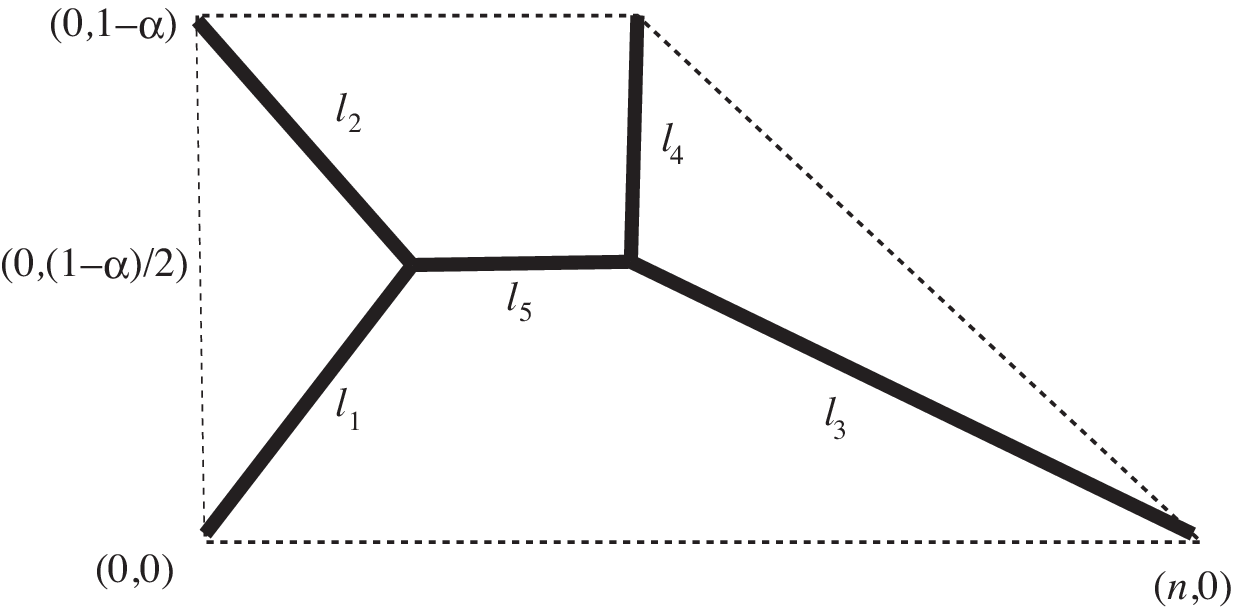}
\par\smallskip
\centerline{\bf Figure 10.1}
\par\bigskip
We also have
\begin{equation}\label{ltehir2}
(\mathfrak{PO}^\text{\bf u}_0)_2 =
\begin{cases}
\overline y_1  &\text{If $u_1 < (1+\alpha)n/4$} \\
\overline y_1^{-1}\overline y_2^{-n}  &\text{If $u_1 > (1+\alpha)n/4$}  \\
\overline y_1+\overline y_1^{-1}\overline y_2^{-n} &\text{If $u_1= (1+\alpha)n/4$} 
\end{cases}
\end{equation}
(\ref{ltehir1}) gives the first leading term equation 
$
1 -\overline  y_2^{-2} = 0
$ whose solutions are $\overline y_2 = \pm 1$.
\par
Then (\ref{ltehir2}) gives the second of the leading 
term equation which are
$1=0$, $-(\pm 1)^{-n}\overline y_1^{-2} = 0$, $1-(\pm 1)^{-n}\overline y_1^{-2}= 0$, 
where $u_1 < (1+\alpha)n/4$, 
$u_1 > (1+\alpha)n/4$ and $u_1= (1+\alpha)n/4$,
respectively.
\par
The solution $\overline y_1 \ne 0$ exists only in the case
$u_1= (1+\alpha)n/4$. In that case the solutions of leading term equations 
are 
$(1,\pm 1)$  and 
$(-1,\pm (-1)^{n/2})$. 
Thus $L((1+\alpha)n/4,(1-\alpha)/2)$ is 
strongly bulk balanced.
\par
We can check that there are no other strongly bulk balanced $T^2$ orbit.
(This follows from Theorem \ref{critHQ} also.)
\par
See \cite{toric1} Example 8.2 where the same conclusion is proved 
by basically the same but a slightly different calculation.
\end{Example}
\begin{Remark}
For the case of Example \ref{ex:Hilexa} we can actually 
prove that $L((1+\alpha)n/4,(1-\alpha)/2)$ is 
strongly balanced. Namely some Floer cohomology without bulk deformation 
is non-zero. This follows from \cite{toric1} Theorem 10.4.
\end{Remark}
\begin{Example}\label{p22ptbu}
(\cite{toric2} section 5, \cite{toric1} Example 10.17.)
We consider two points blow up $X(\alpha,\alpha')$ of $\C P^2$.
(Example \ref{P2blowuptwo1}.)
We consider the case $\alpha > 1/3$, $\alpha' = (1-\alpha)/2$.
The moment polytope is
$$
P = \{(u_1,u_2) \mid 0\le u_1 \le 1,\,\, 0\le u_2 \le 1-\alpha, \,\,
(1-\alpha)/2\le u_1+u_2 \le 1\}.
$$
We consider 
\begin{equation}\label{utP2}
\text{\bf u}(t) = (t,(1-\alpha)/2), \qquad
t \in ((1-\alpha)/2,(1+\alpha)/4).
\end{equation}
We have
$$
\frak{PO} = T^{(1-\alpha)/2} (\overline y_2+\overline y_2^{-1}) + T^t(\overline y_1+\overline y_1\overline y_2) 
+ T^{(1+\alpha)/2 + t}(\overline y_1\overline y_2)^{-1} 
$$
where 
$$
(1-\alpha)/2 < t < (1+\alpha)/2 + t.
$$
Therefore
$$
(\frak{PO})_1 = \overline y_2+\overline y_2^{-1}, 
\qquad
(\frak{PO})_2 = \overline y_1+\overline y_1\overline y_2.
$$
Thus the leading term equation is 
$$
1 - \overline y_2^{-2} = 0, 
\qquad
1+\overline y_2 = 0.
$$
This has a solution $\overline y_2 = -1$ ($\overline y_1$ is any number $\in \C \setminus \{0\}$.)
\par
Theorem \ref{eliminatebulk} implies that 
all of $L(\text{\bf u}(t))$ as in (\ref{utP2}) are strongly bulk balanced.
In particular they are non-displaceable.
\par\medskip
\epsfbox{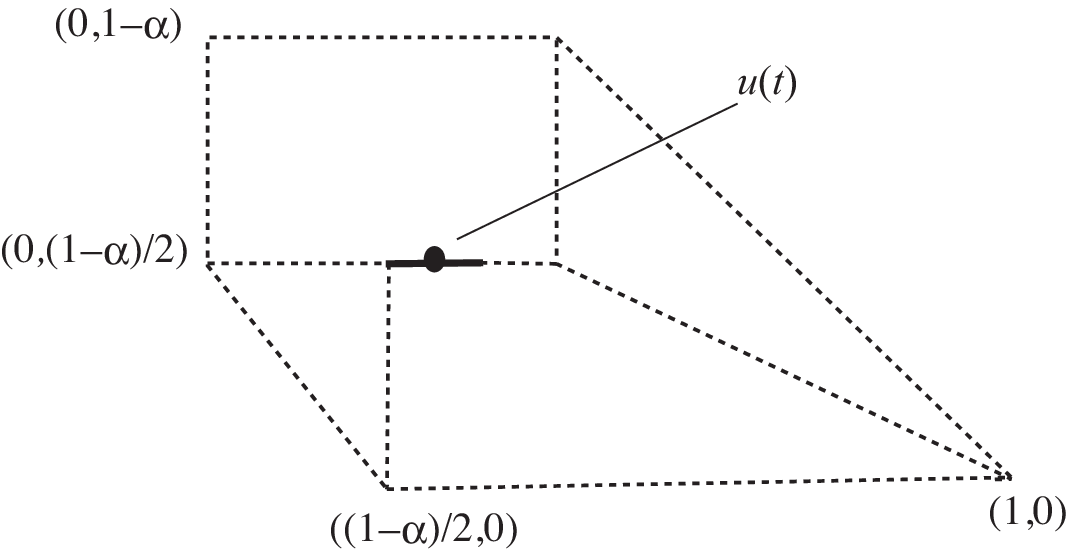}
\par\smallskip
\centerline{\bf Figure 10.2}
\par\bigskip
\end{Example}
\begin{Remark}
In the toric case, for each 
given $\frak b$, the number of $L(\text{\bf u})$ with 
nontrivial Floer cohomology for a pair $(\frak b,b)$  for 
some $b\in H^1(L(\text{\bf u});\Lambda_0)$ is finite.
(It is smaller than the Betti number of $X$ by Theorem \ref{Jacqcmain1}.)
So to obtain infinitely many $L(\text{\bf u})$ with 
nontrivial Floer cohomology we need to include bulk 
deformations.
\end{Remark}
In the examples we discussed in this section, we do not need to 
change the variables from $y_j$ to $y_{l,s}$.
An example where we need this change of variables is 
given in \cite{toric1} Example 10.10.
\par
In Example \ref{p22ptbu} we obtain a continuum of non-displaceable 
Lagrangian torus in certain two points blow up of $\C P^2$.
(\cite{toric2}).
We can also use bulk deformation to obtain a continuum of Lagrangian 
tori in $S^2 \times S^2$. 
They are not of the type of $T^2$ orbit 
but is obtained from the $T^2$ orbit of singular Hirzebruch surface $F_2(0)$ by 
deforming the singularity, that is of orbifold of $A_2$-type.
(\cite{fooo10}.)
Closely related construction is in  \cite{nnu1, nnu2}
\par\bigskip

\section{Quantum cohomology and Jacobian ring}
\label{QCJac}

\subsection{Jacobian ring over Novikov ring}
\label{Jacsub}

In this section we discuss the isomorphism between the Jacobian ring of 
$\frak{PO}^{\frak b}$ and the quantum cohomology ring of $X$ 
deformed by $\frak b$.
We start with defining Jacobian ring precisely.
\par
Usually Jacobian ring is studied in the case of (Laurent) polynomial or 
holomorphic function germ.
Our function $\frak{PO}^{\frak b}$ is neither a Laurent polynomial 
and nor a holomorphic function.
So we first define a function space in which $\frak{PO}^{\frak b}$
is contained.
\par
We consider the Laurent polynomial ring 
$\Lambda[y,y^{-1}]$ of $n$ variables with $\Lambda$ 
coefficients.
We defined a valuation $\frak v_T^{\text{\bf u}}$ for each 
$\text{\bf u} \in \R^n$ in section \ref{calcu} 
Definition \ref{normvTu}.
Let $P$ be a compact subset of $\R^n$. 
(We use the case when $P$ is a convex polytope only in this 
article.)
\begin{Definition}
For $F \in \Lambda[y,y^{-1}]$ we define
$$
\frak v_T^{P}(F)
= \inf \{ \frak v_T^{\text{\bf u}}(F) \mid \text{\bf u}\in P\}.
$$
This is not a valuation but is a norm. Therefore it defines a metric 
on $\Lambda[y,y^{-1}]$ by $d_P(F_1,F_2) = e^{-\frak v_T^{P}(F_1-F_2)}$.
We denote the completion of $\Lambda [y,y^{-1}]
$ with respect to $d_P$ 
by $\Lambda^P\langle\!\langle y,y^{-1}\rangle\!\rangle$.
It is a normed ring.
\par
We define $\Lambda_0^P\langle\!\langle y,y^{-1}\rangle\!\rangle$ as the set of all 
$F\in \Lambda^P\langle\!\langle y,y^{-1}\rangle\!\rangle$ such that $\frak v_T^P(F) \ge 0$.
\end{Definition}
Let $P$ be a moment polytope of our toric manifold $X$.
We take $\ell_j$ ($j=1,\dots,m$) as in Condition \ref{condellj} and put
$$
P_{\epsilon} = \{\text{\bf u} \in \R^n \mid \ell_j(\text{\bf u}) \ge \epsilon,
\,\,\, j=1,\dots,m  \} 
$$
for $\epsilon >0$.
\begin{Definition}
We define a metric $d_{\overset{\circ}P}$ on  $\Lambda[y,y^{-1}]$
by
$$
d_{\overset{\circ}P}(F_1,F_2)
= 
\sum_{k=1}^{\infty} 2^{-k}\min(d_{P_{1/k}}(F_1,F_2),1).
$$
Let $\Lambda^{\overset{\circ}P}\langle\!\langle y,y^{-1}\rangle\!\rangle$ 
be the completion of $\Lambda[y,y^{-1}]$ with respect to 
$d_{\overset{\circ}P}$.
\end{Definition}
It is easy to see that an element of $\Lambda^P\langle\!\langle y,y^{-1}\rangle\!\rangle$ may be regarded as a function 
$: \frak A(P) \to \Lambda$ and 
an element of $\Lambda^{\overset{\circ}P}\langle\!\langle y,y^{-1}\rangle\!\rangle$ may be regarded as a function 
$: \frak A({\overset{\circ}P}) \to \Lambda$.
\begin{Lemma}
If $\frak b\in \mathcal A(\Lambda_0)$ then
\begin{equation}
\frak{PO}^{\frak b} \in \Lambda_0^{\overset{\circ}P}\langle\!\langle y,y^{-1}\rangle\!\rangle,
\qquad
y_i\frac{\partial \frak{PO}^{\frak b}}{\partial y_i} \in \Lambda_0^{\overset{\circ}P}
\langle\!\langle y,y^{-1}\rangle\!\rangle.
\end{equation}If $\frak b\in \mathcal A(\Lambda_+)$ then
\begin{equation}
\frak{PO}^{\frak b} \in \Lambda_0^{P}\langle\!\langle y,y^{-1}\rangle\!\rangle,
\qquad
y_i\frac{\partial \frak{PO}^{\frak b}}{\partial y_i} \in  \Lambda_0^{P}\langle\!\langle y,y^{-1}\rangle\!\rangle.
\end{equation}
\end{Lemma}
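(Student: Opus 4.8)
The plan is to read the explicit shape of $\frak{PO}^{\frak b}$ off from Theorem \ref{calcPObulk} and to bound every monomial occurring in it for the norms $\frak v_T^{\text{\bf u}}$, uniformly in $\text{\bf u}$ over the relevant compactum. Everything rests on one computation: $\frak v_T^{\text{\bf u}}$ is a valuation (Definition \ref{normvTu}) and $z_j = T^{\ell_j(\text{\bf u})}(y_1^{\text{\bf u}})^{v_{j,1}}\cdots(y_n^{\text{\bf u}})^{v_{j,n}}$, so $\frak v_T^{\text{\bf u}}(z_j)=\ell_j(\text{\bf u})$, and hence a monomial $Q=c\,z_{j_1}\cdots z_{j_d}$ of degree $d$ in the $z_j$ with $c\in\Lambda_0$ satisfies $\frak v_T^{\text{\bf u}}(Q)=\frak v_T(c)+\sum_a\ell_{j_a}(\text{\bf u})\ge d\min_j\ell_j(\text{\bf u})$. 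Thus $\frak v_T^{P}(Q)\ge 0$ and, for each $\epsilon>0$, $\frak v_T^{P_\epsilon}(Q)\ge d\epsilon$. Since $y_i\partial/\partial y_i$ turns $Q$ into $\big(\sum_a v_{j_a,i}\big)Q$ and the integer prefactor has $\frak v_T=0$ when nonzero, exactly the same bounds hold for $y_i\partial Q/\partial y_i$; so it suffices to prove the assertion for $\frak{PO}^{\frak b}$ and obtain the one for $y_i\partial\frak{PO}^{\frak b}/\partial y_i$ by differentiating the defining series term by term.

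For $\frak b\in\mathcal A(\Lambda_+)$, Theorem \ref{calcPObulk}(2) writes $\frak{PO}^{\frak b}=z_1+\dots+z_m+\sum_{k\ge1}T^{\rho_k}P_k(\frak b;z)$ with $P_k$ a degree $\ge2$ monomial with $\Lambda_0$ coefficient and $\rho_k\to\infty$. Each $z_j$ and each $T^{\rho_k}P_k$ has $\frak v_T^{P}\ge 0$, and $\frak v_T^{P}(T^{\rho_k}P_k)\ge\rho_k\to\infty$, so the partial sums are $d_P$-Cauchy; their limit lies in $\Lambda_0^{P}\langle\!\langle y,y^{-1}\rangle\!\rangle$ because that subring is closed in the completion and contains every partial sum. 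For $\frak b\in\mathcal A(\Lambda_0)$, Theorem \ref{calcPObulk}(3) adds a term $P_0(\frak b;z)=\sum_{d\ge2}Q_d$, where $Q_d$, the (finite) degree-$d$ part, is a sum of monomials with $\Lambda_0$ coefficients and hence $\frak v_T^{P_\epsilon}(Q_d)\ge d\epsilon\to\infty$ for each fixed $\epsilon>0$. Therefore the whole series for $\frak{PO}^{\frak b}$ is $d_{P_\epsilon}$-Cauchy for every $\epsilon$, hence $d_{\overset{\circ}P}$-Cauchy since $d_{\overset{\circ}P}=\sum_k2^{-k}\min(d_{P_{1/k}},1)$, and every partial sum has $\frak v_T^{\text{\bf u}}\ge 0$ at each $\text{\bf u}\in\text{Int}\,P$; so the limit lies in $\Lambda_0^{\overset{\circ}P}\langle\!\langle y,y^{-1}\rangle\!\rangle$. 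The two cases are compatible because $P_{1/k}\subseteq P$ gives $d_{P_{1/k}}\le d_P$, whence $d_{\overset{\circ}P}\le\min(d_P,1)$ and $d_P$-convergence implies $d_{\overset{\circ}P}$-convergence.

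The main point to get right — and really the only non-formal one — is the passage from Lemma \ref{convPObulk}, which gives convergence of the potential series \emph{pointwise} in $\text{\bf u}$ (after substituting honest Novikov numbers $\frak y_i$), to convergence for the norms $\frak v_T^{P}$ and $\frak v_T^{P_\epsilon}$, i.e. convergence uniform over the corresponding compactum. This is exactly what the displayed estimates $\frak v_T^{P}(T^{\rho_k}P_k)\ge\rho_k$ and $\frak v_T^{P_\epsilon}(Q_d)\ge d\epsilon$ supply: the exponents $\rho_k$, the degrees $d$, and the $\Lambda_0$-membership of the coefficients of $P_k$ and of $P_0$ are all independent of $\text{\bf u}$, so the tail estimates are automatically uniform. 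Once this uniformity is recorded, the residual checks — that $\Lambda_0^{P}\langle\!\langle y,y^{-1}\rangle\!\rangle$ and $\Lambda_0^{\overset{\circ}P}\langle\!\langle y,y^{-1}\rangle\!\rangle$ are closed, and that $y_i\partial/\partial y_i$ of a Laurent monomial again has nonnegative $\frak v_T^{\text{\bf u}}$ — are immediate.
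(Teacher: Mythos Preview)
Your argument is correct and is precisely the one the paper has in mind: the paper omits the proof entirely, saying only that it follows from Theorem~\ref{calcPObulk} (and referring to \cite{toric3} Lemma 2.6), and what you have written is exactly the spelling-out of that remark. The key uniform estimates $\frak v_T^{P}(T^{\rho_k}P_k)\ge\rho_k$ and $\frak v_T^{P_\epsilon}(Q_d)\ge d\epsilon$, together with the observation that $y_i\partial/\partial y_i$ multiplies each $z$-monomial by an integer, are the whole content.
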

We omit the proof, which follows from 
Theorem \ref{calcPObulk}. See \cite{toric3} Lemma 2.6.
Now we define
\begin{Definition}
$$
\text{\rm Jac}(\frak{PO}^{\frak b})
= 
\frac{\Lambda_0^{\overset{\circ}P}\langle\!\langle y,y^{-1}\rangle\!\rangle}
{\text{\rm Clos}_{d_{\overset{\circ}P}}
\left( y_i\frac{\partial \frak{PO}^{\frak b}}{\partial y_i} 
: i=1,\dots,n\right)}.
$$
\end{Definition}
(We may replace  $\Lambda_0^{\overset{\circ}P}\langle\!\langle y,y^{-1}\rangle\!\rangle$ 
by $\Lambda_0^{P}\langle\!\langle y,y^{-1}\rangle\!\rangle$
in the above formula in case $\frak b \in \mathcal A(\Lambda_+)$.)

Here the denominator is the closure of the ideal generated by 
$ y_i\frac{\partial \frak{PO}^{\frak b}}{\partial y_i} 
: i=1,\dots,n$. The closure is taken with respect to the metric 
$d_{\overset{\circ}P}$.
\par\bigskip
\subsection{Big quantum cohomology: a quick review}
\label{bigquatum}

We next review briefly the well established story of 
deformed quantum cup product. 
Let $(X,\omega)$ be a symplectic manifold. For
$\alpha \in H_2(X;\Z)$ let
$\mathcal M_{\ell}(\alpha)$ be the moduli space of
stable maps from genus zero semi-stable curves with $\ell$ marked points and of homology class $\alpha$.
There exists an evaluation map
$$
\text{\rm ev}: \mathcal M_{\ell}(\alpha) \to X^{\ell}.
$$
$ \mathcal M_{\ell}(\alpha)$ has a virtual fundamental cycle
and hence defines a class
$$
\text{\rm ev}_*[\mathcal M_{\ell}(\alpha)] \in H_{*}(X^{\ell};\Q).
$$
(See \cite{FO}.)
Here
$
* = 2n + 2c_1(X)\cap \alpha + 2\ell - 6
$.
Let $Q_1,\dots,Q_{\ell}$ be cycles such that
\begin{equation}\label{degreecondGW}
\sum \text{\rm codim}\, Q_i = 2n + 2c_1(X)\cap \alpha + 2\ell - 6.
\end{equation}
We define Gromov-Witten invariant by
$$
GW_{\ell}(\alpha:Q_1,\dots,Q_{\ell}) =
\text{\rm ev}_*[\mathcal M_{\ell}(\alpha)] \cap (Q_1 \times \dots \times Q_{\ell})
\in \Q.
$$
We put $GW_{\ell}(\alpha:Q_1,\dots,Q_{\ell}) = 0$ when (\ref{degreecondGW})
is not satisfied.
\par
We now define
\begin{equation}\label{sumGW}
GW_{\ell}(Q_1,\dots,Q_{\ell})
= \sum_{\alpha} T^{(\alpha \cap \omega)/2\pi} GW(\alpha:Q_1,\dots,Q_{\ell}).
\end{equation}
The formula 
(\ref{sumGW}) extends to a $\Lambda_0$ module homomorphism
$$
GW_{\ell}: H(X;\Lambda_0)^{\otimes\ell} \to \Lambda_0.
$$
\begin{Definition}\label{def:deformcup}
Let $\frak b \in H(X;\Lambda_0)$ be given. For each given pair $\frak c,\frak d \in H(X;\Lambda_0)$, we define a product
$
\frak c \cup^{\frak b} \frak d \in H(X;\Lambda_0)
$
by the following formula
\begin{equation}\label{defcup}
\langle \frak c \cup^{\frak b} \frak d, \frak e\rangle_{\text{\rm PD}_X}
= \sum_{\ell=0}^{\infty} \frac{1}{\ell!}GW_{\ell+3}(\frak c,\frak d,\frak e,\frak b,\dots,\frak b).
\end{equation}
Here $\langle \cdot,\cdot\rangle_{\text{\rm PD}_X}$ denotes the Poincar\'e duality
pairing.
The right hand side converges
if $\frak b \in H^2(X;\Lambda_+) \oplus \bigoplus_{k>2}
H^k(X;\Lambda_0)$. We can extend it to arbitrary 
$\frak b \in H^*(X;\Lambda_0)$.
(This is well-known. See for example \cite{toric3} section 2.)
\par
$\cup^{\frak b}$ defines a graded commutative and associative ring structure on
$H(X;\Lambda_0)$. We call $\cup^{\frak b}$ the {\it deformed quantum cup product}.
\end{Definition}
\par\bigskip
\subsection{The isomorphism `Jacobian ring = quantum cohomology' and its applications}
\label{JRAC}

\begin{Theorem}\label{Jacqcmain1}
There exists a ring isomorphism
$$
(H(X;\Lambda_0),\cup^{\frak b})
\cong 
\text{\rm Jac}(\frak{PO}^{\frak b}) 
$$
\end{Theorem}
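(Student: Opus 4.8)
The plan is to build an explicit \emph{Kodaira--Spencer map}
\[
\frak{ks}_{\frak b} \colon (H(X;\Lambda_0),\cup^{\frak b}) \longrightarrow \text{\rm Jac}(\frak{PO}^{\frak b})
\]
out of the operator $\frak q$, and then show it is a ring isomorphism by combining a ring-homomorphism property, surjectivity, and a rank count. For a cycle $\text{\bf p}$ representing a class in $H(X;\Lambda_0)$ I would set $\widetilde{\frak{ks}}_{\frak b}(\text{\bf p})=\sum_{\ell,k}\tfrac{1}{\ell!}\,\frak q_{\ell+1,k}(\text{\bf p},\frak b^{\ell};b^{k})$, with $\text{\bf p}$ a distinguished interior input. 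Exactly the Maslov-index/dimension count used in the proof of Theorem \ref{calcPO} and in Remark \ref{virdimge2} (again with $T^n$-equivariant multisections, so ${\rm ev}_0$ is a submersion) shows each contributing term has nonpositive output degree, hence is a scalar multiple of $\text{\bf e}\in H^0(L(\text{\bf u});\Lambda_0)$; so $\widetilde{\frak{ks}}_{\frak b}(\text{\bf p})=f_{\text{\bf p}}(y)\,\text{\bf e}$ with $f_{\text{\bf p}}\in\Lambda_0^{\overset{\circ}P}\langle\!\langle y,y^{-1}\rangle\!\rangle$ by the energy estimates behind Theorem \ref{calcPObulk} and Lemma \ref{convPObulk}, and I let $\frak{ks}_{\frak b}(\text{\bf p})$ be its class in $\text{\rm Jac}(\frak{PO}^{\frak b})$. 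Two facts are immediate: $\frak{ks}_{\frak b}(1)=1$ (from Theorem \ref{qqainfalgdef}(2),(5)), and $\frak{ks}_{\frak b}(D_j)$ is the class of $\partial\frak{PO}^{\frak b}/\partial w_j=\frak c_jz_j+(\text{higher $T$-order})$, obtained by differentiating formula (\ref{formula49}) in the bulk direction $w_j$. The reason $\text{\rm Jac}(\frak{PO}^{\frak b})$ is the correct target is conceptual: by (\ref{form21}) one has $\frak m_1^b(\text{\bf e}_i)=(y_i\,\partial\frak{PO}^{\frak b}/\partial y_i)\,\text{\bf e}$, and since $H(L(\text{\bf u});\Lambda_0)$ is the exterior algebra on $\text{\bf e}_1,\dots,\text{\bf e}_n$ with $\frak m_1^b$ acting as a Koszul-type differential, the degree-$0$ part of the family of complexes $(H(L(\text{\bf u});\Lambda_0),\frak m_1^b)$ over all $(\text{\bf u},b)$ is precisely $\text{\rm Jac}(\frak{PO}^{\frak b})$, and $\frak{ks}_{\frak b}$ lands there.

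Next I would prove $\frak{ks}_{\frak b}$ is a $\Lambda_0$-algebra homomorphism. Linearity and unitality are routine; multiplicativity is the crux. I would feed two distinguished interior inputs $\text{\bf p}_1,\text{\bf p}_2$ into the $A_\infty$-type relation (\ref{qmaineq0}) for $\frak q$ and analyze the degenerations of $\mathcal M_{k+1,\ell}^{\text{\rm main}}(\beta;\text{\bf p}_I)$: those in which $\text{\bf p}_1,\text{\bf p}_2$ remain on a disk component assemble (after summing over everything) into the product $f_{\text{\bf p}_1}f_{\text{\bf p}_2}\,\text{\bf e}$ plus $\frak m_1^b$-exact terms, while those in which $\text{\bf p}_1,\text{\bf p}_2$ bubble off onto a sphere component with the node mapping into $X$ produce $f_{\text{\bf p}_1\cup^{\frak b}\text{\bf p}_2}\,\text{\bf e}$, via the divisor axiom for the Gromov--Witten classes entering (\ref{defcup}) and the fact that $\frak q$ makes $H(L)$ a module over quantum cohomology. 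Hence $f_{\text{\bf p}_1}f_{\text{\bf p}_2}-f_{\text{\bf p}_1\cup^{\frak b}\text{\bf p}_2}\in\text{\rm Im}\,\frak m_1^b$ for every $(\text{\bf u},b)$, so by the Koszul picture above the two sides agree in $\text{\rm Jac}(\frak{PO}^{\frak b})$.

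Then surjectivity: for a smooth compact toric $X$ the cohomology ring $H(X;\Z)$ is generated by the degree-$2$ divisor classes $[D_j]$ (Stanley--Reisner presentation), and a filtered Nakayama argument shows $(H(X;\Lambda_0),\cup^{\frak b})$ is generated by the same classes; since $\frak{ks}_{\frak b}$ is a unital ring map it suffices to show the $\frak{ks}_{\frak b}(D_j)=[\frak c_jz_j+\cdots]$ generate $\text{\rm Jac}(\frak{PO}^{\frak b})$ topologically. Equivalently one shows $\Lambda_0^{\overset{\circ}P}\langle\!\langle y,y^{-1}\rangle\!\rangle$ modulo the Jacobian ideal is spanned over $\Lambda$ by monomials in $z_1,\dots,z_m$, using the $n$ relations $y_i\,\partial\frak{PO}^{\frak b}/\partial y_i\equiv0$ (leading parts $\sum_j v_{j,i}\frak c_jz_j\equiv0$) to rewrite an arbitrary Laurent monomial in the $y_i$ as a convergent $\Lambda_0$-combination of products of $z_j$'s, the convergence controlled by the energy bookkeeping of Lemma \ref{convPObulk}. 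Finally, to pass from surjective to bijective: surjectivity gives $\dim_\Lambda\bigl(\text{\rm Jac}(\frak{PO}^{\frak b})\otimes_{\Lambda_0}\Lambda\bigr)\le\dim_\Lambda QH(X;\Lambda)=\sum_k b_k(X)$; for the reverse inequality I would deform to the leading term, comparing $\frak{PO}^{\frak b}$ with the Laurent polynomial $\frak{PO}^{\frak b}_0=\sum\frak c_jz_j$, whose Jacobian ring has $\Lambda$-dimension equal to the number of solutions of the leading term equation of Section \ref{LTE}, which for suitable $\frak b$ equals $\sum_k b_k(X)$ by a Batyrev--Givental-type count, together with an upper-semicontinuity/rigidity argument showing the dimension does not drop when passing from $\frak{PO}^{\frak b}_0$ to $\frak{PO}^{\frak b}$. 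Then $\frak{ks}_{\frak b}\otimes\Lambda$ is an isomorphism, and since $QH(X;\Lambda_0)$ and $\text{\rm Jac}(\frak{PO}^{\frak b})$ are $\Lambda_0$-free of the same finite rank, a $\Lambda_0$-linear surjection between them that is an isomorphism after $\otimes\Lambda$ is itself an isomorphism (cf.\ \cite{toric3}).

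The main obstacle will be the ring-homomorphism step: correctly identifying the sphere-bubble degeneration contributions of $\mathcal M_{k+1,\ell}^{\text{\rm main}}(\beta;\text{\bf p}_I)$ with the deformed quantum product $\cup^{\frak b}$. This requires choosing $T^n$-equivariant multisections compatibly across all boundary strata, getting the signs in (\ref{qmaineq0}) to line up, invoking the divisor axiom in the Novikov-ring setting, and controlling convergence in the $d_{\overset{\circ}P}$-topology. A secondary difficulty is establishing the $\Lambda_0$-freeness and the exact rank of $\text{\rm Jac}(\frak{PO}^{\frak b})$ needed to upgrade the $\Lambda$-isomorphism to a $\Lambda_0$-isomorphism.
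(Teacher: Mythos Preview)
Your construction of $\frak{ks}_{\frak b}$ via $\frak q$, its well-definedness modulo the Jacobian ideal, and the surjectivity argument by reduction mod $\Lambda_+$ to the Stanley--Reisner presentation are all in line with the paper. The genuine gap is in the multiplicativity step.

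You propose to extract $\frak{ks}_{\frak b}(\text{\bf p}_1)\cdot\frak{ks}_{\frak b}(\text{\bf p}_2)\equiv\frak{ks}_{\frak b}(\text{\bf p}_1\cup^{\frak b}\text{\bf p}_2)$ from relation (\ref{qmaineq0}). But (\ref{qmaineq0}) is the codimension-one boundary formula for $\mathcal M^{\text{\rm main}}_{k+1;\ell}(\beta)$ and records only \emph{disk} bubbling: the interior inputs $\text{\bf p}_1,\text{\bf p}_2$ are distributed between an inner and an outer $\frak q$, giving terms of the form $\frak q(\text{\bf p}_1;\dots\frak q(\text{\bf p}_2;\dots)\dots)$, $\frak q(\text{\bf p}_1,\text{\bf p}_2;\dots\frak m(\dots)\dots)$, etc. No term of this relation produces a \emph{sphere} component carrying both $\text{\bf p}_1$ and $\text{\bf p}_2$, which is where the quantum product $\text{\bf p}_1\cup^{\frak b}\text{\bf p}_2$ would have to come from. (Invoking ``$\frak q$ makes $H(L)$ a module over $QH$'' is circular: that statement \emph{is} multiplicativity.) The paper's argument is different in kind: one considers the forgetful map $\frak{forget}:\mathcal M^{\text{\rm main}}_{k+1;2}(\beta;\text{\bf p},\text{\bf p}')\to\mathcal M_{1;2}$ to the moduli of \emph{domains} with two interior and one boundary marked point, picks two specific nodal domains $[\Sigma_1]$ (both interior points on a sphere attached at an interior node to the disk) and $[\Sigma_2]$ (two disks, each carrying one interior point, joined at a boundary node), and computes the correlators over the fibers $\frak{forget}^{-1}([\Sigma_i])$. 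The fiber over $[\Sigma_1]$ gives $\frak{ks}_{\frak b}(\text{\bf p}\cup^{\frak b}\text{\bf p}')$ (formula (\ref{fibersigma1})), the fiber over $[\Sigma_2]$ gives $\frak{ks}_{\frak b}(\text{\bf p})\cdot\frak{ks}_{\frak b}(\text{\bf p}')$ (formula (\ref{fibersigma2})), and connectedness of $\mathcal M_{1;2}$ furnishes a cobordism whose boundary contributions lie in the Jacobian ideal. This is a domain-constraint argument, not a boundary-of-moduli argument, and (\ref{qmaineq0}) does not substitute for it.

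A second, smaller divergence is in the rank inequality. You propose to compare with the leading-order potential $\frak{PO}_0^{\frak b}$ and cite upper-semicontinuity; but semicontinuity by itself gives the wrong direction (fiber dimension can only \emph{increase} at special values), so the content is really a properness statement: critical points cannot escape $\text{Int}\,P$ along the deformation. The paper obtains (\ref{mainineqality}) by a different route: it uses McDuff--Tolman (Seidel elements) to present $QH(X;\Lambda_0)$ with generators $z'_j$ satisfying the quantum Stanley--Reisner relations and relations $\frak P_i$ with the \emph{same} linear part $\sum_j v_{j,i}Z_j$ as the Jacobian relations, then linearly interpolates $\frak P_i^{\frak s}=\frak s\,\partial\frak{PO}^{\text{\bf 0}}/\partial x_i+(1-\frak s)\frak P_i$, and shows the resulting family $\frak R_{\frak s}$ is flat and proper (properness via valuations staying in $P$; flatness via local complete intersection/regular sequence). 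Your deformation from $\frak{PO}_0^{\frak b}$ to $\frak{PO}^{\frak b}$ is in the same spirit, but you would still need exactly this properness/flatness input, and you would also need to know $\dim_\Lambda\text{Jac}(\frak{PO}_0^{\frak b})=\sum_k b_k(X)$ over $\Lambda$, which in the non-Fano case is not simply a ``Batyrev--Givental count'' and is part of what the paper's argument supplies.
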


This is \cite{toric3} Theorem 1.1 (1).
We explain some parts of the proof later in this section. We first discuss some applications.
\begin{Definition}
Let
$\text{\rm Crit}(\frak{PO}^{\frak b})$ 
be the set of all $\frak y \in \frak A(\overset{\circ}P)$ such that
$$
\frac{\partial \frak{PO}^{\frak b}}{\partial y_i}(\frak y) = 0
$$
for $i=1,\dots,n$.
An element of $\text{\rm Crit}(\frak{PO}^{\frak b})$  
is said to be a {\it critical point} of $\frak{PO}^{\frak b}$.
\par
A critical point $\frak y$ of $\frak{PO}^{\frak b}$ 
is said to be {\it non-degenerate}
if the matrix
$$
\left[ \frak y_i\frak y_j\frac{\partial^2 \frak{PO}^{\frak b}}{\partial y_i\partial y_j}(\frak y)\right]
_{i,j=1}^{i,j=n}
$$
is invertible, as an $n\times n$ matrix with $\Lambda$ entries.
\par
The function $\frak{PO}^{\frak b}$ is said to be 
a {\it Morse function} if all of its critical points are non-degenerate.
\end{Definition}
We put
$$
\frak M(X;\frak b) 
= \left\{ (\text{\bf u},b) \,\,
\left\vert
\aligned
&\text{\bf u} \in \text{\rm Int}\,\, P, 
b\in H^1(L(\text{\bf u});\Lambda_0)/H^1(L(\text{\bf u});2\pi \sqrt{-1}\Z), 
\\
&HF((L(\text{\bf u}),(\frak b,b)), (L(\text{\bf u}),(\frak b,b));\Lambda_0)
\cong H(T^n;\Lambda_0)
\endaligned
\right\}\right..
$$
Theorem \ref{POandHFbulk} implies the following.
\begin{equation}\label{fraMandcrit}
\# \frak M(X;\frak b) = \# \text{\rm Crit}(\frak{PO}^{\frak b}).
\end{equation}

\begin{Proposition}\label{localization}
There exists a direct product decomposition 
\begin{equation}
\text{\rm Jac}(\frak{PO}^{\frak b}) \otimes_{\Lambda_0}
\Lambda
= \prod_{\frak y \in \text{\rm Crit}(\frak{PO}^{\frak b})}
\text{\rm Jac}(\frak{PO}^{\frak b};\frak y), 
\end{equation}
as a ring.
\par
The factor $\text{\rm Jac}(\frak{PO}^{\frak b};\frak y)$ in the 
right hand side is a local ring.
\par
The ring $\text{\rm Jac}(\frak{PO}^{\frak b};\frak y)$ is 
one dimensional if and only if $\frak y$ is non-degenerate.
\end{Proposition}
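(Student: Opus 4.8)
The plan is to derive the whole statement from Theorem \ref{Jacqcmain1}, which exhibits $\text{\rm Jac}(\frak{PO}^{\frak b})\otimes_{\Lambda_0}\Lambda$ as a finite-dimensional commutative $\Lambda$-algebra, and then to combine the structure theory of Artinian algebras over a field with a local Morse-type computation at each critical point. First I would record the finiteness: by Theorem \ref{Jacqcmain1} the ring $\text{\rm Jac}(\frak{PO}^{\frak b})$ is isomorphic to $(H(X;\Lambda_0),\cup^{\frak b})$, which is $\Lambda_0$-free of rank equal to the total Betti number of $X$, so $A:=\text{\rm Jac}(\frak{PO}^{\frak b})\otimes_{\Lambda_0}\Lambda$ is a commutative $\Lambda$-algebra with $\dim_\Lambda A=\sum_k b_k(X)<\infty$. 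A finite-dimensional commutative algebra over a field is Artinian, hence has only finitely many maximal ideals $\frak m_1,\dots,\frak m_r$ and decomposes uniquely as $A=\prod_{i=1}^r A_i$ with $A_i=A_{\frak m_i}$ a local Artinian $\Lambda$-algebra (equivalently $A_i=e_iA$ for a complete set of primitive orthogonal idempotents $e_i$).

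Next I would match the factors with the critical points of $\frak{PO}^{\frak b}$. Since the universal Novikov field $\Lambda$ is algebraically closed, every residue field $A/\frak m_i$ equals $\Lambda$, so the maximal ideals of $A$ are exactly the kernels of the $\Lambda$-algebra homomorphisms $A\to\Lambda$. Such a homomorphism amounts to a $\Lambda$-algebra homomorphism $\varphi:\Lambda^{\overset{\circ}P}\langle\!\langle y,y^{-1}\rangle\!\rangle\to\Lambda$ that kills the closed ideal generated by the $y_i\,\partial\frak{PO}^{\frak b}/\partial y_i$. Because the elements of this completed ring are precisely the series converging on $\frak A(\overset{\circ}P)$, such a $\varphi$ is forced to be evaluation at the point $\frak y=(\varphi(y_1),\dots,\varphi(y_n))$, which then lies in $\frak A(\overset{\circ}P)$; conversely evaluation at any $\frak y\in\frak A(\overset{\circ}P)$ gives such a homomorphism. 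As each $y_i$ is a unit, $\varphi$ annihilates all $y_i\,\partial_i\frak{PO}^{\frak b}$ if and only if $\partial_i\frak{PO}^{\frak b}(\frak y)=0$ for all $i$, i.e. $\frak y\in\text{\rm Crit}(\frak{PO}^{\frak b})$. Thus the maximal ideals of $A$ are in bijection with $\text{\rm Crit}(\frak{PO}^{\frak b})$ (so in particular the latter is finite), and writing $\text{\rm Jac}(\frak{PO}^{\frak b};\frak y)$ for the local factor at the maximal ideal corresponding to $\frak y$ gives the asserted product decomposition; this also re-proves \eqref{fraMandcrit}.

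It remains to prove the dimension-one criterion. Fix $\frak y\in\text{\rm Crit}(\frak{PO}^{\frak b})$, let $R:=\text{\rm Jac}(\frak{PO}^{\frak b};\frak y)$ and let $\frak M$ be its (nilpotent) maximal ideal; then $\dim_\Lambda R=1$ iff $\frak M=0$ iff $\frak M/\frak M^2=0$. Introduce local coordinates $\xi_i$ by $y_i=\frak y_i e^{\xi_i}$, so that $y_i\partial/\partial y_i=\partial/\partial\xi_i$; since $\frak y$ is a critical point, near $\frak y$ one has
\begin{equation}
\frak{PO}^{\frak b}=\frak{PO}^{\frak b}(\frak y)+\frac12\sum_{i,j}H_{ij}\,\xi_i\xi_j+O(\xi^3),\qquad H_{ij}:=\frak y_i\frak y_j\frac{\partial^2\frak{PO}^{\frak b}}{\partial y_i\partial y_j}(\frak y),
\end{equation}
and hence $y_i\partial_i\frak{PO}^{\frak b}=\sum_jH_{ij}\xi_j+O(\xi^2)$. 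Since $\frak M$ is generated by the classes $\bar\xi_1,\dots,\bar\xi_n$, the defining relations $y_i\partial_i\frak{PO}^{\frak b}=0$ reduce modulo $\frak M^2$ to $\sum_jH_{ij}\bar\xi_j=0$ for $i=1,\dots,n$; therefore $\frak M/\frak M^2$ is the cokernel of the transpose of the matrix $[H_{ij}]$. Consequently $\frak M/\frak M^2=0$ iff $[H_{ij}]$ is invertible, i.e. iff $\frak y$ is non-degenerate.

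The hard part will be the second step: rigorously controlling $\Lambda$-algebra homomorphisms out of the \emph{topologically} completed Laurent ring $\Lambda^{\overset{\circ}P}\langle\!\langle y,y^{-1}\rangle\!\rangle$ rather than an ordinary Laurent polynomial ring. Concretely, one must check that every such homomorphism to $\Lambda$ is a point evaluation, that the point it produces automatically has valuation vector in $\text{\rm Int}\,P$, and that the induced completed local rings at these points behave like quotients of formal power series rings, so that the presentation used for the $\frak M/\frak M^2$ computation is legitimate. These points are carried out in \cite{toric3}; granting them, the rest is the standard structure theory of finite-dimensional commutative algebras over an algebraically closed field.
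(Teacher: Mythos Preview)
Your argument is correct. The survey itself gives no proof beyond citing \cite{toric3} section~5 and remarking that the argument there parallels the classical case of a polynomial or holomorphic germ. Your route differs from that classical template in one respect: you obtain finite-dimensionality of $A=\text{\rm Jac}(\frak{PO}^{\frak b})\otimes_{\Lambda_0}\Lambda$ by invoking Theorem~\ref{Jacqcmain1}, whereas the direct approach in \cite{toric3} establishes finiteness intrinsically, so that Proposition~\ref{localization} is available independently of, and logically prior to, the quantum-cohomology isomorphism. In the survey's order of exposition your shortcut is harmless---the proof of Theorem~\ref{Jacqcmain1} sketched in \S\ref{isomproof} does not rely on Proposition~\ref{localization}---but be aware that it reverses the dependency structure of \cite{toric3}, where one wants the critical-point decomposition as input before the heavy machinery of the ring isomorphism is in place. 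Once finiteness is granted, your use of Artinian structure theory over the algebraically closed field $\Lambda$, the bijection between maximal ideals and $\text{\rm Crit}(\frak{PO}^{\frak b})$, and the Hessian computation of $\frak M/\frak M^2$ are all standard and correct; you have also correctly isolated the one genuinely nontrivial analytic ingredient, namely that every $\Lambda$-algebra homomorphism from the $d_{\overset{\circ}P}$-completed ring to $\Lambda$ is evaluation at a point of $\frak A(\overset{\circ}P)$.
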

This is a standard result in the case, for example, when the function ($\frak{PO}^{\frak b}$
in our case) is a polynomial or a holomorphic function. 
We can prove Proposition \ref{localization} in a similar way to those cases.
It is proved in \cite{toric3} section 5.
\par
Theorem \ref{Jacqcmain1} and Proposition \ref{localization}
imply that $(H(X;\Lambda),\cup^{\frak b})$ is semi-simple 
if and only if $\frak{PO}^{\frak b}$ is a Morse function.
\par
Theorem \ref{Jacqcmain1} together with 
Proposition \ref{localization} and Formula (\ref{fraMandcrit}) 
imply the following:

\begin{Theorem}\label{critHQ}
\begin{enumerate}
\item
If $\frak{PO}^{\frak b}$ is a Morse function then
$$
\text{\rm rank} \,H(X;\Q) =\# \frak M(X;\frak b).
$$
\item
If $\frak{PO}^{\frak b}$ is not a Morse function then
$$
0 < \# \frak M(X;\frak b) < \text{\rm rank}\, H(X;\Q).
$$
\end{enumerate}
\end{Theorem}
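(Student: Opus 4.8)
The plan is to derive both statements as a pure dimension count over the Novikov \emph{field} $\Lambda$, feeding in the ring isomorphism of Theorem \ref{Jacqcmain1}, the localization statement Proposition \ref{localization}, and the bijective correspondence (\ref{fraMandcrit}).

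First I would tensor the isomorphism of Theorem \ref{Jacqcmain1} with $\Lambda$ over $\Lambda_0$, obtaining an isomorphism of $\Lambda$-algebras $(H(X;\Lambda),\cup^{\frak b}) \cong \text{\rm Jac}(\frak{PO}^{\frak b})\otimes_{\Lambda_0}\Lambda$. Since $H(X;\Lambda_0) = H(X;\Q)\otimes_{\Q}\Lambda_0$ is $\Lambda_0$-free of rank the total Betti number, the left-hand side is a $\Lambda$-vector space of dimension $\text{\rm rank}\, H(X;\Q)$; in particular $\text{\rm Jac}(\frak{PO}^{\frak b})\otimes_{\Lambda_0}\Lambda$ is finite-dimensional over $\Lambda$ and nonzero. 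Then I would invoke Proposition \ref{localization}: this algebra splits as the finite product $\prod_{\frak y\in\text{\rm Crit}(\frak{PO}^{\frak b})}\text{\rm Jac}(\frak{PO}^{\frak b};\frak y)$ of local $\Lambda$-algebras, so the index set $\text{\rm Crit}(\frak{PO}^{\frak b})$ is nonempty (the product is a nonzero ring). Taking $\Lambda$-dimensions gives
\[
\text{\rm rank}\, H(X;\Q) = \sum_{\frak y\in\text{\rm Crit}(\frak{PO}^{\frak b})}\dim_{\Lambda}\text{\rm Jac}(\frak{PO}^{\frak b};\frak y),
\]
where each summand is $\geq 1$ (a nonzero ring has positive dimension) and, by the last clause of Proposition \ref{localization}, equals $1$ exactly when $\frak y$ is non-degenerate.

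Finally I would combine this with (\ref{fraMandcrit}), which identifies $\#\frak M(X;\frak b)$ with $\#\text{\rm Crit}(\frak{PO}^{\frak b})$, to conclude $0 < \#\frak M(X;\frak b) \leq \text{\rm rank}\, H(X;\Q)$, with equality on the right precisely when every summand equals $1$, i.e.\ precisely when every critical point is non-degenerate, i.e.\ precisely when $\frak{PO}^{\frak b}$ is a Morse function. That is statement (1). If $\frak{PO}^{\frak b}$ is not Morse, at least one summand is $\geq 2$, so the inequality becomes strict, while nonemptiness of $\text{\rm Crit}(\frak{PO}^{\frak b})$ keeps $\#\frak M(X;\frak b) > 0$; this is statement (2).

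The bookkeeping here is light; the real content is all upstream, in Theorem \ref{Jacqcmain1} (the $QH\cong\text{\rm Jac}$ isomorphism, whose proof is the bulk of \cite{toric3}) and in Proposition \ref{localization} (the decomposition into local rings together with the finite-dimensionality and the non-degeneracy criterion). Within the plan itself, the one point I would want to state with care is that passing from $\Lambda_0$ to $\Lambda$ commutes with the localization decomposition and preserves finite-dimensionality of each factor, since it is over $\Lambda$ — not over $\Lambda_0$, where $\frak{PO}^{\frak b}$ need not even have isolated critical points — that the dimension count is legitimate.
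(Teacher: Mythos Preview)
Your proposal is correct and is exactly the argument the paper gives: the theorem is stated as an immediate consequence of Theorem~\ref{Jacqcmain1}, Proposition~\ref{localization}, and Formula~(\ref{fraMandcrit}), with no further proof written out. Your dimension count over $\Lambda$ is precisely the intended deduction.
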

This is \cite{toric3} Theorem 1.3. 
Some of the earlier partial results is given in \cite{toric1} Theorems 1.9 and 1.12.
\begin{Remark}
Theorem \ref{critHQ} in particular implies that there exists at least 
one non-displaceable $T^n$ orbit.
This fact also follows from an earlier result by Entov-Polterovich \cite{entov-pol06,EP:rigid}.
\end{Remark}
Another application is the following: 

\begin{Theorem}\label{c1iscrit}{\rm (\cite{toric3} Theorem 1.4.)}
Assume $\frak b \in H^2(X;\Lambda_0)$.
The set of eigenvalues of the map $x \mapsto c_1(X) \cup^{\frak b} x :
H(X;\Lambda) \to  H(X;\Lambda)$ coincides with the set of
critical values of $\frak{PO}^{\frak b}$, with multiplicities counted.
\end{Theorem}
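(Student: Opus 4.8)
The plan is to transport the entire statement across the ring isomorphism of Theorem~\ref{Jacqcmain1}. Write $\Psi\colon (H(X;\Lambda_0),\cup^{\frak b})\to\text{\rm Jac}(\frak{PO}^{\frak b})$ for that isomorphism, and extend it $\Lambda$-linearly to an isomorphism $H(X;\Lambda)\cong\text{\rm Jac}(\frak{PO}^{\frak b})\otimes_{\Lambda_0}\Lambda$. Because $\Psi$ is a ring isomorphism it conjugates the operator $x\mapsto c_1(X)\cup^{\frak b}x$ on $H(X;\Lambda)$ into the operator ``multiplication by $\Psi(c_1(X))$'' on $\text{\rm Jac}(\frak{PO}^{\frak b})\otimes_{\Lambda_0}\Lambda$; conjugate operators have the same characteristic polynomial, hence the same set of eigenvalues with the same multiplicities. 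So the theorem reduces to two things: (i) to identify the element $\Psi(c_1(X))$, and (ii) to compute the spectrum of multiplication by that element on the Jacobian ring.

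Step (ii) is the easy, purely algebraic half. By Proposition~\ref{localization} we have a product decomposition $\text{\rm Jac}(\frak{PO}^{\frak b})\otimes_{\Lambda_0}\Lambda=\prod_{\frak y\in\text{\rm Crit}(\frak{PO}^{\frak b})}\text{\rm Jac}(\frak{PO}^{\frak b};\frak y)$ into local Artinian $\Lambda$-algebras, the residue field of the $\frak y$-factor being $\Lambda$ with residue map sending the class of a function $G$ to its value $G(\frak y)$. Hence multiplication by an element $g$ on $\text{\rm Jac}(\frak{PO}^{\frak b};\frak y)$ is $g(\frak y)\cdot\text{\rm id}$ plus a nilpotent endomorphism, so it contributes the single eigenvalue $g(\frak y)$ with algebraic multiplicity $\dim_{\Lambda}\text{\rm Jac}(\frak{PO}^{\frak b};\frak y)$. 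Summing over $\frak y$, the characteristic polynomial of multiplication by $g$ is $\prod_{\frak y}(t-g(\frak y))^{\dim_{\Lambda}\text{\rm Jac}(\frak{PO}^{\frak b};\frak y)}$. With $g=\Psi(c_1(X))$ this gives exactly the claimed statement once (i) is known, the multiplicity of a critical value $c$ being $\sum_{\frak y\,:\,\frak{PO}^{\frak b}(\frak y)=c}\dim_{\Lambda}\text{\rm Jac}(\frak{PO}^{\frak b};\frak y)$.

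Step (i) is the heart of the argument and the step I expect to be the main obstacle: the claim is $\Psi(c_1(X))=[\frak{PO}^{\frak b}]$, the class in $\text{\rm Jac}(\frak{PO}^{\frak b})$ of the potential function itself. Granting this, $\Psi(c_1(X))(\frak y)=\frak{PO}^{\frak b}(\frak y)$ is precisely the critical value of $\frak{PO}^{\frak b}$ at $\frak y$, and combining with (ii) finishes the proof. The reason to expect the claim is that $\Psi$ is built from the open--closed operators $\frak q$ of Section~\ref{operatorq}: its value on a degree-two class $\text{\bf p}_j=D_j$ is governed by inserting one more copy of $D_j$ at an interior marked point of a holomorphic disk. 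Since $c_1(X)=\sum_{j=1}^{m}[D_j]$, and since the disk classes $\beta$ that contribute to $\frak{PO}^{\frak b}$ all satisfy $\mu(\beta)=2$ (by the dimension count $\dim\mathcal M_{1,\ell}^{\text{\rm main}}(\beta;\text{\bf p}_I)=n-2+\mu(\beta)$ for degree-two bulk insertions, as in Section~\ref{bulkFloer}), a divisor relation for these open Gromov--Witten moduli spaces should make the extra insertion $\sum_j D_j$ multiply each contributing term by the factor $\sum_{j=1}^{m}(\beta\cap D_j)=\mu(\beta)/2=1$. Term by term in the expansion $(\ref{formula49})$ this identifies $\Psi(c_1(X))$ with $\frak{PO}^{\frak b}$ itself.

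Making step (i) rigorous requires the explicit description of $\Psi$ (the part of the proof of Theorem~\ref{Jacqcmain1} to be developed later in this section) together with a careful statement and proof of the divisor relation on the $T^n$-equivariantly perturbed moduli spaces $\mathcal M_{1,\ell}^{\text{\rm main}}(\beta;\text{\bf p}_I)^{\frak s}$. In particular one must arrange the $T^n$-equivariant multisections defining $\frak q$ to be chosen compatibly as the number of interior divisor insertions is increased, so that the combinatorics of the coefficients $c(\beta;\vec\ell)$ in $(\ref{formula49})$ behaves as the divisor axiom predicts, and one must check that the constant term $w_0$ and any $\partial\beta=0$ contributions match up correctly. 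This compatibility of perturbations, rather than any new enumerative input, is where the technical difficulty lies; the rest of the proof is the formal transport and localization described above.
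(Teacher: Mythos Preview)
Your proof is correct and follows exactly the paper's strategy: transport across the ring isomorphism $\frak{ks}_{\frak b}$ of Theorem~\ref{Jacqcmain1}, invoke the key lemma that $\frak{ks}_{\frak b}(c_1(X))=[\frak{PO}^{\frak b}]$ in the Jacobian ring, and then read off the eigenvalues via the local decomposition of Proposition~\ref{localization}. The paper states the key lemma without proof (citing \cite{toric3} Proposition~15.1), whereas you sketch the underlying divisor-axiom reason for it; your identification of the perturbation-compatibility issue as the real technical content is accurate.
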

\begin{Remark}
Theorem \ref{c1iscrit} was conjectured by M. Kontsevich.
See also \cite{Aur07}.
\end{Remark}
\begin{proof}
The proof uses the following:
\begin{Lemma}
Let us consider the situation of Theorem \ref{c1iscrit}.
Then, by the isomorphism in Theorem \ref{Jacqcmain1},
the first Chern class $c_1(X) \in H^2(X;\C)$ is sent to 
the equivalence class of $\frak{PO}^{\frak b}$ in 
$\text{\rm Jac}(\frak{PO}^{\frak b})$.
\end{Lemma}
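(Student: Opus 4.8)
The plan is to unwind the construction of the isomorphism in Theorem~\ref{Jacqcmain1}. That isomorphism is the Kodaira--Spencer map $\frak{ks}_{\frak b}\colon (H(X;\Lambda_0),\cup^{\frak b})\to\text{\rm Jac}(\frak{PO}^{\frak b})$, and its defining feature (from \cite{toric3}) is that it represents the family $\{\frak{PO}^{\frak b}\}$ of potential functions: writing $\frak b=\sum_i w_i\text{\bf p}_i$ and identifying the deformation direction $\partial/\partial w_i$ with the class $\text{\bf p}_i$, one has
\[
\frak{ks}_{\frak b}(\text{\bf p}_i)=\Bigl[\,\frac{\partial\frak{PO}^{\frak b}}{\partial w_i}\,\Bigr]\in\text{\rm Jac}(\frak{PO}^{\frak b}),
\]
where $[\,\cdot\,]$ denotes the class modulo the closed ideal generated by the $y_j\,\partial\frak{PO}^{\frak b}/\partial y_j$. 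I would take this, together with the $\Lambda_0$-linearity of $\frak{ks}_{\frak b}$, as the starting point.

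Next I would rewrite $c_1(X)$ in terms of the toric divisor classes. For a toric manifold $-K_X=\sum_{j=1}^m D_j$, hence $c_1(X)=\sum_{j=1}^m\text{\bf p}_j$ in $H^2(X;\C)\subset H^2(X;\Lambda_0)$ (the classes $\text{\bf p}_1,\dots,\text{\bf p}_m$ span $H^2$, so we may and do use such a representative for the degree-$2$ deformation variables). Linearity then gives
\[
\frak{ks}_{\frak b}(c_1(X))=\sum_{j=1}^m\frak{ks}_{\frak b}(\text{\bf p}_j)=\Bigl[\,\sum_{j=1}^m\frac{\partial\frak{PO}^{\frak b}}{\partial w_j}\,\Bigr].
\]
Thus the Lemma follows once I prove the identity $\sum_{j=1}^m\partial\frak{PO}^{\frak b}/\partial w_j=\frak{PO}^{\frak b}$, which I will in fact establish as an honest equality of elements of $\Lambda_0^{\overset{\circ}P}\langle\!\langle y,y^{-1}\rangle\!\rangle$ (much more than equality of classes).

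To prove this identity I would start from Formula~(\ref{formula49}), which expresses $\frak{PO}^{\frak b}$ as a sum over $\beta\in H_2(X,L(\text{\bf u});\Z)$ of contributions built from the bulk-deformed open Gromov--Witten numbers $c(\beta;\vec\ell)$ and the monomials $(y^{\text{\bf u}})^{\partial\beta}$. Differentiating in $w_i$ shifts $\vec\ell\mapsto\vec\ell+\text{\bf e}_i$, and the divisor equation for these numbers --- inserting one more interior marked point labelled by the toric divisor $D_i$ multiplies the number by the intersection index $\beta\cap D_i$ --- yields $\partial\frak{PO}^{\frak b}/\partial w_i=\sum_\beta(\beta\cap D_i)\cdot(\text{contribution of }\beta)$. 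Summing over $i=1,\dots,m$ and using the toric Maslov-index formula $\mu(\beta)=2\sum_{j=1}^m\beta\cap D_j$ (Cho--Oh) turns the right-hand side into $\sum_\beta\tfrac12\mu(\beta)\cdot(\text{contribution of }\beta)$. Here the hypothesis $\frak b\in H^2(X;\Lambda_0)$ enters decisively: all interior insertions then have degree $2$, so the virtual-dimension formula~(\ref{virdimbulkmod}) forces $\mu(\beta)=2$ for every $\beta$ that contributes to $\frak{PO}^{\frak b}$ (this includes classes carrying sphere bubbles, whose Maslov index is $2c_1(X)\cdot(\text{sphere part})$ plus the disk part), and it also forces $w_0=0$ so that $\frak{PO}^{\frak b}$ has no stray constant term. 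Therefore $\sum_{j=1}^m\partial\frak{PO}^{\frak b}/\partial w_j=\sum_\beta(\text{contribution of }\beta)=\frak{PO}^{\frak b}$, as claimed.

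The step I expect to be the main obstacle is the divisor equation for the bulk-deformed open Gromov--Witten numbers $c(\beta;\vec\ell)$: this is the bordered analogue of the closed-string divisor axiom, and one must check it is compatible with the $T^n$-equivariant multisections used in Section~\ref{bulkFloer} to define $\frak q_{\ell,k;\beta}$, i.e.\ that adding or forgetting an interior marked point labelled by $D_i$ commutes (up to the factor $\beta\cap D_i$) with the chosen perturbation. In the toric case this is tractable because, by Cho--Oh, any $\beta$ with $\mathcal M^{\text{\rm main}}_1(\beta)\ne\emptyset$ decomposes into the basic disk classes $\beta_j$ plus sphere bubbles, so both sides can be made explicit; the remaining technical point is to pin down precisely that the isomorphism of Theorem~\ref{Jacqcmain1} is the first-variation Kodaira--Spencer map, for which one must appeal to the construction in \cite{toric3} rather than merely to the statement quoted above.
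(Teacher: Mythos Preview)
Your proposal is correct and follows the natural line of argument; the paper itself defers the proof to \cite{toric3} Proposition 15.1 without details, and your route (write $c_1(X)=\sum_{j=1}^m[D_j]$, apply the defining property $\frak{ks}_{\frak b}(\text{\bf p}_j)=[\partial\frak{PO}^{\frak b}/\partial w_j]$ from Theorem~\ref{indepenceKS}, then prove $\sum_j\partial_{w_j}\frak{PO}^{\frak b}=\frak{PO}^{\frak b}$ via the divisor equation and the Maslov-index-$2$ constraint) is the expected one. The open divisor equation you flag as the main obstacle is already available in this series of papers---it is \cite{toric2} Proposition 4.9 (invoked above in Section~\ref{exa3}), which gives the exponential form of the degree-$2$ bulk-deformed potential and makes your identity immediate; this identity is also essentially the degree-$2$ restriction of the Euler-vector-field relation $\frak E(\frak{PO})=\frak{PO}$ recorded as (\ref{EandPO}).
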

This is \cite{toric3} Proposition 15.1.
\par
Now we consider $x \mapsto c_1(X) \cup^{\frak b} x$.
We use Thoerem \ref{Jacqcmain1} and Proposition \ref{localization}
then it is identified to the direct sum of maps
$$
[F] \mapsto [\frak{PO}^{\frak b}F], \quad 
\text{\rm Jac}(\frak{PO}^{\frak b};\frak y) \to \text{\rm Jac}(\frak{PO}^{\frak b};\frak y).
$$
The eigenvalue of this map is $\frak{PO}^{\frak b}(\frak y)$. 
This implies Theorem \ref{c1iscrit}.
\end{proof}
\par\bigskip
\subsection{Construction of the homomorphism $\frak{ks}_{\frak b}$}
\label{isoconst}

In various applications of Thoerem \ref{Jacqcmain1} it is also important to 
know the way how the isomorphism is defined, which we describe in this subsection. 
\par
Let $\text{\bf p}_i$ be the basis of $\mathcal A$ as in section \ref{bulkFloer}.
We write an element $\frak b \in \mathcal A(\Lambda_0)$ as
$$
\frak b  =\sum_{i=0}^B w_i \text{\bf p}_i.
$$
We put $\frak w_i = e^{w_i}$ for $i=1,\dots,m$. 
(Note $\text{\bf p}_i$, $i=1,\dots,m$ are degree 2 classes.) 
We define $P_{j_0\dots j_B}(y)$ by 
\begin{equation}
\frak{PO}(\frak b;y) 
= \sum_{j_0=0}^{\infty}\cdots\sum_{j_B=0}^{\infty}
P_{j_0\dots j_B}(y)  w_0^{j_0}\frak w_1^{j_1}\cdots \frak w_m^{j_m} 
w_{m+1}^{j_{m+1}}\dots w_B^{j_B}.
\end{equation}
We can show that 
$$
P_{j_0\dots j_B}(y) \in T^{\rho_{j_0\dots j_B}}\Lambda_0^{\overset{\circ}P}\langle\!\langle y,y^{-1}\rangle\!\rangle
$$
with
$$
\lim_{j_0 + \dots + j_B \to \infty} \rho_{j_0\dots j_B} = \infty.
$$
Therefore the right hand side of
\begin{equation}
\aligned
&\frac{\partial}{\partial w_i}\frak{PO}(\frak b;y) \\
&=
\begin{cases}
\displaystyle
\sum_{j_0=0}^{\infty}\cdots\sum_{j_B=0}^{\infty} j_i P_{j_0\dots j_B}(y)  w_0^{j_0}\cdots 
w_i^{j_i-1}\cdots w_B^{j_B}  \quad  i\ne 1,\dots,m\\
\displaystyle
\sum_{j_0=0}^{\infty}\cdots\sum_{j_B=0}^{\infty} j_iP_{j_0\dots j_B}(y)  w_0^{j_0}\cdots 
\frak w_i^{j_i}\cdots w_B^{j_B} 
\quad \quad i = 1,\dots,m
\end{cases}
\endaligned
\end{equation}
makes sense and is contained in 
$\Lambda^{\overset{\circ}P}\langle\!\langle y,y^{-1}\rangle\!\rangle$ for each 
$\frak b\in \mathcal A(\Lambda_0)$.
\par
We define the map
$$
\tilde{\frak{ks}}_{\frak b_0}
: \mathcal A(\Lambda_0) \to \Lambda_0^{\overset{\circ}P}\langle\!\langle y,y^{-1}\rangle\!\rangle
$$
by
\begin{equation}
\tilde{\frak{ks}}_{\frak b_0}(\text{\bf p}_i) 
= \left.\frac{\partial}{\partial w_i}\frak{PO}(\frak b;y)\right\vert_{\frak b = \frak b_0}.
\end{equation}
\begin{Theorem}\label{indepenceKS}
There exists a $\Lambda_0$ module homomorphism ${\frak{ks}}_{\frak b}$ such that
the following diagram commutes:
\begin{equation}
\begin{CD}
\mathcal A(\Lambda_0) @ > {\widetilde{\frak{ks}}_{\frak b}} >>
\Lambda_0^{\overset{\circ}P}\langle\!\langle y,y^{-1}\rangle\!\rangle  \\
@ VVV @ VVV\\
H(X;\Lambda_0) @ > {{\frak{ks}}_{\frak b}} >> \text{\rm Jac}(\frak{PO}^{\frak
b}).
\end{CD}
\end{equation}
\end{Theorem}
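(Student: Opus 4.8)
The plan is to reduce the theorem to the single statement that $\widetilde{\frak{ks}}_{\frak b}$ carries the kernel of the natural surjection $\mathcal A(\Lambda_0)\to H(X;\Lambda_0)$ into the closed ideal $\text{\rm Clos}_{d_{\overset{\circ}P}}\!\left(y_i\,\partial\frak{PO}^{\frak b}/\partial y_i:i=1,\dots,n\right)$; once this is established, $\frak{ks}_{\frak b}$ is forced and the square commutes by construction. Since $H(X;\Z)$ is torsion free for a smooth projective toric manifold and all maps in the diagram are $\Lambda_0$-linear, it suffices to verify this on a $\Z$-module generating set of $\ker(\mathcal A\to H(X;\Z))$. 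First I would also record, as a routine consequence of Theorem \ref{calcPObulk} together with the fact that $e^{w_j}\in\Lambda_0$ is a unit, that $\widetilde{\frak{ks}}_{\frak b}$ actually takes values in $\Lambda_0^{\overset{\circ}P}\langle\!\langle y,y^{-1}\rangle\!\rangle$, not merely in $\Lambda^{\overset{\circ}P}\langle\!\langle y,y^{-1}\rangle\!\rangle$.

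The computational heart is an identity for $\widetilde{\frak{ks}}_{\frak b}(\text{\bf p}_j)=\left.\partial\frak{PO}(\frak b';y)/\partial w_j\right\vert_{\frak b'=\frak b}$ on the divisor classes $\text{\bf p}_j=D_j$, $j=1,\dots,m$. Writing $\frak{PO}^{\frak b}-w_0=\sum_{\beta}A_{\beta}$ as in $(\ref{formula49})$, with $A_{\beta}$ the contribution of $\beta\in H_2(X,L(\text{\bf u});\Z)$, I would invoke the divisor axiom for the open Gromov--Witten numbers, $c(\beta;\vec{\ell}\,')=(\beta\cap D_j)\,c(\beta;\vec{\ell})$ whenever $\vec{\ell}\,'$ is obtained from $\vec{\ell}$ by one more insertion of $D_j$ (established in \cite{toric2} from the classification of \cite{cho-oh}), to get, after reindexing, $\partial\frak{PO}^{\frak b}/\partial w_j=\sum_{\beta}(\beta\cap D_j)A_{\beta}$. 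Next, for each $i=1,\dots,n$ the character-lattice relation $\sum_{j=1}^m v_{j,i}[D_j]=0$ in $H^2(X;\Z)$ forces $\sum_j v_{j,i}(\alpha\cap D_j)=0$ for every closed class $\alpha$, so the sphere-bubble contributions cancel and $\sum_j v_{j,i}(\beta\cap D_j)=\partial_i(\beta)$ (using $(\ref{charbetaj})$ and $\partial_i(\beta_j)=v_{j,i}$), which is precisely the $y_i$-exponent of the monomial carried by $A_\beta$. Hence
\[
\sum_{j=1}^m v_{j,i}\,\widetilde{\frak{ks}}_{\frak b}(\text{\bf p}_j)=\sum_{\beta}\partial_i(\beta)\,A_{\beta}=y_i\frac{\partial\frak{PO}^{\frak b}}{\partial y_i},
\]
so $\widetilde{\frak{ks}}_{\frak b}$ maps the linear relation $\sum_j v_{j,i}\text{\bf p}_j$ to a defining generator of the Jacobian ideal, hence to $0$ in $\text{\rm Jac}(\frak{PO}^{\frak b})$. (For $j=0$ one has $\widetilde{\frak{ks}}_{\frak b}(\text{\bf p}_0)=1$, consistent with $\text{\bf p}_0=[X]$ mapping to the unit.)

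It then remains to treat the other relations. By the Stanley--Reisner plus character-lattice presentation of $H^*(X;\Q)$, $\ker(\mathcal A\to H(X;\Z))$ is generated over $\Z$ by the character-lattice relations just used together with their products by face classes $[D_{J_0}]$, the non-squarefree monomials that appear being reduced recursively by further character-lattice relations. For a generator of the form ``$i$-th character-lattice relation times $D_{J_0}$'' the same mechanism applies, now with an extra cycle $D_{J_0}$ inserted at interior marked points of $\mathcal M_{1,\ell}^{\text{\rm main}}(\beta;\text{\bf p}_I)$ and the $D_j$-divisor axiom applied to the remaining insertions: one obtains $\widetilde{\frak{ks}}_{\frak b}$ of the generator as an element of $\Lambda_0^{\overset{\circ}P}\langle\!\langle y,y^{-1}\rangle\!\rangle$ times $y_i\,\partial\frak{PO}^{\frak b}/\partial y_i$, plus correction terms that by induction on the number of non-squarefree reductions already lie in the closed ideal. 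Convergence of all the series involved, and the validity of these identities in the completed rings, is guaranteed by $\rho_k\to\infty$ in Theorem \ref{calcPObulk}. This produces $\frak{ks}_{\frak b}$ and establishes the commutativity.

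The step I expect to be the real obstacle is the geometric input of the preceding two paragraphs: the divisor axiom $c(\beta;\vec{\ell}\,')=(\beta\cap D_j)c(\beta;\vec{\ell})$ and its higher-codimension analogue for the fiber product moduli spaces, including the correct accounting of sphere-bubble contributions in the passage between the monomials $z_j$ and the variables $y_i$. This rests on the $T^n$-equivariant Kuranishi perturbations of Sections \ref{calcu}--\ref{bulkFloer} and is imported from \cite{toric2} (ultimately \cite{cho-oh}); the remaining work is the purely combinatorial matching of a $\Z$-generating set of $\ker(\mathcal A\to H(X;\Z))$ with elements of the Jacobian ideal. A more conceptual alternative would exploit that bulk deformation by cohomologous cycles gives isomorphic gapped filtered $A_{\infty}$ structures, realized in the toric case by a logarithmic coordinate change $y_i\mapsto y_i\cdot(\text{unit})$ of the mirror; differentiating this relation in $\frak b$ shows directly that $\widetilde{\frak{ks}}_{\frak b}$ kills $\ker(\mathcal A\to H(X;\Lambda_0))$ modulo the Jacobian ideal, the obstacle there being to pin down the logarithmic nature and $\frak{PO}$-canonicity of that coordinate change.
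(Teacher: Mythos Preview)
Your degree-$2$ computation is correct and clean: the divisor axiom for the open invariants gives $\partial\frak{PO}^{\frak b}/\partial w_j=\sum_{\beta}(\beta\cap D_j)A_{\beta}$, and then $\sum_j v_{j,i}(\beta\cap D_j)=\partial_i\beta$ yields $\sum_j v_{j,i}\,\widetilde{\frak{ks}}_{\frak b}(D_j)=y_i\,\partial\frak{PO}^{\frak b}/\partial y_i$ exactly.

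The gap is in the higher-degree step. A generator of $\mathcal A$ in degree $>2$ is a single cycle $D_J$ with $|J|\ge 2$, and $\widetilde{\frak{ks}}_{\frak b}(D_J)=\partial\frak{PO}/\partial w_J$ is the first derivative in the $D_J$-direction, i.e.\ it counts discs with \emph{one} interior marked point constrained to the higher-codimension cycle $D_J$. The divisor axiom you invoke relates insertions of a \emph{divisor} $D_j$ at an \emph{additional} marked point to multiplication by $\beta\cap D_j$; it says nothing about a single insertion of $D_j\cap D_{J_0}$. Your phrase ``an extra cycle $D_{J_0}$ inserted \dots and the $D_j$-divisor axiom applied to the remaining insertions'' is really computing the mixed second derivative $\partial^2\frak{PO}/\partial w_j\partial w_{J_0}$, not $\partial\frak{PO}/\partial w_{\{j\}\cup J_0}$, and there is no a priori relation between these two quantities. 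To pass from one to the other you would need $\widetilde{\frak{ks}}_{\frak b}$ (or its lift to $\mathcal A$) to be multiplicative, which is precisely Theorem~\ref{multiplicative}, proved only after the present theorem and by an independent cobordism argument.

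The paper sidesteps this entirely. It does not analyse generators of $\ker(\mathcal A\to H(X))$ at all; instead it imports the chain-level fact (\cite{fooo-book} Theorem~3.8.62, recorded here as Claim~\ref{claimss}) that for \emph{any} null-homologous combination $\sum c_i\text{\bf p}_i$, the element $\sum c_i\,i^*_{\text{qm},(\frak b,b)}(\text{\bf p}_i)$ is $\frak m_1^{\frak b,b}$-exact. This is a cobordism statement---one bounds $\sum c_i\text{\bf p}_i$ by a chain $Q$ in $X$ and studies the boundary of the moduli space with $Q$ inserted---and it works uniformly in all degrees with no combinatorics of toric relations. One then observes, via the same differentiation that gave Formula~(\ref{form21}), that the image of $\frak m_1^{\frak b,b}$, with $b$ regarded as a function of $x_i$, lands in the Jacobian ideal. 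Your ``more conceptual alternative'' at the end is essentially this argument; that is the one to pursue.
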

The map ${\frak{ks}}_{\frak b}$ is the isomorphism mentioned in Theorem \ref{Jacqcmain1}.
\par
Theorem \ref{indepenceKS} is \cite{toric3} Theorem 7.1.
\begin{proof}[Sketch of the proof]
By definition, we have
\begin{equation}\label{qmap1}
\frak{PO}(\frak b;y) 
= \sum_{k=0}^{\infty}\sum_{\ell=0}^{\infty}\int_{L(\text{\bf u})}\frak q_{\ell;k}(\frak b^{\ell},b^{k}).
\end{equation}
Here $b = \sum_{i=1}^n x_i \text{\bf e}_i$ and $y_i = e^{x_i}$.
Using $\partial \frak b/\partial w_i = \text{\bf p}_i$ we have
\begin{equation}\label{qmap2}
\frac{\partial \frak{PO}(\frak b;y)}{\partial w_i}
= \sum_{k=0}^{\infty}\sum_{\ell_1=0}^{\infty}\sum_{\ell_2=0}^{\infty}
\int_{L(\text{\bf u})}\frak q_{\ell;k}(\frak b^{\ell_1}\text{\bf p}_i\frak b^{\ell_2},b^{k}).
\end{equation}
The homomorphism
\begin{equation}\label{qmap3}
\text{\bf p}_i 
\mapsto  \sum_{k=0}^{\infty}\sum_{\ell_1=0}^{\infty}\sum_{\ell_2=0}^{\infty}
\frak q_{\ell;k}(\frak b^{\ell_1}\text{\bf p}_i\frak b^{\ell_2},b^{k})
\end{equation}
induces a homomorphism
\begin{equation}\label{qmap4}
H(X;\Lambda_0) \to HF((L(\text{\bf u}),(\frak b,b)),(L(\text{\bf u}),(\frak b,b));\Lambda_0).
\end{equation}
This fact was proved in \cite{fooo-book} Theorem 3.8.62 for arbitrary $L \subset X$.
\par
Note that to define (\ref{qmap4}) by (\ref{qmap3}) we fix $\frak b$, $b$ and regard the 
right hand side of  (\ref{qmap3}) as an element of $H(L(\text{\bf u}),\Lambda_0)$. 
When we define $\tilde{\frak {ks}}_{\frak b}$, we regard $b = \sum_{i=1}^n x_i \text{\bf e}_i$, 
as a ($H(L(\text{\bf u}),\Lambda_0)$ valued function of $x_i$.
So the right hand side of (\ref{qmap2}) is a function of $y_i = e^{x_i}$.
\par
In other words we need to study the `family version' of the well-definedness of
 (\ref{qmap4}).
 \par
 We consider the boundary operator
 $$
 a \in H(L(\text{\bf u}),\Lambda_0) \mapsto \frak m_1^{\frak b,b}(a) 
 = 
  \sum_{k_1=0}^{\infty}\sum_{k_2=0}^{\infty}\sum_{\ell=0}^{\infty}
\frak q_{\ell;k}(\frak b^{\ell},b^{k_1} a b^{k_2}).
 $$
 The well-definedness of  (\ref{qmap4}) means the following Claim \ref{claimss}.
 Let $i^*_{\text{\rm qm},(\frak b,b)}(\text{\bf p}_i)$  be the 
 right hand side of (\ref{qmap3}).

\begin{Claim}\label{claimss}
If $\sum_{i=0}^B c_i\text{\bf p}_i$ is  zero in $H(X;\Lambda_0)$, then 
$\sum_{i=0}^B c_i i^*_{\text{\rm qm},(\frak b,b)}(\text{\bf p}_i)$ lies 
in the image of  $\frak m_1^{\frak b,b}$.
\end{Claim}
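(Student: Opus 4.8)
The plan is to extend $i^*_{\text{\rm qm},(\frak b,b)}$ to a map on \emph{singular chains} of $X$ which is a chain map with respect to the Floer differential $\frak m_1^{\frak b,b}$, and then to feed in a bounding chain of $\sum_i c_i\text{\bf p}_i$. Note first that every $\text{\bf p}_i$ with $i\ge 1$ is a closed toric submanifold $D_J$ and $\text{\bf p}_0=X$ is boundaryless, so $\sum_i c_i\text{\bf p}_i$ is a cycle; as it is zero in $H(X;\Lambda_0)$ there is a chain $S$ with coefficients in $\Lambda_0$ and $\partial S=\sum_i c_i\text{\bf p}_i$. (Since $1$ is the only class of degree $0$, the coefficient $c_0$ must vanish, so $S$ bounds a combination of the cycles $D_J$ only.) Because the connected torus $T^n$ acts trivially on $H_*(X)$, averaging lets us take $S$ to be $T^n$-invariant.

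Next I would define the homotopy operator. For $\beta\in H_2(X,L(\text{\bf u});\Z)$, consider the fiber-product moduli spaces obtained from $\mathcal M_{k+1,\ell+1}^{\text{\rm main}}(\beta)$ by placing $S$ at one interior marked point and copies of $\frak b$ at the others, equipped with a $T^n$-equivariant multisection invariant under permutations of the interior marked points; as in the proof of Theorem \ref{calcPO} and in Remark \ref{virdimge2}, ${\rm ev}_0$ is then automatically a submersion on the perturbed space. Using the trick of Cho to admit $b$ and $\frak b$ with $\Lambda_0$-coefficients exactly as in Sections \ref{calcu} and \ref{bulkFloer}, one obtains from these spaces, with $b^{\times k}$ pulled back at the boundary marked points and integrated along ${\rm ev}_0$, an element $H\in H(L(\text{\bf u});\Lambda_0)$; its degree is one less than that of $i^*_{\text{\rm qm},(\frak b,b)}(\text{\bf p}_i)$, which is exactly what makes it an admissible argument of $\frak m_1^{\frak b,b}$.

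The heart of the argument is then the boundary analysis of these perturbed moduli spaces: the codimension-one boundary consists of the stratum where the $S$-marked point is pushed onto $\partial S$, which gives $\sum_i c_i\, i^*_{\text{\rm qm},(\frak b,b)}(\text{\bf p}_i)$, and of boundary disk bubbling, which, by the $\frak q$-relation of Theorem \ref{qqainfalgdef}(1) together with $\sum_k\frak m_k^{\frak b,b}(b,\dots,b)=\frak{PO}(\frak b,b)\text{\bf e}$ and unitality (Theorem \ref{qqainfalgdef}(3)), reassembles into $\pm\,\frak m_1^{\frak b,b}(H)$ modulo a multiple of the unit $\text{\bf e}$; sphere bubbling is of codimension two and does not enter, and $\frak m_{1,0}$ vanishes on $T^n$-invariant forms. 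Stokes' theorem for ${\rm ev}_{0!}$ thus yields
$$
\sum_i c_i\, i^*_{\text{\rm qm},(\frak b,b)}(\text{\bf p}_i) \;=\; \pm\,\frak m_1^{\frak b,b}(H)\qquad\text{in } H(L(\text{\bf u});\Lambda_0),
$$
which is the Claim. (Concisely: $i^*_{\text{\rm qm},(\frak b,b)}$ extends to a chain map on $C_*(X)$; this is the $T^n$-equivariant de Rham, bulk-deformed incarnation of \cite{fooo-book} Theorem 3.8.62.)

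The algebra of the boundary decomposition is routine; the genuine difficulty is the simultaneous bookkeeping. One must choose $S$ and the family of multisections so that they are $T^n$-equivariant and compatible both with forgetting the $S$-marked point and with the fiber-product boundary decomposition (\ref{descboundary}), and one must check that all the infinite sums over $\beta$, the $\ell_i$ and $k$ converge. Moreover, for the intended application to $\frak{ks}_{\frak b}$ this convergence must be \emph{uniform} in $b=\sum_i x_i\text{\bf e}_i$, i.e.\ the identity has to hold in $\Lambda_0^{\overset{\circ}P}\langle\!\langle y,y^{-1}\rangle\!\rangle$ and not merely for a fixed $b$; this ``family version'' is the step I expect to cost real effort, the remainder following the pattern of \cite{fooo-book,toric2,toric3}.
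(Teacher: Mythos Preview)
Your approach is correct and is exactly the mechanism behind \cite{fooo-book} Theorem~3.8.62, which is all the paper invokes for this Claim; the survey gives no independent argument beyond that citation, so your sketch is in fact more detailed than the paper. The chain-level extension of $i^*_{\text{\rm qm},(\frak b,b)}$ to a $T^n$-invariant bounding chain $S$ of $\sum_i c_i\text{\bf p}_i$, followed by Stokes/boundary analysis separating the $\partial S$-stratum from boundary disk bubbling, is precisely the standard proof, and your identification of the family version over $\Lambda_0^{\overset{\circ}P}\langle\!\langle y,y^{-1}\rangle\!\rangle$ as the genuine extra work matches what the paper itself flags.

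One small correction: the qualifier ``modulo a multiple of the unit $\text{\bf e}$'' is not needed. In the bubbling terms where $S$ sits on the outer component, the inner bubble sums to $\frak{PO}^{\frak b}(b)\,\text{\bf e}$, and then unitality (Theorem~\ref{qqainfalgdef}(3), specifically (\ref{unital0})) kills the outer $\frak q$ outright because its $E_\ell$-input contains $S$ and hence has $\ell\ge 1$; so the identity $\sum_i c_i\, i^*_{\text{\rm qm},(\frak b,b)}(\text{\bf p}_i)=\pm\,\frak m_1^{\frak b,b}(H)$ holds on the nose.
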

We can prove the same claim when we regard $b$ as a function of $x_i$.
By the proof of Theorem \ref{CritisHFne0} (especially by Formula (\ref{form21})), 
the image of $\frak m_1^{\frak b,b}$ (where $b$ is regarded as a function of $x_i$) 
is in the Jacobian ideal (the ideal generated by $y_i\partial \frak{PO}^{\frak b}/\partial  y_i$).
\par
Thus the kernel of $\mathcal A(\Lambda_0) \to H(X;\Lambda_0)$ 
is mapped to the  Jacobian ideal  by $\tilde{\frak{ks}}_{\frak b}$.
This implies the theorem.
\end{proof}
Before closing this subsection, we state Theorem \ref{cchangethem} which is a
nonlinear version of Theorem \ref{indepenceKS}.
\par
The potential function with bulk $\frak{PO}^{\frak b}$
is parametrized by $\frak b \in \mathcal A(\Lambda_0)$.
Theorem  \ref{cchangethem}  says that it depends only on the cohomology class $\frak b$ 
up to appropriate change of variables.
$\Lambda_+^{\overset{\circ}P}\langle\!\langle y,y^{-1}\rangle\!\rangle$
denotes the set of elements $R$ of $\Lambda_0^{\overset{\circ}P}\langle\!\langle y,y^{-1}\rangle\!\rangle$
such that $T^{-\epsilon}R \in \Lambda_0^{\overset{\circ}P}\langle\!\langle y,y^{-1}\rangle\!\rangle$
for some $\epsilon > 0$.
\begin{Definition}\label{coordinatechange}
We consider $n$ elements $y_i' \in \Lambda^{\overset{\circ}P}\langle\!\langle y,y^{-1}\rangle\!\rangle $ $(i=1,\dots,n)$.
\begin{enumerate}
\item We say that $y' = (y'_1,\dots,y'_n)$ is a {\it coordinate change
converging on} $\text{\rm Int}\, P$
(or a {\it coordinate change on $\text{\rm Int}\,P$}) if
\begin{equation}\label{leadingcondition}
y'_i \equiv c_i y_i  \mod y_i\Lambda_+^{\overset{\circ}P}\langle\!\langle y,y^{-1}\rangle\!\rangle
\end{equation}
$c_i \in \C \setminus \{0\}$.
\item We say that the coordinate change is {\it strict} if $c_i =1$ for all $i$.
\item We say that the {\it coordinate change
converges on $P$} if
$y_i' \in \Lambda^{P}\langle\!\langle y,y^{-1}\rangle\!\rangle $ $(i=1,\dots,n)$
in addition.
Its strictness is defined in the same way.
We also say that $y'$ is
a {\it coordinate change on $P$}.
\end{enumerate}
\end{Definition}
The set of all coordinate changes forms a group. 
It is regarded as a kind of group of self automorphisms of the 
filtered $A_{\infty}$ algebra associated to $L(\text{\bf u})$.
(The domain of convergence assumed in Definition \ref{coordinatechange} 
requires that it converges not only by the norm $\frak v_T^{\text{\bf u}}$ 
but also by $\frak v_T^{\text{\bf u}'}$  with any $\text{\bf u}'$. 
This is the reason we write ``a kind of'' in the above sentence.)
A closely related group appears in \cite{KS2} and \cite{grospand}.

\begin{Theorem}\label{cchangethem}
Let $\frak b,\frak b' \in \mathcal A(\Lambda_0)$.
We assume that $[\frak b] = [\frak b']
\in H(X;\Lambda_0)$.
\par
Then there exists a coordinate change $y'$ on $\text{\rm Int}\, P$, such that
\begin{equation}\label{POcoordinatechage}
\mathfrak{PO}^{\frak b}(y')
= \mathfrak{PO}^{\frak b'}(y).
\end{equation}
\par
If $\frak b - \frak b' \in \mathcal A(\Lambda_+)$,
then $y'$ can be taken to be strict.
\par
If both $\frak b,\frak b' \in \mathcal A(\Lambda_+)$,
then $y'$ can be taken to be
a strict coordinate change on $P$.
\end{Theorem}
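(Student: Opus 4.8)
The plan is to join $\frak b$ to $\frak b'$ by a path and integrate an infinitesimal coordinate change produced by the $\frak q$-operator formalism. Since $[\frak b]=[\frak b']\in H(X;\Lambda_0)$, the straight path $\frak b_t=(1-t)\frak b+t\frak b'$ $(t\in[0,1])$ lies in $\mathcal A(\Lambda_0)$, has constant cohomology class, and its velocity $\dot{\frak b}:=\frak b'-\frak b$ satisfies $[\dot{\frak b}]=0\in H(X;\Lambda_0)$; if $\frak b-\frak b'\in\mathcal A(\Lambda_+)$ then $\dot{\frak b}\in\mathcal A(\Lambda_+)$, and if $\frak b,\frak b'\in\mathcal A(\Lambda_+)$ then all $\frak b_t\in\mathcal A(\Lambda_+)$. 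By Theorem \ref{calcPObulk} the family $t\mapsto\frak{PO}^{\frak b_t}$ is differentiable as a family in $\Lambda_0^{\overset{\circ}P}\langle\!\langle y,y^{-1}\rangle\!\rangle$, and differentiating the defining formula (\ref{qmap1})--(\ref{qmap2}) in $t$ (using $\partial_t\frak b_t=\dot{\frak b}$ and $\Lambda_0$-linearity of $\widetilde{\frak{ks}}$) gives
\begin{equation}\label{dPOdt}
\frac{\partial}{\partial t}\frak{PO}^{\frak b_t}(y)
=\widetilde{\frak{ks}}_{\frak b_t}(\dot{\frak b})(y).
\end{equation}

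The crucial point is that the right-hand side of (\ref{dPOdt}) lies in the Jacobian ideal of $\frak{PO}^{\frak b_t}$. Indeed, by the commutative diagram of Theorem \ref{indepenceKS} the image of $\widetilde{\frak{ks}}_{\frak b_t}(\dot{\frak b})$ in $\text{\rm Jac}(\frak{PO}^{\frak b_t})$ is $\frak{ks}_{\frak b_t}([\dot{\frak b}])=\frak{ks}_{\frak b_t}(0)=0$; equivalently, as in Claim \ref{claimss}, $[\dot{\frak b}]=0$ forces the class obtained by applying $i^*_{\text{\rm qm},(\frak b_t,b)}$ to $\dot{\frak b}$ into the image of $\frak m_1^{\frak b_t,b}$, which, after identifying $b=\sum x_i\text{\bf e}_i$ with a function of $y_i=e^{x_i}$, is generated via (\ref{form21}) by $y_i\,\partial\frak{PO}^{\frak b_t}/\partial y_i$, $i=1,\dots,n$. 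Performing this compatibly in $t$ (the family version of Claim \ref{claimss}) and keeping track of valuations yields functions $h_i(t,\cdot)\in\Lambda_0^{\overset{\circ}P}\langle\!\langle y,y^{-1}\rangle\!\rangle$ with
\begin{equation}\label{hdecomp}
\frac{\partial}{\partial t}\frak{PO}^{\frak b_t}
=\sum_{i=1}^{n}h_i(t,y)\,y_i\frac{\partial\frak{PO}^{\frak b_t}}{\partial y_i},
\end{equation}
where moreover $h_i\in\Lambda_+^{\overset{\circ}P}\langle\!\langle y,y^{-1}\rangle\!\rangle$ when $\dot{\frak b}\in\mathcal A(\Lambda_+)$, and the $h_i$ in addition converge on all of $P$ when $\frak b,\frak b'\in\mathcal A(\Lambda_+)$.

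Now form the time-dependent derivation $V_t=-\sum_i h_i(t,y)\,y_i\,\partial/\partial y_i$ of $\Lambda^{\overset{\circ}P}\langle\!\langle y,y^{-1}\rangle\!\rangle$ and solve $\partial_t\phi_t=V_t\circ\phi_t$, $\phi_0=\mathrm{id}$, by successive approximation; this converges in the $d_{\overset{\circ}P}$-metric because $V_t$ does not decrease the filtration and its iterated corrections gain positive valuation, and inspection of the leading order shows that $\phi_t$ is a coordinate change on $\text{\rm Int}\,P$ in the sense of Definition \ref{coordinatechange}; it is strict when $\dot{\frak b}\in\mathcal A(\Lambda_+)$, and a strict coordinate change on $P$ when $\frak b,\frak b'\in\mathcal A(\Lambda_+)$. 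By (\ref{hdecomp}),
\begin{equation}\label{conservation}
\frac{d}{dt}\bigl(\frak{PO}^{\frak b_t}\circ\phi_t\bigr)
=\bigl(\partial_t\frak{PO}^{\frak b_t}+V_t\frak{PO}^{\frak b_t}\bigr)\circ\phi_t=0,
\end{equation}
so $\frak{PO}^{\frak b'}\circ\phi_1=\frak{PO}^{\frak b}$. Since coordinate changes form a group, $y':=\phi_1^{-1}(y)$ is again a coordinate change of the same type, and $\frak{PO}^{\frak b}(y')=\frak{PO}^{\frak b'}(\phi_1(y'))=\frak{PO}^{\frak b'}(y)$, which is (\ref{POcoordinatechage}).

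The main obstacle is the passage from Claim \ref{claimss}, which is formulated for a fixed $\frak b$, to the ``family over $(t,y)$'' identity (\ref{hdecomp}) with the valuations of the $h_i$ controlled well enough, in particular near $\partial P$, that the ensuing flow lands in the prescribed class of coordinate changes; this is exactly where the full $\frak q$-operator formalism of \cite{fooo-book} Section~3.8 and the $T^n$-equivariant transversality of Section \ref{bulkFloer} enter. The details are carried out in \cite{toric3}.
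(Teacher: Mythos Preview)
Your proposal is correct and follows essentially the same route the paper indicates: the paper explicitly says ``Theorem \ref{cchangethem} follows from Theorem \ref{indepenceKS} by some `integration' (that is solving appropriate ordinary differential equation),'' and that is precisely what you do---use Theorem \ref{indepenceKS} to place $\partial_t\frak{PO}^{\frak b_t}$ in the Jacobian ideal along a path, then integrate the resulting time-dependent vector field to produce the coordinate change. Your identification of the main technical point (the family version of Claim \ref{claimss} with valuation control on the $h_i$, especially near $\partial P$) also matches what the paper defers to \cite{toric3} section 8.
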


This is \cite{toric3} Theorem 8.7.
\par
We remark that $\Lambda_0^{\overset{\circ}P}\langle\!\langle y,y^{-1}\rangle\!\rangle$
parametrizes the deformation of the potential function. Then the Jacobian ideal corresponds to the 
part induced by the coordinate change. 
Thus Theorem \ref{cchangethem} follows from Theorem \ref{indepenceKS}
by some `integration' (that is solving appropriate ordinary differential equation.)
See \cite{toric3} section 8.
\par\bigskip
\subsection{The homomorphism $\frak{ks}_{\frak b}$ is an isomorphism}
\label{isomproof}

The main geometric input to the proof of Theorem \ref{Jacqcmain1}
is the following:

\begin{Theorem}\label{multiplicative}
The map
$
\frak{ks}_{\frak b}: (H(X;\Lambda_0),\cup^{\frak b})
\to \text{\rm Jac}(\frak{PO}^{\frak b})
$
is a ring homomorphism.
\end{Theorem}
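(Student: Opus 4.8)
The plan is to prove that $\frak{ks}_{\frak b}$ intertwines the deformed quantum cup product $\cup^{\frak b}$ on $H(X;\Lambda_0)$ with the ordinary product in $\text{\rm Jac}(\frak{PO}^{\frak b})$. The underlying geometric principle is that both products are governed by the operators $\frak q_{\ell,k}$ applied to interior insertions, together with the fact that $\frak q$ defines an $A_\infty$-module/algebra structure over the de Rham complex of $X$ (Theorem \ref{qqainfalgdef} (1)). First I would recall from Theorem \ref{indepenceKS} and its proof that $\frak{ks}_{\frak b}(\text{\bf p}_i)$ is, up to the Jacobian ideal, the class $i^*_{\text{\rm qm},(\frak b,b)}(\text{\bf p}_i) = \sum_{k,\ell_1,\ell_2}\frak q_{\ell_1+\ell_2+1;k}(\frak b^{\ell_1}\,\text{\bf p}_i\,\frak b^{\ell_2}; b^k)$, viewed as a function of $y_i = e^{x_i}$; this is precisely the Floer-theoretic analogue of the closed-open map. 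The ring homomorphism property of such a closed-open map is the content of \cite{fooo-book} Theorem 3.8.62 (and its refinement for bulk deformations), which asserts that the map $H(X;\Lambda_0)\to HF((L,(\frak b,b)),(L,(\frak b,b));\Lambda_0)$ sending $\frak c$ to the $\frak q$-insertion sum carries the deformed quantum product $\cup^{\frak b}$ to the product in Floer cohomology $HF((L,(\frak b,b)),(L,(\frak b,b));\Lambda_0)$, which is induced by $\frak m_2^{\frak b,b}$.

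The key steps, in order, are as follows. (1) Fix $\frak b$ and regard $b = \sum_i x_i\text{\bf e}_i$ as a variable. Show that the fiberwise version of the closed-open map is multiplicative: for $\frak c,\frak d\in H(X;\Lambda_0)$, one has in $H(L(\text{\bf u});\Lambda_0)$, modulo the image of $\frak m_1^{\frak b,b}$, the identity $i^*_{\text{\rm qm},(\frak b,b)}(\frak c\cup^{\frak b}\frak d)\ \equiv\ \frak m_2^{\frak b,b}\big(i^*_{\text{\rm qm},(\frak b,b)}(\frak c),\, i^*_{\text{\rm qm},(\frak b,b)}(\frak d)\big)$. This is proved by the standard bordism/cobordism argument on the one-dimensional moduli spaces $\mathcal M_{k+1,\ell}^{\text{\rm main}}(\beta;\dots)$ with two distinguished interior insertions, whose boundary contributions are exactly the two sides; here one uses the $T^n$-equivariant multisections of Section \ref{bulkFloer} (Remark \ref{virdimge2}) so that everything is defined by genuine submersions and the output is an honest differential form, function of $y$. (2) Translate this to the potential-function side: by Formula (\ref{form21}) (and its bulk analogue in Theorem \ref{CritisHFne0bulk}), the image of $\frak m_1^{\frak b,b}$ on $H^1(L(\text{\bf u}))$ corresponds, under $b\leftrightarrow (x_1,\dots,x_n)\leftrightarrow(y_1,\dots,y_n)$, precisely to the Jacobian ideal $(y_i\partial\frak{PO}^{\frak b}/\partial y_i)$; hence the Floer product $\frak m_2^{\frak b,b}$ descends to multiplication in $\text{\rm Jac}(\frak{PO}^{\frak b})$. (3) Combine (1) and (2) with the identification of $i^*_{\text{\rm qm},(\frak b,b)}$ with $\tilde{\frak{ks}}_{\frak b}$ followed by projection, from the proof of Theorem \ref{indepenceKS}, to conclude $\frak{ks}_{\frak b}(\frak c\cup^{\frak b}\frak d) = \frak{ks}_{\frak b}(\frak c)\cdot\frak{ks}_{\frak b}(\frak d)$, and $\frak{ks}_{\frak b}(1) = 1$ (the unit $\text{\bf p}_0$ maps to the constant $1$, reading off from $\frak q_{1,0}(\text{\bf p}_0;1)=\text{\bf e}$).

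**The main obstacle** I expect is step (1): making the multiplicativity of the closed-open map rigorous as an identity of \emph{functions} of $y$, rather than as a pointwise statement for fixed $b\in H^1(L(\text{\bf u});\Lambda_0)$. The subtlety is that one must run the moduli-space cobordism argument uniformly in the boundary insertion $b$, keeping track of the $A_\infty$-correction terms $\frak q(\dots,\frak m_1^{\frak b,b}(\cdot),\dots)$ that arise from the codimension-one strata of the compactified moduli spaces, and checking that all such terms either cancel or land in the Jacobian ideal after passing to $\text{\rm Jac}(\frak{PO}^{\frak b})$. This is where the algebraic formalism of $A_\infty$-bimodule homomorphisms over the family parametrized by $b$ is needed, and where one invokes \cite{fooo-book} Theorem 3.8.62 in its family form together with the convergence estimates of Theorem \ref{calcPObulk} (2)–(3) ensuring everything lies in $\Lambda_0^{\overset{\circ}P}\langle\!\langle y,y^{-1}\rangle\!\rangle$. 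Once multiplicativity is established, surjectivity and injectivity of $\frak{ks}_{\frak b}$ (hence Theorem \ref{Jacqcmain1}) follow by a separate rank count: both sides have rank equal to $\dim_\Q H(X;\Q)$ over $\Lambda$ after tensoring, using Proposition \ref{localization} on the Jacobian side and Poincaré duality on the quantum side, so a multiplicative map between them respecting units and (eventually) the pairings must be an isomorphism; but that final bookkeeping is carried out in the subsequent parts of the paper and is not needed for the present statement.
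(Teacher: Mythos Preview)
Your proposal captures the right geometric idea—a cobordism on moduli of disks with two distinguished interior marked points—and this is exactly what the paper does. However, the implementation differs in two notable ways.

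First, the paper does not pass through the Floer product $\frak m_2^{\frak b,b}$ at all. Instead, it introduces the forgetful map
\[
\frak{forget}\colon \mathcal M^{\text{\rm main}}_{k+1;2}(\beta;\text{\bf p},\text{\bf p}') \longrightarrow \mathcal M_{1;2}
\]
to the two-dimensional moduli of domain disks with one boundary and two interior marked points, and picks two specific boundary points $[\Sigma_1],[\Sigma_2]\in\mathcal M_{1;2}$: at $\Sigma_1$ the two interior points bubble into a sphere (producing $\frak q_{1;k}(\text{\bf p}\cup^Q\text{\bf p}';b^k)$), while at $\Sigma_2$ the disk splits at a boundary node into two disks each carrying one interior marked point, producing directly the \emph{product of functions}
\[
\sum_{k_1+k_2=k}\frak q_{1;k_1}(\text{\bf p};b^{k_1})\,\frak q_{1;k_2}(\text{\bf p}';b^{k_2}).
\]
No appeal to $\frak m_2^{\frak b,b}$ is needed: the output of each $\frak q_{1;k_i}(\text{\bf p};b^{k_i})$ is already a multiple of the unit $\text{\bf e}$ by the $T^n$-equivariant dimension argument (Remark~\ref{virdimge2}), so the node-gluing reduces to ordinary multiplication of coefficients via unitality. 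Your detour through step~(2), arguing that $\frak m_2^{\frak b,b}$ descends to multiplication in the Jacobian ring, is therefore unnecessary in the toric setting—but it is not wrong, just longer.

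Second, a word of caution on your citation: in this paper, \cite{fooo-book}~Theorem~3.8.62 is invoked only for the \emph{well-definedness} of the closed--open map (Claim~\ref{claimss}), not for its multiplicativity. The ring homomorphism property is precisely the new content of Theorem~\ref{multiplicative}, so citing 3.8.62 for it is circular. Since you then go on to sketch the actual cobordism argument, this is a misattribution rather than a gap, but you should present the cobordism as the proof itself rather than as a backup to a citation.
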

Theorem \ref{multiplicative} is \cite{toric3} Theorem 9.1.
\par
Note this theorem is a version of a result which holds in greater 
generality. Namely there exists a ring homomorphism
\begin{equation}
QH(X;\Lambda_0) \to HH(Fuk(X,\omega)),
\end{equation}
where the right hand side is the Hochschild cohomology of the Fukaya category 
(see \cite{fooo099,anchor} for its definition.)
The existence of such homomorphism was first suggested by \cite{konts:hms} and 
conjectured explicitly by \cite{seidel:HH} etc.
See \cite{toric3} section 31 and the reference therein for some of the related works. 
\par
We remark that $HH(Fuk(X,\omega))$ parametrizes the deformation of 
the Lagrangian Floer theory on $X$. 
The Jacobian ring $\text{\rm Jac}(\frak{PO}^{\frak b})$ parametrizes the deformation of 
a part of the structures, that is the part described by $\frak m^b_0(1)$.   
So there is a natural ring homomorphism 
$HH(Fuk(X,\omega)) \to \text{\rm Jac}(\frak{PO}^{\frak b})$ in the 
toric case. Combining them we obtain the ring homomorphism in Theorem \ref{multiplicative}.
\par
More precise and down-to-earth proof of Theorem \ref{multiplicative} is given as follows.
\par
We recall that the map $
\frak{ks}_{\frak b}: (H(X;\Lambda_0),\cup^{\frak b})
\to \text{\rm Jac}(\frak{PO}^{\frak b})
$ is induced from the map
\begin{equation}\label{qmap3again}
\text{\bf p}_i 
\mapsto  \sum_{k=0}^{\infty}\sum_{\ell_1=0}^{\infty}\sum_{\ell_2=0}^{\infty}
\int_{L(\text{\bf u})}\frak q_{\ell_1 + \ell_2 + 1;k}(\frak b^{\ell_1}\text{\bf p}_i\frak b^{\ell_2},b^{k}) : 
\mathcal A \to \Lambda_0^P\langle\!\langle y,y^{-1} \rangle\!\rangle.
\end{equation}
(See (\ref{qmap3}).)
Note $b = \sum x_i\text{\bf e}_i$ and the right hand side is a function of $x_i$.
It then turns out to be a function of $y_i^{\text{\bf u}} = e^{x_i}$.
Moreover by changing the variables to $y_i$ by the formula 
$y_i = T^{u_i}y_i^{\text{\bf u}}$, the right hand side becomes a function 
of $y_i$ and is an element of $ \Lambda_0^P\langle\!\langle y,y^{-1} \rangle\!\rangle$.
\par
We consider the case $\frak b =\text{\bf 0}$ for simplicity.

We consider the moduli space $\mathcal M_{k+1;2}(\beta)$ of 
$J$-holomorphic disks with $k+1$ boundary and $\ell$ interior marked points,
(See subsection \ref{subsec:moduli}.)
and take a fiber product
$$
\mathcal M^{\text{\rm main}}_{k+1;2}(\beta) {}_{({\rm ev}_1^+,{\rm ev}_2^+)} \times (\text{\bf p}\times \text{\bf p}')
$$
where $\text{\bf p},\text{\bf p}' \in \mathcal A$.
We denote this fiber product by 
$$
\mathcal M^{\text{\rm main}}_{k+1;2}(\beta;\text{\bf p},\text{\bf p}' ).
$$
Let $\mathcal M_{1;2}$ be the moduli space of 
bordered Riemann surface of genus $0$ with two interior and 
one boundary marked points.
This moduli space is a two dimensional disk. We consider two 
points $[\Sigma_1],[\Sigma_2] \in \mathcal M_{1;2}$ as in the figure below.
\par\medskip
\hskip1cm\epsfbox{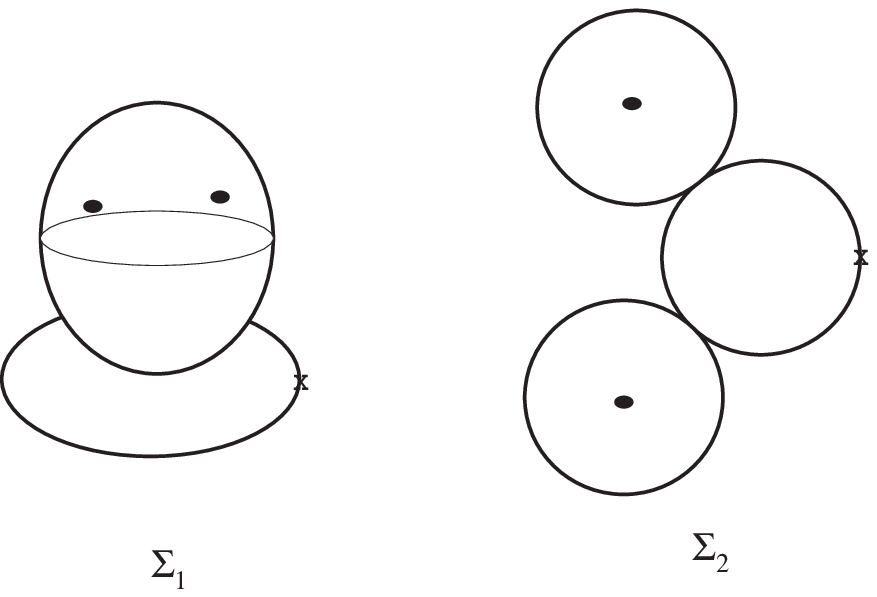}
\par\smallskip
\centerline{\bf Figure 11.1}
\par\bigskip
\par
We have a forgetful map
\begin{equation}
\frak{forget} : 
\mathcal M^{\text{\rm main}}_{k+1;2}(\beta )
\to \mathcal M_{1;2}.
\end{equation}
Namely we put 
$$
\frak{forget} ([\Sigma;z_0,\dots,z_k,z_1^+,z_2^+,u])
= [\Sigma;z_0;z_1^+,z_2^+].
$$
It induces a map 
$$
\frak{forget} : \mathcal M^{\text{\rm main}}_{k+1;2}(\beta;\text{\bf p},\text{\bf p}')
\to \mathcal M_{1;2}.
$$
For $i=1,2$, we denote by
$$
\mathcal M^{\text{\rm main}}_{k+1;2}(\beta;\text{\bf p},\text{\bf p}' ;\Sigma_i)
$$
the inverse image of $\{[\Sigma_i]\}$ in 
$\mathcal M^{\text{\rm main}}_{k+1;2}(\beta;\text{\bf p},\text{\bf p}')$.
\par
Let $h_j \in H^1(L(\text{\bf u});\C)$
($j=1,\dots, k$). (Note we identify the cohomology group 
with the set of $T^n$ invariant forms.) We pull back
$h_1\times \dots\times h_k$ to 
$\mathcal M^{\text{\rm main}}_{k+1;2}(\beta;\text{\bf p},\text{\bf p}';\Sigma_i)$
by $({\rm ev}_1,\dots,{\rm ev}_k)$ 
and consider the integration along fiber by 
${\rm ev}_{0}$. We denote it by 
\begin{equation}\label{fixq}
\mathrm{Corr}(h_1\times\dots\times h_k;
\mathcal M^{\text{\rm main}}_{k+1;2}(\beta;\text{\bf p},\text{\bf p}';\Sigma_i)).
\nonumber\end{equation}
 More precisely we take a $T^n$ invariant multisection $\frak s$ so that 
the zero set 
$\mathcal M^{\text{\rm main}}_{k+1;2}(\beta;\text{\bf p},\text{\bf p}';\Sigma_i)^{\frak s}$
is transversal to zero. Then integration along the fiber is 
well-defined. This is because ${\rm ev}_{0}$ on 
$\mathcal M^{\text{\rm main}}_{k+1;2}(\beta;\text{\bf p},\text{\bf p}';\Sigma_i)^{\frak s}$
must become a submersion by the $T^n$ equivariance.
\par
We put
$$
\aligned
&\mathrm{Corr}(h_1\times\dots\times h_k;
\mathcal M^{\text{\rm main}}_{k+1;2}(\text{\bf p},\text{\bf p}';\Sigma_i)) \\
&= \sum_{\beta} T^{(\beta\cap \omega)/2\pi}
\mathrm{Corr}(h_1\times\dots\times h_k;
\mathcal M^{\text{\rm main}}_{k+1;2}(\beta;\text{\bf p},\text{\bf p}';\Sigma_i))
\endaligned
$$
and extend $\mathrm{Corr}(\cdots;
\mathcal M^{\text{\rm main}}_{k+1;2}(\text{\bf p},\text{\bf p}';\Sigma_i))
$ to 
$$
H^1(L(\text{\bf u});\Lambda_0)^{\otimes k} \to \Lambda_0.
$$
We then can prove
the following two formulas:
\begin{equation}\label{fibersigma1}
\mathrm{Corr}(
\underbrace{b,\dots,b}_{k};
\mathcal M^{\text{\rm main}}_{k+1;2}(\text{\bf p},\text{\bf p}';\Sigma_1))
= 
\frak q_{1;k}(\text{\bf p}\cup^Q\text{\bf p}';b^{k}).
\end{equation}
\begin{equation}\label{fibersigma2}
\mathrm{Corr}(
\underbrace{b,\dots,b}_{k};
\mathcal M^{\text{\rm main}}_{k+1;2}(\text{\bf p},\text{\bf p}';\Sigma_2))
=
\sum_{k_1+k_2=k}
\frak q_{1;k_1}(\text{\bf p};b^{k_1})
\frak q_{1;k_2}(\text{\bf p}';b^{k_2}).
\end{equation}
Note the sum over $k$ of the right hand sides of (\ref{fibersigma1}) 
and (\ref{fibersigma2}) are 
$$
\frak{ks}_{\text{\bf  0}}(\text{\bf p}\cup^Q\text{\bf p}')
\qquad \text{and} 
\qquad
\frak{ks}_{\text{\bf  0}}(\text{\bf p})
\frak{ks}_{\text{\bf  0}}(\text{\bf p}')
$$
respectively.
(Note we are studying the case $\frak b = \text{\bf  0}$.) 
\par
We finally use cobordism argument to show that the 
left hand side of (\ref{fibersigma1}) 
coincides with the left hand side of (\ref{fibersigma2})
modulo elements in the Jacobian ideal.
This is an outline of the proof of Theorem 
\ref{multiplicative}.
See \cite{toric3} section 9 for detail.
\par\medskip
The outline of the rest of the proof of 
Theorem \ref{Jacqcmain1} is as follows.
\par
We first prove the surjectivity of $\frak{ks}_{\frak b}$.
For this purpose we consider the map obtained by 
reducing the coefficient to $\C = \Lambda_0/\Lambda_+$.
Then the quantum cohomology of the domain becomes ordinary 
cohomology. We can calculate the $\C = \Lambda_0/\Lambda_+$ 
reduction of the Jacobian ring using Cho-Oh's result
(namely by studying the leading order term 
$z_1 + \dots +z_m$. See Theorem \ref{calcPObulk}.)
Then the $\C$-reduction of $\frak{ks}_{\frak b}$ 
is an isomorphism by a classical result of Stanley 
which calculates the cohomology ring of toric manifold.
(See for example \cite{fulton}.)
It implies that $\frak{ks}_{\frak b}$  
itself is surjective.
\par
We remark that the fact that $\C$-reduction of $\frak{ks}_{\frak b}$
is an isomorphism does not imply that 
$\frak{ks}_{\frak b}$ is isomorphism.
In fact we need to eliminate the possibility 
that $\text{\rm Jac}(\frak{PO}^{\frak b})$ has 
a component such as $\Lambda_0/(T^{\lambda})$.
Note that 
the (quantum) cohomology $H(X;\Lambda_0)$ 
is a free $\Lambda_0$ module.
Therefore to prove the injectivity of $\frak{ks}_{\frak b}$ 
and complete the proof of Theorem \ref{Jacqcmain1} 
it suffices to prove the following inequality.
\begin{equation}\label{mainineqality}
\text{\rm rank}_{\Lambda} (\text{\rm Jac}(\frak{PO}^{\frak b}) \otimes
_{\Lambda_0}\Lambda)
\ge 
\text{\rm rank}_{\Q} H(X;\Q).
\end{equation}
We remark that in many explicit examples we can prove 
the equality (\ref{mainineqality}) directly by finding critical points of 
$\frak{PO}^{\frak b}$, for example by solving leading term 
equation.
However the proof of (\ref{mainineqality}) is in general more involved, which 
we briefly describe now. 
We consider the case $\frak b = \text{\bf 0}$, for simplicity.
\par
We prove (\ref{mainineqality}) in two steps. 
We first use a result of McDuff-Tolman
\cite{mc-tol} (which is based on Seidel's work 
\cite{seidel:auto}), to find elements 
$z'_1,\dots,z'_m \in HQ(X;\Lambda_0)$
with the following properties.
\begin{enumerate}
\item
$z'_1,\dots,z'_m$ satisfies quantum Stanley-Reisner relation.
\item
There exists $\frak P_i(Z_1,\dots,Z_m) = 
\sum_{j=1}^m v_{j,i} Z_i + \sum_{k=1}^{\infty}T^{\rho_k}
P_{i,k}(Z_1,\dots,Z_m)$
such that
\begin{equation}\label{relationforQh}
\frak P_i(z'_1,\dots,z'_m) = 0
\end{equation}
and $\rho_k \to \infty$, $\rho_k >0$, $P_{i,k} \in \C[Z_1,\dots,Z_m]$.
(We recall $d\ell_j = ( v_{j,1},\dots, v_{j,n}) \in \Z^n$.)
\item
The relations in the above (1),(2) are all the relations among 
$z'_i$. Moreover $z'_i$ generates  $HQ(X;\Lambda_0)$.
\end{enumerate}
Let us explain the above statement briefly.
By putting 
$Z_i = T^{\lambda_i}y_1^{v_{i,1}}\cdots y_n^{v_{i,n}}$ 
we obtain a surjective  ring homomorphism
$$
\Lambda[Z_1,\dots,Z_m] 
\to \Lambda[y_1,y_1^{-1},\dots,y_n,y_n^{-1}].
$$
The quantum Stanley-Reisner relations are the generators of the 
kernel of this homomorphism.
(See \cite{toric1} Definition 6.4.)
The quantum Stanley-Reisner relation appeared in the Batyrev's work 
on quantum cohomology of toric manifold and is given explicitly 
by using moment polytope $P$.
\par
We put $z_i  = T^{\lambda_i}y_1^{v_{i,1}}\cdots y_n^{v_{i,n}} 
\in \text{\rm Jac}(\frak{PO}^{\text{\bf 0}})$.
Then (\ref{POmainformulabulk}) implies that 
it satisfies the formula
\begin{equation}\label{relationforJac}
\sum_{j=1}^m v_{j,i} z_i + \sum_{k=1}^{\infty}T^{\rho_k}
\frac{\partial P_{k}}{\partial x_i}(z_1,\dots,z_m) =0.
\end{equation}
(Note we put $y_i = e^{x_i}$ so (\ref{relationforJac}) is 
$\frac{\partial P_{k}}{\partial x_i} = y_i\frac{\partial P_{k}}{\partial y_i}$.)
We remark that the first term of the left hand side of (\ref{relationforJac}) is
$$
\frac{\partial \frak{PO}^{\text{\bf 0}}}{\partial x_i}(z_1,\dots,z_m).
$$
We also remark that the left hand side of 
(\ref{relationforQh}) is similar to (\ref{relationforJac}). 
Namely their leading order terms coincide. 
\par
The element $z'_i$ is the invariant of \cite{seidel:auto} associated 
to the Hamiltonian $S^1$ action. Here $S^1$ is the component of $T^n$ which 
fixes $D_i$.
The fact that they satisfy the quantum Stanley-Reisner relation 
is proved in \cite{mc-tol} using the relation between those $S^1$ actions 
and basic properties of Seidel invariant.
The property (2) can be proved using the fact 
$z'_i \equiv [D_i] \mod \Lambda_+$.
\par
Let $(QSR) \subset \Lambda_0\langle\!\langle Z_1,\dots,Z_m
\rangle\!\rangle$ be the ideal generated by the 
quantum Stanley-Reisner relations.
Then (1), (2) above imply the existence of homomorphism
\begin{equation}\label{PandQH}
\frac{\Lambda_0\langle\!\langle Z_1,\dots,Z_m
\rangle\!\rangle }{\text{Clos}((QSR) \cup \{\frak P_i : i=1,\dots,m\})}
\to 
QH(X;\Lambda_0).
\end{equation} 
here $\text{Clos}$ means a closure with respect to an appropriate topology.
By reduction to $\C = \Lambda_0/\Lambda_+$ we can show that 
(\ref{PandQH}) is an isomorphism.
(We use the fact that $QH(X;\Lambda_0)$ is torsion free here.)
\par
Now the proof of (\ref{mainineqality}) goes as follows.
For $\frak s \in \Lambda$ we put
$$
\frak P_i^{\frak s} =
\frak s \frac{\partial \frak{PO}^{\text{\bf 0}}}{\partial x_i}
+ (1-\frak s)\frak P_i.
$$
We remark $\frak P_i^{\frak s} $ has the form 
$$
\frak P^{\frak s}_i(Z_1,\dots,Z_m) = 
\sum_{j=1}^m v_{j,i} Z_i + \sum_{k=1}^{\infty}T^{\rho_k}
P^{\frak s}_{i,k}(Z_1,\dots,Z_m).
$$
We define the ring $\frak R_{\frak s}$ by
$$
\frak R_{\frak s} = 
\frac{\Lambda_0\langle\!\langle Z_1,\dots,Z_m
\rangle\!\rangle }{\text{Clos}((QSR) \cup \{\frak P^{\frak s}_i : i=1,\dots,m\})}
\otimes_{\Lambda_0}\Lambda.
$$
We have
$$
\frak R_{0} \cong QH(X;\Lambda)
$$
since (\ref{PandQH}) is an isomorphism.
On the other hand
$$
\frak R_{1} \cong \text{\rm Jac}(\frak{PO}^{\text{\bf 0}})\otimes_{\Lambda_0}\Lambda.
$$
Thus it suffices to show that $\dim_{\Lambda}\frak R_{\frak s}$ is 
independent of $\frak s$. 
We regard $\cup_{\frak s \in \Lambda}Spec(\frak R_{\frak s})$ as a 
family of affine schemes parametrized by $\frak s \in \Lambda$.
If we can prove that this family is flat and proper then 
the independence of  $\dim_{\Lambda}\frak R_{\frak s}$ is a 
standard result of algebraic geometry.
\par
We prove the properness using the fact that 
the valuation of the solution of the equation 
$\frak P^{\frak s}_1= \dots =\frak P^{\frak s}_m=0$ can not escape from 
moment polytope.
The flatness is a consequence of the fact that 
our scheme is a local complete intersection and 
also of standard facts about the regular sequence of 
Cohen-Macauley ring.
\par
In general $\frak P^{\frak s}_i$ is an infinite series rather 
than a polynomial.  So we first need to change the coordinate $y_i$ so 
that $\frak P^{\frak s}_i$ becomes a polynomial.
Such a process is known in algebraic geometry as a 
algebraization of singularity. See \cite{toric3}  section 12.
\par
This is an outline of the proof of (\ref{mainineqality}).
See \cite{toric3} especially its section 14 for details.
\qed
\begin{Remark}
We regard
\begin{equation}\label{familyoverb}
\bigcup_{\frak b \in H(X;\Lambda_0)} 
Spec(\frak{PO}^{\frak b})
\end{equation}
as a $H(X;\Lambda_0)$ parametrized `family of schemes'
\footnote{It is proved in \cite{toric3} that each of $\frak{PO}^{\frak b}$ can 
be transformed to a Laurent polynomial by change of variables.
So we can define  its $Spec$. It is not verified that the whole family can be 
regarded to be a scheme. So we put quotation mark.}
\par
The same argument to show the flatness and properness 
of the family $\cup_{\frak s \in \Lambda}Spec(\frak R_{\frak s})$ 
seems to be applicable to show that the family (\ref{familyoverb}) 
is also flat and proper.
\par
In the study of K. Saito theory of Laurent polynomials 
(such as one described in \cite{saba}), 
the properness of the family of the critical point sets
is an important issue. 
When one works over $\C$ the properness is not 
necessarily satisfied. 
When we work over a Novikov ring in place of $\C$, 
properness of the family of the critical point sets
(that is the geometric points of  $Spec(\frak{PO}^{\frak b})$) 
is always satisfied at least for the potential function 
appearing as the mirror of a toric manifold.
The authors believe that this is an important advantage of working
with Novikov ring over working with $\C$.
\end{Remark}
\begin{Remark}
Let us consider the family (\ref{familyoverb}).
For the $H^2(X;\Lambda_0)$ part of $\frak b$ it is 
natural to replace the coordinate $w_i$ by its exponential 
$\frak w_i = e^{w_i}$.
Then we may extend the domain 
$ \{\frak w_i \mid \frak v_T(\frak w_i) = 0\}$
to $\frak w_i \in \Lambda$.
Note in $\frak{PO}^{\frak b}$ 
the leading order term is 
$\sum \frak w_i z_i$. 
So if we extend $\frak w_i$ and allow 
for example $\frak w_i = T^{c}$, we have a term 
$T^cz_i$. We may regard this insertion $\frak w_i = T^{c}$ as  
changing the moment polytope. Namely 
appearance of the term $T^cz_i$  is  equivalent 
to moving $\partial_iP = \{\text{\bf u} 
\mid \ell_i(\text{\bf u}) = 0\}$ 
to $\{\text{\bf u} 
\mid \ell_i(\text{\bf u}) = -c\}$\footnote{In other 
words the parameter $\frak v_T(\frak w_i)$ corresponds to the 
K\"ahler cone of our toric manifold $X$. 
This is similar to the fact that the valuation of $\frak y_i$ corresponds to the 
parameter $\text{\bf u}$ of the Lagrangian submanifold $L(\text{\bf u})$}.
\par
Thus for this extended family the flatness and 
properness still hold as far as the 
corresponding moment polytope is combinatorially equivalent to the 
original one.
\par
There is some flavor of this kind of arguments in 
\cite{toric3} subsection 14.2. 
\end{Remark}
\par\bigskip
\section{Poincar\'e duality and Residue pairing}
\label{PD}

In this section we explain that the isomorphism 
in Theorem \ref{Jacqcmain1} can be enhanced to 
give an isomorphism between two Frobenius manifold structures.
\par
\subsection{Big quantum cohomology and Frobenius manifold}
\label{subseqQC}

\begin{Definition}\label{deffrebenius}
A {\it Frobenius manifold structure} on a manifold $M$ 
is a quintet $(\langle \cdot \rangle,\nabla,\circ,e,\Phi)$ 
with the following properties.
\begin{enumerate} 
\item $\langle \cdot \rangle$ is a non-degenerate inner product 
on the tangent bundle $TM$.
\item
$\nabla$ is a connection of $TM$.
\item
$\nabla$ is a metric connection. Namely :
$$
X\langle Y,Z \rangle = \langle \nabla_XY,Z \rangle + 
 \langle Y,\nabla_XZ \rangle.
$$
\item
$\nabla$ is flat and torsion free. Namely :
$$
\aligned
\nabla_X \nabla_Y - \nabla_Y \nabla_X - \nabla_{[X,Y]} &= 0, \\
\nabla_X Y - \nabla_Y X - [X,Y] &= 0.
\endaligned$$
\item $\circ$ defines a ring structure on $T_pM$ which depends 
smoothly on $p$ and satisfies
\begin{equation}\label{frobeniusalg}
\langle X\circ Y,Z \rangle =
\langle X,Y\circ Z \rangle.
\end{equation}
An associative algebra with unit which satisfies (\ref{frobeniusalg})
is called a {\it Frobenius algebra}.
\item $e$ is a section of $TM$ such that $e(p)$ is the unit of 
$(T_pM,\circ,+)$ for each $p$. Moreover
$$
\nabla e = 0.
$$
\item
$\Phi$ is a function on $M$ such that
\begin{equation}\label{potentiality}
\left\langle \frac{\partial}{\partial x_i} \circ \frac{\partial}{\partial x_j},
\frac{\partial}{\partial x_k} \right\rangle
= \frac{\partial^3\Phi}{\partial x_i\partial x_j\partial x_k}.
\end{equation}
Here $x_i$ $(i=1,\dots,\dim M)$ is a local coordinate  of $M$ 
such that $\nabla_{\frac{\partial}{\partial x_i}}(\frac{\partial}{\partial x_j}) = 0$.
We call $\Phi$ the {\it potential}.
\end{enumerate} 
\par
In some case we have a vector field 
$\frak E$  on $M$ that satisfies the following
\begin{equation}\label{euler}
\aligned
\frak E\langle X,Y\rangle - \langle [\frak E,X],Y\rangle
- \langle X, [\frak E,Y]\rangle &= d_1\langle X,Y\rangle, \\
 [\frak E, X \circ Y] -  [\frak E, X] \circ Y -  X \circ [\frak E,Y]  &= d_2X \circ Y,\\
 [\frak E, e]  &= d_3e,
\endaligned
\end{equation}
where $d_1,d_2,d_3 \in \Q$.
We call $\frak E$ the {\it Euler vector field}.
\end{Definition}
\begin{Remark}
In various situations where a Frobenius manifold arises 
the tangent space $T_pM$ appears as either a $\C$ vector space 
or a $\Lambda$ vector space.
In that case the inner product $\langle \cdot \rangle$ 
is bilinear over $\C$ or $\Lambda$. (In this case $\langle \cdot\rangle$ is 
required to be complex symmetric {\it not hermitian}.)
Moreover $\Phi$ is a $\C$ or $\Lambda$ valued function.
\par
We do not try to define what connection, funciton, coordinate etc.
mean in case $TM$ is a $\Lambda$ vector space.
At the present stage of development, we do not meet the 
situation where we need to seriously study it. 
In the main example of our consideration, $M$ is  a $\Lambda_0$ affine  
space, hence we can easily make sense out of them.
\end{Remark}
This structure first appeared in K. Saito's work \cite{Sai83} 
(see the next subsection). 
Dubrovin \cite{dub} discovered this structure in Gromov-Witten theory, which we recall 
below.
\par
Let $X$ be a symplectic manifold. We take $M = H^{evev}(X;\Lambda_0)$
the even degree cohomology group of $X$ with $\Lambda_0$ coefficient.
(We may include odd degree part by regarding $X$ as a supermanifold.
Since in the case of our main interest (toric manifold), there is no 
cohomology class of odd degree, we do not discuss odd degree part.)
\par
In subsection \ref{bigquatum} we associate a deformed quantum 
cup product $\cup^{\frak b}$ on $H(X;\Lambda)$ for each 
$\frak b \in H^{even}(X;\Lambda_0)$.
We regard $T_{\frak b} M = H(X;\Lambda)$
and put $\circ= \cup^{\frak b}$ there.
It is associative.
\begin{Remark}
Note $ H^{even}(X;\Lambda_0)$ is not an open set of $ H^{even}(X;\Lambda)$.
So  $T_{\frak b} H^{even}(X;\Lambda_0) = H(X;\Lambda)$ 
does not make sense in a usual sense of manifold.
This is regarded only as a convention here.
\end{Remark}
We have Poincar\'e duality pairing 
$$
H^{d}(X;\Lambda) \otimes_{\Lambda}
 H^{2n-d}(X;\Lambda)  \to \Lambda.
$$
The inner product $\langle\cdot\rangle$ is the 
Poincar\'e duality pairing.
We remark that then the Levi-Civita connection, 
that is the connection which is a torsion free metric connection  
of the metric $\langle\cdot\rangle$, is the standard affine 
connection of the vector space $H^{even}(X;\Lambda_0)$.
It is obviously flat.
\par
(\ref{frobeniusalg}) follows from
$$
\langle \frak c \cup^{\frak b} \frak d, \frak e\rangle_{\text{\rm PD}_X}
= \sum_{\ell=0}^{\infty} \frac{1}{\ell!}GW_{\ell+3}(\frak c,\frak d,\frak e,\frak b,\dots,\frak b).
$$
(See (\ref{defcup}).) and the fact that $GW_{\ell}(Q_1,\dots,Q_{\ell})$ is independent of the 
permutation of $Q_i$.
\par
The element $e$ is the unit of the cohomology group that is the 
Poincar\'e dual to the fundamental homology class $[X]$.
\par
The potential $\Phi$ is defined by
\begin{equation}\label{GWpotential}
\Phi(\frak b)
= \sum_{\ell=0}^{\infty} \frac{1}{\ell!}GW_{\ell}(\frak b,\dots,\frak b)
\end{equation}
for which the formula (\ref{potentiality}) can be easily checked.
The potential $\Phi$ in (\ref{GWpotential}) is called the {\it Gromov-Witten potential}.
\par
The Euler  vector field $\frak E$ is defined by the vector field:
\begin{equation}\label{eulerquatumcoh}
\frak E
= \frac{\partial}{\partial w_0} 
+ \sum_{i=1}^m  r_i\frac{\partial}{\partial \frak w_i} + \sum_{i=m+1}^B 
\left(1 - \frac{\text{\rm deg}\,\text{\bf p}_i}{2}\right)
w_i\frac{\partial}{\partial w_i},
\end{equation}
where $c_1(X) = \sum_{i=1}^m r_i\text{\bf p}_i$.
We remark that $\text{\bf p}_i$, $i=0,\dots,B$ are basis of 
$H^{even}(X;\Q)$ such that $\text{\rm deg}\,\text{\bf p}_0 = 0$,
$\text{\rm deg}\,\text{\bf p}_i = 2$ for $i=1,\dots,m$ and 
$\text{\rm deg}\,\text{\bf p}_i > 2$ for $i>m$.
\par
By using the dimension formula 
$$
\dim_{\C} \mathcal M_{\ell}(\alpha) = n + \ell - 3 + c_1(X) \cap \alpha
$$
of the moduli space 
$\mathcal M_{\ell}(\alpha)$ 
of pseudo-holomorphic sphere with $\ell$ interior marked points and of 
homology class $\alpha$, we can prove (\ref{euler}), where 
$d_1 = 2-n$, $d_2 = 1$, $d_3 = 0$.
Thus we have:
\begin{Theorem}{\rm (Dubrovin)}\label{FrobDub}
$(\langle\cdot\rangle,\nabla,\cup^{\frak b},\Phi,e)$ is 
a structure of Frobenius manifold on $H(X;\Lambda_0)$.
$(\ref{eulerquatumcoh})$ is its Euler vector field.
\end{Theorem}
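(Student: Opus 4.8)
The plan is to verify the seven axioms of Definition \ref{deffrebenius} one at a time; all but one are immediate from the constructions recalled in subsection \ref{bigquatum}, the exception being associativity of $\cup^{\frak b}$, which is the WDVV equation. First I would dispose of the linear-algebraic data. The pairing $\langle\cdot\rangle$ is the Poincar\'e duality pairing, nondegenerate because $X$ is a closed oriented $2n$-manifold, giving (1). Since $M = H^{even}(X;\Lambda_0)$ is (as a matter of convention) a $\Lambda_0$-affine space, I take $\nabla$ to be its standard flat connection, for which the coordinate vector fields $\partial/\partial w_i$ are parallel; this is flat and torsion free, giving (2) and (4), and it is metric for the constant pairing $\langle\cdot\rangle$, giving (3). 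The unit $e = \text{\rm PD}[X]$ is a constant section, so $\nabla e = 0$; that $e$ is the unit of $\cup^{\frak b}$ follows from the fundamental class axiom for Gromov--Witten invariants, which is part of the standard package recalled around $(\ref{defcup})$. This settles (6).

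Next I would treat the Frobenius symmetry $(\ref{frobeniusalg})$ and potentiality $(\ref{potentiality})$. Both are formal consequences of the defining formula
$$
\langle \frak c \cup^{\frak b}\frak d,\frak e\rangle_{\text{\rm PD}_X}
= \sum_{\ell=0}^\infty \frac{1}{\ell!}\, GW_{\ell+3}(\frak c,\frak d,\frak e,\frak b,\dots,\frak b)
$$
together with the invariance of $GW_{\ell}(Q_1,\dots,Q_\ell)$ under permutations of the $Q_i$: symmetry of the right-hand side in $\frak c,\frak d,\frak e$ yields $(\ref{frobeniusalg})$ and also commutativity of $\cup^{\frak b}$, while differentiating the Gromov--Witten potential $\Phi$ of $(\ref{GWpotential})$ three times in the flat coordinates $w_i$ and comparing reproduces $\langle \partial_i\circ\partial_j,\partial_k\rangle$, which is $(\ref{potentiality})$.

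The one substantial point is associativity of $\cup^{\frak b}$, i.e.\ the remainder of axiom (5); this is where Gromov--Witten theory enters essentially, and I expect it to be the main obstacle. The plan is to use the standard argument: the Deligne--Mumford space $\overline{\mathcal M}_{0,4}$ of genus-zero stable curves with four marked points is $\CP{1}$, so its three boundary points $(12|34)$, $(13|24)$, $(14|23)$ are rationally equivalent; pulling these divisors back along the stabilization map $\overline{\mathcal M}_{0,\ell+4}(X;\alpha)\to\overline{\mathcal M}_{0,4}$ and pushing forward by the evaluation maps, one obtains the quadratic relation among the structure constants that is exactly the associativity of the product whose structure constants are the third derivatives of $\Phi$. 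For the operations to be well defined one invokes the virtual fundamental cycles of \cite{FO}. Equivalently one cites that the Gromov--Witten potential $\Phi$ solves WDVV, which together with $(\ref{potentiality})$ and the bilinear identity already proved makes each $T_{\frak b}M$ a commutative, associative, unital Frobenius algebra, establishing (5).

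Finally I would verify that $\frak E$ of $(\ref{eulerquatumcoh})$ satisfies $(\ref{euler})$ by a grading count. Using the virtual dimension formula $\dim_{\C}\mathcal M_{\ell}(\alpha) = n+\ell-3+c_1(X)\cap\alpha$ together with the divisor axiom---which produces the weights $r_i$ on the degree-two directions $\frak w_i = e^{w_i}$ and interacts with the factor $T^{(\alpha\cap\omega)/2\pi}$---one checks that $\frak E$ scales $GW_\ell(\frak b,\dots,\frak b)$, hence $\Phi$, with a definite weight; unwinding this into the three identities of $(\ref{euler})$ yields $d_1 = 2-n$, $d_2 = 1$, $d_3 = 0$. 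This verifies all the axioms and identifies $\frak E$ as the Euler vector field, completing the sketch.
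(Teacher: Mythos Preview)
Your proposal is correct and follows essentially the same approach as the paper: the paper does not give a formal proof but simply recalls the construction in the paragraphs preceding the theorem (Poincar\'e pairing, standard affine connection, Frobenius symmetry from permutation invariance of $GW_\ell$, the potential $(\ref{GWpotential})$, and the Euler field from the dimension formula) and then states the result as due to Dubrovin. You have filled in more detail than the paper provides---in particular you make explicit the WDVV argument for associativity, which the paper asserts without comment after Definition \ref{def:deformcup}---but the route is the same.
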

\par\bigskip
\subsection{A fragment of K. Saito theory.}
\label{subseqSaito}

Let
\begin{equation}\label{germfunc}
F(x_1,\dots,x_n;w_0,w_1,\dots,w_B)
:  U \times V \to \C
\end{equation}
be a holomorphic function on $U\times V \subset \C^n \times \C^{B+1}$.
Here $U$ and $V$ are small neighborhoods of origin in  
$\C^n$ and $\C^{B+1}$, respectively.
\par
We assume $F$ is of the form
$$
F(x_1,\dots,x_n;w_0,w_1,\dots,w_B) = w_0 + F(x_1,\dots,x_n;0,w_1,\dots,w_B). 
$$
We put
$$
F^{\vec w}(x_1,\dots,x_n) = F(x_1,\dots,x_n;w_0,w_1,\dots,w_B),
$$
for $\vec w = (w_0,\dots,w_B)$.
We assume that $F^{\vec 0}(x_1,\dots,x_n)$ has $\vec x = \vec 0$ as an isolated critical 
point. Namely 
$
(d F^{\vec 0})(0,\dots,0)  = 0, 
$
and 
$(d F^{\vec 0})(\vec x)  \ne 0$ for $\vec x \in U \setminus \{\vec 0\}$.
\begin{Definition}
We define the {\it Jacobian ring} $\text{\rm Jac}(F^{\vec w})$ by
\begin{equation}\label{Jacgerm}
\text{\rm Jac}(F^{\vec w}) 
= \frac{\mathcal O(U)}{\left( \frac{\partial F^{\vec w}}{\partial x_i} ; 
i=1,\dots,n\right)}
\end{equation}
Here $\mathcal O(U)$ is the ring of holomorphic functions on $U$ 
and the denominator is its ideal generated by 
$\frac{\partial F^{\vec w}}{\partial x_i}$, $i=1,\dots,n$.
\par
We define the {\it Kodaira-Spencer map} $\frak{ks}_{\vec w} : T_{\vec w}V \to \text{\rm Jac}(F^{\vec w}) $
by
\begin{equation}\label{KSgerm}
\frak{ks}_{\vec w}\left( \frac{\partial}{\partial w_i}\right)
\equiv \frac{\partial F}{\partial w_i}(x_1,\dots,x_n;{\vec w}) 
\in \text{\rm Jac}(F^{\vec w}).
\end{equation}
\par
$F$ is called a {\it universal unfolding} of $F^{\vec 0}$ if 
$\frak{ks}_{\vec 0} : T_{\vec 0}V \to \text{\rm Jac}(F^{\vec 0})$ is an isomorphism.
\end{Definition}
We remark that if $F$ is a universal unfolding of $F^{\vec 0}$ 
then by shrinking $V$ if necessary we may assume that $\frak{ks}_{\vec w}$ 
is an isomorphism for any $\vec w \in V$.
We assume it in the rest of this subsection.
\par
We remark that $\text{\rm Jac}(F^{\vec w}) $ is a ring. 
On the other hand $T_{\vec w}V$ do not have a ring structure a priori.
We {\it define}
\begin{equation}
X \circ Y = (\frak{ks}_{\vec w})^{-1}(\frak{ks}_{\vec w}(X)\frak{ks}_{\vec w}(Y)),
\end{equation}
for $X,Y \in T_{\vec w}V$.
Thus $(T_{\vec w}V,\circ,+)$ forms a ring.
Note $\partial/\partial w_0 \in T_{\vec w}V$ is sent to $[1] \in \text{\rm Jac}(F^{\vec w}) $.
Therefore 
$$
e(\vec w) = \partial/\partial w_0 \in T_{\vec w}V
$$
is a unit.
\par
\begin{Theorem}\text{\rm (K.Saito-M.Saito)}\label{existfrob}
There exists a $\C$ valued metric $\langle \cdot \rangle$ on $TV$, 
its Levi-Civita connection $\nabla$ 
and a holomorphic function $\Phi : V \to \C$ such that
$(\langle \cdot \rangle,\circ,e,\nabla,\Phi)$ is a Frobenius manifold.
\end{Theorem}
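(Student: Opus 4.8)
The multiplication $\circ$ and the unit $e = \partial/\partial w_0$ are already provided by the Kodaira--Spencer isomorphism $\frak{ks}_{\vec w}$ (which is an isomorphism for every $\vec w \in V$ after shrinking $V$, as noted above), so what remains is to construct a flat metric $\langle\cdot\rangle$ on $TV$ compatible with $\circ$ in the sense of (\ref{frobeniusalg}) --- its Levi--Civita connection will then be the desired $\nabla$ --- together with a potential $\Phi$ realizing (\ref{potentiality}). The metric will be a Grothendieck residue pairing on $\text{\rm Jac}(F^{\vec w})$ transported to $T_{\vec w}V$ through $\frak{ks}_{\vec w}$. The point is that the residue pairing taken naively with respect to the fixed form $dx_1\wedge\cdots\wedge dx_n$ is in general neither flat nor $\circ$-compatible, and the correct object is a residue pairing twisted by a \emph{primitive form} in the sense of K. Saito; producing such a form is the heart of the matter.

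First I would set up the analytic package attached to the unfolding $F$: the Gauss--Manin system of the family, i.e. the Brieskorn lattices of the $F^{\vec w}$ organized into a free $\mathcal O_V[[t]]$-module $\mathcal H^{(0)}$ of rank $\mu = \dim_{\C}\text{\rm Jac}(F^{\vec 0})$ with $\mathcal H^{(0)}/t\mathcal H^{(0)}$ canonically isomorphic to $\text{\rm Jac}(F^{\vec w})$, together with the flat Gauss--Manin connection $\nabla^{GM}$ (which has a pole of order at most $2$ along $t=0$) and, by M. Saito's theory, the \emph{higher residue pairings} $K_F\colon \mathcal H^{(0)}\otimes\mathcal H^{(0)}\to t^n\C[[t]]$: these are $(-1)^n$-symmetric, $\nabla^{GM}$-flat, and their leading coefficient is the Grothendieck residue pairing. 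I would invoke the existence and flatness of $K_F$ as a black box from \cite{Msaito}.

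The hard part, which I expect to be the main obstacle, is the existence of a primitive form $\zeta \in \mathcal H^{(0)}$: a section that generates $\mathcal H^{(0)}$ over $\mathcal O_V[[t]]$ under $\nabla^{GM}$, is homogeneous of the correct weight, and is ``primitive'' with respect to $K_F$ and the $t$-adic filtration. Following M. Saito, one constructs $\zeta$ by choosing an \emph{opposite filtration} to the Hodge filtration of the limit, polarized mixed Hodge structure on the vanishing cohomology of $F^{\vec 0}$; its existence uses the quasi-unipotence of the monodromy and the $\mathrm{sl}_2$-structure underlying the polarization, and this is where the bulk of \cite{Msaito} is spent. Granting $\zeta$, the period map $\partial/\partial w_i \mapsto [\nabla^{GM}_{\partial/\partial w_i}\zeta \bmod t\mathcal H^{(0)}]$ identifies $T_{\vec w}V$ with $\text{\rm Jac}(F^{\vec w})$ compatibly with $\frak{ks}_{\vec w}$; the leading coefficient of $K_F$ applied to two such periods defines $\langle\cdot\rangle$; its $\nabla^{GM}$-flatness gives both the metric property of $\langle\cdot\rangle$ and the flatness of the induced connection $\nabla$; and the leading term of $K_F$ being a residue pairing gives (\ref{frobeniusalg}). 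That $e = \partial/\partial w_0$ is $\nabla$-parallel is built into the construction.

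Finally I would extract the potential $\Phi$ from $\zeta$ by period integrals. In the flat coordinates supplied by $\nabla$ the structure constants $C_{ijk} = \langle\partial_i\circ\partial_j,\partial_k\rangle$ are totally symmetric (symmetry in $i,j$ is commutativity of $\circ$, symmetry with $k$ is (\ref{frobeniusalg})), and the integrability condition permitting one to write $C_{ijk} = \partial_i\partial_j\partial_k\Phi$ for a single holomorphic $\Phi$ on a possibly smaller neighborhood is exactly the flatness of the first structure connection $\nabla + t^{-1}(X \mapsto X\circ\,\cdot\,)$ guaranteed by the primitive form --- equivalently, the WDVV compatibility of $\langle\cdot\rangle$, $\circ$ and $\nabla$. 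This establishes all of the axioms of Definition \ref{deffrebenius}; tracking the homogeneity weight of $\zeta$ additionally produces an Euler vector field as in (\ref{euler}).
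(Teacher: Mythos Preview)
The paper does not prove this theorem at all: it is stated as an external result, attributed to K.~Saito \cite{Sai83} (construction of the Frobenius manifold from a primitive form) and M.~Saito \cite{Msaito} (existence of the primitive form for an isolated singularity), with a pointer to \cite{SaiTaka} for exposition. Your sketch is a faithful outline of precisely that Saito--Saito approach --- Brieskorn lattice, Gauss--Manin connection, higher residue pairings, opposite filtration, primitive form, flat coordinates, WDVV --- so you are not diverging from the paper but rather filling in what the paper deliberately black-boxes.
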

K. Saito \cite{Sai83} constructed a Frobenius manifold structure 
assuming the existence of a primitive form.
We do not explain the notion of primitive form here. 
(See \cite{SaiTaka} for its description 
in a way closely related to the discussion here.)
Existence of primitive form for a universal unfolding of a 
germ of isolated singularity is established in 
\cite{Msaito}.
We remark that Theorem \ref{existfrob} had been proved before 
Gromov-Witten theory started. 
\par
The metric $\langle \cdot \rangle$ is called a {\it residue paring}.
Since $\nabla$ is flat there exists a local coordinate $t_0,t_1,\dots,t_B$ of $V$ so that 
$\nabla_{\partial/\partial t_i}(\partial/\partial t_j)  = 0$.
Such a coordinate $(t_0,t_1,\dots,t_B)$ is called a {\it flat coordinate}.
($t_0 =w_0$.)
\par
For some $F$ associated to an ADE singularity, the primitive form takes a simple form
$dx_1\wedge dx_2 \wedge dx_3$. In such a case 
we have the following description of the residue pairing.
\par
We put
$$
\text{\rm Crit}(F^{\vec w})
= \{\frak y \in U \mid dF^{\vec w}(\frak y) = 0\}.
$$
Let $\mathcal O_{\frak y}$ be the ring of germs of holomorphic functions 
at $\frak y \in U$. We put
\begin{equation}\label{Jaclocy}
\text{\rm Jac}(F^{\vec w};\frak y) 
= \frac{\mathcal O_{\frak y}}{\left( \frac{\partial F^{\vec w}}{\partial x_i} ; 
i=1,\dots,n\right)}
\end{equation}
The following fact is standard:
\begin{Proposition}\label{localizationC}
We have
$$
\text{\rm Jac}(F^{\vec w})  
\cong \prod_{\frak y \in \text{\rm Crit}(F^{\vec w})}\text{\rm Jac}(F^{\vec w};\frak y). 
$$
$\text{\rm Jac}(F^{\vec w};\frak y)$ is one dimensional if and only 
if the critical point $\frak y$ is non-degenerate.
\end{Proposition}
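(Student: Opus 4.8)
The plan is to exhibit $\text{\rm Jac}(F^{\vec w})$ as a finite-dimensional, hence Artinian, $\C$-algebra and then apply the structure theory of such rings. First I would use the hypothesis that $F$ is a universal unfolding of $F^{\vec 0}$: after shrinking $V$ (and $U$) we may assume $\frak{ks}_{\vec w}$ is an isomorphism for all $\vec w \in V$, so $\dim_\C \text{\rm Jac}(F^{\vec w}) = \dim_\C T_{\vec w}V < \infty$. Writing $J_{\vec w} = (\partial F^{\vec w}/\partial x_1,\dots,\partial F^{\vec w}/\partial x_n) \subset \mathcal O(U)$ for the Jacobian ideal, the quotient $\mathcal O(U)/J_{\vec w}$ is therefore a finite $\C$-algebra, and evaluation at any point of $\text{\rm Crit}(F^{\vec w})$ (where by definition all the generators of $J_{\vec w}$ vanish) gives a $\C$-algebra surjection $\mathcal O(U)/J_{\vec w} \to \C$; since holomorphic functions on $U$ separate points, the images of suitable interpolating functions are linearly independent, which forces $\text{\rm Crit}(F^{\vec w})$ to be a finite set $\{\frak y_1,\dots,\frak y_N\}$, each of whose points is an isolated critical point of $F^{\vec w}$.

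Next I would invoke the standard fact that a finite-dimensional commutative $\C$-algebra $R$ decomposes canonically as the product $\prod_{\frak m} R_{\frak m}$ over its finitely many maximal ideals, each factor $R_{\frak m}$ being a local Artinian ring. For $R = \mathcal O(U)/J_{\vec w}$ the maximal ideals are exactly the ideals $\frak m_a$ of functions vanishing at $\frak y_a$, so it remains to identify $R_{\frak m_a}$ with $\text{\rm Jac}(F^{\vec w};\frak y_a) = \mathcal O_{\frak y_a}/(\partial F^{\vec w}/\partial x_i)$. The cleanest route is via completion: $R_{\frak m_a}$ and $\text{\rm Jac}(F^{\vec w};\frak y_a)$ are both local Artinian $\C$-algebras with residue field $\C$, and each is isomorphic to its own $\frak m$-adic completion, which in both cases is $\widehat{\mathcal O}_{\frak y_a}/(\partial F^{\vec w}/\partial x_i)$ (equivalently, one uses faithful flatness of the analytic localization $\mathcal O_{\frak y_a}$ over $\mathcal O(U)_{\frak m_a}$). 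This gives the displayed product decomposition.

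For the last assertion, fix $\frak y \in \text{\rm Crit}(F^{\vec w})$, choose local coordinates centered at $\frak y$, and let $\frak m$ be the maximal ideal of $\mathcal O_{\frak y}$. Since $dF^{\vec w}(\frak y) = 0$, each $\partial F^{\vec w}/\partial x_i$ lies in $\frak m$, so $I := (\partial F^{\vec w}/\partial x_i) \subseteq \frak m$ and $\dim_\C \mathcal O_{\frak y}/I \ge 1$, with equality exactly when $I = \frak m$. By Nakayama's lemma $I = \frak m$ iff the images of the $\partial F^{\vec w}/\partial x_i$ span $\frak m/\frak m^2 \cong \C^n$; writing $\partial F^{\vec w}/\partial x_i = \sum_j H_{ij} x_j + O(|x|^2)$ with $H_{ij} = \partial^2 F^{\vec w}/\partial x_i\partial x_j$ evaluated at $\frak y$ (the Hessian), these images span $\frak m/\frak m^2$ iff $\det(H_{ij}) \ne 0$, i.e. iff $\frak y$ is non-degenerate. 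Conversely, non-degeneracy makes $\partial F^{\vec w}/\partial x_1,\dots,\partial F^{\vec w}/\partial x_n$ a regular system of parameters, so the quotient is $\C$.

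The step needing most care is the local-to-global identification of $R_{\frak m_a}$ with the germ-level ring $\text{\rm Jac}(F^{\vec w};\frak y_a)$, i.e. passing between the ring $\mathcal O(U)$ of holomorphic functions on the open set $U$ and the analytic local rings $\mathcal O_{\frak y_a}$; the completion argument sketched above handles it, but one must keep track, when shrinking $U$ and $V$, that no critical point escapes and that $\dim_\C\text{\rm Jac}$ is preserved, which is precisely where the universal-unfolding hypothesis is used.
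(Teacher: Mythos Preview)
Your argument is correct and is essentially the standard proof of this classical fact. The paper itself gives no proof: it introduces the proposition with ``The following fact is standard'' and moves on, so there is nothing to compare against beyond noting that your Artinian decomposition plus Nakayama/Hessian argument is exactly the kind of justification the authors have in mind.
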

Let $\vec w$ be a vector such that $F^{\vec w}$ is a Morse function.
Let $1_{\frak y} \in \text{\rm Jac}(F^{\vec w};\frak y)$ be the unit.
Then Proposition \ref{localizationC} implies that 
$\{ 1_{\frak y} \mid \frak y \in \text{\rm Crit}(F^{\vec w})\}$ 
forms a $\C$ basis of the vector space $\text{\rm Jac}(F^{\vec w})$.
If $\frak y \ne \frak y'$ we obtain
$$
\langle  1_{\frak y},1_{\frak y'}\rangle
= \langle  1_{\frak y},1_{\frak y'}\circ 1\rangle
= \langle  1_{\frak y}\circ 1_{\frak y'},1\rangle = 0,
$$
from the equation $1_{\frak y}\circ 1_{\frak y'} = 0$ and (\ref{frobeniusalg}).
Namely $\{ 1_{\frak y} \mid \frak y \in \text{\rm Crit}(F^{\vec w})\}$ is 
an orthogonal basis with respect to the residue pairing.
\par
\begin{Lemma}\label{Hessdetres}
If the primitive form is $dx_1\wedge \dots\wedge dx_n$ and 
$F^{\vec w}$ is a Morse function then we have
$$
\langle  1_{\frak y},1_{\frak y}\rangle
= 
\left(\det 
\left[ \frac{\partial^2 F^{\vec w}}{\partial x_i\partial x_j}\right]_{i=1,j=1}^{i=n,j=n}
(\frak y)\right)^{-1}.
$$
\end{Lemma}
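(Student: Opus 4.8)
The plan is to reduce the lemma to a standard computation of a Grothendieck local residue; set $dx=dx_1\wedge\dots\wedge dx_n$. Under the hypothesis that the primitive form is this standard volume form, the residue pairing of Theorem \ref{existfrob} on $\text{\rm Jac}(F^{\vec w})$ is the sum of Grothendieck point residues
\begin{equation}\label{pp:resdef}
\langle [g],[h]\rangle=\sum_{\frak y\in\text{\rm Crit}(F^{\vec w})}\mathrm{Res}_{\frak y}\left[\begin{matrix} g\,h\,dx\\ \partial F^{\vec w}/\partial x_1,\ \dots,\ \partial F^{\vec w}/\partial x_n\end{matrix}\right].
\end{equation}
By Proposition \ref{localizationC}, when $F^{\vec w}$ is a Morse function the decomposition $\text{\rm Jac}(F^{\vec w})\cong\prod_{\frak y}\text{\rm Jac}(F^{\vec w};\frak y)$ is orthogonal for $\langle\cdot\rangle$ and each factor is the one-dimensional ring $\C\cdot 1_{\frak y}$; hence the off-diagonal terms of (\ref{pp:resdef}) vanish and it suffices to show, for each critical point $\frak y$,
\begin{equation}\label{pp:target}
\mathrm{Res}_{\frak y}\left[\begin{matrix} dx\\ \partial F^{\vec w}/\partial x_1,\ \dots,\ \partial F^{\vec w}/\partial x_n\end{matrix}\right]=\left(\det\left[\frac{\partial^2 F^{\vec w}}{\partial x_i\partial x_j}\right]_{i,j=1}^{n}(\frak y)\right)^{-1}.
\end{equation}

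To prove (\ref{pp:target}) I would use Hadamard's lemma: since $\frak y$ is a critical point, there are holomorphic germs $a_{ij}$ at $\frak y$ with $\partial F^{\vec w}/\partial x_i=\sum_{j=1}^n a_{ij}(x)\,(x_j-\frak y_j)$ and $a_{ij}(\frak y)=\partial^2 F^{\vec w}/\partial x_i\partial x_j(\frak y)$, and non-degeneracy of the critical point means $(a_{ij})$ is invertible at, hence near, $\frak y$. The transformation law for Grothendieck residues under this change of regular sequence, together with the normalization $\mathrm{Res}_{\frak y}\big[\varphi\,dx/(x_1-\frak y_1,\dots,x_n-\frak y_n)\big]=\varphi(\frak y)$, gives
$$\mathrm{Res}_{\frak y}\left[\begin{matrix} h\,dx\\ \partial F^{\vec w}/\partial x_1,\ \dots,\ \partial F^{\vec w}/\partial x_n\end{matrix}\right]=\mathrm{Res}_{\frak y}\left[\begin{matrix}\big(h/\det(a_{ij})\big)\,dx\\ x_1-\frak y_1,\ \dots,\ x_n-\frak y_n\end{matrix}\right]=\frac{h(\frak y)}{\det\big[\partial^2 F^{\vec w}/\partial x_i\partial x_j\big](\frak y)}$$
for every holomorphic germ $h$; taking $h\equiv 1$ yields (\ref{pp:target}). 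I would confirm the sign and normalization bookkeeping on the one-variable model $F^{\vec w}=\frac{1}{2}a\,x^2$, for which both sides of (\ref{pp:target}) equal $1/a$. An alternative route avoids the transformation law: in the Artinian Gorenstein ring $\text{\rm Jac}(F^{\vec w};\frak y)$ the class of the Hessian determinant generates the one-dimensional socle, and the residue trace $\tau$ (defined by $\langle p,q\rangle=\tau(pq)$) satisfies $\tau(\text{class of }\det[\partial^2 F^{\vec w}/\partial x_i\partial x_j])=1$ in the Morse case (the local Milnor number being $1$); since that class equals $\det[\partial^2 F^{\vec w}/\partial x_i\partial x_j](\frak y)\cdot 1_{\frak y}$, one gets $\tau(1_{\frak y})=1/\det[\partial^2 F^{\vec w}/\partial x_i\partial x_j](\frak y)$ and hence $\langle 1_{\frak y},1_{\frak y}\rangle=\tau(1_{\frak y})=1/\det[\partial^2 F^{\vec w}/\partial x_i\partial x_j](\frak y)$.

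The one step that is not purely formal --- and the one I expect to require the most care --- is the identification used in (\ref{pp:resdef}) of the abstract residue pairing of Theorem \ref{existfrob} (built from a primitive form through the Gauss--Manin connection, following \cite{Sai83, Msaito}) with the explicit Grothendieck-residue formula in the special case where the primitive form is $dx_1\wedge\dots\wedge dx_n$. I would either quote this from the literature (see e.g.\ \cite{SaiTaka}) or, for a self-contained treatment, characterize the residue pairing as the unique $\mathcal O_V$-bilinear, $\circ$-invariant pairing on $TV$ whose restriction to each Morse stratum equals the residue pairing of the corresponding quadratic singularity, and then let the computation above determine its value. With (\ref{pp:resdef}) granted, the remaining ingredients --- orthogonality of the local factors, that $1_{\frak y}$ is the unit of a one-dimensional ring, and the residue bookkeeping --- are routine.
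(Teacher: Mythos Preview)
Your proposal is correct and is precisely an unpacking of what the paper means by its one-line proof, ``This lemma follows from the definition.'' When the primitive form is $dx_1\wedge\dots\wedge dx_n$, the residue pairing of Theorem \ref{existfrob} is by construction the Grothendieck residue pairing (\ref{pp:resdef}), and your computation of the local residue at a nondegenerate critical point via Hadamard's lemma and the transformation law is the standard way to extract the Hessian-determinant formula; the paper simply takes all of this as implicit in the word ``definition.''
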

This lemma follows from the definition.
We remark that in general the primitive form is not necessarily 
equal to $dx_1\wedge \dots\wedge dx_n$.
\par\bigskip
\subsection{Residue pairing on $\text{\rm Jac}(\frak{PO}^{\frak b})$}
\label{subseqRP}

We now consider the case $F(x_1,\dots,x_n,\vec w)= \frak{PO}^{\frak b}(y_1,\dots,y_n)$
where $\frak b = \sum w_i \text{\bf p}_i$ and $e^{x_i} = y_i$.
\par
We however remark that our situation is different from that of 
subsection \ref{subseqSaito} in the following two points.
\begin{enumerate}
\item The tangent space $T_{\frak b}(H(X;\Lambda_0))$ is a $\Lambda$ vector space and is not
a $\C$ vector space.
\item The `open set' on which $\frak{PO}^{\frak b}$ is defined  is the set $\frak A(\overset{\circ}P)$ which is not 
a `small' neighborhood of a point.
\end{enumerate}
However, many parts of the story are directly translated to the case 
$\frak{PO}^{\frak b}$.
(See however Remark \ref{1224}.)
Note $V$ in subsection \ref{subseqSaito} corresponds to 
$H(X;\Lambda_0)$.
\par
In this subsection we describe a pairing on $\text{\rm Jac}(\frak{PO}^{\frak b})$ 
which we expect to be the version of residue pairing in our situation.
\par 
\begin{Definition}
\label{Def:Frobeniusalg}
Let $C$ be a $\Z_2$ graded finitely generated free $\Lambda$ module.
A structure of {\it unital Frobenius algebra} of dimension $n$ is
$\langle \cdot,\cdot \rangle : C^k \otimes C^{n-k} \to \Lambda$,
$\cup :  C^k \otimes C^{\ell} \to  C^{k+\ell}$,
$1 \in C^0$, such that:
\begin{enumerate}
\item $\langle \cdot,\cdot\rangle$ is a graded symmetric bilinear form which induces an
isomorphism $x \mapsto (y \mapsto \langle x,y\rangle)$,
$C^k \to Hom_{\Lambda_0}(C^{n-k},\Lambda)$.
\item $\cup$ is an associative product on $C$. $1$ is its unit.
\item  $\langle x\cup y,z\rangle = \langle x,y\cup z\rangle$.
\end{enumerate}
\end{Definition}
The cohomology group of an oriented closed manifold becomes a unital Frobenius algebra in an obvious way.
\begin{defn}\label{invariantZ}
Let $(C,\langle \cdot,\cdot\rangle,\cup,1)$ be a unital Frobenius algebra.
We take a basis $\text{\bf e}_I$, $I \in \frak I$ of $C$ such that $\text{\bf e}_0$ is the unit.
Let $g_{IJ} = \langle \text{\bf e}_I,\text{\bf e}_J\rangle$ and let
$g^{IJ}$ be its inverse matrix. We define an invariant of $C$ by
\begin{equation}\label{defnformulaZ}
\aligned
Z(C) = \sum_{I_1,I_2,I_3 \in \frak I}\sum_{J_1,J_2,J_3 \in \frak I}
&(-1)^*g^{I_1J_1}g^{I_2J_2}g^{I_30}g^{J_30} \\
&\langle \text{\bf e}_{I_1} \cup \text{\bf e}_{I_2},\text{\bf e}_{I_3}\rangle
\langle \text{\bf e}_{J_1} \cup \text{\bf e}_{J_2},\text{\bf e}_{J_3}\rangle
\endaligned
\end{equation}
where $* =  \deg \text{\bf e}_{I_1}
\deg \text{\bf e}_{J_2}
+ \frac{n(n-1)}{2}$.
We call $Z(C)$ the \emph{trace} of unital Frobenius algebra $C$.
\end{defn}
It is straightforward to check that $Z(C)$ is independent of the choice of the basis.
This invariant is an example of 1-loop partition function and
can be described by the following Feynman diagram.
\par\medskip
\hskip3.5cm
\epsfbox{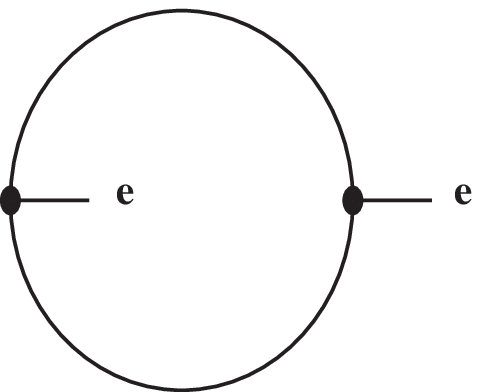}
\par\smallskip
\centerline{\bf Figure 12.1}
\par\bigskip
Let us consier $\text{\bf u} \in \text{\rm Int}\, P$ and 
$b \in H^1(L(\text{\bf u});\Lambda_0)$ such that 
the Floer cohomology 
$HF((L(\text{\bf u}),(\frak b,b)),(L(\text{\bf u}),(\frak b,b));\Lambda)$ is isomorphic 
to  $H(T^n;\Lambda)$.
\par
We have a binary operator $\frak m_2^{\frak c,\frak b,b}$ on it. The Poincar\'e duality 
induces  a $\Lambda$ valued non-degenerate inner product 
$\langle \cdot \rangle_{\text{\rm PD}_{L(\text{\bf u})}}$ of it.
\par
We define
\begin{eqnarray}
x \cup^{\frak c,\frak b,b} y &=& (-1)^{\deg x(\deg y + 1)} \frak m^{\frak c,\frak b,b}_2(x,y), \label{deformcup}\\
\langle x, y\rangle_{\text{cyc}} &=& (-1)^{\deg x(\deg y + 1)}\langle x, y\rangle_{\text{\rm PD}_{L(u)}}.
\label{PDandCYC}
\end{eqnarray}
Then 
$(H(L(\text{\bf u});\Lambda),\langle \cdot, \cdot\rangle_{{\rm cyc}},
\cup^{\frak c,\frak b,b},\text{\rm PD}[L(\text{\bf u})])$ becomes a unital Frobenius algebra.
\begin{Remark}
We remark that the operation $\frak m_2^{\frak c,\frak b,b}$ 
is slightly different from the operation $\frak m_2^{\frak b,b}$ which is 
obtained from the operation $\frak q_{\ell,k}$ 
by (\ref{mbulkdeformed}).
In fact  $\frak q_{\ell,k}$ may not satisfy the cyclic symmetry:
\begin{equation}\label{formulacyclic}
\aligned
&\langle\frak q_{\ell;k}(\text{\bf y};h_1,\dots,h_k),h_0\rangle_{\rm{cyc}}
\\
&=
(-1)^{\deg' h_0(\deg' h_1 + \dots + \deg' h_k)}
\langle\frak q_{\ell;k}(\text{\bf y};h_0,h_1,\dots,h_{k-1}),h_k\rangle_{\rm{cyc}}.
\endaligned
\end{equation}
This is because the way how we perturb the moduli space $\mathcal M_{k+1;\ell}^{\text{\rm main}}(\beta)$, 
which we described in sections \ref{Floertheory} and 
\ref{operatorq}, breaks cyclic symmetry.
\par
However we can modify the construction of $\frak q_{\ell;k}$ to obtain 
$\frak q_{\ell;k}^{\frak c}$ for which (\ref{formulacyclic}) is satisfied.
Using it in place of $\frak q_{\ell;k}$ we define 
$\frak m^{\frak c,\frak b,b}_2$, which appears in (\ref{deformcup}).
Then Definition \ref{Def:Frobeniusalg} 3) is satisfied for $\cup^{\frak b,b}$.
\par
This point is quite technical and delicate. 
So we do not discuss its detail in this survey and refer readers to
\cite{toric3} sections 18-19. 
However it is inevitable and essential, especially in the 
non-Fano case. It might be related to the fact that primitive form 
may be different from $dx_1\wedge \dots \wedge dx_n$ in general. 
\end{Remark}
We put
\begin{equation}\label{Zbbeq}
Z(\frak b, b) =
Z((H(L(u);\Lambda),\langle \cdot, \cdot\rangle_{\text{cyc}},
\cup^{\frak c,\frak b,b},\text{\rm PD}([L(u)])).
\end{equation}
\begin{Definition}\label{res2}
Assume that $\frak{PO}^{\frak b}$ is a Morse function. We then
define a {\it residue pairing}
$$
\langle \cdot,\cdot \rangle_{\text{\rm res}}:
(\text{\rm Jac}(\frak{PO}^{\frak b})
\otimes_{\Lambda_0}\Lambda) \otimes (\text{\rm Jac}(\frak{PO}^{\frak b})
\otimes_{\Lambda_0}\Lambda)
\to \Lambda
$$
by
\begin{equation}\label{rpdefine}
\langle 1_{\frak y},1_{\frak y'} \rangle_{\text{\rm res}}
=
\begin{cases} 0  &\text{if $\frak y \ne \frak y'$,} \\
\left(Z(\frak b, b)\right)^{-1} &\text{if $\frak y = \frak y'$.}
\end{cases}
\end{equation}
\end{Definition}
We remark that we use the decomposition (\ref{localization}) 
and $1_{\frak y}$ is the unit of $\text{\rm Jac}(\frak{PO}^{\frak b};\frak y)$.
$\text{\bf u} = (u_1,\dots,u_n)$ is defined by the valuation of $\frak y
= (\frak y_1,\dots,\frak y_n)$. 
Namely  $u_i = \frak v_T(\frak y_i)$.
$b\in H^1(L(\text{\bf u});\Lambda_0)$ is defined from $\frak y_i$ by
$b = \sum_{i=1}^n \frak x_i\text{\bf e}_i$, $T^{u_i}e^{\frak x_i} = \frak y_i$.
\par
The name `residue pairing' is justified by the following Theorem \ref{cliffordZ}
and Lemma \ref{Hessdetres}.
\begin{Theorem}\label{cliffordZ}
\begin{enumerate}
\item 
Assume that  $\frak y$ is a nondegenerate critical point of $\frak {PO}^{\frak b}$. 
Suppose $b = \sum_{i=1}^n \frak x_i\text{\bf e}_i$, $T^{u_i}e^{\frak x_i} = \frak y_i$ 
as above.
Then
\begin{equation}\label{res0reseq}
\aligned
Z(\frak b, b)
\equiv \det \left[y_i y_j \frac{\partial^2\frak{PO}^{\frak b}}{\partial y_i\partial y_j}
\right]_{i,j=1}^{i,j=n} (\frak y) 
\mod T^{\lambda}\Lambda_+.
\endaligned
\end{equation}
Here $\lambda = v_T(Z(\frak b, b))$ and
$\frak y = (e^{x_1},\dots,e^{x_n})$.
\item If $\dim_{\C} X = 2$, then
we have
\begin{equation}\label{res0res}
Z(\frak b, b) = \det \left[y_iy_j\frac{\partial^2\frak{PO}^{\frak c,\frak b}}{\partial y_i\partial y_j}
\right]_{i,j=1}^{i,j=n} (\frak y).
\end{equation}
\item If $X$ is nef and $\text{\rm deg}\frak b =2$, then
we have
\begin{equation}\label{res1res}
Z(\frak b, b) = \det \left[y_i  y_j\frac{\partial^2\frak{PO}^{\frak b}}{\partial y_i\partial y_j}
\right]_{i,j=1}^{i,j=n} (\frak y).
\end{equation}
\end{enumerate}
\end{Theorem}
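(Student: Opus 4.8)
The plan is to identify, at a nondegenerate critical point $\frak y$, the unital Frobenius algebra $(H(L(\text{\bf u});\Lambda),\langle\cdot,\cdot\rangle_{\mathrm{cyc}},\cup^{\frak c,\frak b,b},\mathrm{PD}[L(\text{\bf u})])$ whose trace equals $Z(\frak b,b)$ (\ref{Zbbeq}) with a Clifford algebra, and then to evaluate the trace of a Clifford algebra by a finite algebraic computation. Write $b = \sum_i x_i\text{\bf e}_i$ and $y_i = e^{x_i}$, $\frak y_i = T^{u_i}y_i$. Differentiating the identity defining the cyclically corrected potential, $\frak{PO}^{\frak c,\frak b}(b)\,\text{\bf e} = \sum_k\frak m^{\frak c,\frak b}_k(b,\dots,b)$, twice -- once in the $x_i$-direction and once in the $x_j$-direction -- and using $\partial b/\partial x_i = \text{\bf e}_i$ together with the computation behind (\ref{form21}), one obtains $\frac{\partial^2\frak{PO}^{\frak c,\frak b}}{\partial x_i\partial x_j}(b)\,\text{\bf e} = \frak m^{\frak c,\frak b,b}_2(\text{\bf e}_i,\text{\bf e}_j)+\frak m^{\frak c,\frak b,b}_2(\text{\bf e}_j,\text{\bf e}_i)$. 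Since $\partial/\partial x_i = y_i\,\partial/\partial y_i$ as operators and $y_i\,\partial\frak{PO}^{\frak c,\frak b}/\partial y_i(\frak y) = 0$ at a critical point, the symmetric matrix $Q$ with entries $Q_{ij} = y_iy_j\,\partial^2\frak{PO}^{\frak c,\frak b}/\partial y_i\partial y_j(\frak y)$ governs the anticommutators of the degree one classes under $\cup^{\frak c,\frak b,b}$: $\text{\bf e}_i\cup^{\frak c,\frak b,b}\text{\bf e}_j + \text{\bf e}_j\cup^{\frak c,\frak b,b}\text{\bf e}_i = Q_{ij}\,\text{\bf e}$.
\par
Because $\frak y$ is nondegenerate, $Q$ is invertible over the field $\Lambda$, so the Clifford algebra $\mathrm{Cliff}(Q)$ is free of rank $2^n$. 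Since $\text{\bf e}_1,\dots,\text{\bf e}_n$ generate $H^*(T^n)$ under the classical cup product and $\cup^{\frak c,\frak b,b}$ differs from it only by terms of positive Novikov order, the $\text{\bf e}_i$ also generate the Floer cohomology ring $HF((L(\text{\bf u}),(\frak b,b)),(L(\text{\bf u}),(\frak b,b));\Lambda)\cong H(T^n;\Lambda)$; combined with the Clifford relation this gives a surjection $\mathrm{Cliff}(Q)\to HF$, hence -- both sides being $\Lambda$-vector spaces of dimension $2^n$ -- an isomorphism of $\Lambda$-algebras. One then checks that this isomorphism carries $\langle\cdot,\cdot\rangle_{\mathrm{cyc}}$, the Poincar\'e pairing of $T^n$ twisted as in (\ref{PDandCYC}), to the standard Frobenius trace pairing of $\mathrm{Cliff}(Q)$ (extraction of the coefficient of the top monomial), the normalization being fixed by $\langle 1,\mathrm{PD}[L(\text{\bf u})]\rangle$. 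Evaluating the invariant $Z$ of (\ref{defnformulaZ}) for $(\mathrm{Cliff}(Q),\langle\cdot,\cdot\rangle,\cup)$ is then a finite computation; diagonalizing $Q$ by a $\Lambda$-linear change of basis (which leaves $Z$ unchanged) reduces it to a product of rank two factors $\Lambda[\text{\bf e}]/(\text{\bf e}^2 - q_i/2)$, for each of which (\ref{defnformulaZ}) gives $Z = q_i$, so $Z(\frak b,b) = \prod_i q_i = \det Q = \det[y_iy_j\,\partial^2\frak{PO}^{\frak c,\frak b}/\partial y_i\partial y_j](\frak y)$. This is (\ref{res0res}) when $\dim_{\C}X = 2$; and since $\frak{PO}^{\frak c,\frak b}$ always agrees with $\frak{PO}^{\frak b}$ modulo terms of positive Novikov order, while it agrees with $\frak{PO}^{\frak b}$ exactly when $X$ is nef and $\deg\frak b = 2$, the same computation yields (\ref{res0reseq}) with $\lambda = \frak v_T(Z(\frak b,b))$, respectively (\ref{res1res}).
\par
The point that makes $\dim_{\C}X = 2$ and the nef degree two case special, and that is deferred to \cite{toric3} sections 18--19, is the chain-level construction of the cyclically symmetric operations $\frak q^{\frak c}_{\ell,k}$ itself: the $T^n$-equivariant perturbations used to define $\frak q_{\ell,k}$ (sections \ref{Floertheory}, \ref{operatorq}) break cyclic symmetry, and restoring it changes $\frak q$ -- hence $\frak m_2$ and $\frak{PO}$ -- by contributions one must control. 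When $\dim_{\C}X = 2$ the virtual dimension formula (\ref{virdimbulkmod}) leaves essentially no room for the extra disk classes; when $X$ is nef with $\deg\frak b = 2$ the Fano-type transversality arguments still apply and no correction appears at all; in general these contributions are only of positive Novikov order, which is why item (1) is an equality only modulo $T^\lambda\Lambda_+$. This cyclic-symmetrization step -- the delicate Kuranishi-perturbation issue flagged in the Remark preceding (\ref{Zbbeq}) -- is the main obstacle; granting it, the rest is the algebra of Clifford algebras above.
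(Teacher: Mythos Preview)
Your overall strategy matches the paper's: identify the Floer Frobenius algebra with a Clifford algebra via the anticommutator relation coming from the Hessian of the potential, then compute $Z$ directly on the Clifford model. The derivation of the Clifford relation and the final evaluation of $Z$ are the same as in the paper's sketch (which also credits Cho's earlier argument).

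The gap is the sentence ``One then checks that this isomorphism carries $\langle\cdot,\cdot\rangle_{\mathrm{cyc}}$ \ldots\ to the standard Frobenius trace pairing of $\mathrm{Cliff}(Q)$.'' This check is not automatic, and it is precisely where the paper invokes the hypothesis. The functional governing the pairing is $\tau(x)=\int_{T^n}x$, which extracts the $H^n$-component; for the Clifford monomial $\text{\bf e}'_I=\text{\bf e}'_{i_1}\cup^{\frak c,\frak b,b}\cdots\cup^{\frak c,\frak b,b}\text{\bf e}'_{i_k}$ to have the correct $H^n$-component one needs the degree estimate (\ref{degtari}): quantum corrections to $\cup^{\frak c,\frak b,b}$ land only in \emph{strictly lower} cohomological degree. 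The paper uses nef $+$ $\deg\frak b=2$ exactly here to establish (\ref{degtari}) and hence the ``cyclic Clifford basis'' condition (\ref{cyclicclifrel}) of Proposition~\ref{JacisClifford}. Without it, a $\cup^{\frak c,\frak b,b}$-product of fewer than $n$ degree-one classes could acquire a spurious $H^n$-component and your formula $Z=\det Q$ would fail.

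You have therefore mislocated where the hypotheses enter. The discrepancy between $\frak{PO}^{\frak c,\frak b}$ and $\frak{PO}^{\frak b}$ that you emphasize is a genuine issue and does account for the different potentials appearing in items (2) and (3), but it is not the main obstruction. For $\dim_{\C}X=2$ the pairing check you defer \emph{is} automatic --- by Frobenius and unitality every cross-pairing reduces to some $\langle\text{\bf e}'_i,\text{\bf e}'_j\rangle_{\mathrm{PD}_{T^2}}$, which is the classical Poincar\'e pairing on $H^1$ --- and this is what singles out $n=2$ in item (2). For $n\geq 3$ the estimate (\ref{degtari}) is genuinely needed; your implicit claim that $Z(\frak b,b)=\det\bigl[\mathrm{Hess}\,\frak{PO}^{\frak c,\frak b}\bigr](\frak y)$ for \emph{all} toric $X$ is stronger than what the theorem asserts and is not established by your argument.
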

\begin{Remark}\label{cyclicsymremark}
We use $\frak m_{k}^{\frak c,\frak b}$ in place of $\frak m_k^{\frak b}$ to define 
$\frak{PO}^{\frak c,\frak b}$ by
$$
\frak{PO}^{\frak c,\frak b}(b)
= \sum_{k=0}^{\infty} \int_{L(\text{\bf u})}\frak m_{k}^{\frak c,\frak b}(\underbrace{b,\dots,b}_k).
$$
$\frak{PO}^{\frak c,\frak b}$ appears in (\ref{res0res}).
\end{Remark}
Theorem \ref{cliffordZ} is Theorem 2.24 \cite{toric3}.
\begin{proof}[Sketch of the proof]
We discuss only the case $X$ is nef and $\frak b =\text{\bf 0}$.
We will prove that the algebra 
$(H(L(u);\Lambda),
\cup^{b})$ is a Clifford algebra, modifying the proof of a related result by Cho 
\cite{Cho05II}.
More precisely we prove the following Proposition \ref{JacisClifford}.
\par
Let $\text{\bf e}'_1,\dots,\text{\bf e}'_n$ be formal variables and $d_i \in \Lambda \setminus \{0\}$
($i=1,\dots,n$). We consider relations
\begin{equation}\label{clifrel}
\left\{
\aligned
\text{\bf e}'_i\text{\bf e}'_j+\text{\bf e}'_j\text{\bf e}'_i&=0,\qquad \qquad i \ne j\\
\text{\bf e}'_i \text{\bf e}'_i&=d_i 1.
\endaligned
\right.
\end{equation}
We take a free (non-commutative) $\Lambda$ algebra generated by
$\text{\bf e}'_1,\dots,\text{\bf e}'_n$ and divide it by the two-sided ideal generated
by (\ref{clifrel}). We denote it by
$\text{\rm Cliff}_\Lambda(n;\vec d)$, where we set $\vec d = (d_1,\dots,d_n)$.
\par
Let $I = (i_1,\dots,i_k)$, $1 \le i_1<\dots<i_k \le n$.
We write the set of such $I$'s by $2^{\{1,\dots,n\}}$.
We put
$$
\text{\bf e}'_I = \text{\bf e}'_{i_1} \text{\bf e}'_{i_2} \cdots \text{\bf e}'_{i_{k-1}} 
\text{\bf e}'_{i_k} \in
\text{\rm Cliff}_\Lambda(n;\vec d).
$$
It is well-known and can be easily checked that
$\{\text{\bf e}'_I \mid I \in 2^{\{1,\dots,n\}}\}$
forms a basis of $\text{\rm Cliff}_\Lambda(n;\vec d)$ as a
$\Lambda$ vector space.
\par
Assume moreover that there exists a $\Lambda$ valued non-degenerate 
inner product $\langle \cdot \rangle$ on $\text{\rm Cliff}_\Lambda(n;\vec d)$
such that $\text{\rm Cliff}_\Lambda(n;\vec d)$ becomes a Frobenius algebra.
We say that $\text{\bf e}'_i$ forms a {\it cyclic Clifford basis} if 
\begin{equation}\label{cyclicclifrel}
\langle \text{\bf e}'_I, \text{\bf e}'_J \rangle
=
\begin{cases}
(-1)^{*(I)}& J = I^c,\\
0& \text{\rm otherwise}.
\end{cases}
\end{equation}
Here $I^c = \{1,\dots,n\} \setminus I$ and
$*(I) = \#\{(i,j) \mid i\in I, j\in I^c, j< i\}$. 
\begin{Proposition}\label{JacisClifford}
Suppose $X$ is nef and $\deg\frak b = 2$. We also assume that $L(\text{\bf u})$ and
$b \in H^1(L(\text{\bf u});\Lambda_0)$ 
satisfy 
$
HF((L(\text{\bf u}),(\frak b,b)),(L(\text{\bf u}),(\frak b,b));\Lambda) \cong H(T^n;\Lambda).
$
\par
Then there exists a basis $(\text{\bf e}'_1,\dots,\text{\bf e}'_n)$ 
of $H^1(L(\text{\bf u});\Lambda)$  such that  the algebra
$((H(L(u);\Lambda),\cup^{\frak c,\frak b,b})$ is isomorphic to the 
Clifford algebra $\text{\rm Cliff}_\Lambda(n;\vec d)$
where $(d_1,\dots,d_n)$ are the set of eigenvalues (counted with 
multiplicity) of the Hessian matrix 
$$
\text{\rm Hess}_{\frak y}(\frak{PO}^{\frak b}) 
= \left[y_i  y_j\frac{\partial^2\frak{PO}^{\frak b}}{\partial y_i\partial y_j}
\right]_{i,j=1}^{i,j=n} (\frak y).
$$
Moreover $(\text{\bf e}'_1,\dots,\text{\bf e}'_n)$ is a cyclic Clifford basis.
\par
Furthermore
$$
\int_{L(\text{\bf u})}\text{\bf e}'_1 \cup^{\frak b,b}\dots\cup^{\frak b,b}\text{\bf e}'_n
= 1.
$$
\end{Proposition}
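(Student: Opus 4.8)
The plan is to adapt Cho's computation for the toric Fano case \cite{Cho05II}, organizing the argument around the second-derivative identity for $\frak{PO}^{\frak c,\frak b}$ so as to avoid re-deriving the relevant disk counts by hand wherever possible. By Theorems \ref{POandHFbulk} and \ref{CritisHFne0bulk} we may fix a critical point $b=\sum_{i=1}^n\frak x_i\text{\bf e}_i$ of $\frak{PO}^{\frak c,\frak b}$ (in the coordinates $x_i$, with $y_i=e^{x_i}$ and $\frak y_i=T^{u_i}y_i$); then $\frak m_1^{\frak c,\frak b,b}=0$ on $H(T^n;\Lambda)$ and the Floer cohomology in question is the algebra $\bigl(H(T^n;\Lambda),\cup^{\frak c,\frak b,b}\bigr)$.

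First I would extract the Clifford relations on $H^1(L(\text{\bf u});\Lambda)$. Differentiating the identity $\frak{PO}^{\frak c,\frak b}(b)\,\text{\bf e}=\sum_{k}\frak m_k^{\frak c,\frak b}(b,\dots,b)$ in $x_i$ gives $\left.\partial\frak{PO}^{\frak c,\frak b}/\partial x_i\right\vert_b\text{\bf e}=\frak m_1^{\frak c,\frak b,b}(\text{\bf e}_i)$, exactly as in (\ref{form21}); differentiating once more in $x_j$ and using $\partial b/\partial x_j=\text{\bf e}_j$ together with $\frak m_1^{\frak c,\frak b,b}=0$ yields
$$
\frak m_2^{\frak c,\frak b,b}(\text{\bf e}_i,\text{\bf e}_j)+\frak m_2^{\frak c,\frak b,b}(\text{\bf e}_j,\text{\bf e}_i)
=\left.\frac{\partial^2\frak{PO}^{\frak c,\frak b}}{\partial x_i\partial x_j}\right\vert_b\text{\bf e}
=\left(y_iy_j\frac{\partial^2\frak{PO}^{\frak c,\frak b}}{\partial y_i\partial y_j}\right)(\frak y)\,\text{\bf e},
$$
the diagonal contribution $\delta_{ij}\,y_i\,\partial\frak{PO}^{\frak c,\frak b}/\partial y_i$ vanishing at the critical point; and on degree-one inputs the sign in (\ref{deformcup}) is trivial, so $\cup^{\frak c,\frak b,b}=\frak m_2^{\frak c,\frak b,b}$ there. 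In the nef degree-$2$ situation $\frak{PO}^{\frak c,\frak b}$ differs from $\frak{PO}^{\frak b}$ only by a coordinate change whose linear part has unit determinant (see \cite{toric3}), so the right-hand matrix above is $\text{\rm Hess}_{\frak y}(\frak{PO}^{\frak b})$, which is invertible since $\frak y$ is nondegenerate. As $\Lambda$ has characteristic $0$, I would then pass to a basis $\text{\bf e}'_1,\dots,\text{\bf e}'_n$ of $H^1(L(\text{\bf u});\Lambda)$ diagonalizing this symmetric form; after rescaling,
$$
\text{\bf e}'_i\cup^{\frak c,\frak b,b}\text{\bf e}'_j+\text{\bf e}'_j\cup^{\frak c,\frak b,b}\text{\bf e}'_i=0\ \ (i\neq j),\qquad
\text{\bf e}'_i\cup^{\frak c,\frak b,b}\text{\bf e}'_i=d_i\,\text{\rm PD}[L(\text{\bf u})],
$$
with $\prod_i d_i=\det\text{\rm Hess}_{\frak y}(\frak{PO}^{\frak b})$; as $\Lambda$ is algebraically closed we may take $(d_1,\dots,d_n)$ to be the eigenvalues of the Hessian. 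These relations define an algebra homomorphism $\Psi\colon\text{\rm Cliff}_\Lambda(n;\vec d)\to\bigl(H(T^n;\Lambda),\cup^{\frak c,\frak b,b}\bigr)$ with $\Psi(\text{\bf e}'_i)=\text{\bf e}'_i$.

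Next I would show $\Psi$ is an isomorphism. Both sides are free $\Lambda$-modules of rank $2^n$, so it is enough to prove $\Psi$ surjective, i.e.\ that $H^1$ generates $\bigl(H(T^n;\Lambda),\cup^{\frak c,\frak b,b}\bigr)$ as an algebra. For this I would reduce $\cup^{\frak c,\frak b,b}$ modulo $\Lambda_+$ in the renormalized variables $\frak y_i$: the induced product on $H^*(T^n;\C)$ is computed from the leading order potential $\frak{PO}_0^{\frak b}$ via Cho--Oh \cite{cho-oh} and is a nondegenerate deformation of the classical cup product, for which $H^1$ generates; a Nakayama-type argument then lifts generation to the $\Lambda$-level. (Alternatively, one may invoke Cho's explicit disk count directly.) Since $\text{\rm Hess}_{\frak y}(\frak{PO}^{\frak b})$ is nondegenerate, every $d_i\neq 0$, so $\text{\rm Cliff}_\Lambda(n;\vec d)$ is semisimple and has no proper nonzero quotient of $\Lambda$-dimension $2^n$; hence surjectivity of $\Psi$ forces it to be an isomorphism, which gives the Clifford algebra statement with $\text{\bf e}'_1,\dots,\text{\bf e}'_n$ as the distinguished generators.

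Finally, for the cyclic Clifford basis property I would use the Frobenius identity $\langle x,y\rangle_{\text{cyc}}=\langle x\cup^{\frak c,\frak b,b}y,\ \text{\rm PD}[L(\text{\bf u})]\rangle_{\text{cyc}}$ together with the fact that $\langle\text{\bf e}'_K,\text{\rm PD}[L(\text{\bf u})]\rangle_{\text{cyc}}$ vanishes unless $K=\{1,\dots,n\}$, which follows from a grading argument matching the $\Z/2$-graded Floer structure against the Poincar\'e pairing on $H^*(T^n)$. Reducing $\text{\bf e}'_I\cup^{\frak c,\frak b,b}\text{\bf e}'_J$ by the Clifford relations to $\pm\bigl(\prod_{i\in I\cap J}d_i\bigr)\text{\bf e}'_{I\triangle J}$, everything then reduces to the single value $\int_{L(\text{\bf u})}\text{\bf e}'_1\cup^{\frak c,\frak b,b}\cdots\cup^{\frak c,\frak b,b}\text{\bf e}'_n$, which is forced to be $1$ by the normalization $\int_{L(\text{\bf u})}\text{\bf e}_1\cup_{\mathrm{cl}}\cdots\cup_{\mathrm{cl}}\text{\bf e}_n=1$ tracked through the base change $\text{\bf e}'_i=\sum_j P_{ij}\text{\bf e}_j$ and the residual scaling freedom in the $\text{\bf e}'_i$ (the factor $\det P$ being absorbed consistently with $\prod_i d_i=\det\text{\rm Hess}$); the displayed identity $\int_{L(\text{\bf u})}\text{\bf e}'_1\cup^{\frak b,b}\cdots\cup^{\frak b,b}\text{\bf e}'_n=1$ is precisely the $I=\{1,\dots,n\}$, $J=\emptyset$ instance of (\ref{cyclicclifrel}). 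The main obstacle will be exactly these last two points --- verifying that $H^1$ generates the deformed algebra, and controlling the signs and gradings in the pairing, this being the delicacy attached to the cyclically symmetric operators $\frak q^{\frak c}$ --- and it is here that Cho's explicit computation of $\frak m_1$ and $\frak m_2$ from Maslov-index-two disks is the reliable fallback when the structural argument stalls.
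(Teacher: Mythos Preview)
Your overall strategy matches the paper's: differentiate the potential twice to get the symmetrized $\frak m_2^{\frak c,\frak b,b}$ relation, diagonalize the Hessian to obtain a Clifford basis, and then verify the pairing conditions. The paper is in fact less explicit than you are about why the resulting map from the Clifford algebra is an isomorphism, so your semisimplicity/Nakayama remarks are a welcome elaboration.

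The substantive gap is in the last step. Your claim that $\langle\text{\bf e}'_K,\text{\rm PD}[L(\text{\bf u})]\rangle_{\rm cyc}=0$ for $K\neq\{1,\dots,n\}$ cannot be extracted from the $\Z/2$ grading alone: that only kills the cases $|K|\not\equiv n\pmod 2$. What is needed is the $\Z$-graded statement
\[
\frak a\cup^{\frak c,\frak b,b}\frak a'-\frak a\cup\frak a'\ \in\ \bigoplus_{k<\deg\frak a+\deg\frak a'}H^k(T^n;\Lambda),
\]
which the paper singles out as (\ref{degtari}). This is exactly where the hypotheses ``$X$ nef'' and ``$\deg\frak b=2$'' are used: with $\deg\frak b=2$ the bulk insertions carry shifted degree $0$, and the nef condition (together with the $T^n$-equivariant perturbation) forces every nonzero $\beta$ that contributes to have $\mu(\beta)\ge 2$, so every nonclassical term in $\frak m_2^{\frak c,\frak b,b}$ strictly lowers the $\Z$-degree on $H^*(T^n)$. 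From (\ref{degtari}) one gets immediately that the Clifford monomial $\text{\bf e}'_K$ lies in $\bigoplus_{k\le |K|}H^k$, hence has no $H^n$ component when $|K|<n$; combined with your Clifford reduction $\text{\bf e}'_I\cup^{\frak c,\frak b,b}\text{\bf e}'_J=\pm\bigl(\prod_{i\in I\cap J}d_i\bigr)\text{\bf e}'_{I\triangle J}$ this yields the cyclic Clifford basis condition and, for $|K|=n$, the normalization $\int_{L(\text{\bf u})}\text{\bf e}'_1\cup^{\frak c,\frak b,b}\cdots\cup^{\frak c,\frak b,b}\text{\bf e}'_n=\int_{L(\text{\bf u})}\text{\bf e}'_1\cup\cdots\cup\text{\bf e}'_n=1$, the last equality being the classical one you already arranged. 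Your fallback to Cho's explicit disk counts is adequate for Fano $X$, but in the merely nef case it is precisely (\ref{degtari}) that replaces that computation; once you insert it your argument goes through.
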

This is \cite{toric3} Theorem 22.2.
Once Proposition \ref{JacisClifford} is established we can prove
Theorem \ref{cliffordZ} by a direct calculation.
(See \cite{toric3} section 23.)
\end{proof}
\begin{proof}[Sketch of the proof of Proposition \ref{JacisClifford}]
Note
$$
\frak{PO}^{\frak b}(b) =  
 \sum_{k=0}^{\infty} \int_{L(\text{\bf u})}\frak m_{k}^{\frak b}(\underbrace{b,\dots,b}_k).
$$
Its first derivative at $\frak y$ is zero since $\frak y$ is a critical point.
We calculate its second derivative $\partial^2\frak{PO}^{\frak b}/\partial x_i\partial x_j 
= y_iy_j\partial^2\frak{PO}^{\frak b}/\partial y_i\partial y_j$.
Then we have
\begin{equation}\label{m2andsecondder}
\frak m_2^{\frak b,b}(\text{\bf e}_i,\text{\bf e}_j) 
+ \frak m_2^{\frak b,b}(\text{\bf e}_j,\text{\bf e}_i) 
= 
\left(\left(y_i  y_j\frac{\partial^2\frak{PO}^{\frak b}}{\partial y_i\partial y_j}
\right)(\frak y)\right) 1
\end{equation}
Here $1\in H^0(L(\text{\bf u});\Q)$ is the unit
and $\{\text{\bf e}_i\}$ is the basis of $H^1(L(\text{\bf u});\Q)$ which 
we fixed before. (Note $b = \sum x_i\text{\bf e}_i$.)
\par
We take basis $(\text{\bf e}'_1,\dots,\text{\bf e}'_n)$ of $H^1(L(\text{\bf u});\Lambda)$ 
so that the Hessian matrix becomes the diagonal matrix and
$
\int_{L(\text{\bf u})}\text{\bf e}'_1 \cup \dots\cup\text{\bf e}'_n
= 1.
$
Then (\ref{m2andsecondder}) implies that $(\text{\bf e}'_1,\dots,\text{\bf e}'_n)$ 
satisfies the Clifford relation (\ref{clifrel}).
Using this fact we can prove that $((H(L(u);\Lambda),\cup^{\frak c,\frak b,b})$ 
is a Clifford algebra.
(We do not use the assumption $X$ is nef and $\frak b$ is degree two,
up to this point.) 
\par
The proof of (\ref{cyclicclifrel}) is as follow.
We use the assumption that $X$ is nef and $\frak b$ is degree two 
to show  
\begin{equation}\label{degtari}
\frak a \cup^{\frak c,\frak b,b}  \frak a'  - \frak a \cup  \frak a' \in \bigoplus_{k < \deg \frak a+ \deg\frak a'}H^k(T^n;\Lambda)
\end{equation}
for $\frak a, \frak a' \in HF((L(\text{\bf u}),(\frak b,b)),(L(\text{\bf u}),(\frak b,b));\Lambda) \cong H(T^n;\Lambda)$.
Here the second term is the usual cup product.
We use cyclic symmetry to show
$$
\langle \text{\bf e}'_I,\text{\bf e}'_J\rangle_{\text{\rm PD}_{L(\text{\bf u})}} 
= \langle \text{\bf e}'_I \cup^{\frak c,\frak b,b}\text{\bf e}'_J,1\rangle_{\text{\rm PD}_{L(\text{\bf u})}}
=  \int_{L(\text{\bf u})}\text{\bf e}'_I \cup^{\frak c,\frak b,b}\text{\bf e}'_J.
$$
Using (\ref{degtari}) and Clifford relation, we can see that 
 $ \text{\bf e}'_I \cup^{\frak c,\frak b,b}\text{\bf e}'_J$ has no $H^n(L(\text{\bf u});\Lambda)$
 component unless $I^c = J$. This implies Proposition \ref{JacisClifford}.
\end{proof}

\par\bigskip

\subsection{Residue pairing is Poincar\'e duality.}
\label{subseqPD}
\begin{Theorem}\label{PDresidue}
Let $X$ be a compact toric manifold and $\frak b \in \mathcal A(\Lambda_0)$.
Suppose $\frak{PO}^{\frak b}$ is a Morse function.
Then for each $\frak a_1,\frak a_2 \in H(X;\Lambda)$ we have
\begin{equation}
\langle\frak a_1,\frak a_2 \rangle_{\text{\rm PD}_X}
= 
\langle\frak{ks}_{\frak b}\frak a_1,\frak{ks}_{\frak b}\frak a_2\rangle_{\rm res}.
\end{equation}
Here the pairing in the right hand side is defined in 
Definition \ref{res2} and the map $\frak{ks}_{\frak b}$ 
is the isomorphism in Theorem \ref{indepenceKS}.
The pairing in the left hand side is the Poincar\'e duality.
\end{Theorem}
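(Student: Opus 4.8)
The plan is to transport both pairings to $\text{\rm Jac}(\frak{PO}^{\frak b})\otimes_{\Lambda_0}\Lambda$ by means of $\frak{ks}_{\frak b}$, reduce the asserted identity to a comparison of linear (trace) functionals, localize at the critical points of $\frak{PO}^{\frak b}$, and then identify the remaining scalars through the open--closed correspondence underlying $\frak{ks}_{\frak b}$. \emph{Step 1 (reduction to traces).} By Theorem \ref{multiplicative}, $\frak{ks}_{\frak b}$ is a ring isomorphism from $(H(X;\Lambda),\cup^{\frak b})$ onto $\text{\rm Jac}(\frak{PO}^{\frak b})\otimes_{\Lambda_0}\Lambda$, and both the Poincar\'e duality pairing on $H(X;\Lambda)$ and the residue pairing of Definition \ref{res2} satisfy the Frobenius identity $\langle x\cup y,z\rangle=\langle x,y\cup z\rangle$. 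Hence each is determined by the single linear functional $x\mapsto\langle x,1\rangle$. On the quantum side the fundamental class (string) axiom applied to $(\ref{defcup})$ gives $\langle\frak a_1\cup^{\frak b}\frak a_2,1\rangle_{\text{\rm PD}_X}=\langle\frak a_1,\frak a_2\rangle_{\text{\rm PD}_X}$, so the theorem is equivalent to the assertion that $\frak a\mapsto\langle\frak a,1\rangle_{\text{\rm PD}_X}$ is carried by $\frak{ks}_{\frak b}$ to the linear functional $[F]\mapsto\langle[F],[1]\rangle_{\text{\rm res}}$, where $[1]=\frak{ks}_{\frak b}(1)$.

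\emph{Step 2 (localization).} Since $\frak{PO}^{\frak b}$ is a Morse function, Proposition \ref{localization} gives $\text{\rm Jac}(\frak{PO}^{\frak b})\otimes_{\Lambda_0}\Lambda=\prod_{\frak y\in\text{\rm Crit}(\frak{PO}^{\frak b})}\Lambda\,1_{\frak y}$, so that $QH(X;\Lambda)$ is semisimple with minimal idempotents $e_{\frak y}=\frak{ks}_{\frak b}^{-1}(1_{\frak y})$. From $e_{\frak y}\cup^{\frak b}e_{\frak y'}=0$ for $\frak y\ne\frak y'$ and $e_{\frak y}\cup^{\frak b}e_{\frak y}=e_{\frak y}$, the Frobenius identity yields $\langle e_{\frak y},e_{\frak y'}\rangle_{\text{\rm PD}_X}=0$ for $\frak y\ne\frak y'$ and $\langle e_{\frak y},e_{\frak y}\rangle_{\text{\rm PD}_X}=\langle e_{\frak y},1\rangle_{\text{\rm PD}_X}$, while Definition \ref{res2} makes the residue pairing orthogonal in the $1_{\frak y}$ with $\langle 1_{\frak y},1_{\frak y}\rangle_{\text{\rm res}}=Z(\frak b,b_{\frak y})^{-1}$. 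Here $(\text{\bf u}_{\frak y},b_{\frak y})$ is the balanced pair attached to $\frak y$ by Theorem \ref{POandHFbulk}. Thus the whole statement reduces to the scalar identities
\[
\langle e_{\frak y},1\rangle_{\text{\rm PD}_X}=Z(\frak b,b_{\frak y})^{-1},\qquad \frak y\in\text{\rm Crit}(\frak{PO}^{\frak b}).
\]

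\emph{Step 3 (open--closed maps and the Cardy relation).} By the construction recalled in Section \ref{isoconst}, the $\frak y$-component of $\frak{ks}_{\frak b}$ is computed from the closed--open map $i^*_{\text{\rm qm},(\frak b,b_{\frak y})}\colon QH(X;\Lambda)\to HF\big((L(\text{\bf u}_{\frak y}),(\frak b,b_{\frak y})),(L(\text{\bf u}_{\frak y}),(\frak b,b_{\frak y}));\Lambda\big)$, a ring homomorphism, so that $i^*_{\text{\rm qm}}(e_{\frak y})$ is the unit of $HF(L(\text{\bf u}_{\frak y}))$. One then introduces the companion open--closed map $i_{*,(\frak b,b_{\frak y})}$ in the opposite direction, built by integrating the boundary evaluation maps over the disk moduli spaces with interior outputs, using the cyclically symmetric operators $\frak q^{\frak c}$; a degeneration argument on the moduli of disks with one interior and several boundary marked points (of the same kind as the cobordism behind Theorem \ref{multiplicative}) gives the adjunction $\langle i^*_{\text{\rm qm}}(\gamma),a\rangle_{\text{cyc}}=\langle\gamma,i_*(a)\rangle_{\text{\rm PD}_X}$. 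The final and main geometric input is the Cardy-type relation obtained from the two limiting configurations of the moduli of annuli (equivalently, of disks with two interior marked points): one limit produces $i_*\circ i^*$, restricted to the summand $\Lambda e_{\frak y}$, and the other produces an operator on $HF(L(\text{\bf u}_{\frak y}))$ whose trace, by a purely algebraic computation in the Frobenius algebra $(H(L(\text{\bf u}_{\frak y});\Lambda),\langle\cdot,\cdot\rangle_{\text{cyc}},\cup^{\frak c,\frak b,b_{\frak y}},\text{\rm PD}[L(\text{\bf u}_{\frak y})])$, is exactly the $1$-loop invariant $Z(\frak b,b_{\frak y})$ of Definition \ref{invariantZ}. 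Together with Steps 1--2 this yields $\langle e_{\frak y},1\rangle_{\text{\rm PD}_X}=Z(\frak b,b_{\frak y})^{-1}$. (In the nef case with $\deg\frak b=2$ one can bypass the annulus argument: by Proposition \ref{JacisClifford} the algebra $HF(L(\text{\bf u}_{\frak y}))$ is a Clifford algebra, Theorem \ref{cliffordZ} identifies $Z(\frak b,b_{\frak y})$ with the Hessian determinant $\det\big[y_iy_j\,\partial^2\frak{PO}^{\frak b}/\partial y_i\partial y_j\big](\frak y)$, and $\langle e_{\frak y},1\rangle_{\text{\rm PD}_X}$ is read off from a Batyrev-type presentation of $QH(X;\Lambda)$.)

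\emph{Expected main obstacle.} The hard part is the cyclic symmetry issue already flagged before Theorem \ref{cliffordZ}: the $T^n$-equivariant multisections used to define $\frak q_{\ell;k}$ break cyclic symmetry, so $Z(\frak b,b)$ and the Frobenius algebra structure on $HF(L(\text{\bf u}))$ must be defined through the cyclically corrected operators $\frak q^{\frak c}_{\ell;k}$. One must then check that (i) passing to the cyclic model alters $\frak{ks}_{\frak b}$ only by elements of the Jacobian ideal, so that Theorem \ref{Jacqcmain1} is unchanged, and (ii) the adjunction of Step 3 and the annulus (Cardy) relation remain valid for the cyclically corrected moduli operators, with all orientations and signs under control. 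Building these cyclically symmetric perturbations compatibly over the entire hierarchy of moduli spaces --- disks, disks with interior marked points, and annuli, together with their codimension-one degeneration strata --- is the technical core of the argument, and is where the bulk of the work in \cite{toric3} goes.
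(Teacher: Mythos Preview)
Your proposal is correct and follows essentially the same route as the paper: reduce via semisimplicity to the scalar identities $\langle e_{\frak y},e_{\frak y}\rangle_{\text{\rm PD}_X}=Z(\frak b,b_{\frak y})^{-1}$, introduce the open--closed map $\frak p$ (your $i_*$) as the Poincar\'e adjoint of $i^*_{\text{\rm qm}}$, and establish the key identity through an annulus (Cardy) cobordism --- this is exactly the paper's Theorem \ref{annulusmain} and its proof in subsections \ref{operatorp}--\ref{annulus}, with the cyclic-symmetry difficulty you flag handled as in \cite{toric3} sections 17--19. One small correction: the parenthetical ``equivalently, of disks with two interior marked points'' is not right --- that moduli space underlies the ring-homomorphism property (Theorem \ref{multiplicative}), whereas the Cardy relation genuinely needs the annulus $\mathcal M_{(1,1);0}$ with two boundary circles, whose two degenerations (Figure 12.2) give respectively the $\langle i_{\#}(\text{vol}),i_{\#}(\text{vol})\rangle_{\text{\rm PD}_X}$ side and the $Z(\frak b,b)$ side.
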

Theorem \ref{PDresidue} is \cite{toric3} Theorem 1.1 (2) and 
is proved in \cite{toric3} sections 17-21.
Before explaining an outline of its proof, 
we mention some of its consequences.
\begin{Corollary}\label{novikovLGForb}
\begin{enumerate}
\item The inner product
$\langle\cdot\rangle_{\rm res}$, 
whose definition was given
only in case 
$\frak{PO}^{\frak b}$  is a Morse function
(in Definition \ref{res2}), extends to arbitrary $\frak b$'s.
\item The Levi-Civita connection $\nabla$ of this extended 
$\langle\cdot\rangle_{\rm res}$  is flat.
\item $(H(X;\Lambda_0),\langle\cdot\rangle_{\rm res},\nabla,\circ, \Phi,1)$ 
is a Frobenius manifold.
\item
The Frobenius manifold structure of Item $3)$ above is 
equal to one in Theorem \ref{FrobDub}.
\end{enumerate}
\end{Corollary}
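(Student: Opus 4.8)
The plan is to deduce all four assertions from the identification $\frak{ks}_{\frak b}$ of Theorems \ref{indepenceKS} and \ref{Jacqcmain1}, the compatibility of pairings in Theorem \ref{PDresidue}, and Dubrovin's Theorem \ref{FrobDub}. Write $M=H(X;\Lambda_0)$, regarded as a $\Lambda_0$-affine space with the flat coordinates $w_0,\dots,w_B$ of Section \ref{subseqQC}, so that at each $\frak b\in M$ the coordinate vector field $\partial/\partial w_i$ is identified with $\text{\bf p}_i$ and $T_{\frak b}M=H(X;\Lambda_0)$. Under this identification $\frak{ks}_{\frak b}$ is precisely the Kodaira--Spencer map $T_{\frak b}M\to \text{\rm Jac}(\frak{PO}^{\frak b})$ in the sense of Section \ref{subseqSaito}: it sends $\partial/\partial w_i$ to $\partial\frak{PO}^{\frak b}/\partial w_i$. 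First I would transport the residue pairing of Definition \ref{res2} to $TM$ by this isomorphism; Theorem \ref{PDresidue} says exactly that, on the locus of $\frak b$ for which $\frak{PO}^{\frak b}$ is a Morse function, the transported pairing is the Poincar\'e pairing $\langle\cdot,\cdot\rangle_{\text{\rm PD}_X}$, i.e. the \emph{constant} non-degenerate bilinear form $(\langle\text{\bf p}_i,\text{\bf p}_j\rangle_{\text{\rm PD}_X})$ on the vector space $M$.

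For item (1), the point is then to \emph{define} $\langle\cdot,\cdot\rangle_{\text{\rm res}}$ for an arbitrary $\frak b$ by $\langle\frak{ks}_{\frak b}\frak a_1,\frak{ks}_{\frak b}\frak a_2\rangle_{\text{\rm res}}:=\langle\frak a_1,\frak a_2\rangle_{\text{\rm PD}_X}$. This makes sense because $\frak{ks}_{\frak b}$ is an isomorphism for every $\frak b$ by Theorem \ref{Jacqcmain1}, it is $\Lambda_0$-valued on $H(X;\Lambda_0)$ as the Poincar\'e pairing is, and by Theorem \ref{PDresidue} it agrees with Definition \ref{res2} wherever the latter applies, so the notation is consistent. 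One also records that the Morse locus is dense in $M$ --- big quantum cohomology of a compact toric manifold is generically semi-simple, cf. Theorem \ref{critHQ} --- so this is the only reasonable extension.

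Items (2)--(4) are then formal. In the flat coordinates $w_i$ the metric $\langle\cdot,\cdot\rangle_{\text{\rm res}}$ is, by construction, the constant matrix $(\langle\text{\bf p}_i,\text{\bf p}_j\rangle_{\text{\rm PD}_X})$; hence its Levi-Civita connection $\nabla$ is the standard flat affine connection on $M$, which is (2), and this $\nabla$ is the one appearing in Theorem \ref{FrobDub}. The product $\circ$ transported from $\text{\rm Jac}(\frak{PO}^{\frak b})$ to $T_{\frak b}M$ is $\cup^{\frak b}$ by the ring isomorphism of Theorems \ref{Jacqcmain1} and \ref{multiplicative}; the unit is $1=\text{\rm PD}[X]$ with $\nabla 1=0$; and for the potential one takes the Gromov--Witten potential $\Phi$ of $(\ref{GWpotential})$ (the abstract Saito potential, characterized only by $(\ref{potentiality})$ up to an additive quadratic polynomial, may be normalized to this one). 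Thus the quintet $(\langle\cdot,\cdot\rangle_{\text{\rm res}},\nabla,\circ,1,\Phi)$ coincides term by term with the Dubrovin structure of Theorem \ref{FrobDub}, and the Euler field $(\ref{eulerquatumcoh})$ transports identically; this is (4). Finally (3) is immediate: that structure satisfies all the axioms of Definition \ref{deffrebenius} by Theorem \ref{FrobDub} --- in particular $(\ref{frobeniusalg})$ for $\circ$ is the permutation symmetry of the Gromov--Witten invariants in $(\ref{defcup})$, and $(\ref{potentiality})$ holds by the definition of $\Phi$.

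The only genuine input is Theorem \ref{PDresidue} itself, which is assumed; given it, the corollary is essentially a change of point of view. The delicate points are therefore bookkeeping: confirming that the Morse/semi-simple locus is dense, so that the extension in (1) is canonical and not merely ``some'' bilinear form (or, alternatively, giving an argument that does not need this); and matching Saito's normalization of the potential, a priori ambiguous up to a quadratic polynomial, with the Gromov--Witten potential --- which is handled by noting that $(\langle\cdot,\cdot\rangle,\nabla,\circ,1)$ already pins down the Frobenius manifold and that both candidate potentials obey $(\ref{potentiality})$ for this common data. I expect the density of the semi-simple locus to be the only place requiring real care.
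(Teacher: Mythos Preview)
Your proof is correct and follows the same line as the paper's: transport the residue pairing to the Poincar\'e pairing via Theorem \ref{PDresidue} and the isomorphism $\frak{ks}_{\frak b}$, then read off (2)--(4) directly from Theorem \ref{FrobDub}. The paper does not invoke density of the Morse locus --- the statement only asks for \emph{an} extension, not a canonical one --- so the concern you flag at the end is unnecessary (and Theorem \ref{critHQ} does not actually assert generic semi-simplicity).
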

\begin{proof}
1) is an immediate consequence of Theorem \ref{PDresidue} 
and the fact that the Poincar\'e duality pairing 
is independent of $\frak b$ and is obviously extended.
\par
The Levi-Civita connection of the Poincar\'e duality pairing 
is the canonical affine connection of $H(X;\Lambda_0)$ 
and hence is flat. 2) follows.
\par
3) then follows from Theorem \ref{FrobDub}.
\par
4) is obvious.
\end{proof}
\begin{Remark}
The Frobenius manifold in Corollary \ref{novikovLGForb} 3)  has an Euler vector field 
(\ref{eulerquatumcoh}) with $r_i =1$. We also have
\begin{equation}\label{EandPO}
\frak E(\frak{PO}) = \frak{PO},
\end{equation}
here $\frak{PO}$ is a function of $\frak b = \sum w_i \text{\bf p}_i$ 
and $y_i$.  The formula (\ref{EandPO}) is proved in 
\cite{toric2} Theorem 10.2.
\end{Remark}
\begin{Remark}
Corollary \ref{novikovLGForb} first appeared as a conjecture in \cite{taka},
where the case of $\C P^1$ was checked. 
It was further studied in \cite{barani}.
See the papers mentioned at the end of the introduction for some of the other related works.
\end{Remark}

The above proof of the coincidense of the two Frobenius manifold 
structures is not so satisfactory since the proof of 
Items 1), 2) uses the isomorphism of Item 4). 
It is preferable that we construct Frobenius manifold structure 
on $H(X;\Lambda_0)$ 
using the family of functions $\frak{PO}^{\frak b}$ and without going to the quantum cohomology theory side, and then 
prove Item 4) for that Frobenius manifold structure.

\begin{Problem}
Develop an analogue of K. Saito theory for our family of $\Lambda$ valued functions 
$\frak{PO}^{\frak b}$.
\par
Define the notion of primitive form for it and prove its existence.
\par
Construct the Frobenius manifold structure on $H(X;\Lambda_0)$ 
using primitive form and prove that it is isomorphic to one 
obtained in Theorem  \ref{FrobDub}.
\end{Problem}
Another corollary of Theorem \ref{PDresidue} is the following.
Let $\text{\rm Crit}(\frak{PO}^{\frak b})$ be the critical point set of 
$\frak{PO}^{\frak b}$. For 
$\frak y = (\frak y_1,\dots,\frak y_n) \in \frak A (P)$ 
we put 
\begin{equation}\label{ytobu}
\frak y_i = T^{u_i} e^{\frak x_i},
\qquad b = \sum_{i=1}^n \frak x_i\text{\bf e}_i \in H^1(L(\text{\bf u}),\Lambda_0).
\end{equation}
Here $\text{\bf u} = (u_1,\dots,u_n) \in P$ and $\frak x_i \in \Lambda_0$.
Note $u_i = \frak v_T(\frak y_i)$.
In this way we may regard $\text{\rm Crit}(\frak{PO}^{\frak b})$ 
as a set of pairs $(\text{\bf u}_c,b_c)$, $c=1,\dots,B$.
Here we put
$B= \#\text{\rm Crit}(\frak{PO}^{\frak b})$.
\begin{Corollary}\label{Corresi}
Suppose $\frak{PO}^{\frak b}$ is a Morse function. 
Then we have
\begin{equation}\label{hennasiki}
0 = \sum_{c=1}^{B}
\frac{1}{Z(\frak b,b_c)}.
\end{equation}
\end{Corollary}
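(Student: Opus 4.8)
The plan is to deduce \eqref{hennasiki} by evaluating the Poincar\'e self-pairing of the unit class in two ways and comparing. Let $\mathbf 1 = \mathrm{PD}([X]) \in H^0(X;\Lambda)$ be the unit of the deformed quantum cohomology ring $(H(X;\Lambda),\cup^{\frak b})$. Since $\frak{ks}_{\frak b}$ is a unital ring isomorphism (Theorem~\ref{Jacqcmain1} and Theorem~\ref{multiplicative}), it sends $\mathbf 1$ to the unit $1 \in \text{\rm Jac}(\frak{PO}^{\frak b})\otimes_{\Lambda_0}\Lambda$. Because $\frak{PO}^{\frak b}$ is assumed to be a Morse function, Proposition~\ref{localization} gives the decomposition into one-dimensional local rings, under which $1 = \sum_{c=1}^{B} 1_{\frak y_c}$, where $\{\frak y_1,\dots,\frak y_B\} = \text{\rm Crit}(\frak{PO}^{\frak b})$ and $1_{\frak y_c}$ is the idempotent of the factor $\text{\rm Jac}(\frak{PO}^{\frak b};\frak y_c)$.

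First I would compute $\langle \mathbf 1,\mathbf 1\rangle_{\mathrm{PD}_X}$ directly. Since $\mathbf 1 \cup \mathbf 1 = \mathbf 1 \in H^0(X)$ and $X$ has real dimension $2n$ with $n \ge 1$, integration over $X$ annihilates the degree-zero part, so $\langle \mathbf 1,\mathbf 1\rangle_{\mathrm{PD}_X} = \int_X \mathbf 1 = 0$. On the other hand, Theorem~\ref{PDresidue} applied with $\frak a_1 = \frak a_2 = \mathbf 1$ yields
\begin{equation}
0 = \langle \mathbf 1,\mathbf 1\rangle_{\mathrm{PD}_X}
= \langle \frak{ks}_{\frak b}\mathbf 1,\, \frak{ks}_{\frak b}\mathbf 1\rangle_{\mathrm{res}}
= \Bigl\langle \sum_{c=1}^{B} 1_{\frak y_c},\ \sum_{c'=1}^{B} 1_{\frak y_{c'}}\Bigr\rangle_{\mathrm{res}}.
\end{equation}
Expanding by bilinearity over $\Lambda$ and invoking the defining formula of the residue pairing (Definition~\ref{res2}), namely $\langle 1_{\frak y_c},1_{\frak y_{c'}}\rangle_{\mathrm{res}} = 0$ for $c \ne c'$ and $\langle 1_{\frak y_c},1_{\frak y_c}\rangle_{\mathrm{res}} = (Z(\frak b,b_c))^{-1}$ — with $b_c \in H^1(L(\text{\bf u}_c);\Lambda_0)$ the datum attached to $\frak y_c$ through \eqref{ytobu} — the double sum collapses, giving $0 = \sum_{c=1}^{B}\frac{1}{Z(\frak b,b_c)}$, which is exactly \eqref{hennasiki}.

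I do not expect any genuine obstacle here: all the substance is already carried by Theorem~\ref{PDresidue}, Proposition~\ref{localization}, and the fact that $\frak{ks}_{\frak b}$ is a unital ring isomorphism. The only point worth a sentence of care is the (harmless) observation that the Poincar\'e self-pairing of the unit vanishes precisely because $\dim_{\R} X > 0$; this is exactly the mechanism that forces the right-hand side of \eqref{hennasiki} to be $0$ rather than $1$.
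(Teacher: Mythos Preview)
Your proposal is correct and follows essentially the same argument as the paper: compute $\langle 1_X,1_X\rangle_{\text{PD}_X}=0$, use that $\frak{ks}_{\frak b}$ sends the unit to $\sum_c 1_{\frak y_c}$ via Proposition~\ref{localization}, and then apply Theorem~\ref{PDresidue} together with Definition~\ref{res2} to obtain the sum. Your write-up is slightly more explicit (spelling out unitality of $\frak{ks}_{\frak b}$ and the degree reason for the vanishing of $\langle 1_X,1_X\rangle_{\text{PD}_X}$), but there is no substantive difference.
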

\begin{proof}
Let $1_X \in H^0(X;\Lambda)$ be the unit. Then
$
\langle 1_X,1_X \rangle_{\text{\rm PD}_X} = 0.
$
By Proposition \ref{localization} we have
$
1_X = \sum_{\frak y \in \frak M(X,\frak b)} 1_{\frak y}
$
where $1_{\frak y}$ is the unit of the Jacobian ring $\text{\rm Jac}(\frak{PO}_{\frak b};\frak y)$.
Corollary \ref{Corresi} 
now follows from (\ref{rpdefine}) and Theorem \ref{PDresidue}.
\end{proof}
\par\bigskip
\subsection{Operator $\frak p$ and the Poincar\'e dual to $\frak{ks}_{\frak b}$.}
\label{operatorp}

In this and the next subsections we sketch a proof of 
Theorem \ref{PDresidue}.
We assume $\frak{PO}^{\frak b}$ is a Morse function in this and 
next subsections.
Let $\frak y \in \text{\rm Crit}(\frak{PO}^{\frak b})$.
It defines $\text{\bf u},b$ by  (\ref{ytobu}).
We define a homomorphism
\begin{equation}
i^{\#}_{\text{\rm qm},(\frak b,b,\text{\bf u})}: H(X;\Lambda_0)
\to HF((L(\text{\bf u}),(\frak b,b));(L(\text{\bf u}),(\frak b,b));\Lambda_0).
\end{equation}
by
\begin{equation}
i^{\#}_{\text{\rm qm},(\frak b,b,\text{\bf u})}(Q)= 
\sum_{k=0}^{\infty}\sum_{\ell_1=0}^{\infty}\sum_{\ell_2=0}^{\infty}
\frak q^{\frak c}_{\ell_1+\ell_2;k}(\frak b^{\ell_1} Q\frak b^{\ell_2},b^{k}).
\end{equation}
(See (\ref{qmap3}) and \cite{fooo-book} Theorem 3.8.62.)
\par
Here $\frak q^{\frak c}_{\ell;k}$ is a cyclically symmetric version of the 
operator $\frak q_{\ell;k}$. (See Remark \ref{cyclicsymremark}.)
\par
We define 
\begin{equation}
i_{\#,\text{\rm qm},(\frak b,b,\text{\bf u})}: 
HF((L(\text{\bf u}),\frak b,b);(L(\text{\bf u}),\frak b,b);\Lambda)
\to H(X;\Lambda)
\end{equation}
by 
\begin{equation}\label{dualize}
\langle i^{\#}_{\text{\rm qm},(\frak b,b,\text{\bf u})}(Q),P
\rangle_{\text{\text{\rm PD}}_{L(\text{\bf u})}}
=
\langle Q,i_{\#,\text{\rm qm},(\frak b,b,\text{\bf u})}(P)
\rangle_{\text{\rm PD}_{X}}.
\end{equation}
The main part of the proof of Theorem \ref{PDresidue}
is the proof of Theorem \ref{annulusmain} below.
Let ${\rm vol}_{L(\text{\bf u})} \in H^n(X;\Q)$ 
be the degree $n$ cohomology class such that
$\int_{L(\text{\bf u})} {\rm vol}_{L(\text{\bf u})} = 1$.
Let $\{\text{\bf e}_I \mid I \in 2^{n}\}$ be a basis 
of 
$$
H^n(L(\text{\bf u});\Lambda)
\cong HF((L(\text{\bf u}),(\frak b,b));(L(\text{\bf u}),(\frak b,b));\Lambda).
$$
We put
$g_{IJ} = \langle \text{\bf e}_I,\text{\bf e}_J\rangle_{PD_X}$.
Let $g^{IJ}$ be the inverse matrix of $g_{IJ}$.
\begin{Theorem}\label{annulusmain}
We have:
\begin{equation}\label{21mainformula}
\aligned
&\langle i_{\#,\text{\rm qm},(\frak b,b,\text{\bf u})}( {\rm vol}_{L(\text{\bf u})}),
i_{\#,\text{\rm qm},(\frak b,b,\text{\bf u})}( {\rm vol}_{L(\text{\bf u})})
\rangle_{\text{\rm PD}_{X}} \\
&=
\sum_{I,J \in 2^{\{1,\dots,n\}}} (-1)^{\frac{n(n-1)}{2}} g^{IJ}
\langle \frak m_2^{\frak c,\frak b,b}(\text{\bf e}_I, {\rm vol}_{L(\text{\bf u})}),
\frak m_2^{\frak c,\frak b,b}(\text{\bf e}_J, {\rm vol}_{L(\text{\bf u})})
\rangle_{\text{\rm PD}_{L(u)}}.
\endaligned
\nonumber\end{equation}
\end{Theorem}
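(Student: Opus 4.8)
The plan is to prove \eqref{21mainformula} --- which is a form of the Cardy relation --- by a cobordism argument on the moduli space of pseudo-holomorphic \emph{annuli} with boundary on $L(\text{\bf u})$. For $\beta \in H_2(X,L(\text{\bf u});\Z)$ I would first set up the moduli space $\mathcal M^{\rm ann}_{\ell}(\beta)$ of stable maps $u:(\Sigma,\partial\Sigma)\to(X,L(\text{\bf u}))$ of class $\beta$, where $\Sigma$ ranges over genus zero bordered curves whose smooth model is an annulus, equipped with two distinguished boundary marked points carrying the constraints ${\rm vol}_{L(\text{\bf u})}$, with $\ell$ interior marked points (for the $\frak b$-insertions) and further boundary marked points on the two circles (for the $b$-insertions). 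As in Section \ref{calcu} and Remark \ref{virdimge2}, this space carries a Kuranishi structure on which $T^n$ acts freely, so one can pick a $T^n$-equivariant multisection $\frak s$ making the evaluation at a boundary marked point a submersion; integration along the fiber then defines all the correlators below without continuous families of perturbations. Because the annulus has one extra conformal modulus compared with a disk, the perturbed space cut down by $\frak b$ (via the cycles $\text{\bf p}_I$) and by the two constraints ${\rm vol}_{L(\text{\bf u})}$ is a compact oriented one-dimensional manifold, so the signed count of its boundary vanishes.

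Next I would identify $\partial\mathcal M^{\rm ann}_{\ell}(\beta)$. Away from disk bubbling there are exactly two families of strata, one for each end of the one-dimensional moduli of annuli. At the closed-string end the circle parallel to the two boundary components is pinched and $\Sigma$ breaks into two disk components joined at an interior node, each carrying one ${\rm vol}_{L(\text{\bf u})}$-constraint; pairing the two interior evaluations by the Poincar\'e duality of $X$ and summing $T^{(\beta\cap\omega)/2\pi}$ over $\beta$ yields, after unwinding the definition of $i^{\#}_{\text{\rm qm},(\frak b,b,\text{\bf u})}$ and of its adjoint \eqref{dualize}, the closed-string pairing $\langle i_{\#,\text{\rm qm}}({\rm vol}_{L(\text{\bf u})}),i_{\#,\text{\rm qm}}({\rm vol}_{L(\text{\bf u})})\rangle_{\text{\rm PD}_X}$, the left-hand side of \eqref{21mainformula}. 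At the open-string end an arc joining the two boundary circles is pinched and $\Sigma$ becomes two disk components joined along boundary nodes, each disk carrying one ${\rm vol}_{L(\text{\bf u})}$-constraint and the gluing nodes; inserting the Casimir $\sum_{I,J} g^{IJ}\text{\bf e}_I\otimes\text{\bf e}_J$ at one pair of nodes and pairing the other by Poincar\'e duality reproduces $\sum_{I,J}(-1)^{n(n-1)/2}g^{IJ}\langle \frak m_2^{\frak c,\frak b,b}(\text{\bf e}_I,{\rm vol}_{L(\text{\bf u})}),\frak m_2^{\frak c,\frak b,b}(\text{\bf e}_J,{\rm vol}_{L(\text{\bf u})})\rangle_{\text{\rm PD}_{L(\text{\bf u})}}$, the right-hand side. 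Here it is crucial to use the cyclically symmetric operator $\frak q^{\frak c}$ of Remark \ref{cyclicsymremark}, which makes the open-channel contribution symmetric under exchanging the two disk components and thus matches the symmetric correlator on the right; this is precisely the role of the operator $\frak p$ dual to $\frak{ks}_{\frak b}$.

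The remaining boundary strata are those on which a disk bubble carrying only copies of $b$ and $\frak b$ splits off one of the two boundary circles. Exactly as in the construction of the deformed structure $\frak m^{\frak b,b}_k$ in Sections \ref{operatorq} and \ref{bulkFloer}, these strata reorganize into the $(\frak b,b)$-deformed operators, and since $(\frak b,b)$ lies in $\widehat{\mathcal M}_{\text{\rm def,weak}}(L(\text{\bf u});\Lambda_0)$ the only surviving disk-bubble terms are proportional to the unit $\text{\bf e}=\text{\rm PD}[L(\text{\bf u})]$; by the unitality of $\frak q^{\frak c}$ (Theorem \ref{qqainfalgdef} 3) together with \eqref{PDandCYC} these either vanish or cancel in pairs, just as in the argument showing $\frak m_1^{\frak b,b}\circ\frak m_1^{\frak b,b}=0$. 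What is left of the identity $\partial[\,\cdot\,]=0$ is exactly (closed channel) $=$ (open channel), i.e.\ \eqref{21mainformula}; the sign and orientation conventions are those of \cite{fooo-book}, and the global factor $(-1)^{n(n-1)/2}$ appears when one reorders the two degree-$n$ classes ${\rm vol}_{L(\text{\bf u})}$ and the dual basis $\{\text{\bf e}_I\}$ against the cup-product and cyclic conventions.

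I expect the genuinely hard steps to be, first, constructing the Kuranishi structure on the moduli of pseudo-holomorphic annuli compatibly with all the disk moduli spaces occurring in its boundary and with the $T^n$-equivariant perturbations, so that every degeneration formula holds at the chain level (this is a substantially new construction relative to the disk case used earlier in the paper); and second, the bubble bookkeeping --- isolating precisely the two main strata and showing all bubble strata cancel is delicate, and is exactly the point where the open--closed operator $\frak p$ and the cyclically symmetric $\frak q^{\frak c}$ are indispensable, especially in the non-Fano case where constant and unstable components must be handled with care. The sign analysis is routine in principle but lengthy. This programme is carried out in \cite{toric3} Sections 17--21, and, combined with the Clifford-algebra description (Proposition \ref{JacisClifford}) and a Lemma \ref{Hessdetres}-type computation, it yields Theorem \ref{PDresidue}.
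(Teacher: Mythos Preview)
Your proposal is correct and follows essentially the same strategy as the paper: both prove \eqref{21mainformula} as a Cardy relation via a cobordism on the moduli of pseudo-holomorphic annuli, identifying the closed-string degeneration (interior node, two disks paired through $X$) with the left-hand side and the open-string degeneration (boundary node, two disks paired through $L(\text{\bf u})$ via the diagonal class \eqref{Diagonal}) with the right-hand side. The only organizational difference is that the paper packages the argument through the forgetful map $\frak{forget}:\mathcal M_{(1,1);0}(\beta)\to\mathcal M_{(1,1);0}$ to the two-dimensional domain moduli (a disk), fixes the two nodal configurations $\Sigma^{(1)},\Sigma^{(2)}$ as specific points there, and invokes connectedness of $\mathcal M_{(1,1);0}$ to obtain the cobordism between the fibers (Lemmas \ref{inverseimage1} and \ref{inverseimage2}); your version instead lets the annulus modulus run and reads the two channels off as ends of a one-parameter family, which is the same picture. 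Your identification of the delicate points---the compatible Kuranishi structure on the annulus moduli, the bubble bookkeeping, and the necessity of the cyclically symmetric $\frak q^{\frak c}$---matches what the paper defers to \cite{toric3} Section 20.
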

This is \cite{toric3} Theorem 20.1.
\begin{proof}[Theorem \ref{annulusmain} $\Rightarrow$
Theorem \ref{PDresidue}]
Let $Q_{\frak y} \in H(X;\Lambda)$ be an element such that
$\frak{ks}_{\frak b}(Q_{\frak y})=
1_{\frak y}$, where $1_{\frak y}$ is the unit of the factor $\text{\rm Jac}(\frak{PO}^{\frak b};\frak y)$ of
$\text{\rm Jac}(\frak{PO}^{\frak b})$.
Let $b,\text{\bf u}$ corresponds to $\frak y$ by (\ref{ytobu}).
\par
Then 
we have
$$
i^{\#}_{\text{\rm qm},(\frak b,b,\text{\bf u})}(Q_{\frak y'})
=
\begin{cases}
1    & \text{if $\frak y'=\frak y$} \\
0   & \text{if $\frak y'\ne \frak y$.}
\end{cases}
$$
Here $1\in H^0(L(\text{\bf u});\Lambda)$ is the unit.
This is a consequence of the definition of $\frak{ks}_{\frak b}$.
Therefore 
\begin{equation}\label{kocchikite1}
\langle Q_{\frak y},i_{\#,\text{\rm qm},(\frak b,b,\text{\bf u})}(\text{vol}_{L(\text{\bf u})})\rangle_{\text{\rm PD}_{X}}
= 1.
\end{equation}
We remark
$
\langle Q_{\frak y},Q_{\frak y'} \rangle 
= 
\langle Q_{\frak y}\cup Q_{\frak y'}, 1 \rangle
= 0
$
if $\frak y\ne \frak y'$.
Therefore
\begin{equation}\label{kocchikite2}
i_{\#,\text{\rm qm},(\frak b,b,\text{\bf u})}(\text{vol}_{L(u)})
= \frac{1}{\langle Q_{\frak y},Q_{\frak y}\rangle_{\text{\rm PD}_{X}}}Q_{\frak y}.
\end{equation}
Theorem \ref{annulusmain} implies
\begin{equation}\label{kocchikite3}
\langle i_{\#,\text{\rm qm},(\frak b,b,\text{\bf u})}(\text{\rm vol}_{L(\text{\bf u})}),
i_{\#,\text{\rm qm},(\frak b,b,\text{\bf u})}(\text{\rm vol}_{L(\text{\bf u})})
\rangle_{\text{\rm PD}_{X}}
= Z(\frak b,b).
\end{equation}
(See \cite{toric3} subsection 26.2 for sign.) 
Theorem \ref{PDresidue} follows from 
(\ref{kocchikite2}) and (\ref{kocchikite3}).
\end{proof}
\par
To prove Theorem \ref{annulusmain} we need a geometric description of the 
homomorphism $ i_{\#,\text{\rm qm},(\frak b,b,\text{\bf u})}$.
We use the operator $\frak p$ introduced in 
\cite{fooo-book} section 3.8, for this purpose.
To simplify the notation we consider only the case $\frak b = \text{\bf 0}$.
Let $C$ be a filtered $A_{\infty}$ algebra and define an
automorphism
$
\text{cyc} : B_kC[1] \to B_kC[1]
$
by
$$
\text{cyc}(x_1\otimes\cdots\otimes x_k) = (-1)^{\deg'x_k\times
(\sum_{i=1}^{k-1}\deg'x_i)}
x_k\otimes x_1\otimes\cdots\otimes x_{k-1}.
$$
It induces a $\Bbb Z_{k}$ action on $B_kC[1]$. Let
$B^{\text{cyc}}_kC[1]$ be the invariant set of the $\Bbb Z_k$ action and
$B^{\text{cyc}}C[1] = \widehat{\bigoplus}_k B^{\text{cyc}}_kC[1]$
the completed direct sum of them. 
We call $B^{\text{cyc}}_kC[1]$ the  {\it cyclic bar complex}.
\begin{Theorem} 
For a relatively spin Lagrangian submanifold $L$
there exists a sequence of operators
$$
\frak p_k : B^{\text{\rm cyc}}_kH(L;\Lambda_{0})[1]
\longrightarrow
H(X;\Lambda_{0})
$$
$(k = 0,1,2,\dots)$ of degree $n+1$
with the
following properties. 
\par
Let
$
\frak p : B^{\text{\rm cyc}}H(L;\Lambda_{0})[1] \longrightarrow
H(X;\Lambda_{0})
$
be the operator whose restriction on
$B^{\text{\rm cyc}}_kH(L;\Lambda_{0})[1]$ is $\frak p_k$. 
We denote by  $\frak m_k^{\frak c}$ the cyclically symmetric version of $\frak m_k$ 
and write $\frak m^{\frak c}$ instead of $\frak m^{\frak c}_k$.
\begin{enumerate}
\item
$$
\frak p_1 \equiv i_{!} \mod \Lambda_{+}.
$$
Here $i_{!} = H^k(L;\Lambda_0) \to H^{k+n}(X;\Lambda)$ 
is the Gysin homomorphism.
\item
\begin{equation}\label{pmainformula}
\sum_c \frak p(\text{\bf x}_c^{3;1}  \otimes \frak m^{\frak c}(\text{\bf x}_c^{3;2}) 
 \otimes \text{\bf x}_c^{3;3}) = 0
\end{equation}
for $\text{\bf x} \in B^{\text{\rm cyc}}_kH(L;\Lambda_{0})[1]$, $k>0$.
We use the notation $(\ref{Deltasymbol})$.
\item
$$
(\frak p_1\circ \frak m^{\frak c}_0)(1)
+ GW_{1}(L) = 0.
$$
Here the second term is defined by
$
\langle GW_{1}(L), Q\rangle_{\text{\rm PD}_X} = GW_{2}(L,Q),
$
where the right hand side is as in $(\ref{sumGW})$.
\end{enumerate}
\end{Theorem}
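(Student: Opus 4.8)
The plan is to construct $\frak p_k$ by imitating the construction of $\frak m_k$ and $\frak q_{\ell,k}$ in Sections~\ref{Floertheory} and~\ref{operatorq}, but integrating along the fiber of an \emph{interior} evaluation map. For $\beta\in H_2(X,L;\Z)$ let $\mathcal M^{\mathrm{main}}_{k;1}(\beta)$ denote the compactified moduli space of genus zero bordered stable maps in class $\beta$ with $k$ boundary marked points and one interior marked point (cf.\ subsection~\ref{subsec:moduli}); because no boundary point is distinguished, these spaces carry a $\Z_k$-action cyclically permuting the boundary points. With $({\rm ev}_1,\dots,{\rm ev}_k):\mathcal M^{\mathrm{main}}_{k;1}(\beta)\to L^k$ the boundary evaluation maps and ${\rm ev}^+_1:\mathcal M^{\mathrm{main}}_{k;1}(\beta)\to X$ the interior one, I would set
\[
\frak p_{k,\beta}(h_1,\dots,h_k)={\rm ev}^+_{1\,!}\bigl(({\rm ev}_1,\dots,{\rm ev}_k)^*(h_1\times\dots\times h_k)\bigr),\qquad
\frak p_k=\sum_{\beta}T^{(\beta\cap\omega)/2\pi}\,\frak p_{k,\beta},
\]
the sum converging in the $\frak v_T$-topology by Gromov compactness. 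As with $\frak m_k$, the map ${\rm ev}^+_1$ need not be a submersion, so one perturbs $\mathcal M^{\mathrm{main}}_{k;1}(\beta)$ by a continuous family of multisections; the essential requirement is that this family be $\Z_k$-invariant and compatible, across all $k$ and $\beta$, both with the cyclically symmetric perturbations used for $\frak m^{\frak c}$ and with the boundary fiber-product descriptions used below. Granting such a choice, $\frak p_k$ descends to $B^{\mathrm{cyc}}_kH(L;\Lambda_0)[1]$, and the virtual dimension formula $\dim\mathcal M^{\mathrm{main}}_{k;1}(\beta)=n+\mu(\beta)+k-1$ gives $\deg\frak p_k=n+1$.

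For (1) one computes the $\beta=0$, $k=1$ contribution. The moduli space $\mathcal M^{\mathrm{main}}_{1;1}(0)$ of a constant disk with one boundary and one interior marked point is stable and canonically identified with $L$, with ${\rm ev}_1=\id_L$ and ${\rm ev}^+_1=i:L\hookrightarrow X$. Integration along the fiber of $i$ is by definition the Gysin map $i_!$, so $\frak p_{1,0}=i_!$, while every other term of $\frak p_1$ carries a positive power of $T$; hence $\frak p_1\equiv i_!\bmod\Lambda_+$.

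Statement (2) is a Stokes (divergence) identity. The key geometric input is that for $k\ge1$ the component carrying the interior point is never unstable, so sphere bubbling has real codimension two and the codimension-one boundary of $\mathcal M^{\mathrm{main}}_{k;1}(\beta)$ consists exactly of the strata in which a boundary node splits the domain into a plain disk carrying a cyclically consecutive block of boundary marked points and a disk carrying the interior marked point. As spaces with Kuranishi structure,
\[
\partial\mathcal M^{\mathrm{main}}_{k;1}(\beta)\;\cong\;\bigcup_{\substack{k_1+k_2=k\\ \beta_1+\beta_2=\beta}}\ \bigcup\ \mathcal M^{\mathrm{main}}_{k_1+1}(\beta_1)\;{}_{{\rm ev}_0}\!\times_{L}\;\mathcal M^{\mathrm{main}}_{k_2+1;1}(\beta_2),
\]
the inner union running over the ways of distributing the cyclically ordered boundary inputs. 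Applying the de Rham differential $d$ to $({\rm ev}_1,\dots,{\rm ev}_k)^*(h_1\times\dots\times h_k)$ for harmonic (hence closed) representatives, pushing forward along ${\rm ev}^+_1$, and invoking Stokes' theorem turns the boundary contributions into $\sum_c\frak p(\text{\bf x}^{3;1}_c\otimes\frak m^{\frak c}(\text{\bf x}^{3;2}_c)\otimes\text{\bf x}^{3;3}_c)$, as dictated by the cyclic coalgebra structure, the terms involving $\frak m^{\frak c}_{1,0}=\pm d$ cancelling the interior differential. Tracking the Koszul signs with the conventions of \cite{fooo-book} Section~3.8 gives the stated formula.

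Statement (3) is the $k=0$ instance of this argument. There the disk-node degeneration contributes $(\frak p_1\circ\frak m^{\frak c}_0)(1)$, but an additional codimension-one contribution now appears, coming from configurations in which the interior marked point lies on a holomorphic sphere meeting $L$; by the fiber-product (projection) formula this contribution equals $GW_1(L)$, defined by $\langle GW_1(L),Q\rangle_{\mathrm{PD}_X}=GW_2(L,Q)$, and Stokes' theorem gives that the two contributions sum to zero. I expect the main obstacle of the whole argument to be precisely the perturbation theory behind (2) and (3): producing multisections on all $\mathcal M^{\mathrm{main}}_{k;1}(\beta)$ that are simultaneously $\Z_k$-invariant, compatible under the boundary fiber products, compatible with the cyclic perturbations already fixed for $\frak m^{\frak c}$, and (in the de Rham model) make ${\rm ev}^+_1$ fiberwise submersive — together with the correct identification and orientation of the extra boundary stratum in (3), where the interplay of disk and sphere bubbling is subtler than for $k\ge1$. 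This is the cyclic-symmetrization issue handled for $\frak q^{\frak c}$ in \cite{fooo-book} Section~3.8 and \cite{toric3} Sections~18--19; the remainder is orientation and sign bookkeeping.
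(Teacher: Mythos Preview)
Your construction and argument are essentially identical to the paper's: define $\frak p_{k,\beta}$ by pulling back along the boundary evaluations on $\mathcal M^{\text{main}}_{k;1}(\beta)$ and pushing forward along the interior evaluation ${\rm ev}^+$, sum over $\beta$ with Novikov weights, read off (1) from the constant-disk term, derive (2) from the disk-bubbling boundary description via Stokes, and attribute the extra term in (3) to the special degeneration of $\mathcal M_{0;1}(\beta)$ in which the disk collapses and leaves a sphere with a marked point constrained to $L$. The paper gives only this outline and refers to \cite{fooo-book} Theorem 3.8.9 and subsections 3.8.3, 7.4.1 for the details; your write-up is in fact more explicit than the paper's sketch, and your identification of the cyclic-perturbation compatibility as the main technical issue matches the paper's own caveat.
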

This is \cite{fooo-book} Theorem 3.8.9.
(Here we use cohomology group instead of appropriate chain complex.
The latter is used in \cite{fooo-book} Theorem 3.8.9.
We also omit the statement on the unit in \cite{fooo-book} Theorem 3.8.9.)
See also \cite{toric3} section 17-19.
\par
The operator $\frak p_k$ is constructed as follows.
We consider the moduli space $\mathcal M^{\text{\rm main}}_{k;1}(\beta)$ 
described in section 2.2. Note the number of interior marked point is $1$ 
and the number of exterior marked points is $k$.
We have an evaluation map
$$
({\rm ev}_1,\dots,{\rm ev}_k,{\rm ev}^{+}) = ({\rm ev},{\rm ev}^+) : \mathcal M^{\text{\rm main}}_{k;1}(\beta)
\to L^k \times X.
$$
Let $h_1,\dots,h_k$ be differential forms on $L$.
We consider the pull back
${\rm ev}^*(h_1\times \dots \times h_k)$, which is a
differential form on $\mathcal M^{\text{\rm main}}_{k;1}(\beta)$.
We use integration along fiber by the map ${\rm ev}^+$ to obtain a differential form 
on $X$, which we put $\frak p_{k,\beta}(h_1,\dots,h_k)$. Namely
$$
\frak p_{k,\beta}(h_1,\dots,h_k) 
= {\rm ev}^+_!({\rm
 ev}^*(h_1\times \dots \times h_k)).
$$
This is a map between differential forms. 
By an algebraic argument it induces a map between tensor 
products of the de Rham cohomology 
groups of $L$ and of $X$. Thus obtain the operator
$$
\frak p_{k} = \sum_{\beta \in H_2(X,L)} T^{(\beta\cap\omega)/2\pi}\frak p_{k,\beta}.
$$
We can prove (\ref{pmainformula}) by studying the stable map 
compactification of 
$\mathcal M^{\text{\rm main}}_{k;1}(\beta)$. In case $k=0$ the compactification 
of $\mathcal M_{0;1}(\beta)$ is slightly different from the case of $k>0$.
The second term of Item 3) appears by this reason.
In our case of toric manifold and $T^n $ orbit $L$, 
this term drops since $L$ is homologous to $0$ in $X$.
So we do not discuss it here but refer to \cite{fooo-book} subsections 3.8.3 and 
7.4.1 for more detail.
\par\smallskip
Now we go back to the case where $X$ is a toric manifold and $L = L(\text{\bf u})$
is a $T^n$ orbit.
Let $b \in H^1(L(\text{\bf u});\Lambda_0)$.
For $P \in H(L(\text{\bf u});\Lambda_0)$ we put
$$
[Pe^b] = \sum_{k_1=0}^{\infty}\sum_{k_2=0}^{\infty} 
\underbrace{b \otimes \dots \otimes b}_{k_1}  \otimes P
\otimes \underbrace{b \otimes \dots \otimes b}_{k_2}.
$$
\par
Suppose 
$H(L(\text{\bf u});\Lambda)
\cong HF((L(\text{\bf u}),(\text{\bf 0},b));(L(\text{\bf u}),(\text{\bf 0},b));\Lambda)$.
\begin{Proposition}
Let $P \in H(L(\text{\bf u});\Lambda_0)$, $Q \in H(X;\Lambda_0)$.
Then we have:
\begin{equation}\label{defisitsharp2}
i_{\#,\text{\rm qm},(\text{\bf 0},b,\text{\bf u})}(P)
= \frak p([Pe^b]).
\end{equation}
\end{Proposition}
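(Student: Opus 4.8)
The plan is to reduce (\ref{defisitsharp2}) to a projection-formula identity on the one moduli space that underlies both the operator $\frak q$ and the operator $\frak p$. First recall that, by the defining adjunction (\ref{dualize}), it suffices to prove
\[
\langle i^{\#}_{\text{\rm qm},(\text{\bf 0},b,\text{\bf u})}(Q),P\rangle_{\text{\rm PD}_{L(\text{\bf u})}}
= \langle Q,\frak p([Pe^b])\rangle_{\text{\rm PD}_X}
\]
for all $Q\in H(X;\Lambda_0)$ and $P\in H(L(\text{\bf u});\Lambda_0)$. Expanding the left-hand side with $\frak b=\text{\bf 0}$ gives $\sum_{k\ge 0}\langle \frak q^{\frak c}_{1;k}(Q;b^{k}),P\rangle_{\text{\rm PD}_{L(\text{\bf u})}}$, and each summand is built from $\mathcal M^{\text{\rm main}}_{k+1;1}(\beta)$ by pulling $Q$ back along ${\rm ev}^{+}$, pulling the boundary inputs back along $({\rm ev}_1,\dots,{\rm ev}_k)$, and pushing forward along ${\rm ev}_0$ to $L(\text{\bf u})$; whereas $\frak p_{k+1}$ is assembled from the \emph{same} spaces $\mathcal M^{\text{\rm main}}_{k+1;1}(\beta)$ by pushing forward along ${\rm ev}^{+}$ to $X$ instead. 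Conceptually, (\ref{defisitsharp2}) is the toric incarnation of the general Poincar\'e duality between $\frak p$ and $\frak q$ from \cite{fooo-book} section 3.8; its geometric content is in any case the projection formula below.

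Second, I would establish the key lemma: for forms $h_0,\dots,h_k$ on $L(\text{\bf u})$ and a class $Q$ on $X$,
\[
\int_{L(\text{\bf u})} h_0\wedge {\rm ev}_{0!}\!\Big({\rm ev}^{+*}Q\wedge\textstyle\bigwedge_{i=1}^{k}{\rm ev}_i^{*}h_i\Big)
= \pm\int_X Q\wedge {\rm ev}^{+}_{!}\Big(\textstyle\bigwedge_{i=0}^{k}{\rm ev}_i^{*}h_i\Big),
\]
both sides equalling $\int_{\mathcal M^{\text{\rm main}}_{k+1;1}(\beta)^{\frak s}}{\rm ev}_0^{*}h_0\wedge{\rm ev}^{+*}Q\wedge\bigwedge_{i=1}^{k}{\rm ev}_i^{*}h_i$ up to sign; this is the adjunction formula for integration along the fiber applied to the two projections ${\rm ev}_0$ and ${\rm ev}^{+}$ out of the common perturbed moduli space. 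Taking $h_0=P$, $h_1=\dots=h_k=b$, summing over $\beta$ with weights $T^{(\beta\cap\omega)/2\pi}$ and over $k$, and using the cyclic symmetry of the $T^n$-equivariant perturbed moduli spaces in their $k+1$ boundary marked points, the left side becomes $\langle i^{\#}_{\text{\rm qm},(\text{\bf 0},b,\text{\bf u})}(Q),P\rangle_{\text{\rm PD}_{L(\text{\bf u})}}$ and the right side reorganizes, up to the standard cyclic and combinatorial normalization factors, into $\langle Q,\frak p([Pe^b])\rangle_{\text{\rm PD}_X}$, because $[Pe^b]=\sum_{k_1,k_2}b^{k_1}\otimes P\otimes b^{k_2}$ records exactly one copy of $P$ together with arbitrarily many copies of $b$ inserted in all cyclic positions among the boundary marked points.

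The main obstacle will be the bookkeeping of signs and, more substantively, ensuring that the \emph{cyclically symmetric} perturbations defining $\frak q^{\frak c}_{1;k}$, $\frak m^{\frak c}_k$ and $\frak p_k$ are chosen consistently on the common spaces $\mathcal M^{\text{\rm main}}_{k+1;1}(\beta)$, so that the projection-formula identity holds at the chain level rather than merely up to homotopy; this is the delicate point about cyclic symmetry discussed in \cite{toric3} sections 17--19 (compare Remark \ref{cyclicsymremark}). One must also justify the infinite sums over $k$ when $b\in H^1(L(\text{\bf u});\Lambda_0)$ rather than $\Lambda_+$, which is handled by Cho's trick together with convergence in the $\frak v_T^{\text{\bf u}}$-topology, exactly as for $\frak m^b_k$ in Section \ref{calcu}. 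Finally, under the standing hypothesis $HF((L(\text{\bf u}),(\text{\bf 0},b)),(L(\text{\bf u}),(\text{\bf 0},b));\Lambda)\cong H(L(\text{\bf u});\Lambda)$ one has $\frak m_1^{\frak c,\text{\bf 0},b}=0$, so the chain-level identity descends unambiguously to the cohomology-level maps $i_{\#,\text{\rm qm},(\text{\bf 0},b,\text{\bf u})}$ and $P\mapsto\frak p([Pe^b])$ appearing in the statement.
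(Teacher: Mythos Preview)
Your proposal is correct and follows essentially the same approach as the paper's sketch: reduce via the defining adjunction (\ref{dualize}) to the identity $\sum_{k}\langle \frak q^{\frak c}_{1,k}(Q;b^{k}),P\rangle_{\text{\rm PD}_{L(\text{\bf u})}}=\langle Q,\frak p([Pe^b])\rangle_{\text{\rm PD}_{X}}$, and observe that both sides arise from the \emph{same} moduli spaces $\mathcal M^{\text{\rm main}}_{k+1;1}(\beta)$ via the projection formula for integration along the two evaluation maps ${\rm ev}_0$ and ${\rm ev}^{+}$. The paper's sketch treats only the case $b=0$ explicitly (where both sides collapse to $\sum_{\beta}T^{(\beta\cap\omega)/2\pi}\int_{\mathcal M_{1;1}(\beta)}({\rm ev}^+)^*\rho\wedge{\rm ev}^*h$) and defers the general case to \cite{toric3} Theorem 19.8, while you spell out the general $k$ and correctly flag the same technical issues (compatible cyclically symmetric perturbations, signs, and convergence for $b\in\Lambda_0$) that the paper acknowledges in Remark \ref{1224} and the surrounding discussion.
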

\begin{Remark}
We remark that $[Pe^b]$ is an element of $B^{\rm cyc}H(L(\text{\bf u});\Lambda_0)$ 
if $b \equiv 0 \mod \Lambda_+$.  So $\frak p([Pe^b])$ is defined in that case.
Otherwise we write $b = b_0 + b_+$ such that $b_0 \in H^1(L(\text{\bf u});\C)$ 
and $b_+ \in H^1(L(\text{\bf u});\Lambda_+)$, 
and define
$$
\frak p([Pe^b]) = \sum_{\beta\in H_2(X,L:\Z)} T^{(\beta\cap\omega)/2\pi}
\exp(b_0 \cap \partial \beta) \frak p_{\beta}([Pe^{b_{+}}]).
$$
We omit the discussion of this point. See \cite{toric2} section 9
and \cite{toric3} section 19.
\end{Remark}
\begin{proof}[Sketch of the proof]
We remark that $i_{\#,\text{\rm qm},(0,b,\text{\bf u})}(P)$ 
is defined by $(\ref{dualize})$.  Therefore 
it suffices to prove
\begin{equation}\label{pqdual}
\sum_{k=0}^\infty\langle \frak q^{\frak c}_{1,k}(Q;b^{k}),P\rangle_{\text{\rm PD}_{L(\text{\bf u})}}
= 
\langle Q,\frak p([Pe^b]\rangle_{\text{\rm PD}_{X}}.
\end{equation}
This is \cite{toric3} Theorem 19.8.
Let us sketch its proof for the case $b=0$.
In case $b=0$, Formula (\ref{pqdual})  reduced to
\begin{equation}\label{pqdual2}
\langle \frak q^{\frak c}_{1,0}(Q;1),P\rangle_{\text{\rm PD}_{L(\text{\bf u})}}
= 
\langle Q,\frak p_1(P)\rangle_{\text{\rm PD}_{X}}.
\end{equation}
We take $\rho$ and $h$ which are closed forms on $X$ and $L(\text{\bf u})$, 
representing the cohomology class $Q$ and $P$, respectively.
Then it is easy to see that the left and the right hand sides of 
(\ref{pqdual2}) both become
\begin{equation}\label{pqdual3}
\sum_{\beta\in H_2(X,L(\text{\bf u});\Z)}T^{(\beta\cap\omega)/2\pi}\int_{\mathcal M_{1;1}(\beta)}
({\rm ev}^+)^*\rho \wedge {\rm ev}^* h.
\end{equation}
Here $({\rm ev},{\rm ev}^+) : \mathcal M_{1;1}(\beta) \to L(\text{\bf u}) \times X$  is evaluation maps 
at marked points. (\ref{pqdual2}) follows.
\end{proof}
\begin{Remark}\label{1224}
In fact, we need to perturb $\mathcal M_{1;1}(\beta)$ appropriately 
so that the integration in (\ref{pqdual3}) makes sense.
It is a nontrivial thing to prove that after perturbation 
(\ref{pqdual2}) still holds. Actually we need to consider cyclically symmetric 
version of the operator $\frak q$ for this purpose.
(See \cite{toric3} Remark 19.12.)
We omit the discussion about perturbation and refer the reader to \cite{toric3} section 19.
\end{Remark}
\par\bigskip
\subsection{Annulus argument.}
\label{annulus}
We continue the sketch of the proof of Theorem 
\ref{annulusmain}.
We assume $\frak b = \text{\bf 0}$ in this subsection for simplicity.
We consider the class $\text{\rm vol}_{L(\text{\bf u})}$. (It is the Poincar\'e dual to the 
point class.) Then the left hand side is 

\begin{equation}\label{129}
\sum_{\beta_1,\beta_2 \in H_2(X,L(\text{\bf u});\Z), \atop \beta=\beta_1+\beta_2}
T^{((\beta_1+\beta_2)\cap \omega)/2\pi}
\left\langle \frak p_{\beta_1}([\text{\rm vol}_{L(\text{\bf u})}e^b],\frak p_{\beta_2}([\text{\rm vol}_{L(\text{\bf u})}e^b]\right\rangle_{\text{\rm PD}_{X}}.
\end{equation}
We show that (\ref{129}) can be regarded as an appropriate 
integration of the differential 
form $\text{\rm vol}_{L(\text{\bf u})} \times \text{\rm vol}_{L(\text{\bf u})}$ on a moduli space of 
pseudo-holomorphic annuli, as follows.
For simplicity we assume $b=0$.
\par
We consider a pair $((\Sigma;z_1,z_2),u)$ with the following properties.
\begin{enumerate}
\item $\Sigma$ is a bordered curve of genus zero such that 
$\partial \Sigma$ is a disjoint union of two circles, which we denote by $\partial_1 \Sigma$,
$\partial_2 \Sigma$.
\item The singularity of $\Sigma$ is at worst the interior double point.
\item
$z_i \in \partial_i \Sigma$ for $i=1,2$.
\item $u : \Sigma \to X$ is a pseudo-holomorphic map.
$u(\partial \Sigma) \subset L(\text{\bf u})$.
\item 
$u_*([\Sigma]) = \beta \in H_2(X, L(\text{\bf u});\Z)$.
\item 
The set of maps $v : \Sigma \to \Sigma$ which is biholomorphic, 
$v(z_i) =z_i$ for $i=1,2$,  
and $u \circ v = u$ is finite.
\end{enumerate}
We denote by $\mathcal M_{(1,1);0}(\beta)$ the totality of such 
$((\Sigma;z_1^+,z_2^+),u)$.
There exists an evaluation map
$$
{\rm ev} = ({\rm ev}_1,{\rm ev}_2) : \mathcal M_{(1,1);0}(\beta) 
\to L(\text{\bf u})^2,
$$
which is defined by
$$
{\rm ev}((\Sigma;z_1,z_2),u) = (u(z_1),u(z_2)).
$$
\par
We consider the set of all $(\Sigma;z_1,z_2)$ which satisfies 
1), 2), 3) above and 
\par\smallskip
\hskip0.3cm 7)  The set of all biholomorphic maps $v : \Sigma \to \Sigma$
with $v(z_i) =z_i$ for $i=1,2$ is finite.
\par\smallskip
We denote it by $\mathcal M_{(1,1);0}$.
There is a forgetful map
\begin{equation}\label{forgettoRSM}
\frak{forget} : \mathcal M_{(1,1);0}(\beta)  \to \mathcal M_{(1,1);0},
\end{equation}
which is obtained by forgetting the map $u$.
\par
We can show that $\mathcal M_{(1,1);0}$ is homeomorphic to a disk 
and so is connected.
We take two points $(\Sigma^{(j)};z_1^{(j)},z_2^{(j)}) \in \mathcal M_{(1,1);0}$
($j=1,2$) which we show in the figure below.
\par\medskip
\hskip1.3cm
\epsfbox{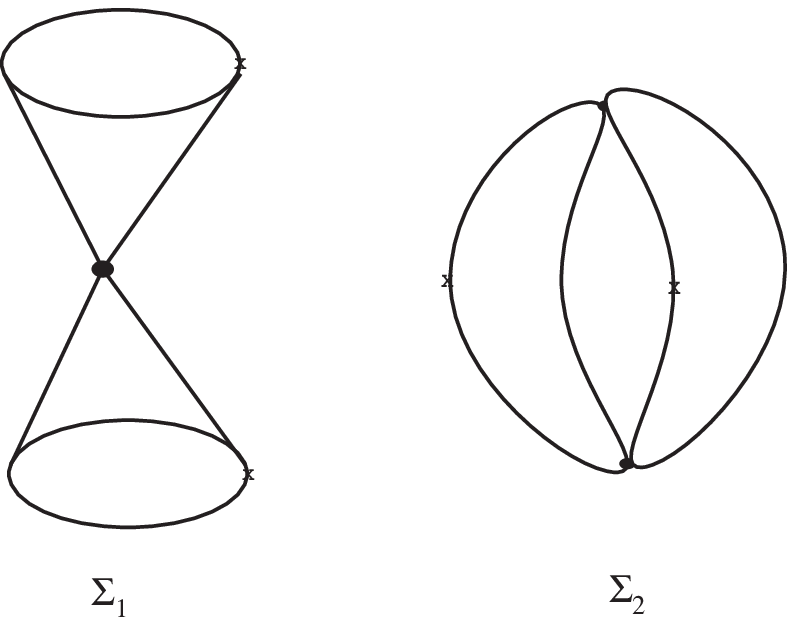}
\par\smallskip
\centerline{\bf Figure 12.2}
\par\bigskip
We denote by
$\mathcal M_{(1,1);0}(\beta;\Sigma^{(j)})$ the inverse image of $\{(\Sigma^{(j)};z_1^{(j)},z_2^{(j)})\}$ 
by the map (\ref{forgettoRSM}).
\begin{Lemma}\label{inverseimage1}
$$
\sum_{\beta_1,\beta_2 \in H_2(X,L(\text{\bf u});\Z), \atop \beta=\beta_1+\beta_2}
\langle \frak p_{1,\beta_1}(P),\frak p_{1,\beta_2}(P)\rangle_{\text{\rm PD}_{X}}
= 
\int_{\mathcal M_{(1,1);0}(\beta;\Sigma^{(1)})} {\rm ev}_1^* \text{\rm vol}_{L(\text{\bf u})}
\wedge {\rm ev}_2^* \text{\rm vol}_{L(\text{\bf u})}.
$$
\end{Lemma}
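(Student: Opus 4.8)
The plan is to read both sides of Lemma~\ref{inverseimage1} as integrals of differential forms over (perturbed) moduli spaces and to match them by analyzing what the stable maps over the special point $\Sigma^{(1)}$ look like, combined with the push--pull (composition) formula for integration along the fibre. Throughout I take $P=\text{\rm vol}_{L(\text{\bf u})}$ and $b=\text{\bf 0}$, as in the statement.

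First I would describe the fibre of $\frak{forget}$ over $\Sigma^{(1)}$. By construction $\Sigma^{(1)}$ is the point of $\mathcal M_{(1,1);0}$ at which the annulus has degenerated so that its source is two disk components $\Sigma_1,\Sigma_2$ meeting at a single interior node, with $z_i\in\partial\Sigma_i$. Hence a stable map $((\Sigma;z_1,z_2),u)$ with $\frak{forget}((\Sigma;z_1,z_2),u)$ equal to $(\Sigma^{(1)};z_1^{(1)},z_2^{(1)})$ is exactly a pair of holomorphic disks $u_i:(\Sigma_i,\partial\Sigma_i)\to(X,L(\text{\bf u}))$ in classes $\beta_1,\beta_2$ with $\beta_1+\beta_2=\beta$, each carrying one boundary marked point $z_i$ and one interior marked point at the node, subject to $u_1(\text{node})=u_2(\text{node})$. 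Regarding the node as the unique interior marked point on each factor, this gives an isomorphism of spaces with Kuranishi structure
\[
\mathcal M_{(1,1);0}(\beta;\Sigma^{(1)})
\;\cong\;
\bigsqcup_{\beta_1+\beta_2=\beta}
\mathcal M_{1;1}(\beta_1)\,{}_{{\rm ev}^+}\!\times_X\,{}_{{\rm ev}^+}\mathcal M_{1;1}(\beta_2),
\]
the fibre product being taken in the sense of Kuranishi structures (\cite{fooo-book} Appendix~A1), under which $({\rm ev}_1,{\rm ev}_2)$ on the left corresponds to the product of the two boundary evaluation maps on the right. Next I would arrange that the ($T^n$-equivariant) multisection on the annulus moduli space $\mathcal M_{(1,1);0}(\beta)$ is chosen so that $\frak{forget}$ stays weakly submersive and so that the induced multisection on the fibre over $\Sigma^{(1)}$ agrees with the fibre product of the multisections defining $\frak p_{1,\beta_1}$ and $\frak p_{1,\beta_2}$; this is the standard inductive construction of compatible perturbation data, and, exactly as in the proof of Theorem~\ref{calcPO}, $T^n$-equivariance forces ${\rm ev}^+$ on each disk factor and ${\rm ev}_1$ on the fibre to be submersions on the zero sets, so the fibre integrals are defined. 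With such a choice, $\mathcal M_{(1,1);0}(\beta;\Sigma^{(1)})^{\frak s}$ is identified with the union over $\beta_1+\beta_2=\beta$ of the perturbed fibre products, and ${\rm ev}_1^*\text{\rm vol}_{L(\text{\bf u})}\wedge{\rm ev}_2^*\text{\rm vol}_{L(\text{\bf u})}$ corresponds to $({\rm ev}_1,{\rm ev}_2)^*(P\times P)$ on each factor.

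The identity then follows from the composition formula for integration along the fibre: for each $(\beta_1,\beta_2)$ one has
\[
\int_X \frak p_{1,\beta_1}(P)\wedge\frak p_{1,\beta_2}(P)
=\int_{\mathcal M_{1;1}(\beta_1)\,{}_{{\rm ev}^+}\!\times_X\,{}_{{\rm ev}^+}\mathcal M_{1;1}(\beta_2)}({\rm ev}_1,{\rm ev}_2)^*(P\times P),
\]
since by definition $\frak p_{1,\beta_i}(P)={\rm ev}^+_{!}({\rm ev}_1^*P)$ and $\langle\cdot,\cdot\rangle_{\text{\rm PD}_X}=\int_X\cdot\wedge\cdot$. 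Summing over $\beta_1+\beta_2=\beta$ (a finite sum, since only finitely many $(\beta_1,\beta_2)$ with fixed total energy have nonempty moduli, by Gromov compactness) yields the left-hand side of Lemma~\ref{inverseimage1} on the one hand, and, by the identification of the previous paragraph, the integral over $\mathcal M_{(1,1);0}(\beta;\Sigma^{(1)})$ on the other. It remains to check that the orientations of the Kuranishi structures on the two sides of the displayed isomorphism match, so that no extra sign is produced; this is bookkeeping with the orientation conventions of \cite{fooo-book}.

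The step I expect to be the real obstacle is not the combinatorial analysis of the degeneration but the perturbation theory: one must build a single coherent system of ($T^n$-equivariant) multisections on all the $\mathcal M_{(1,1);0}(\beta)$ which (i) keeps the forgetful map (\ref{forgettoRSM}) weakly submersive so that the fibre $\mathcal M_{(1,1);0}(\beta;\Sigma^{(1)})$ genuinely inherits a Kuranishi structure, (ii) restricts over $\Sigma^{(1)}$ to the product of the perturbations defining the operator $\frak p$, and (iii) simultaneously restricts over the other special point $\Sigma^{(2)}$ to the data used in the companion identity (so that the subsequent cobordism/connectedness argument on the disk $\mathcal M_{(1,1);0}$ goes through), all while keeping the relevant evaluation maps submersive. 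Making this precise, together with the orientation analysis and the cyclically symmetric version of $\frak q$ needed when $b\ne 0$ and $\frak b\ne \text{\bf 0}$, is the technical core, carried out in \cite{toric3} sections~17--21.
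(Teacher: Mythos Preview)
Your proposal is correct and follows the same approach as the paper: the paper itself only states that the geometric origin is clear from Figure~12.2 (two disks glued at an interior node, giving the fibre-product description you wrote down) and then defers the rigorous perturbation analysis to \cite{toric3} section~20, which is exactly what you identify as the technical core. One small caveat: $T^n$-equivariance alone does not force the interior evaluation ${\rm ev}^+:\mathcal M_{1;1}(\beta_i)\to X$ to be a submersion (the $T^n$ action on $X$ is not free), so the fibre product over $X$ and the integration along ${\rm ev}^+$ genuinely require the continuous-family perturbation machinery rather than a single equivariant multisection; but you have already flagged the perturbation theory as the real obstacle, so this is a refinement rather than a gap.
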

Geometric origin of this lemma is clear from Figure 12.2.
To prove the lemma rigorously we need to work out the way 
to perturb our moduli space $\mathcal M_{(1,1);0}(\beta;\Sigma^{(1)})$ 
so that the integration of the right hand side 
makes sense and the lemma holds. 
The detail is given in \cite{toric3} section 20
as the proof of  Lemma 20.8.
\par
\begin{Lemma}\label{inverseimage2}
$$
\aligned
&\sum_{I,J \in 2^{\{1,\dots,n\}}} (-1)^{\frac{n(n-1)}{2}} g^{IJ}
\langle \frak m_2^{\frak c,0,0}(\text{\bf e}_I, {\rm vol}_{L(\text{\bf u})}),
\frak m_2^{\frak c,0,0}(\text{\bf e}_J, {\rm vol}_{L(\text{\bf u})})
\rangle_{\text{\rm PD}_{L(u)}} \\
&=
\int_{\mathcal M_{(1,1);0}(\beta;\Sigma^{(2)})} {\rm ev}_1^* \text{\rm vol}_{L(\text{\bf u})}
\wedge {\rm ev}_2^* \text{\rm vol}_{L(\text{\bf u})}.
\endaligned
$$
\end{Lemma}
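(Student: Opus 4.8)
The plan is to prove this in the same manner as Lemma~\ref{inverseimage1}, replacing the degenerate annulus $\Sigma^{(1)}$ by $\Sigma^{(2)}$. First I would work out, from the structure theory of the compactified moduli space of genus-zero bordered stable maps, the fibre $\mathcal M_{(1,1);0}(\beta;\Sigma^{(2)})$ of $\frak{forget}$ over $\{[\Sigma^{(2)}]\}$. Since $\Sigma^{(2)}$ is the configuration of Figure 12.2, in which (as drawn there) two disk components are joined at two boundary nodes and the marked points $z_1,z_2$ lie one on each component, this theory gives an isomorphism of spaces with Kuranishi structure
$$
\mathcal M_{(1,1);0}(\beta;\Sigma^{(2)})
\;\cong\;
\bigcup_{\beta_1+\beta_2=\beta}
\mathcal M_{3}^{\text{\rm main}}(\beta_1)\;
{}_{({\rm ev}_1,{\rm ev}_2)}\!\times_{L(\text{\bf u})^2}\!{}_{({\rm ev}_1,{\rm ev}_2)}\;
\mathcal M_{3}^{\text{\rm main}}(\beta_2)
$$
together with lower strata carrying sphere or disk bubbles, where on each factor $\mathcal M_3^{\text{\rm main}}(\beta_i)$ two of the three boundary marked points are the nodal ones (evaluated into the two copies of $L(\text{\bf u})$ by the fibre product) and the third carries $z_i$, hence the class ${\rm vol}_{L(\text{\bf u})}$. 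As in the proof of Theorem~\ref{calcPO}, the $T^n$-action is free on all these moduli spaces, so every stratum admits a $T^n$-equivariant multisection with the evaluation maps submersive; a stratum on which a nonconstant sphere (or extra disk) bubbles off has Maslov index larger than what is needed, hence strictly smaller virtual dimension, so it does not contribute to the integral of the top-degree form ${\rm ev}_1^*{\rm vol}_{L(\text{\bf u})}\wedge {\rm ev}_2^*{\rm vol}_{L(\text{\bf u})}$, and the displayed identification suffices for the computation.

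Next I would translate this integral into the $A_\infty$ algebra. Integration along the fibre product over $L(\text{\bf u})$ at each of the two nodes amounts, in the standard way, to inserting the diagonal class of $L(\text{\bf u})$, written $\sum_I \text{\bf e}_I\otimes \text{\bf e}^I$ in Poincar\'e dual bases; at one node this produces, after contracting with the matrix $g^{IJ}$ of the statement, the pairing $\langle\cdot,\cdot\rangle_{\text{\rm PD}_{L(\text{\bf u})}}$, and at the other node it produces the coefficient $g^{IJ}$ linking the two disk factors. By the very definition of $\frak m_{k,\beta}$ (Formula (\ref{defmkbyint})), now with the cyclically symmetric multisections so that the cyclic operator $\frak m_2^{\frak c}$ appears, each disk factor contributes $\frak m_{2,\beta_i}^{\frak c}(\text{\bf e}_I,{\rm vol}_{L(\text{\bf u})})$ (resp. with $\text{\bf e}_J$), and summing over $\beta_1,\beta_2$ with the weights $T^{(\beta_i\cap\omega)/2\pi}$ assembles $\frak m_2^{\frak c,0,0}$. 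This reproduces the right-hand side of the Lemma up to an overall sign, and the sign $(-1)^{n(n-1)/2}$ is obtained by comparing the boundary-and-fibre-product orientation on $\mathcal M_{(1,1);0}(\beta;\Sigma^{(2)})$ with the orientation conventions built into $\frak m_2^{\frak c}$ and $\langle\cdot,\cdot\rangle_{\text{\rm PD}_{L(\text{\bf u})}}$, keeping track of the Koszul signs from the two diagonal insertions and using $\deg{\rm vol}_{L(\text{\bf u})}=n$, $\deg\text{\bf e}_I+\deg\text{\bf e}^I=n$.

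The hard part is not this formal bookkeeping but making the integral along the fibre of $\frak{forget}$ rigorous near $[\Sigma^{(2)}]$: one must choose a continuous family of $T^n$-equivariant multisections on the moduli of annuli $\mathcal M_{(1,1);0}(\beta)$ so that $({\rm ev}_1,{\rm ev}_2)$ is a submersion on the perturbed spaces, so that near $[\Sigma^{(2)}]$ this perturbation is compatible with the product of the perturbations used to define the two disk factors, and---crucially---so that it is cyclically symmetric, which is exactly why the cyclic operators $\frak m_k^{\frak c}$, $\frak q_{\ell;k}^{\frak c}$ must be used throughout this section rather than $\frak m_k$, $\frak q_{\ell;k}$. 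I would carry out these compatibility-of-perturbation arguments, and the full orientation computation, following \cite{toric3} Section~20, where the parallel statement Lemma~\ref{inverseimage1} is proved by the same device.
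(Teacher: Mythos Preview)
Your proposal is correct and follows the same approach as the paper. The paper itself gives only a one-line sketch here, pointing to Figure 12.2 and the diagonal class formula $[\Delta_{L(\text{\bf u})}]=\sum_{I,J}(-1)^{\deg\text{\bf e}_I\deg\text{\bf e}_J}g^{IJ}\,\text{\bf e}_I\times\text{\bf e}_J$ and deferring the details (perturbation compatibility and the sign) to \cite{toric3}, Lemma 20.11 and Lemma 26.7; you have essentially reconstructed that argument, correctly identifying $\Sigma^{(2)}$ as the two-disk configuration with two boundary nodes and recognizing that cyclic symmetry of the multisections is what forces the use of $\frak m_2^{\frak c}$.
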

Geometric origin of this lemma is also clear from Figure 12.2 and the equality
\begin{equation}\label{Diagonal}
\aligned
\left[ \{(x,x) \mid x \in L(\text{\bf u})\} \right]
&=\sum_{I,J} (-1)^{\deg \text{\bf e}_I  \deg \text{\bf e}_J} g^{IJ} \text{\bf e}_I \times \text{\bf e}_J 
\\
&\in H_n( L(\text{\bf u})\times  L(\text{\bf u});\Z).
\endaligned\end{equation}
The detail is given in  \cite{toric3} section 20 as
the proof of  Lemma 20.11.
(The sign in (\ref{Diagonal}) is proved in \cite{toric3} Lemma 26.7.)
\par
Now we can use the fact that $\mathcal M_{(1,1);0}$ is connected 
to find a cobordism between $\mathcal M_{(1,1);0}(\beta;\Sigma^{(1)})$ 
and $\mathcal M_{(1,1);0}(\beta;\Sigma^{(2)})$. 
The differential form 
$ {\rm ev}_1^* \text{\rm vol}_{L(\text{\bf u})}
\wedge {\rm ev}_2^* \text{\rm vol}_{L(\text{\bf u})}$ extends to this cobordism.
Therefore Lemmas \ref{inverseimage1} and \ref{inverseimage2} 
imply Theorem \ref{annulusmain} in case $\frak b = b =0$.
The general case is similar.
\qed
\begin{Remark}
According to E. Getzler, the fact 
$\mathcal M_{(1,1);0}(\beta;\Sigma^{(1)})$ 
is cobordant to $\mathcal M_{(1,1);0}(\beta;\Sigma^{(2)})$ 
is called the Cardy relation.
\end{Remark}
\begin{Remark}
A similar trick using the annulus is used in \cite{abouz, bircor} for a similar but a slightly 
different purpose.
\end{Remark}
\par\bigskip
\section{Examples 3}
\label{exa3}
\begin{Example}
We consider the case of $\C P^n$ and $\frak b = \text{\bf 0}$.
The moment polytope $P$ is a simplex 
$
\left\{(u_1,\dots,u_n) \mid 0 \le u_i, \sum u_i \le 1\right\}
$
and the potential function is
$$
\frak{PO}^{\text{\bf 0}} = \sum_{i=1}^n  y_i + T
(y_1\cdots y_n)^{-1}.
$$
The critical points are
$\frak y^{(k)} = T^{\frac{1}{n+1}} e^{\frac{2\pi\sqrt{-1}k}{n+1}}$ $k=0,\dots,n$
which are all non-degenerate.
The isomorphism 
$
\text{\rm Jac}(\frak{PO}^{\text{\bf 0}})\otimes_{\Lambda_0} \Lambda
\cong \prod_{k=0}^{n} \Lambda 1_{\frak y^{(k)}}.
$
is induced by
$$
P \mapsto \sum_{k=0}^n P(\frak y^{(k)}) 1_{\frak y^{(k)}}.
$$
We put
$
\text{\bf f}_k = \pi^{-1}(\{(u_1,\dots,u_n) \in P \mid u_i = 0,\,
i = n-k+1, \dots, n\}).
$
We derive
$$
\frak{PO}^{w\text{\bf p}_1}(y)
= \frak{PO}^{\text{\bf 0}}(y) + (e^w - 1)T^{u_n}y_n
$$
from Proposition 4.9 \cite{toric2} and hence
\begin{equation}\label{ksf1}
\frak{ks}_{\text{\bf 0}}(\text{\bf p}_1)
= [T^{u_n}y_n] = T^{\frac{1}{n+1}} \sum_{k=0}^n e^{\frac{2\pi\sqrt{-1}k}{n+1}} 1_{\frak \frak y^{(k)}}
\end{equation}
by definition of $\frak{ks}_{\text{\bf 0}}$.
Using the fact that $\frak{ks}_{\text{\bf 0}}$ is a ring homomorphism,
we have
\begin{equation}\label{ksfm}
\frak{ks}_{\text{\bf 0}}(\text{\bf p}_\ell)
= T^{\frac{\ell}{n+1}} \sum_{k=0}^n e^{\frac{2\pi\sqrt{-1}k\ell}{n+1}} 1_{\frak y^{(k)}}.
\end{equation}
Note this holds for $\ell=0$ also since $\text{\bf f}_0$ is a unit
and $\frak{ks}_{\text{\bf 0}}$ is unital.
\par
The Hessian of $\frak{PO}^{\text{\bf 0}}$
is given by
$$
\text{\rm Hess}_{\frak y^{(k)}} \frak{PO}^{\text{\bf 0}}
= \left[ T^{\frac{1}{n+1}} \frac{\partial^2}{\partial x_i\partial x_j}
\left(e^{x_1} + \dots + e^{x_n} + e^{-(x_1 + \dots + x_n)}\right)\right]_{i,j=1}^{i,j=n}
(\frak x^{(k)})
$$
with $\frak x^{(k)}= \exp\left(\frac{2\pi\sqrt{-1}k}{n+1}\right)$. Therefore
$$
\text{\rm Hess}_{\frak y^{(k)}} \frak{PO}^{\text{\bf 0}}
=  T^{\frac{1}{n+1}}e^{\frac{2\pi\sqrt{-1}k}{n+1}} \left[
\delta_{ij} + 1\right]_{i,j=1}^{i,j=n}.
$$
It is easy to see that the determinant of the matrix
$\left[
\delta_{ij} + 1\right]_{i,j=1}^{i,j=n}$ is $n+1$. Therefore
the residue pairing is given by
\begin{equation}\label{CPnresdue}
\langle 1_{\frak y^{(k)}}, 1_{\frak y^{(k')}}\rangle_{\text{\rm res}}
= T^{-\frac{n}{n+1}}e^{-\frac{2\pi\sqrt{-1}kn}{n+1}}\frac{\delta_{kk'}}{1+n}.
\end{equation}
Combining (\ref{ksfm}) and (\ref{CPnresdue}), we obtain
\begin{equation}\label{rescalcmm}
\langle \frak{ks}_{\text{\bf 0}}(\text{\bf p}_\ell),
\frak{ks}_{\text{\bf 0}}(\text{\bf p}_{\ell'})\rangle_{\text{\rm res}}
= \frac{1}{n+1}T^{-\frac{n}{n+1}}\sum_{k=0}^n
e^{-\frac{2\pi\sqrt{-1}kn}{n+1}} T^{\frac{\ell+\ell'}{n+1}}
e^{\frac{2\pi\sqrt{-1}(\ell+\ell')k}{n+1}}.
\end{equation}
It follows that (\ref{rescalcmm}) is $0$ unless $\ell+\ell' = n$ and
$$
\langle \frak{ks}_{\text{\bf 0}}(\text{\bf p}_\ell),
\frak{ks}_{\text{\bf 0}}(\text{\bf p}_{n-\ell})\rangle_{\text{\rm res}}
= 1 =
\langle \text{\bf p}_\ell,
\text{\bf p}_{n-\ell}\rangle_{\text{PD}_{\C P^n}}.
$$
Thus Theorem \ref{PDresidue} holds in this case.
\end{Example}
\begin{Remark}
There are various works in the case of $\C P^n$.
See \cite{taka,barani,gros}.
\end{Remark}
\begin{Example}
We consider the Hirzebruch surface $F_2(\alpha)$. We use the notation of 
Example \ref{ex:Hilexa}. In this case the full 
potential function for $\frak b  = \text{\bf 0}$ is calculated in 
\cite{auroux}, \cite{toric3} section 19 and \cite{fooo10} section 5 as follows.
\begin{equation}\label{F2poten}
\frak{PO}^{\text{\bf 0}}
=
y_1+ y_2
+ T^{2}y_1^{-1}y_2^{-2} 
+ T^{1-\alpha}(1+T^{2\alpha})y_2^{-1}.
\end{equation}
The valuation of the critical points are 
$$
(\frak v_T(\frak y_1),\frak v_T(\frak y_2)) = ((1-\alpha)/2,(1+\alpha)/2) = \text{\bf u}.
$$
It is the same for $4$ critical points.  Then using the variables $\overline y_i = y_i^{\text{\bf u}}$ we have
\begin{equation}\label{F2poten2}
\frak{PO}^{\text{\bf 0}}
= 
T^{(1-\alpha)/2}(\overline y_2 + (1+T^{2\alpha})\overline y_2^{-1})  +
T^{(1+\alpha)/2}(\overline y_1 +\overline y_1^{-1}\overline y_2^{-2}).
\end{equation}
(See Example 10.1.)
(We remark $\frak v_T(\overline y_i) = 0$.)
The critical point equation is
\begin{eqnarray}
0 &=& 1 - \overline y_1^{-2}\overline y_2^{-2}.  \label{crit1}\\
0 &=& 1 -2 T^{\alpha}\overline y_1^{-1}\overline y_2^{-3} 
- (1+T^{2\alpha})\overline y_2^{-2}.\label{crit2}
\end{eqnarray}
This has 4 solution.
\par
The Hessian matrix of (\ref{F2poten2}) is 
$$
\left[
\begin{matrix}
T^{(1+\alpha)/2}(\overline y_1+\overline y_1^{-1}\overline y_2^{-2})   &   2T^{(1+\alpha)/2} \overline y_1^{-1}\overline y_2^{-2} \\
& \\
2T^{(1+\alpha)/2}\overline y_1^{-1}\overline y_2^{-2}           &
\aligned
T^{(1-\alpha)/2}(\overline y_2&+(1+T^{2\alpha})(\overline y_2^{-1}) 
\\
&+ 4T^{(1+\alpha)/2}\overline y_1^{-1}\overline y_2^{-2}
\endaligned
\end{matrix}
\right]
$$
We can easily calculate the determinants of this matrix at the four 
solutions of (\ref{crit1}), (\ref{crit2}).  The determinants are $4T,4T,-4T,-4T$.
(See \cite{toric3} section 16 for the detail of the calculation.)
\par
The Hirzebruch suface $F_2(\alpha)$ is symplectomorphic to 
$S^2(1-\alpha) \times S^2(1-\alpha)$, where $S^2(1-\alpha)$ is 
the sphere $S^2$ with total area $1-\alpha$.
This fact is proved in \cite{fooo10} Proposition 5.1.
\par 
The quantum cohomology of $S^2(1-\alpha) \times S^2(1-\alpha)$ 
is generated by $x,y$ that correspond to the 
fundamental class of the factors $S^2(1-\alpha)$ and  $S^2(1+\alpha)$ respectively.
The fundamental relations among them are 
$$
x^2 = T^{1-\alpha} 1, \qquad y^2 = T^{1+\alpha} 1, \qquad xy = yx.
$$
We put
$$
e_{\pm} = \frac{1}{2}T^{{-(1-\alpha)}/2}(T^{(1-\alpha)/2} \pm x), 
\quad 
f_{\pm} =   \frac{1}{2}T^{{-(1+\alpha)}/2}(T^{(1+\alpha)/2} \pm y).
$$
Then $e_-f_-, e_-f_+, e_+f_-, e_+f_+$
are the units of the 4 direct product factors of $QH(S^2(1-\alpha) \times S^2(1+\alpha);\Lambda)$.
We have 
$$\int_{S^2(1-\alpha) \times S^2(1+\alpha)} e_{-}f_- e_{-}f_-
= 
\frac{1}{4T}
$$
Hence 
$$
\langle e_-f_-, e_-f_-\rangle_{\text{\rm PD}_{S^2(1-\alpha) \times S^2(1+\alpha)}} = \frac{1}{4T}.
$$
We obtain $-1/4T,-1/4T,1/4T$ from $e_-f_+, e_+f_-, e_+f_+$ in the same way.
Thus, Theorem \ref{PDresidue} holds in this case also.
\end{Example}
\begin{Example}
We take the monotone toric blow up of $\C P^2$ at one point, whose 
moment polytope is 
$\{(u_1,u_2) \mid 0 \ge u_1,u_2, u_1+u_2 \le 1, u_1 \le 2/3\}$.
Its unique monotone fiber is $\text{\bf u} = (1/3,1/3)$.
We put $\overline y_1 = y^{\text{\bf u}}_1$, $\overline y_2 = y^{\text{\bf u}}_2$.
Then the potential function  (for $\frak b = \text{\bf 0}$) is:
\begin{equation}
\frak{PO}^{\text{\bf 0}}
= 
T^{1/3}(\overline y_1 + \overline y_2 + (\overline y_1\overline y_2)^{-1} + \overline y_1^{-1}).
\end{equation}
The condition for $(\overline y_1,\overline y_2)$ to be critical gives rise to the equation:
\begin{equation}\label{230}
1 - \overline y_1^{-2}\overline y_2^{-1} - \overline y_1^{-2} = 0, \quad
1 - \overline y_1\overline y_2^2 = 0.
\end{equation}
We put $\overline y_2 = z$. Then $\overline y_1 = 1/z$ and 
\begin{equation}\label{231}
z^4 + z^3 -1 = 0.
\end{equation}
By Theorem \ref{cliffordZ} (3) we have
\begin{equation}
\aligned
Z(0,(\overline y_1,\overline y_2))
= T^{2/3} \text{\rm det}
\left[
\begin{matrix}
\overline y_1 + (\overline y_1\overline y_2)^{-1} + \overline y_1^{-1}   &   (\overline y_1\overline y_2)^{-1} \\
(\overline y_1\overline y_2)^{-1}           &
\overline y_2 + (\overline y_1\overline y_2)^{-1}
\end{matrix}
\right] 
= T^{2/3}\frac{4-z^3}{z}.
\endaligned\nonumber
\end{equation}
Let $z_i$ ($i=1,2,3,4$) be the 4 solutions of (\ref{231}). 
 Then the left hand side of
(\ref{hennasiki}) becomes:
\begin{equation}\label{hennasiki2}
T^{-2/3} \sum_{i=1}^4 \frac{z_i}{4-z_i^3}.
\end{equation}
We can directly check that (\ref{hennasiki2})$=0$.
(See \cite{toric3} Example 2.35.)
Thus we checked that Corollary \ref{Corresi}  holds in this case.
\end{Example}

%
%
\end{document}